\def\pa{\partial}
\def\e{\epsilon}
\newtheorem{theorem}{Theorem}[section]
\newtheorem{lemma}[theorem]{Lemma}
\newtheorem{proposition}[theorem]{Proposition}
\theoremstyle{remark}
\newtheorem{remark}[theorem]{\it \bf{Remark}\/}
\numberwithin{equation}{section}
\def\section{\@startsection{section}{1}%
  \z@{1.5\linespacing\@plus\linespacing}{.5\linespacing}%
  {\normalfont\bfseries\large\centering}}
\newcommand{\be}{\begin{equation}}
\newcommand{\ee}{\end{equation}}
\newcommand{\bea}{\begin{eqnarray}}
\newcommand{\eea}{\end{eqnarray}}
\newcommand{\bee}{\begin{eqnarray*}}
\newcommand{\eee}{\end{eqnarray*}}
\def\pa{\partial}
\def\RR{\mathbb{R}}
\def\fref#1{{\rm (\ref{#1})}}
\def\calE{{\mathcal E}}
\def\pa{\partial}
\def\supess{\mathop{\operator@font Sup\,ess}}
\def\la{\langle}
\def\ra{\rangle}
\def\etat{\tilde{\eta}}
\def\bt{\tilde{b}}
\def\RR{\mathbb{R}}
\def\e{\varepsilon}
\def\fref#1{{\rm (\ref{#1})}}
\def\R2+{\RR ^2_+}
\def\lsl{\frac{\lambda_s}{\lambda}}
\def\pa{\partial}
\def\psit{\tilde{\psi}}
\def\lim{\mathop{\rm lim}}
\def\sup{\mathop{\rm sup}}
\def\e{\varepsilon}
\def\l{\lambda}
\def\ln{{\rm log}}
\def\lsl{\frac{\lambda_s}{\lambda}}
\def\Lamdba{\Lambda}
\def\pa{\partial}
\def\psih{\hat{\psi}}
\def\pa{\partial}
\def\H{\mathcal H}
\def\matchal{\mathcal}
\def\Mod{\rm Mod}
\def\at{\tilde{a}}
\def\etah{\hat{\eta}}
\def\lh{\hat{\lambda}}
\def\etaht{\tilde{\etah}}
\title[]{On melting and freezing for the 2d radial Stefan problem}
\author[M. Had\v zi\'c]{Mahir Had\v zi\'c}
\address{King's College London, London, UK} 
\email{mahir.hadzic@kcl.ac.uk}
\author[P. Rapha\"el]{Pierre Rapha\"el}
\address{Universit\'e de Nice Sophia-Antipolis, France\\
Institut Universitaire de France\\
\& European Research Council}
\email{praphael@unice.fr}
\begin{document}
\maketitle

\begin{abstract} 
We consider the two dimensional free boundary Stefan problem describing the evolution of a spherically symmetric ice ball $\{r\leq \l(t)\}$. We revisit the pioneering analysis of \cite{HeVe} and prove the existence in the radial class of finite time {\it melting} regimes 
$$
\l(t)=\left\{\begin{array}{ll} (T-t)^{1/2}e^{-\frac{\sqrt{2}}{2}\sqrt{|\ln(T-t)|}+O(1)}\\
(c+o(1))\frac{(T-t)^{\frac{k+1}{2}}}{|\ln (T-t)|^{\frac{k+1}{2k}}}, \ \ k\in \Bbb N^*\end{array}\right.
\quad\text{ as } t\to T
$$ 
which respectively correspond to the fundamental {\it stable} melting rate, and a sequence of codimension $k\in \Bbb N^*$ excited regimes. Our analysis fully revisits a related construction for the harmonic heat flow in \cite{RS1} by introducing a new and canonical functional framework for the study of type II  (i.e. non self similar) blow up. We also show a deep duality between the construction of the melting regimes and the derivation of a discrete sequence of global-in-time {\it freezing} regimes 
$$
\l_\infty - \l(t)\sim\left\{\begin{array}{ll} \frac{1}{\log t}\\
\frac{1}{t^{k}(\log t)^{2}}, \ \ k\in \Bbb N^*\end{array}\right.
\quad\text{ as } t\to +\infty
$$ 
which correspond respectively to the fundamental {\it stable} freezing rate, and excited regimes which are codimension $k$ stable. 
 \end{abstract}
 

\section{Introduction}



\subsection{Setting of the problem}\label{S:SETTING}


We consider the classical two dimensional one-phase Stefan problem on an {\em external} domain.
The unknowns are the moving domain $\Omega(t)\subset\mathbb{R}^2$ and
the temperature function $u:\Omega(t)\to\mathbb{R}$ which evolve according to:
\be
\label{E:STEFAN}
\left\{\begin{array}{lll}
\pa_tu-\Delta u=0\ \text{ in } \ \Omega(t)\\
\pa_nu=V_{\pa\Omega(t)} \ \text{ on } \ \pa\Omega(t)\\
u=0 \ \text{ on } \ \pa\Omega(t)
\end{array}\right.
\ee
where $V_{\pa \Omega(t)}$ stands for the normal velocity of the moving boundary $\pa\Omega(t)$\footnote{For any given parametrisation $\gamma(t,\cdot):\RR\to\RR^2$ of $\pa\Omega(t)$, the normal velocity is given by the formula 
$V_{\pa\Omega(t)}=\pa_t\gamma\cdot n,$ where $n$ stands for the outward pointing unit normal with respect to $\pa\Omega(t).$}. The temperature $u$ may either be assumed to be positive initially in $\Omega(0)$, in which case  the maximum principle and the Dirichlet boundary condition ensure that it will remain positive in $\Omega(t)$, or on the contrary the data may be undercooled with initially non positive temperature in some regions in space. The cavity represents a circular block of ice kept at constant temperature $u=0.$ If the cavity vanishes at a later time we refer to this process as {\em melting}
and if it expands, we refer to it as {\em freezing}.\\

In many applications it is important to include the effects of surface tension into the description of melting. This is done by replacing the Dirichlet condition in~\eqref{E:STEFAN} by the so-called Gibbs-Thomson correction
\be\label{E:GT}
u = \sigma \kappa_{\pa\Omega(t)} \ \text{ on } \ \pa\Omega(t),
\ee
where $\sigma\ge0$ is known as the coefficient of surface tension and $\kappa_{\pa\Omega(t)}$ is the mean curvature of the moving boundary $\pa\Omega(t)$. For the physical justification of the condition~\eqref{E:GT} we refer the reader to the monograph~\cite{Vi}. When $\sigma>0$ the model can account for phenomena such as superheating (undercooling), phase nucleation, crystal growth etc. It is therefore a small scale description of a phase transition and it adds a $\sigma$-dependent energy contribution from the phase interface, while the classical Stefan problem (corresponding to $\sigma=0$) neglects the energy due to surface tension and represents a {\em macroscopic} description of melting~\cite{HaSh}.\\

In this paper we shall only be interested in the description of melting and freezing within the framework of the one-phase classical Stefan problem $\sigma=0$. While it is fundamental from the physics point of view to understand the singularity formation in both cases, the energy concentration phenomenon displays essential differences related to the type II (for $\sigma=0$) vs type I (for $\sigma\neq 0$) singularity formation process as explained below. 

\subsection{Cauchy theory for the classical Stefan problem} There exists a large literature pertaining to the questions of existence, uniqueness, regularity, and well-posedness for the classical Stefan problem. 
{\em Weak} solutions were first defined and shown to exist in~\cite{Ka} 
and their properties were further studied in many works, see~\cite{Fr1968, Ca77, Ca78, Fr1982,FrKi1975,Vi} and references therein. 
The classical Stefan problem lends itself to a different notion of a weak solution, the so-called {\em viscosity} solutions. For an overview of the seminal works on the 
regularity theory for such solutions we point the reader to~\cite{CaSa} and references therein, and for  further results on existence, uniqueness, and regularity of viscosity solutions to~\cite{Ki,KiPo,sCiK2010,sCiK2012}.  
In the class of {\em classical} solutions, first existence results are traced back to~\cite{Ha,Me}. For well-posedness results in energy-based Sobolev-type spaces see~\cite{HaSh,HaSh2} and in $L^p$-type spaces~\cite{FrSo,PrSaSi}.\\

In this article, we work in a radially symmetric situation and therefore the Cauchy theory is simpler. The domain $\Omega(t)$ is given by 
\begin{align*}
\Omega(t) = \{x\in\mathbb{R}^2; \ |x|\ge \lambda(t)\},
\end{align*}
and the Cauchy problem \eqref{E:STEFAN} becomes:
\be
\label{E:STEFANPOLAR}
\left\{\begin{array}{lll}u_t-u_{rr}-\frac{1}{r}u_r=0\ \text{ in } \ \Omega(t)\\
u_r(t,\lambda(t))=-\dot{\lambda}(t)\\
u(t,\lambda(t))=0 \\
u(0,\cdot) = u_0, \ \l(0) = \l_0.
\end{array}\right.
\ee
It is well posed in $H^2$: for all $(u_0,\l_0)\in H^2\times \Bbb R^*_+$ with $u_0$ radially symmetric, there exists a unique solution 
$(u(t),\l(t))\in \mathcal C([0,T),H^2(\Omega))\times \mathcal{C}^1([0,T),\RR_+^*)$ to \eqref{E:STEFANPOLAR}, and 
$$
T<+\infty\ \ \mbox{implies}\ \ \left(\lim_{t\to T}\|u(t)\|_{H^2(\Omega(t))}=+\infty \ \text{ or } \ \lim_{t\to T}\l(t) = 0\right). 
$$ 
We recall the classical proof\footnote{see for instance~\cite{HeHeVe,Fr1964} and references therein for a Cauchy theory in the class of H\"older spaces.} in Appendix \ref{appendixcauchy}. A simple integration by parts using the boundary conditions \eqref{E:STEFANPOLAR}, see \eqref{E:ENERGY1}, ensures the uniform control of the Dirichlet energy:
\be
\label{decaydirichlet}
\int_{|x|\geq \l(t)}|\nabla u(t,x)|^2dx\leq \int_{|x|\ge \l(0)}|\nabla u_0(x)|^2dx
\ee
in the melting regime $\dot \l <0.$

\subsection{Previous results on melting} The first description of melting regimes is given in the pioneering work by Herrero-Vel\'azquez \cite{HeVe} which predicts the existence of a discrete sequence of melting rates\footnote{A different version of \eqref{E:MELTINGRATE} is computed in \cite{HeVe}, but a correction is suggested in \cite{HeHeVe}. We note that the rate  in~\cite{HeVe} arose due to a minor algebraic mistake, and the same proof would yield the correct rate stated in~\cite{HeHeVe}. We also mention that the correct asymptotic formula is given in~\cite{AnHeVe}.
A similar issue occurs in \cite{HV}, see the correct law in \cite{RSc}.}:
$$
 \lambda(t)\sim \left\{\begin{array}{ll} (T-t)^{1/2}e^{-\frac{\sqrt{2}}{2}\sqrt{|\ln(T-t)|}}\\
 \frac{(T-t)^{\frac{k+1}{2}}}{|\ln (T-t)|^{\frac{k+1}{2k}}},\ \ k\in \Bbb N^*.\end{array}\right.
$$
Their analysis adapts the methodology developed in \cite{Velas, HV} for the construction of non self similar type II blow up bubbles. Following in particular the chemotactic approach \cite{HV}, the authors first consider a change variable related to some partial mass, and analyse the problem as a connection problem between a soliton like behaviour near the origin, and a far out tail connection. As mentioned by the authors themselves \cite{HeHeVe,VICM}, this approach, which relies on a delicate matching procedure cannot as such address the question of stability of the corresponding regimes, and is restricted to radial data.\\

There has been for the past twenty years an immense activity in the field of construction of type II blow up bubbles, in both parabolic and dispersive problems, see for example \cite{P, MR4,KST,RaphRod,MRR, RS1,RS2,RSc,MMR}. The general approach is to split the energy concentration problem into a finite dimensional part which contains the leading order dynamics\footnote{see \eqref{bsyshte} in the strategy of the proof below.}, and an infinite dimensional part which is controlled using purely energy methods. In other words, the matching procedure is replaced by the derivation of the leading order finite dimensional system driving the concentration mechanism, and the full flow is closed using adapted robust energy estimates. This scheme allows both for the description of local manifolds of constructed solutions \cite{collot, MMN}, in particular the stability of the fundamental mode, which is an essential step for the complete description of the flow near the solitary wave \cite{MMR, NSbook}, and it is not in principle restricted to radial data \cite{MMR,MR4, collot2} or scalar problems \cite{MRR}.

\subsection{Main results} The main result of this paper is the existence and stability of a discrete sequence of melting rates with $k$ nonlinear instability directions, $k\in \Bbb N$, which revisits and completes the pioneering construction of \cite{HeVe}.

\begin{theorem}
\label{T:MAIN}[Melting dynamics]
There exists a set of data $(u_0,\l_0)$ in $H^2\times \RR^*_+,$ with $u_0$ arbitrarily small in $\dot{H}^1$, such that
 the corresponding solution $(u,\lambda)\in \mathcal{C}([0,T), H^2)\times\mathcal{C}^1([0,T),\RR_+)$ to the exterior Stefan problem~(\ref{E:STEFANPOLAR}) melts in finite time
 $0\leq T=T(u_0,\l_0)<\infty$ with the following asymptotics:\\
 \noindent\underline{{\em 1. Stable regime}}: the fundamental rate is given by 
  \be\label{E:MELTINGRATE}
 \lambda(t)=(T-t)^{1/2}e^{-\frac{\sqrt{2}}{2}\sqrt{|\ln(T-t)|}+O_{t\to T}(1)}
 \ee
 and is {\em stable} by small radial perturbations in $H^2$; this regime corresponds to positive data $u_0>0$.\\
 \noindent\underline{{\em 2. Excited regimes}}: the excited melting rates are given by
 \be
 \label{meltingk}
 \l(t)=(c^*(u_0, \l_0)+o_{t\to T}(1))\frac{(T-t)^{\frac{k+1}{2}}}{|\ln (T-t)|^{\frac{k+1}{2k}}},\ \ k\in \Bbb N^*,
 \ee
 for some $c^*(u_0,\l_0)>0$; it corresponds to undercooled initial data lying on a locally Lipchitz $H^2$ manifold of codimension $k$.\\
\noindent\underline{{\em 3. Non concentration of energy}}: In all cases, there exists $u^*\in \dot{H^1}$ such that 
\be \label{stronltwoconvergece}
 \lim_{t\to T}\|\nabla u\chi_{\{|x|\ge\l(t)\}}- \nabla u^*\|_{L^2(\RR^2)}=0.
\ee 
 \end{theorem}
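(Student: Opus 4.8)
The plan is to run a type II (non self-similar) blow up construction in the spirit of \cite{RS1}, adapted to the free boundary structure of \eqref{E:STEFANPOLAR}; I expect the construction to yield the non-concentration statement \eqref{stronltwoconvergece} as a byproduct.

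\emph{Step 1: renormalization and linear theory.} First I would pass to variables adapted to the moving boundary: $y=r/\lambda(t)$, a renormalized time $s$ with $\frac{ds}{dt}=\lambda^{-2}$, and a (suitably amplitude-normalized) temperature $v(s,y)$, so that \eqref{E:STEFANPOLAR} becomes a parabolic problem on the \emph{fixed} exterior domain $\{y\geq1\}$ of the schematic form $\partial_s v=\Delta_y v-b_1\,y\partial_y v+\dots$, where $b_1:=-\lambda_s/\lambda>0$ in the melting regime, and the two free boundary conditions become $v(s,1)=0$ and $\partial_y v(s,1)=b_1$ up to normalization. The role of the ``ground state'' is played by the $2$d radial harmonic function $\log y$ vanishing at $y=1$, which does \emph{not} belong to $\dot H^1(\{y\geq1\})$; this logarithmic obstruction is exactly what forbids an exact self-similar melting law. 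The linearized operator is then $\mathcal L=-(\partial_{yy}+\tfrac1y\partial_y)$, the radial Laplacian, with the weak drift $-b_1y\partial_y$ treated as a perturbation; I would develop the inversion theory for $\mathcal L$ with prescribed tails and the weighted coercivity (Hardy-type) estimates on $\{y\geq1\}$ under a finite set of orthogonality conditions. Since $\mathcal L$ has only continuous spectrum down to $0$, the spectral-gap approach of \cite{RS1} must be replaced by a genuinely new functional framework built around $\mathcal L$.

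\emph{Step 2: approximate profiles and the formal blow up system.} Next I would construct a family of approximate blow up profiles $Q_b$, $b=(b_1,\dots,b_L)$ with $L=L(k)$ large, by a tail-correction hierarchy: set $Q_b\approx b_1\log y+\sum_{j\ge2}(\text{products of the }b_i)\,T_j(y)$ with $\mathcal L T_{j+1}=(\text{lower order forcing})$, each inversion of $\mathcal L$ producing one more logarithm, and glue to the outer (parabolic) region near $y\sim b_1^{-1/2}$ through a cutoff. Evaluating the profile error $\Psi_b:=\partial_s Q_b+(\text{renormalized flow})\,Q_b$ forces a self-consistent ODE system, schematically $\frac{\lambda_s}{\lambda}=-b_1$ and $(b_j)_s+(2j-1)b_1b_j=b_{j+1}+\dots$, closed by a logarithmically corrected Riccati-type relation for the last free parameter. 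For the \emph{stable} regime this collapses to $(b_1)_s\sim -c\,b_1^2/|\log b_1|$; integrating it and then undoing the (super-exponentially fast) change of time $s\leftrightarrow t$ yields \eqref{E:MELTINGRATE}, the constant $\tfrac{\sqrt2}{2}$ being fixed by the coefficient $c$ in the Riccati closure. For the excited regimes one keeps $b_{k}$ as an essential slow parameter and the integration produces the power-law corrected rates \eqref{meltingk}. This step also identifies the number $k$ of linearly unstable directions and the sign of the temperature (the stable profile $b_1\log y\ge0$ corresponds to $u_0>0$; the excited ones are undercooled).

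\emph{Step 3: modulation, adapted energy and the monotonicity formula.} Then I would decompose $v(s,y)=Q_{b(s)}(y)+\varepsilon(s,y)$, impose $L$ orthogonality conditions on $\varepsilon$ (to the directions $\partial_{b_j}Q_b$) to define $\lambda(s)$ and $b(s)$, and derive modulation equations $|\tfrac{\lambda_s}{\lambda}+b_1|+\sum_j|(b_j)_s+(2j-1)b_1b_j-b_{j+1}|\lesssim(\text{adapted norm of }\varepsilon)+\|\Psi_b\|$. The analytic heart is a high-order energy estimate: introduce $\mathcal E_\sigma$ built from weighted $L^2$ norms of $\mathcal L^{m}\varepsilon$ up to order $m\le k_0\gg L$, prove its coercivity under the orthogonality conditions, and establish a Lyapunov monotonicity $\frac{d}{ds}\bigl\{\mathcal E_\sigma/\lambda^{2\sigma}\bigr\}\lesssim\frac{b_1}{\lambda^{2\sigma}}\bigl(\delta\,\mathcal E_\sigma+\|\Psi_b\|^2\bigr)$ for small $\delta$. \textbf{I expect this to be the main obstacle.} One must in particular: (i) control every boundary term at $y=1$ produced by the repeated integration by parts, using only the two Stefan relations and the modulation law for $\lambda$ --- a feature with no counterpart in \cite{RS1}, and the reason the standard scheme does not apply verbatim; (ii) absorb the logarithmic losses intrinsic to the $\log y$ profile, which is exactly where the new functional framework is needed; and (iii) prove the weighted coercivity of $\mathcal E_\sigma$ on $\{y\geq1\}$ in the absence of a spectral gap for $\mathcal L$.

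\emph{Step 4: bootstrap, topological selection and non-concentration.} Finally I would set up a bootstrap on a renormalized interval $[s_0,S)$ on which $\varepsilon$ stays small in the adapted norm and $b$ stays close to the formal solution; all bounds then close except along the $k$ genuinely unstable modes, which are handled by a Brouwer-type outgoing-flux argument selecting the initial data on a codimension-$k$ locally Lipschitz manifold in $H^2$ (for the stable law $k=0$ and no shooting is required). Undoing the renormalization gives melting at a finite time $T<\infty$ with the asserted law for $\lambda(t)$. For \eqref{stronltwoconvergece}, note that $Q_b$ has amplitude $O(b_1)$ while its Dirichlet integral over the self-similar zone $\{1\le y\le b_1^{-1/2}\}$ only diverges logarithmically, hence carries $\lesssim b_1^2|\log b_1|=o(1)$ of the Dirichlet energy; combining this with the smallness of $\varepsilon$ and the a priori monotonicity \eqref{decaydirichlet}, one obtains that $\nabla u\,\chi_{\{|x|\ge\lambda(t)\}}$ is Cauchy in $L^2(\RR^2)$ as $t\to T$, i.e. converges strongly to $\nabla u^*$ for some $u^*\in\dot H^1$ --- so no energy concentrates at the vanishing cavity.
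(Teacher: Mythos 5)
Your outline reproduces the generic type II machinery (renormalization to a fixed boundary, approximate profile, modulation, weighted energy, Brouwer selection), and your Step 4 non-concentration heuristic is essentially the computation the paper performs. But there is a genuine gap at the heart of Steps 1--3: you treat the drift $-b\,y\pa_y$ as a perturbation of the radial Laplacian $\mathcal L$, acknowledge that $\mathcal L$ has continuous spectrum touching zero, and then defer the resolution to an unspecified ``genuinely new functional framework,'' listing its three required properties (boundary-term control, absorption of logarithmic losses, coercivity without a spectral gap) as open obstacles. Those three items are not technical refinements of your scheme --- they are the theorem's actual content, and the scaffolding you propose (a tail-correction hierarchy $Q_b=b_1\log y+\sum_{j\ge2}T_j$ with $L(k)$ parameters glued through a cutoff at $y\sim b_1^{-1/2}$, plus an energy built on $\mathcal L^m\e$ with $m\le k_0\gg L$) is precisely the \cite{RS1}-type construction whose weakness in this setting is that each extra derivative generates new boundary terms at $y=1$ from the two Stefan conditions, and the polynomially weighted coercivity for $\mathcal L$ on $\{y\ge1\}$ that would replace a spectral gap is never exhibited. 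As written, the Lyapunov monotonicity of Step 3 cannot be checked, and the constant $\tfrac{\sqrt2}{2}$ in \eqref{E:MELTINGRATE} is attributed to an unspecified ``Riccati closure.''

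The paper's resolution is the opposite move: the drift is \emph{not} a perturbation but the confining part of the operator. In the Gaussian-weighted space $L^2(e^{-b|y|^2/2}y\,dy)$ the operator $\H_b=-\Delta+b\Lambda$ is self-adjoint and, after the rescaling $z=\sqrt b\,y$, becomes the harmonic oscillator with the singular boundary condition $u(\sqrt b)=0$; a Lyapunov--Schmidt bifurcation off the Laguerre modes (Proposition \ref{lemmadiag}) yields \emph{discrete} eigenvalues $\l_{b,k}=2k+\tfrac{2}{|\ln b|}+O(|\ln b|^{-2})$, a quantitative spectral gap \eqref{spcetralgap}, and exact eigenmodes $\eta_{b,j}$ whose expansion \eqref{deftk} already contains your $\log y$ profile with the correct Gaussian localization --- no gluing and no large hierarchy: the profile is the finite sum $Q_\beta=\sum_{j=0}^k b_j\eta_{b,j}$ and the eigenvalue shift $\tfrac{2}{|\ln b|}$ is what produces both the $\sqrt2/2$ exponent and the logarithmic corrections in \eqref{meltingk}. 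The energy estimate then closes at the $H^2$ level only ($\e_2=\H_b\e$), the dangerous term $\int|y|^2\e^2e^{-b|y|^2/2}$ being cancelled by the choice \eqref{choosephi} of $b$, and the single boundary term $\e_2(1)[\pa_y\e_2+\tfrac{b-a}{2}\e_2](1)$ is shown to be a total time derivative up to acceptable errors (Proposition \ref{propenergy}) --- this is the mechanism your item (i) asks for and which a $k_0\gg L$ derivative scheme would multiply rather than tame. To repair your proposal you would either have to supply the missing coercivity and boundary analysis for the Laplacian-based framework, or adopt the Gaussian spectral decomposition from the start.
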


{\it Comments on the result.}\\

\noindent{\it 1. Role of the dimension}. This paper deals with the case $d=2,$ which is the energy critical case, but the higher dimensions $d\geq 3$ could be treated by an entirely analogous approach. In fact, the case $d=2$ is the most complicated case, displaying small logarithmic gains only and strong coupling between the various components of the solution\footnote{see for example \eqref{equationmodulation}.}.\\

\noindent{\it 2. Stefan problem with Gibbs-Thomson correction}.  As already mentioned in Section~\ref{S:SETTING}, the Stefan problem with Gibbs-Thomson correction~\eqref{E:GT} ( as a replacement of the classical 
Dirichlet boundary condition $u|_{\pa\Omega(t)}=0$) is an important phase transition model taking surface tension effects into account (which are non-negligible at a certain microscopic spatial scale). Existence (without uniqueness) of global-in-time weak solutions was shown in deep works~\cite{Lu,AlWa} relying on the gradient-flow structure of the problem. In the realm of classical solutions, local well-posedness results as well as global-in-time stability results can be found in~\cite{Ra,EsPrSi, Ha1, HaGu1, PrSiZa}. In the context of melting for the one-phase  Stefan problem with surface tension, to the best of our knowledge the only available result is
an important theorem of Herraiz, Herrero, \& Velazquez~\cite{HeHeVe}. The authors show that radius $\l_{\text{GT}}$ of radially-symmetric melting solutions in the presence of surface tension in dimensions $d=2$ and $d=3$ obeys the
asymptotic law
\be\label{E:SS}
\l_{\text{GT}}(t) \sim_{t\to T} \left(3\sigma(T-t)\right)^{\frac13}.
\ee
The rate~\eqref{E:SS} thus exhibits a very different qualitative behaviour from the Type II rates observed in~\cite{HeVe} (in dimensions $d\ge2$) and in our Theorems~\ref{T:MAIN} and~\ref{T:MAINbis} (in dimension $d=2$). In fact, rate~\eqref{E:SS} honours the self-similar scaling invariance of the related Hele-Shaw problem with surface tension and it is dimension-independent see~\cite{HeHeVe}, and hence belongs to the setting of type I blow up. This stands in contrast to the melting rates for the classical Stefan problem in higher dimensions. 
It is an important open problem to understand whether the rates~\eqref{E:SS} are stable outside the class of radially-symmetric solutions. 
A second important open problem, even in the class of radial solutions, is to prove the existence and describe melting rates in the context of the physically important {\em two-phase} Stefan problem with Gibbs-Thomson correction.\\

\noindent{\it 3. Nonradial dynamics}.
Formal asymptotics for finite time melting is presented in~\cite{AnHeVe} in addition to a wealth of other possible singularity formation scenarios. There the problem is formulated via the Baiocchi transform and the authors identify ellipses in 2D and ellipsoids in 3D as the asymptotic attractors for the melting dynamics in suitably rescaled variables. We also mention the formal asymptotics derived in~\cite{Mc2003,Mc2005}, wherein the same melting phenomenology as in~\cite{AnHeVe} is identified for the Stefan problem confined to a compact domain, melting inwards. \\

Theorem \ref{T:MAIN} lies in the continuation of the methodology developed for the construction of type II blow up bubbles for both parabolic and dispersive problems \cite{MRR,RaphRod,RS1,RS2,RSc,collot}. The strategy consists of two steps: construction of a high order approximate solution based on the expansion of the   blow up/melting solution    with respect to a well chosen small parameter, and development of an energy like method to control the remaining infinite dimensional part. The main novelty of the proof of Theorem \ref{T:MAIN} with respect to these previous works however is the derivation of a {\it new} and sharp functional setting for the construction of type II,  i.e. non self similar blow up bubbles, here applied to a melting problem, which we expect is universal and robust both with respect to the extension to the non radial case and the full classification of type II scenarios. Our analysis relies on new weighted energy bounds with a degenerate Gaussian weight based on the spectral decomposition of the leading order linear operator after a suitable renormalisation, see the strategy of the proof below.
This is conceptually a continuation of the Giga-Kohn approach \cite{GK} to the type I blow up in the energy subcritical range. Our new set of estimates simplifies both the derivation of the approximate solution and the closure of the nonlinear energy bounds by using in an optimal way the dissipative structure of the problem, see in particular \eqref{estemeergy}. 
The existence of a degenerate resonance leading the blow up rate is reminiscent of the derivation of the celebrated "log-log law" for the mass critical nonlinear Schr\"odinger equation, 
see \cite{LPSS, P, MR1, MR4}. 
A recent series of works by Merle and Zaag \cite{MZ1,MZ2,MZ3} suggest that this approach may be of great interest for dispersive wave-like problems as well.\\

Moreover, our approach applies equally well to the construction of   freezing   solutions  (i.e. $\dot\l>0$)   that asymptotically converge 
to a   steady state   $(u=0,\l=\text{const}>0)$. 
 The proof exploits a deep underlying duality between the derivation of the melting and the freezing rates.  

\begin{theorem}
\label{T:MAINbis}[Freezing dynamics]
There exists a set of data $(u_0,\l_0)$ in $H^2\times \RR^*_+,$ arbitrarily small in $\dot{H}^1,$ such that the corresponding solution $(u,\lambda)\in \mathcal{C}([0,T), H^2)\times\mathcal{C}^1([0,T),\RR_+)$ to the exterior Stefan problem~(\ref{E:STEFANPOLAR}) exists globally-in-time. 
 Furthermore 
the solution freezes asymptotically
$$
\lim_{t\to +\infty}\l(t)=\l_\infty>0,
$$ 
where
\be\label{E:FORMULA}
\l_\infty= \sqrt{\l_0^2 - \frac 1\pi \int_{\Omega_0}u_0(x)\,dx},
\ee
and it has the following asymptotic behaviour:  \\
 \noindent\underline{{\em 1. Stable regime}}: the fundamental freezing rate is given by 
  \be\label{E:MELTINGRATEbis}
\l_\infty - \l(t)= \frac{c(u_0,\l_0)(1+o_{t\to +\infty}(1))}{\log t}
 \ee
  for some $c(u_0,\l_0)>0$;
it is {\em stable} with respect to small well localised smooth radial perturbations; this regime contains negative data $u_0<0$.\\
 \noindent\underline{{\em 2. Excited regimes}}: excited melting rates are given by
 \be
 \label{E:MELTINGRATEbisk}
\l_\infty - \l(t) = \frac{c_k(u_0,\l_0)(1+o_{t\to +\infty}(1))}{t^{k}(\log t)^2}, \ \ k\in\mathbb N^*,
 \ee
 for some $c_k(u_0,\l_0)>0$ and correspond to superheated well localised initial data lying on a locally Lipchitz manifold of codimension $k$ in some well localised norm.\\
\noindent\underline{{\em 3. Energy asymptotics}}: in all cases, the Dirichlet energy dissipates at the rate
\be \label{E:MELTINGRATEbis2}
\|\nabla u(t)\|_{L^2(\Omega(t))}=\frac{d_k(u_0,\l_0)(1+o_{t\to +\infty}(1))}{t^{k+1}\log t}, \ \ k\in\mathbb{N},
\ee 
for some $d_k(u_0,\l_0)>0.$
 \end{theorem}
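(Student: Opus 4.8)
The starting point is the algebraic identity behind \eqref{E:FORMULA}. Differentiating in time and using the boundary conditions $u=0$, $u_r(t,\lambda(t))=-\dot\lambda(t)$ of \eqref{E:STEFANPOLAR}, the heat equation, and the transport formula for moving domains, one obtains
\be
\frac{d}{dt}\Big(\pi\lambda(t)^2-\int_{\Omega(t)}u(t,x)\,dx\Big)=0,
\ee
so $\pi\lambda(t)^2-\int_{\Omega(t)}u=\pi\lambda_0^2-\int_{\Omega_0}u_0$ throughout the lifespan. Once the energy estimates below force $u(t)\to0$ in a quantitative, time-integrable sense, passing to the limit gives $\lambda_\infty^2=\lambda_0^2-\frac1\pi\int_{\Omega_0}u_0$, which is $>0$ — hence $\lambda(t)$ stays bounded away from $0$ and the solution cannot melt — precisely under the superheating/localization hypotheses of the theorem. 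The whole construction therefore runs in the regime $\dot\lambda>0$, $\lambda\uparrow\lambda_\infty$, the mirror image of the melting regime $\dot\lambda<0$, $\lambda\downarrow0$ of Theorem \ref{T:MAIN}.

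The core of the argument then parallels, step by step, the melting analysis. First I would renormalize $y=r/\lambda(t)$, $\frac{ds}{dt}=\lambda(t)^{-2}$ and decompose $u=Q_{b(t)}+\varepsilon$ in renormalized variables (for the excited regimes, $Q_{b_1,\dots,b_k}$ with several modulation parameters), where $Q_b$ is the same high-order approximate profile obtained from the expansion of the blow-up profile around the stationary state $u\equiv0$, with the same leading linear operator $\mathcal L$ after renormalization and the same degenerate resonance, but now evaluated on the \emph{freezing branch} $-\frac{\lambda_s}{\lambda}=b<0$. Imposing the orthogonality conditions that fix $\lambda,b$ (and $b_1,\dots,b_k$) produces the modulation system; on the freezing branch its formal law drives $b\to0^-$, and since $\frac{\lambda_s}{\lambda}=-b>0$ this is exactly $\lambda\uparrow\lambda_\infty$. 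Integrating this system — and converting from renormalized time $s$ (which, because $\lambda\to\lambda_\infty$, satisfies $s\sim t/\lambda_\infty^2\to+\infty$) back to $t$ — yields $\lambda_\infty-\lambda(t)\sim c/\log t$ in the fundamental case and $\lambda_\infty-\lambda(t)\sim c_k/(t^k(\log t)^2)$ for the codimension-$k$ excited solutions, as well as the Dirichlet-energy asymptotics \eqref{E:MELTINGRATEbis2} after reading off $\|\nabla Q_b\|_{L^2}$, the extra logarithm there coming from the borderline two-dimensional $\dot H^1$ integral of the profile.

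The analytic engine is the control of $\varepsilon$ via the new weighted energy estimates with the degenerate Gaussian weight dictated by the spectral decomposition of $\mathcal L$, used together with the dissipative structure exactly as in \eqref{estemeergy}. I would run a bootstrap on $\varepsilon$; the structural change relative to melting is that the bootstrap interval is $[t_0,+\infty)$ rather than $[t_0,T)$, so one must verify that the errors generated by the approximate solution $Q_b$ on the freezing branch are time-integrable on $[t_0,\infty)$ and that this forces $\|\varepsilon(t)\|\to0$, hence the quantitative decay $u(t)\to0$ needed to close the conservation-law argument and to obtain the non-concentration rate \eqref{E:MELTINGRATEbis2}. For the fundamental regime the trapped set is open, which gives the claimed stability; for the excited regimes the $k$ unstable directions of the modulation system are handled by a Brouwer/degree argument on the corresponding $k$-dimensional parameter, producing the locally Lipschitz codimension-$k$ manifold of data.

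The step I expect to be the main obstacle is exactly this global-in-time closure. Unlike the finite-time melting problem there is no a priori ``clock'': one must show that $b(t)$ genuinely reaches $0$ without drifting and that the weighted energy of $\varepsilon$ actually decays — not merely stays bounded — uniformly as $t\to+\infty$, and in $d=2$ the logarithmic degeneracies together with the strong coupling between the components through \eqref{equationmodulation} leave the margins in these estimates extremely thin.
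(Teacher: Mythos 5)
Your plan is correct and follows essentially the same route as the paper: the conservation law $\frac{d}{dt}\big(\int_{\Omega(t)}u-\pi\l^2\big)=0$ combined with $\|u\|_{L^1(\Omega(t))}\to0$ gives \eqref{E:FORMULA}, and the construction itself is the mirror of the melting analysis (renormalization, modulated decomposition on the eigenmodes, weighted Gaussian energy estimate, global-in-time bootstrap, Brouwer argument for the codimension-$k$ manifold). The one point you gloss over is that the operator on the freezing branch is \emph{not} literally the melting operator with $b<0$: the self-adjoint weight flips to the confining Gaussian $e^{+B|y|^2/2}$ (which is why the data must be strongly localized) and the spectrum shifts by $d=2$, i.e.\ $\hat\l_{B,k}=2k+2+\frac{2}{|\ln B|}+\dots$, and it is exactly this shift that produces $B_k\sim s^{-(k+1)}(\log s)^{-2}$ and hence the quantized rates \eqref{E:MELTINGRATEbis}--\eqref{E:MELTINGRATEbisk}.
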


\noindent{\it Comments on the result.}\\

\noindent{\it 1. More melting regimes}. In the setting of the Stefan and Hele-Shaw problems, the authors consider in ~\cite{QuVa2000} a melting scenario for the one-phase Stefan problem outside a fixed domain containing the origin and kept at a pre-fixed non-negative and nontrivial temperature, acting as an effective heat source. The liquid thus expands to infinity for positive initial data and an asymptotic rate of expansion for the free-boundary radius is obtained. Note that this situation is quite different from our setting as there is no such heat source in our case, and the freezing/melting process is driven entirely by the choice of initial conditions.\\

\noindent{\it 2. Solitary wave regimes}. A non trivial global-in-time dynamics with convergence to the solitary wave similar to that described by theorem~\ref{T:MAINbis} has been derived in other critical settings, see for example  \cite{NT2}, \cite{MMR}, \cite{MMN}. The quantised convergence rates with logarithmic corrections are reminiscent of some classical nonlinear dynamical systems scenarios, see for example \cite{galaywayne}.\\

The first main open problem following this work is the understanding of the full non radial stability of the free boundary problem in the stable melting regime $k=0$ which should be amenable to our approach. Let us mention that a related problem 
in the context of evaporating drops was recently studied and solved in the setting of  a {\em self-similar} collapse in the very nice work~\cite{FontHongHwang}. The second main open problem is to give a complete description of the flow for small initial data, and here we expect that the constructions and underlying functional framework of Theorem \ref{T:MAIN} and Theorem \ref{T:MAINbis} are essential steps.

\subsection*{Acknowledgements} The authors thank the anonymous referees for their helpful comments. P.R is supported by the Institut Universitaire de France and the ERC-2014-CoG 646650 SingWave. M.H. kindly acknowledges the hospitality of  the Laboratoire J.A. Dieudonn\'e, Universit\'e de Nice Sophia-Antipolis, where part of this work has been carried out.

\subsection*{Notations} We denote the ball of radius $K$ by
$$
B_K(\Bbb R^{d}) : =\{x\in \Bbb R^{d}, |x|\leq K\}
$$ 
and set
$$
\Lambda : =y\pa_y.
$$ 
For any $\alpha\ge0$ we denote the external domain:
$$\Omega_{\alpha} : = \{x\in \RR^2\big| \ |x|\ge\alpha\}.
$$
When $\alpha = 1,$ we shall simply write $\Omega_1=\Omega.$ We define the weight 
$$
\rho_{\pm}(z) : =e^{\pm\frac{|z|^2}{2}}
$$ and the scalar product on $\Omega_{\sqrt{b}}$ by 
$$
\la f,g\ra_{b,\pm}=\int_{\sqrt{b}}^\infty fg\rho_\pm zdz
$$ 
and the associated norms 
$$
\|u\|_{L^2_{\rho,b,\pm}}=\left(\int_{\sqrt{b}}^\infty u^2\rho_\pm z dz\right)^{\frac 12}, \ \ \|u\|_{H^1_{\rho,b,\pm}}=\left(\|\pa_zu\|_{L^2_{\rho,b,\pm}}^2+\|u\|_{L^2_{\rho,b,\pm}}^2\right)^{\frac 12}
$$ 
and $$\|u\|_{H^2_{\rho,b,\pm}}=\left(\|\Delta u\|_{L^2_{\rho,b,\pm}}+\|\pa_zu\|_{L^2_{\rho,b,\pm}}^2+\|u\|_{L^2_{\rho,b,\pm}}^2\right)^{\frac 12}.$$
We define for $b>0$ the Hilbert space 
\be
\label{defhonrhob}
H^1_{\rho,b,\pm}=\{u:\Omega_{\sqrt b}\to\mathbb{R},\ \ \mbox{$u$ radial with}\ \ \|u\|_{H^1_{\rho,b,\pm}}<+\infty\ \ \mbox{and}\ \ u(\sqrt{b})=0\}
\ee 
equipped with the scalar product $\la \cdot,\cdot\ra_{b,\pm}$, and for $b=0$: 
$$
H^1_{\rho,0,\pm}=\{u:\mathbb{R}^2\to\mathbb R,\ \ \mbox{$u$ radial with}\ \ \|u\|_{H^1_{\rho,0,\pm}}<+\infty\}
$$ 
equipped\footnote{Observe that $u(\sqrt{b})=0$ ensures that $H^1_{\rho,b}$ can be isometrically embedded into $H^1_{\rho,0}$ by considering 
the map $u \mapsto u{\bf 1}_{z\geq \sqrt{b}}$.} with the scalar product $\la \cdot,\cdot\ra_0$. similarly, we define the renormalised quantities:
$$
( f,g)_{b,\pm}=\int_{1}^\infty fg\rho_{b,\pm} ydy, \ \ \rho_{b,\pm}=e^{\pm\frac{b|y|^2}{2}}
$$ 
and the norms 
$$
\|v\|_{b,\pm}=\left(\int_1^\infty v^2\rho_{b,\pm} y dy\right)^{\frac 12}, \ \ \|v\|_{H^1_{b,\pm}}=\|v\|_{b,\pm}+\|\pa_y v\|_{b,\pm}, \ \ \|v\|_{H^2_{b,\pm}}=\|v\|_{H^1_{b,\pm}}+\|\Delta v\|_{b,\pm},
$$ 
and the Hilbert space 
\be\label{E:HBDEF}
H^1_{b,\pm}=\{v:\Omega \to\mathbb{R}, \ \ \mbox{$v$ radial with}\ \ \|v\|_{H^1_{b,\pm}}<+\infty\ \ \mbox{and}\ \ v(1)=0\}.
\ee
We define the sequence of numbers:
\be
\label{edfalphaj}
\alpha_0:=0, \ \ \alpha_j:=\sum_{i=0}^{j-1}\frac{1}{j-i} \ \ \mbox{for} \ \ j\geq 1.
\ee
Throughout the paper, summations over $0\leq j\leq k-1$ are empty for $k=0$.


\subsection{Strategy of the proof}


Problem~\eqref{E:STEFANPOLAR} is invariant under an energy critical  scaling: if $(u,\lambda)$ solves~\eqref{E:STEFANPOLAR}, then so does
\begin{align}\label{E:SELFSIMILAR0}
u_\mu(t,r):=  u(\mu^2t, \mu r), \ \ \lambda_\mu(t) : = \frac{\lambda(t)}{\mu},
\end{align}
and the scaling leaves the Dirichlet energy\footnote{which is dissipated from \eqref{decaydirichlet} in the melting regime.} unchanged. We therefore renormalise the flow
\be
u(t,r)=v(s,y),\ \ \ \frac{ds}{dt}=\frac{1}{\lambda^2(t)}, \ \ \ y=\frac{x}{\lambda(t)}, \label{E:SELFSIMILAR}
\ee
and obtain the renormalised equation with a {\em fixed} boundary:
\be
\label{E:STEFANSS}
\left\{\begin{array}{lll}\pa_sv-\frac{\lambda_s}{\lambda}\Lambda v -\Delta v=0, \ \ y>1;\\
v(s,1)=0 \\
v_y(s,1)=-\frac{\lambda_s}{\lambda}
\end{array}\right.
\ee

\noindent{\bf step 1} Perturbative spectral analysis. We start with the description of melting regimes. Let in a first approximation 
$$b=-\lsl,\ \ 0<b\ll1,
$$ then   for any given $b$   the linear operator driving \eqref{E:STEFANSS} 
is 
$$
\H_b =-\Delta +b\Lambda, \ \ v(1)=0.
$$ 
Our first new input is to diagonalise this operator in a suitable Hilbert space. Indeed, $\H_b$ is self adjoint with respect to the measure $e^{-\frac{b|y|^2}{2}}dy$, and has up to a shift compact resolvent 
in the Hilbert space $H^1_{b,-}$  (see~\eqref{E:HBDEF})   
and hence discrete spectrum. However, the limit $b\to 0$ is singular in the sense that the limiting operator is the Laplacian with resonant eigenmodes and continuous spectrum. After a renormalisation 
 (i.e. rescaling
the space variable by a multiple of $\sqrt b$)  , 
we may equivalently consider 
$$
H_b=-\Delta +\Lambda, \ \ v(\sqrt{b})=0,
$$ 
which formally is a deformation of the standard harmonic oscillator, but with a   Dirichlet boundary   condition. 
We claim that in these renormalised variables, the operator $H_b$ can be diagonalised using a perturbative Lyapounov-Schmidt type argument in the weighted Hilbert space $H^1_{\rho,b,-}.$ 
The first $k$ eigenvalues are given for $0<b<b^*(k)$ by 
\be\label{E:EIGENVALUERATES}
\l_{b,k}=2k+\frac{2}{|\ln b|}+o\left(\frac{1}{|\ln b|}\right), \ \ k\in \Bbb N
\ee
with a corresponding asymptotic expansion of the eigenmodes, see Proposition \ref{lemmadiag}. 
 Unwinding the above renormalisation, we obtain a family of eigenvectors for the operator $\H_b$:  
\be\label{E:ER2}
\H_b\eta_{b,k}=b\l_{b,k}\eta_{b,k}, \ \ \eta_{b,k}(1)=0.
\ee

\noindent{\bf step 2} Approximate solution and modulation equations. 

\medskip
\noindent
\underline{The fundamental melting rate $k=0$.}
For pedagogical purposes, we first summarise our approach specialised to the the case $k=0$ - the derivation of the fundamental melting rate~\eqref{E:MELTINGRATE}. 
Let provisionally
$$
 b=-\lsl, \ \ 0<b\ll1.
$$ 
Following~\cite{RaphRod,RS1,MRR} we look for an approximate solution to \eqref{E:STEFANSS} in the form of a slowly modulated profile 
$$
v(s,y)=v_{b(s)}(y)
$$
 and hence~\eqref{E:STEFANSS} reduces to
$$
b_s\pa_bv_b+\H_bv_b=0, \ \ v_b(1)=0, \ \ \pa_yv_b(1)=b,
$$ 
where $\H_b$ is given above.
The basic observation is that for $b=0$, $\H_0$ has a bound state $\eta_0=\log y$ and a continuous spectrum, but for any $b>0$, 
by~\eqref{E:EIGENVALUERATES}--\eqref{E:ER2} with $k=0$, we are able to compute the bound state for a sufficiently small $0<b<b^*$: 
$$
\H_b\eta_b=b\l_{b,0}\eta_b, \ \ \l_{b,0}=\frac{2}{|\log b|}\left[1+o_{b\to0}(1)\right], \ \ \eta_b(1)=0, 
$$
This furnishes an approximate solution to \eqref{E:STEFANSS}: 
$$
v_b=b\eta_b
$$ 
with the accompanying  law 
$$
b_s+b^2\l_{b,0}=0.
$$ 
The integration of the leading order dynamical system 
\be
\label{dynsmiacal}
\left\{\begin{array}{ll}b_s+\frac{2b^2}{|\log b|}=0\\ \lsl+b=0, \ \ \frac{ds}{dt}=\frac{1}{\l^2}\end{array}\right.
\ee 
leads to the finite time melting with the asymptotics \eqref{E:MELTINGRATE}.

\medskip
\noindent
\underline{The general case $k\in\mathbb N$.}
The computation of the excited melting  regimes is technically more challenging. Due to the new degrees of freedom, 
it is too much to ask for the slowly varying variable $b$ to be approximately equal to $-\frac{\l_s}{\l}$. Instead, we now  fix   
\be
\label{eqbbb}
-\lsl=a, \ \ 0<a\ll 1
\ee 
and rewrite the renormalised flow~\eqref{E:STEFANSS} in the form 
$$
\pa_sv+\H_bv+(a-b)\Lambda v=0, \ \ v(s,1)=0, \ \ \pa_yv(s,1)= a
$$ 
for a parameter $b$ which will be chosen later. 
We look for an approximate solution of the form 
$$
Q(s,y)=\sum_{j=0}^kb_j(s)\eta_{b(s),j}(y),
$$ 
where we recall the definition of $\eta_{b,j}$~\eqref{E:EIGENVALUERATES}.
After projecting onto each eigenmode, we obtain the leading order dynamical system: 
\be\label{bsyshte}
\left\{\begin{array}{ll} (b_j)_s+bb_j\l_{b,k}+\frac{2(a-b)b_j}{|\log b|}+\frac{jb_j}{b}\Phi=0, \ \  0\leq j\leq k\\ \Phi=b_s+2b(a-b).\end{array}\right.
\ee 
This system is complemented by the renormalised nonlinear free boundary condition $\pa_yv(1)=a$ which forces the leading order relationship 
$$
a=\sum_{j=0}^kb_j\left(1+\frac{2\alpha_j}{|\log b|}\right)+{\rm lower \ order \  terms}
$$ 
with $(\alpha_j)_{0\leq j\leq k}$ given by \eqref{edfalphaj}. It remains to chose $b(a)$ which is done through the choice 
\be
\label{choosephi}
\Phi=0
\ee 
which will be motivated below. If $b_k$ dominates over the remaining $b_j$-s, $j=0,\dots,k-1$, the integration of the dynamical system 
\be
\label{estdyna}
\left\{\begin{array}{llll}(b_k)_s+\left(2k+\frac{2}{|\log b|}\right)bb_k+\frac{2(a-b)b_k}{|\log b|}=0\\ b_s+2b(a-b)=0\\ a=b_k\left(1+\frac{2\alpha_k}{|\log b|}\right)\\ \frac{ds}{dt}=\frac{1}{\l^2}, \ \ -\lsl=a\end{array}\right.
\ee 
leads to finite time melting with the rate \eqref{E:MELTINGRATE} for $k=0$ and \eqref{meltingk} for $k\geq 1$, which as solutions to \eqref{estdyna}  possess $k$ unstable   directions.\\

\noindent{\bf step 3} Energy estimate. We now construct a solution of the form 
$$
v(s,y)=\sum_{j=0}^kb_j(s)\eta_{b(s),j}(y)+\e(s,y)
$$ 
with 
\be
\label{orthointro}
 (\e,\eta_{b,j})=0, \ \ 1\leq j\leq k
 \ee 
 and close an energy estimate for the remainder $\e$. Here we note that for a solution to the linear problem
 $$
 \pa_s \e+\H_b\e+(a-b)\Lambda \e=0,
 $$ 
 the time dependance of $b(s)$ yields a modified energy identity 
 $$
 \frac 12\frac{d}{ds}\int \e^2e^{-\frac{b|y|^2}{2}}dy=-(\H_b\e,\e)+\underbrace{(b_s+2b(a-b))}_{=\Phi}\int |y|^2\e^2e^{-\frac{b|y|^2}{2}}dy
 $$  
 and hence the choice of $b$ \eqref{choosephi} to cancel the second term in the energy identity\footnote{which involves a different type of norm for which the spectral gap constant is not explicit and would thus lead to severe difficulties.}. 
To the leading order, thanks to the orthogonality conditions \eqref{orthointro} and the spectral gap estimate in weighted spaces associated to the knowledge of the kernel of $\H_b,$ we obtain the fundamental energy estimate: 
 $$
 \frac 12\frac{d}{ds}\int \e^2e^{-\frac{b|y|^2}{2}}dy=-(\H_b\e,\e)\leq -(2k+2)b\int \e^2e^{-\frac{b|y|^2}{2}}dy.
 $$ 
An integration-in-time will produce the necessary decay to close the bound on the dissipative part of the solution, i.e. $\e$. The situation is however more complicated since the problem cannot close 
at the level of $H^1$ Sobolev regularity, and instead forces us to take one more derivative. However, at the $H^2$ level the corresponding energy identity produces dangerous boundary terms. 
These  come with a particular structure and may beautifully enough be handled through time integration\footnote{This is reminiscent of similar essential issues in \cite{MR1}.}, see Proposition \ref{propenergy}. This part of the analysis is a replacement for the polynomially weighted estimates in \cite{RS1}, and uses in an optimal way the dissipative structure of the equation\footnote{whereas the energy method in~\cite{RS1,RaphRod,MRR} works in both the dissipative and dispersive settings, but barely uses the dissipative terms in the energy estimates.} and the nonlinear algebra induced by the free boundary.\\
 The construction of the manifold of initial data to avoid the codimension k instabilities of the system of ODE's \eqref{bsyshte} is finally performed using a now classical Brouwer type argument as in \cite{cote,collot,RSc,MRRsur}.\\
 
 \noindent{\bf step 4} Freezing. These regimes correspond to an expansion of the circular ice block, reflected in a change of sign in \eqref{E:STEFANSS}, \eqref{eqbbb}: 
 $$
 \lsl=A>0.
 $$ 
This causes a modification in the spectrum of the operator 
 $$
 H_B=-\Delta -B\Lamdba,\ \ v(\sqrt{B})=0, \ \ B>0,
 $$ 
 which admits the eigenvalues:
 $$
 \l_{B,k}=2k+2+\frac{2}{|\ln B|}+o\left(\frac{1}{|\ln B|}\right), \ \ k\in \Bbb N.
 $$ 
Computations parallel to the melting case lead to the dynamical system
 $$
 \left\{\begin{array}{llll}
 (B_k)_s+\left(2k+\frac{2}{|\log b|}\right)BB_k+\frac{2(B-A)B_k}{|\log B|}=0\\ B_s+2B(B-A)=0\\ A=B_k\left(1+\frac{2\alpha_k}{|\log B|}\right)\\ \frac{ds}{dt}=\frac{1}{\l^2}, \ \ \lsl=A
 \end{array}\right.
$$
which after  integration-in-time   produces the global-in-time freezing regimes \eqref{E:MELTINGRATEbis}. 
The energy method is run along similar lines for very well localised initial data,  since the energy spaces are naturally weighted with   the confining measure $e^{\frac{By^2}{2}}\,dy$, $B>0$. 
The analysis is slightly simpler thanks to the better decay of the $B_k$ mode which induces a stronger decoupling from the remaining modes.

\subsection{Plan of the paper}
 In section \ref{sectiongap}, we use a Lyapounov-Schmidt like argument to compute the bound state of $\H_b$ and the associated spectral gap in weighted norms in both the melting regime 
$\frac{\l_s}{\l}<0$, Lemma \ref{lemmarenormalized}, and the freezing regime $\frac{\l_s}{\l}>0$, Lemma \ref{lemmarenormalizedbis}. 
 In section \ref{sectionnonlin}, we construct the melting regimes. We introduce the nonlinear decomposition of the flow, section \ref{sectinogoingeo}, compute the modulation equations using the free boundary  geometry, sections \ref{beivbevbeo} and  \ref{S:MODULATION}, and close the key energy bound, Proposition \ref{pr:bootstrap}. The proof of Theorem \ref{T:MAIN} now follows from a classical shooting argument \`a la Brouwer detailed in section \ref{S:PROOFMAINTHEOREM}. In section \ref{sectionnonlinbis}, we {\em deliberately} follow a parallel plan for the construction of the global-in-time freezing regimes.


\section{Spectral analysis in weighted spaces}
\label{sectiongap}

We compute in this section the $k$ first eigenvalues of the linear operator $$
\H_{b,\pm}=-\Delta \mp  b\Lambda\ \ \mbox{with boundary condition}\ \ u(1)=0
$$ 
and the associated spectral gap estimate in the perturbative regime $0<b<b^*(k)$,  $b^*(k)\ll1$  . The proof relies on a Lyapunov-Schmidt type bifurcation argument at $b=0$ performed in weighted Sobolev spaces.\\

To ease notations, we fix 
\be\label{E:CONVENTION}
\pm =-, \ \ \H_{b}=\H_{b,-}, \ \ \rho=\rho_-=e^{-\frac{|z|^2}{2}}, \ \ b>0,  \ \ \mbox{in sections \ref{sectiontwo}, \ref{sectionthree}, \ref{sectionfour}}
\ee
and we omit the $-$ subscript for the sake of simplicity. The case $b<0$ with the $\rho_+$ weight and the operator $\H_{b,+}$ is addressed in section \ref{sectionfive}.


\subsection{Coercivity for the harmonic oscillator}
\label{sectionone}

We recall in this section without proof the classical estimates for the harmonic oscillator.\\

\noindent{\bf Melting case}: consider $-\Delta +\Lambda$ on $(H^1_{\rho_-,0},\la \cdot,\cdot\ra_0)$. This operator is self adjoint for the $\la \cdot,\cdot \ra_{0,-}$ scalar product as is easily seen by writing
\begin{align}\label{E:HBFORMULA}
-\Delta +\Lambda= -\frac{1}{\rho_- z}\pa_z\left(\rho_- z\pa_z\right).
\end{align}
The normalised Laguerre polynomials~\cite{Sz}
\be
\label{estaimti}
L_k(x)=\frac{e^x}{k!}\frac{d^k}{dx^k}(e^{-x}x^k),\ \ k\in \Bbb N,
\ee 
solve $$XL_k''+(1-X)L'_k+kL_k=0$$ and hence 
\be
\label{cneneoe}P_k(r)=L_k\left(\frac{r^2}{2}\right)
\ee diagonalises the harmonic oscillator:
\be
\label{orthooncf}
(-\Delta +\Lambda)P_k=2k P_k, \ \ \la P_n,P_m\ra_{\rho,0}=1.
\ee
Moreover, they satisfy the double normaliaation condition:
$$
\int_{0}^{+\infty}L_n L_m e^{-x}dx=\delta_{nm}, \ \ L_n(0)=1
$$
or equivalently 
\be
\label{normalization}
\la P_j,P_k\ra_{0,-}=\delta_{jk}, \ \ P_k(0)=1,
\ee
and the classical induction formula 
\be
\label{induction}
\Lambda P_k=2k(P_k-P_{k-1}),  \ \ k\geq 1.
\ee
An extensive overview of Laguerre polynomials can be found in~\cite{Sz}.
We recall the standard sharp Poincar\'e inequality for the harmonic oscillator: $\forall u\in H^1_{\rho_-,0}$ with 
$$
\la u,P_j\ra_{0,-}=0, \ \ 0\leq j\leq k,
$$ 
there holds:
\be
\label{estimationcoreor} 
\|\pa_zu\|_{L^2_{\rho_-,0}}\geq (2k+2)\|u\|_{L^2_{\rho_-,0}}^2.
\ee

\noindent{\bf Freezing case}: Consider $-\Delta -\Lambda$ on $(H^1_{\rho_+,0},\la \cdot,\cdot\ra_0)$. Then the map
$$
\begin{array}{ll} L^2_{0,+}\to L^2_{0,-}\\ v\mapsto w=e^{\frac{|z|^2}{2}}v\end{array}
$$ is an isometry and integrating by parts: 
\be
\label{comptuationintergation}
\int |\nabla v|^2e^{\frac{|z|^2}{2}}\rho_{+}dz=\int |\nabla w|^2\rho_{-}dz+2B\int |w|^2\rho_{B,-}dz
\ee
or equivalently:
\be
\label{commutationrelation}
(-\Delta -\Lambda)v=\left(-\Delta w+\Lambda w+2 w\right) e^{-\frac{|z|^2}{2}}.
\ee
Hence the family of eigenvectors $$\hat{P}_j=P_je^{-\frac{|z|^2}{2}}$$ diagonalises the operator, and there holds the spectral gap estimate:
 $\forall u\in H^1_{\rho_+,0}$ with 
$$
\la u,\hat{P}_j\ra_{0,+}=0, \ \ 0\leq j\leq k,
$$ 
there holds:
\be
\label{estimationcoreorbis} 
\|\pa_zu\|_{L^2_{\rho_+,0}}\geq (2k+4)\|u\|_{L^2_{\rho_+,0}}^2.
\ee

\begin{remark} The shift $2=d$ in \eqref{estimationcoreorbis}, where $d$ stands for the dimension of the ambient space,  will be crucial for the computation of the freezing rates.
\end{remark}


\subsection{Almost invertibility of the renormalised operator}
\label{sectiontwo}

Recall the notational convention~\eqref{E:CONVENTION}. We consider the renormalised operator 
$$
H_b=-\Delta+\Lambda\ \ \mbox{with boundary condition}\ \ u(\sqrt{b})=0
$$ 
in the radial sector and for $0<b<b^*$ small enough. Thanks to the boundary condition $u(\sqrt{b})=0$ and \eqref{E:HBFORMULA}, $H_b$ is self adjoint for the scalar product $\la \cdot,\cdot \ra_b$ on the Hilbert space $H^1_{\rho,b}$ given by \eqref{defhonrhob}. We claim a near invertibility property of $H_b$ which is the starting point of the Lyapunov Schmidt argument.

Before stating the lemma, we introduce some notations. We first fix a frequency size 
$$
K\in \Bbb N
$$ 
and a sufficiently small parameter $$0<b<b^*(K)\ll1 .
$$ 
Universal constants in the sequel may depend on $K$, but are uniform in $b\in(0,b^*(K))$.
 We define the Gramm matrix 
 \be
\label{grammmatric}
M_{b,k}=(\la P_i,P_j\ra_b)_{0\leq i,j\leq k}, \ \ k\le K.
\ee 
Observe from \eqref{orthooncf} that 
\be
\label{almostorgho}
\la P_i,P_j\ra_b=\la P_i,P_j\ra_0+O(\int_{|z|\leq \sqrt{b}}zdz)=\delta_{ij}+O(b)
\ee and hence 
\be
\label{inviennoe}
M_{b,k}=Id+O(b)\ \ \mbox{is invertible}
\ee for $0\leq b<b^*(k)$ small enough. We introduce the vector: 
$$
\matchal P_k=(P_j)_{0\leq j\leq k}
$$ 
and consider the function 
\be
\label{cnekvnenvo}
m_k(b,z)=(M_{b,k}^{-1}\mathcal P_k(\sqrt{b}),\mathcal P_k(z)),
\ee 
which by~\eqref{estaimti} and~\eqref{inviennoe} satisfies:
\be
\label{estmnbk}
m_k(b,z)=\sum_{j=0}^k\left[1+O(b)\right]P_j(z).
\ee
We now claim:

\begin{lemma}[Near inversion of $H_b-2k$]
\label{lemmanerinversion}
Let $k\in \Bbb N$ and $0<b<b^*(k)$ small enough. Then for all $f\in L^2_{\rho,b}$ with 
 \be
 \label{cneneonevno}
 \la f,P_j\ra_b=0, \ \  0\leq j\leq k,
 \ee there is a unique solution $u\in H^1_{\rho,b}$ to:
\be
\label{cneoneoenveo}
\left\{\begin{array}{ll}\tilde{H}_{b,k}u=f \ \ \mbox{where}\ \ \tilde{H}_{b,k}u=(H_b-2k)u-\sqrt{b}m_k(b,z)\pa_zu(\sqrt{b})\\
\la u,P_j\ra_b=0, \ \ 0\leq j\leq k.
\end{array}\right.
\ee 
Moreover, 
\be
\label{inversionestimate}
\|\Delta u\|_{L^2_{\rho,b}}+\|\pa_zu\|_{L^2_{\rho,b}}+\|\Lambda u\|_{L^2_{\rho,b}}+\|u\|_{L^2_{\rho,b}} + |\ln b| \big|\sqrt{b}\pa_z u(\sqrt{b})\big|
\lesssim \|f\|_{L^2_{\rho,b}}.
\ee
\end{lemma}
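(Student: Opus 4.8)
The plan is to solve \eqref{cneoneoenveo} by the standard Lax--Milgram / Fredholm approach on the orthogonal complement $V_k := \{u \in H^1_{\rho,b}: \la u,P_j\ra_b = 0, \ 0\le j\le k\}$, exploiting that the Laguerre polynomials $P_0,\dots,P_k$ span the near-kernel of $H_b - 2k$ so that the shifted operator is coercive on $V_k$. The key algebraic observation is why the zeroth-order boundary correction $-\sqrt{b}\,m_k(b,z)\pa_z u(\sqrt b)$ appears: by \eqref{estmnbk}, $m_k(b,\cdot) = \sum_{j=0}^k (1+O(b))P_j$, and the choice of the coefficients through the Gramm matrix $M_{b,k}$ is designed precisely so that the bilinear form associated with $\tilde H_{b,k}$ is symmetric on $H^1_{\rho,b}$ \emph{despite} the Dirichlet boundary condition at $z=\sqrt b$ producing a boundary term when one integrates $(H_b - 2k)u$ against $P_j$. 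Concretely, integrating by parts using \eqref{E:HBFORMULA},
\[
\la (H_b-2k)u, v\ra_b = \int_{\sqrt b}^\infty \pa_z u\,\pa_z v\,\rho z\,dz - 2k\la u,v\ra_b + \sqrt b\,\rho(\sqrt b)\,\pa_z u(\sqrt b)\,v(\sqrt b),
\]
and the last boundary term is exactly cancelled (after testing against elements of $V_k$, up to the $m_k$ correction) by the extra term in $\tilde H_{b,k}$; this is what makes $\tilde H_{b,k}$ self-adjoint and lets one run a variational argument.

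The steps, in order, would be: \textbf{(1)} Set up the bilinear form $B_b(u,v) = \int_{\sqrt b}^\infty \pa_z u\,\pa_z v\,\rho z\,dz - 2k\int_{\sqrt b}^\infty uv\,\rho z\,dz$ plus the boundary correction, restricted to $V_k \times V_k$, and verify it is bounded on $H^1_{\rho,b}$. \textbf{(2)} Prove coercivity on $V_k$: this is where the spectral gap enters. Using the isometric embedding $H^1_{\rho,b}\hookrightarrow H^1_{\rho,0}$ of the footnote (extend by zero on $z<\sqrt b$) together with the sharp Poincaré inequality \eqref{estimationcoreor}, plus the fact from \eqref{almostorgho}--\eqref{inviennoe} that $\la u,P_j\ra_b = 0$ for $0\le j\le k$ forces $\la \tilde u, P_j\ra_0 = O(b)\|u\|$ for the extension $\tilde u$, one gets $\|\pa_z u\|^2_{L^2_{\rho,b}} \ge (2k+2-O(b))\|u\|^2_{L^2_{\rho,b}}$, hence $B_b(u,u)\ge (1-O(b))\|\pa_z u\|^2_{L^2_{\rho,b}}\gtrsim \|u\|^2_{H^1_{\rho,b}}$ after controlling the boundary correction term (which is lower order, being $O(\sqrt b\,|\pa_z u(\sqrt b)|)\cdot\|u\|$ and absorbable via a trace estimate — see below). \textbf{(3)} Apply Lax--Milgram to get a unique weak solution $u\in V_k$ with $\|u\|_{H^1_{\rho,b}}\lesssim \|f\|_{L^2_{\rho,b}}$. \textbf{(4)} Elliptic regularity: bootstrap from the equation $\Delta u = \Lambda u - 2ku - \sqrt b\,m_k\,\pa_z u(\sqrt b) - f$ to get $\|\Delta u\|_{L^2_{\rho,b}} + \|\Lambda u\|_{L^2_{\rho,b}}\lesssim \|f\|_{L^2_{\rho,b}}$, using that $\|\Lambda u\|_{L^2_{\rho,b}}$ is controlled by $\|\pa_z u\|$ combined with the Gaussian weight (integration by parts turns $\int(\Lambda u)^2\rho$ into $\int|\pa_z u|^2\rho$ plus good terms). \textbf{(5)} Recover the trace bound $|\ln b|\,|\sqrt b\,\pa_z u(\sqrt b)|\lesssim \|f\|_{L^2_{\rho,b}}$: test the equation against $m_k(b,\cdot)$ itself (or a suitable cutoff of it); since $m_k$ is, up to $O(b)$, a combination of the $P_j$'s annihilating $f$ and lying in the near-kernel of $H_b - 2k$, the bulk terms are small and one is left with $\sqrt b\,\pa_z u(\sqrt b)\cdot\la m_k,m_k\ra_b$ on one side; the logarithmic gain $|\ln b|$ comes from the precise size $\int_{\sqrt b}^\infty P_j^2 \rho z\,dz$ versus the boundary contribution, i.e. from the near-resonance $\la P_j, P_k\ra_b - \delta_{jk} = O(b)$ being much smaller than the naive $O(\sqrt b)$ trace — this sharp matching is exactly the mechanism producing the $\frac{2}{|\ln b|}$ correction in the eigenvalues later.

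The main obstacle I expect is \textbf{step (5)}, the sharp logarithmically-weighted trace estimate. The coercivity and Lax--Milgram parts are routine once the self-adjointness is understood, but squeezing out the factor $|\ln b|$ in front of $\sqrt b\,\pa_z u(\sqrt b)$ requires a delicate analysis near $z = \sqrt b$: one must use that radial harmonic functions behave logarithmically in $2d$, so the natural quantity $\pa_z u(\sqrt b)$ is tied to $u$ through a $\log$, and the correct test function is not $u$ but something adapted to this logarithmic profile. A secondary subtlety is making all the $O(b)$ error estimates genuinely uniform in $b\in(0,b^*(k))$ with constants depending only on $k$ — in particular controlling $\|P_j\|$ and $\pa_z P_j(\sqrt b)$ on the domain $\Omega_{\sqrt b}$ uniformly, which follows from explicit Laguerre asymptotics but needs to be stated carefully. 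I would handle the trace estimate by introducing the explicit comparison function $\log z$ (the radial harmonic profile), writing $u$ near the boundary as $c\log(z/\sqrt b) + (\text{higher order})$ with $c \sim \sqrt b\,\pa_z u(\sqrt b)$, and pairing against it to expose the $|\ln b| = |\ln \sqrt b|\cdot 2$ factor.
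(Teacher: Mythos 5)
Your proposal follows essentially the same route as the paper: a constrained variational/Lax--Milgram argument on the orthogonal complement of $\mathrm{Span}(P_0,\dots,P_k)$ with coercivity from the Poincar\'e inequality \eqref{estimationcoreor}, followed by the key step of pairing the equation against the radial harmonic profile $\ln z$ (for which $H_b\ln z=1$) to extract the $|\ln b|$ gain in the trace bound, and a Pohozaev-type integration by parts for $\|\Lambda u\|_{L^2_{\rho,b}}$. The only cosmetic difference is that the paper obtains the $m_k$ correction a posteriori as the Lagrange multiplier of the constrained minimization rather than building it into the bilinear form, and note that when testing against $v\in V_k$ both the boundary term and the $m_k$ term vanish identically (since $v(\sqrt b)=0$ and $m_k\in\mathrm{Span}(P_j)$), so nothing needs to be absorbed in the coercivity step.
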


\begin{proof}[Proof of Lemma \ref{lemmanerinversion}] We use a Lax Milgram type argument in $H^1_{\rho,b}$. Let $k\in \Bbb N$ and define the constraint set
\[
\mathcal{C} : = \{u \in H^1_{\rho,b} \, \big| \la u,P_j\ra_b=0, \ \ 0\leq j\leq k  \}.
\]
We consider the problem of minimising the functional $\mathcal{F}:H^1_{\rho,b}\to\mathbb{R}:$
$$\mathcal{F}(u)=\int_{z\geq \sqrt{b}}|\pa_zu|^2\rho zdz-2k\int_{z\geq \sqrt{b}}u^2\rho zdz-\la f,u\ra_b$$
over the constraint set $u\in \mathcal{C}.$  Let 
$$
I_b = \inf_{u\in \mathcal{C}} \mathcal{F}(u).
$$
We recall from the standard Poincar\'e inequality for the harmonic oscillator \eqref{estimationcoreor} and the compactness estimate \eqref{weightedesimate} that for spherically symmetric $v\in H^1_{\rho,0}$ with $\la v,P_j\ra_0=0$, $0\leq j\leq k$, there holds:
\be
\label{weghtedpoinacre}
 \int |\pa_zv|^2\rho z dz-2k\int v^2\rho zdz\gtrsim \int(1+|z|^2)|v|^2\rho zdz.
\ee
 Applying this to $v(z)=u{\bf 1}_{z\geq \sqrt{b}}\in H^1_{\rho,0}$, we conclude that $I_b>-\infty$ and that any minimising sequence $u_n$ is uniformly bounded in $H^1_{\rho,b}.$ 
Therefore, up to a subsequence, using the compact Sobolev embedding $H^1_{\rho,b}\hookrightarrow L^2_{\rho,b}$ and \eqref{weghtedpoinacre}, we conclude: 
$$u_n\rightharpoonup u \ \ \mbox{in}\ \ H^1_{\rho,b}, \ \ u_n\to u\ \ \mbox{in}\ \ L^2_{\rho,b}.$$ 
In particular, using the local compactness of the embedding $H^1(\Bbb R)\subset L^\infty(\Bbb R)$, this implies 
$$u(\sqrt{b})=0, \ \ \la u,P_j\ra _b=0, \ \ 0\leq j\leq k$$ 
and $u$ is a minimiser of $\mathcal{F}$ over $\mathcal{C}.$ 
By a standard variational argument, there exist  Lagrange multipliers $\l_j\in\mathbb{R}$ such that 
\be
\label{cneokneonove}
H_bu=f-\sum_{j=0}^k \l_jP_j.
\ee
Hence from standard regularity argument, $u\in H^2_{\rm loc}(r\geq \sqrt{b})$. We may then take the scalar product with $P_i$ and compute: 
\bee
\la H_bu,P_i\ra_b&=&-\int_{z\geq \sqrt{b}}\frac{1}{z \rho}\pa_z(z\rho \pa_zu)P_i\rho zdz=\sqrt{b}e^{-b/2}P_j(\sqrt{b})\pa_zu(\sqrt{b})+\la u,H_bP_i\ra_b\\
& = & \sqrt{b}e^{-b/2}P_i(\sqrt{b})\pa_zu(\sqrt{b})+2i\la u,P_i\ra_b=\sqrt{b}e^{-b/2}P_i(\sqrt{b})\pa_zu(\sqrt{b}).
\eee
We conclude from \eqref{cneokneonove}, \eqref{cneneonevno}, \eqref{grammmatric} that
$$\sqrt{b}e^{-b/2}\pa_zu(\sqrt{b})\left(P_i(\sqrt{b})\right)_{0\leq i\leq k}=-M_{b,k}(\l_i)_{0\leq i\le k}$$ or equivalently $$(\l_i)_{0\leq i\le k}=-\sqrt{b}e^{-b/2}\pa_zu(\sqrt{b})M_{b,k}^{-1}\mathcal P_k(\sqrt{b})$$ and hence $u$ solves \eqref{cneoneoenveo} from the definition \eqref{cnekvnenvo} and \eqref{cneokneonove}. We now observe that $u\in \mathcal C$ ensures $$\la m_{k}(b,\cdot),u\ra_b=0$$ and hence 
taking the scalar product of~\eqref{cneoneoenveo} with $u$, using~\eqref{weghtedpoinacre} with $v=u{\bf 1}_{z\geq \sqrt{b}}$, and the identity $\la H_b u, u\ra_b = \|\pa_z u\|_{L^2_{\rho,b}}^2$ yields: 
$$
 \|u\|_{L^2_{\rho,b}}^2\lesssim \|\pa_zu\|_{L^2_{\rho,b}}^2-2k\|u\|_{L^2_{\rho,b}}^2  =\la f,u\ra_b$$
 and hence
\be
\label{poeuetepot}
 \|u\|_{L^2_{\rho,b}} +\|\pa_zu\|_{L^2_{\rho,b}}  \lesssim \|f\|_{L^2_{\rho,b}}.
 \ee
 We now integrate by parts to compute:
 \be
 \label{pofeueutoiurte}
 \la H_b u,\ln z\ra_b = \la u,1 \ra_b  -\frac 12 |\ln b| \sqrt{b}e^{-b/2}\pa_zu(\sqrt{b}).
 \ee
 We estimate from \eqref{estmnbk}
 \be\label{E:MBKBOUND}
 \|m_{k}(b,\cdot)\|_{L^2_{\rho,b}}\lesssim 1
 \ee
and hence \eqref{pofeueutoiurte}, \eqref{cneoneoenveo} ensure $$
 \big|\sqrt{b}\pa_zu(\sqrt{b}) \big|  \lesssim \frac 1{|\ln b|} \left(\|H_b u\|_{L^2_{\rho,b}} + \|u\|_{L^2_{\rho,b}}\right)  \lesssim  \frac 1{ |\ln b|} \left(\|f\|_{L^2_{\rho,b}} + \big|\sqrt{b}\pa_z u(\sqrt{b})|\right)
 $$ 
which together with \eqref{poeuetepot} yields 
$$
\|\pa_zu\|_{L^2_{\rho,b}}+\|u\|_{L^2_{\rho,b}} + |\ln b| \big|\sqrt{b}\pa_z u(\sqrt{b})\big|
\lesssim \|f\|_{L^2_{\rho,b}}.
$$ 
We now use the equation \eqref{cneoneoenveo} again and teh bound~\eqref{E:MBKBOUND} which yield
$$
\|H_bu\|_{L^2}\lesssim \|f\|_{L^2_{\rho,b}}.
$$ 
We then use a Pohozhaev type integration by parts to compute:
\bee
\int_{z\geq \sqrt{b}}z\pa_z u(H_bu)z\rho dz&=&-\int_{z\geq \sqrt{b}}\pa_z(z\rho\pa_zu)z\rho\pa_zu\frac{dz}{\rho}\\
& = & -\left[\frac12 z^2\rho^2(\pa_zu)^2\frac{1}\rho\right]_{\sqrt{b}}^{+\infty}-\frac 12\int_{z\geq \sqrt{b}} z^2\rho^2(\pa_zu)^2\frac{\pa_z\rho}{\rho^2}dz\\
&= & \frac12be^{-b/2}|\pa_zu(\sqrt{b})|^2 +\frac 12\int_{z\geq \sqrt{b}}(\Lambda u)^2z\rho dz\\ 
& = & O\left(\|f\|^2_{L^2_{\rho,b}}\right)+ \frac 12\int_{z\geq \sqrt{b}}(\Lambda u)^2z\rho dz.
\eee
which after a simple application of the Cauchy-Schwarz inequality yields $\|\Lambda u\|_{L^2_{\rho,b}} \lesssim \|f\|_{L^2_{\rho,b}}$ 
and \eqref{inversionestimate} is proven.
\end{proof}


\subsection{Partial diagonalisation of $H_b$}
\label{sectionthree}


We are now in position to diagonalise $H_b$ for frequencies $0\leq k\leq K$ under the smallness $0<b<b^*(K)$. 

\begin{proposition}[Eigenvalues for $H_b$]
\label{lemmadiag}
Let $K\in \Bbb N$. Then for all $0<b<b^*(K)$ small enough, $H_b$ admits a sequence of eigenvalues
\be\label{E:EIGENVALUEEQUATION}
H_b\psi_{b,k}=\l_{b,k}\psi_{b,k},\ \ \psi_{b,k}\in H^1_{\rho,b}, \ \ 0\leq k \leq K,
\ee
such that for each $0\le k\le K,$ the following properties hold:\\
\noindent{\em (i) Expansion of eigenvalues}: there holds the expansion of the eigenvalue 
\be
\label{estimatelowe}
\l_{b,k}=2k+\frac{2}{|\ln b|}+\tilde{\l}_{b,k}, \ \ \tilde{\l}_{b,k}=O\left(\frac{1}{|\ln b|^2}\right), \ \ |\pa_b\l_{b,k}|\lesssim \frac{1}{b|\ln b|^2}.
\ee
\noindent{\em (ii) Expansion of eigenvectors}: there holds the expansion
\be
\label{deftk}
\left\{\begin{array}{ll}\psi_{b,k}=T_{b,k}(z)+\psit_{b,k}(z)\\
T_{b,k}(z)=P_k(z)\left[\ln z-\ln(\sqrt{b})\right]+\sum_{j=0}^{k-1}\mu_{b,jk}P_j(z)\left[\ln z-\ln(\sqrt{b})\right]
\end{array}\right.
\ee
with
\be
\label{estmuj}
\mu_{b,jk}= \frac{2}{(k-j)|\ln b|}+\tilde{\mu}_{b,jk}, \ \ \tilde{\mu}_{b,jk}=O\left(\frac{1}{|\ln b|^2}\right), \ \ b\pa_b\tilde{\mu}_{b,jk}=O\left(\frac{1}{|\ln b|^3}\right)
\ee
and
\begin{align}
& \|\Delta \psit_{b,k}\|_{L^2_{\rho,b}}+\|z^2\psit_{b,k}\|_{L^2_{\rho,b}}+\|\tilde{\psi}_{b,k}\|_{H^1_{\rho,b}}+\|\Lambda \psit_{b,k}\|_{L^2_{\rho,b}}+\|b\pa_b\tilde{\psi}_{b,k}\|_{L^2_{\rho,b}} \notag \\
 & \ \ \ \ +|\ln b| \big|\sqrt{b}\pa_z\tilde{\psi}_{b,k}(\sqrt{b})\big|  \lesssim \frac{1}{|\ln b|}, \label{calculeignvector}
\end{align}
\be
\label{estimatedervaitveinb}
\|\pa_b\psi_{b,k}\|_{H^1_{\rho,b}}+|\ln b||\sqrt{b}\pa_b\pa_z\psi_{b,k}(\sqrt{b})|\lesssim \frac{1}{b},
\ee
and
\be
\label{estimatemomentum}
\|\Lambda \psi_{b,k}\|_{L^2_{\rho,b}}+ \|z^2\psi_{b,k}\|_{L^2_{\rho,b}}\lesssim  |\ln b|.
\ee
\noindent{\em (iii) Spectral gap estimate}: let $u\in H^1_{\rho,b}$ with $$\la u,\psi_{b,j}\ra_b=0, \ \ 0\leq j\leq k,$$ then
\be
\label{spcetralgap}
\|\pa_zu\|^2_{L^2_{\rho,b}}\geq \left[2k+2+O\left(\frac1{|\ln b|}\right)\right]\|u\|^2_{L^2_{\rho,b}}.
\ee 
Moreover, 
\be
\label{neneone}
\l_{b,0}=\inf_{u\in H^1_{\rho,b}}\frac{\|\pa_zu\|^2_{L^2_{\rho,b}}}{\|u\|_{L^2_{\rho,b}}^2}
\ee and 
\be
\label{postiivitypsib}
\psi_{b,0}(z)>0\ \ \mbox{for}\ \ z>\sqrt{b}.
\ee
\noindent{\em (iv) Further identities}: there hold the algebraic identities for $0\leq j\leq k$:
\be
\label{scalirpeoduc}
\la \psi_{b,k},\psi_{b,k}\ra_b=\frac{|\ln b|^2}{4}\left[1+O\left(\frac1{|\ln b|}\right)\right].
\ee
\be
\label{idenitnirninebis}
\frac{\la b\pa_b\psi_{b,j},\psi_{b,k}\ra_b}{\la \psi_{b,k},\psi_{b,k}\ra_b}=-\frac{1}{|\ln b|}\delta_{jk}+O\left(\frac{1}{|\ln b|^2}\right),
\ee
Moreover,
\bea
\label{exactcomputation}
&&\|\Lambda \psi_{b,k}-2k(\psi_{b,k}-\psi_{b,k-1})\|_{H^2_{\rho,b}}\lesssim 1,\\
&& \|b\pa_b\psi_{b,k}+\frac{1}{|\ln b|}\psi_{b,k}\|_{H^2_{\rho,b}}\lesssim \frac{1}{|\ln b|},\label{vnbevbnenonev}\\
&&\|z^2 \psi_{b,k}+(2k+2)\psi_{b,k+1}-(4k+2)\psi_{b,k}+2k\psi_{b,k-1}\|_{H^2_{\rho,b}}\lesssim 1.
\label{vnbevbnenonevbis}
\eea
\end{proposition}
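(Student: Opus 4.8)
\medskip
\noindent\textbf{Proof strategy (proposal).} The plan is a Lyapunov--Schmidt bifurcation at $b=0$, with Lemma \ref{lemmanerinversion} as the linear tool. Writing $g(z):=\ln z-\ln\sqrt{b}$ (which vanishes at $z=\sqrt{b}$), I look for the eigenpair in the form $\psi_{b,k}=T_{b,k}+\tilde{\psi}_{b,k}$ with $T_{b,k}$ the explicit logarithmic profile \eqref{deftk}, $\langle\tilde{\psi}_{b,k},P_j\rangle_b=0$ for $0\le j\le k$, and with $\lambda_{b,k}$, $(\mu_{b,jk})_{0\le j<k}$ and $\tilde{\psi}_{b,k}$ unknown; since $g(\sqrt{b})=0$ and every element of $H^1_{\rho,b}$ vanishes at $\sqrt{b}$, the Dirichlet condition is automatic. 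The engine of the computation is the elementary identity coming from $-\Delta g=0$, $\Lambda g=1$, \eqref{orthooncf} and the Laguerre relation $L_k'=-\sum_{j<k}L_j$ (equivalently $P_k'/z=L_k'(z^2/2)$),
\be
(H_b-2k)(P_kg)=P_k-\frac{2P_k'}{z}=P_k+2\sum_{j=0}^{k-1}P_j,\qquad (H_b-2j)(P_jg)=P_j+2\sum_{i=0}^{j-1}P_i,
\ee
so that $(H_b-2k)(P_jg)=P_j+2\sum_{i<j}P_i-2(k-j)P_jg$ for $j<k$. Inserting the ansatz in $(H_b-\lambda_{b,k})\psi_{b,k}=0$, projecting onto the $\langle\cdot,\cdot\rangle_b$-orthogonal complement of $\mathrm{span}\{P_0,\dots,P_k\}$ (call this projection $\Pi^\perp$) and inverting with the solution operator of Lemma \ref{lemmanerinversion} (using that $\sqrt{b}\,m_k(b,\cdot)\partial_z\tilde{\psi}_{b,k}(\sqrt{b})\in\mathrm{span}\{P_0,\dots,P_k\}$ by \eqref{estmnbk}), one obtains the remainder equation $\tilde{H}_{b,k}\tilde{\psi}_{b,k}=\Pi^\perp\big[(\lambda_{b,k}-2k)(T_{b,k}+\tilde{\psi}_{b,k})-(H_b-2k)T_{b,k}\big]$.

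\medskip
\noindent Next I would solve this by contraction. Since $(H_b-2k)(P_kg)$ has no $\ln z$-part and the $\ln z$-part of $(H_b-2k)(P_jg)$ is $-2(k-j)P_jg$ weighted by $\mu_{b,jk}$, while $\Pi^\perp(P_jg)=\Pi^\perp(P_j\ln z)$ is an $O(1)$ element of $L^2_{\rho,b}$ (the constant shift feeds only $\mathrm{span}\{P_j\}$), the bracket is $O(1/|\ln b|)$ in $L^2_{\rho,b}$ as long as $|\lambda_{b,k}-2k|+\sum_j|\mu_{b,jk}|\lesssim 1/|\ln b|$. For each frozen $(\lambda_{b,k},\mu)$ in this range, Lemma \ref{lemmanerinversion} plus the smallness of $\lambda_{b,k}-2k$ give, by a fixed-point argument, a unique $\tilde{\psi}_{b,k}=\tilde{\psi}_{b,k}[\lambda_{b,k},\mu]$ satisfying all the norm bounds of \eqref{calculeignvector}, smoothly in the parameters; differentiating the construction in $b$---where $\partial_b$ falling on $-\ln\sqrt{b}$ inside $T_{b,k}$ produces a $1/b$---gives the $b$-derivative bounds of \eqref{calculeignvector}--\eqref{estimatedervaitveinb}, and \eqref{estimatemomentum} follows from $\psi_{b,k}=P_kg+O(1/|\ln b|)$ together with $\|\Lambda(P_kg)\|_{L^2_{\rho,b}}+\|z^2P_kg\|_{L^2_{\rho,b}}\lesssim|\ln b|$.

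\medskip
\noindent It remains to impose the $k+1$ reduced equations $G_i:=\langle(H_b-\lambda_{b,k})(T_{b,k}+\tilde{\psi}_{b,k}),P_i\rangle_b=0$, $0\le i\le k$. Integrating by parts and using $\langle\tilde{\psi}_{b,k},P_i\rangle_b=0$, $(-\Delta+\Lambda)P_i=2iP_i$, $\langle P_i,P_j\rangle_b=\delta_{ij}+O(b)$, $\langle P_jg,P_i\rangle_b=\tfrac{|\ln b|}{2}\delta_{ij}+O(1)$ and $|\sqrt{b}\,\partial_z\tilde{\psi}_{b,k}(\sqrt{b})|\lesssim 1/|\ln b|^2$, one finds $G_k=1-\tfrac12(\lambda_{b,k}-2k)|\ln b|+O(1/|\ln b|)$ and $G_i=2-(k-i)\mu_{b,ik}|\ln b|+O(1/|\ln b|)$ for $i<k$; the leading balances force \eqref{estimatelowe} and \eqref{estmuj}, and the Jacobian of $(G_0,\dots,G_k)$ with respect to $(\mu_{b,0k},\dots,\mu_{b,k-1,k},\lambda_{b,k})$ at these values equals $-|\ln b|\,\Diag(k,k-1,\dots,1,\tfrac12)+O(1)$, hence is invertible with inverse of size $1/|\ln b|$. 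The implicit function theorem then yields $(\lambda_{b,k},\mu)(b)$ with the stated asymptotics and $b$-derivative bounds, and a genuine eigenpair since both $\Pi^\perp$ and the complementary projection of $(H_b-\lambda_{b,k})\psi_{b,k}$ vanish. For the spectral gap (iii), rather than arguing completeness of the reduction, I would note that $\tfrac{2}{|\ln b|}\psi_{b,k}=P_k+\varepsilon_{b,k}$ with $\|\varepsilon_{b,k}\|_{H^1_{\rho,b}}\lesssim 1/|\ln b|$, so $\langle u,\psi_{b,j}\rangle_b=0$ ($0\le j\le k$) forces $|\langle u\,{\bf 1}_{z\ge\sqrt{b}},P_j\rangle_0|\lesssim\|u\|_{L^2_{\rho,b}}/|\ln b|$; then \eqref{spcetralgap} follows by perturbing the sharp harmonic-oscillator Poincar\'e inequality \eqref{weghtedpoinacre} applied to $u\,{\bf 1}_{z\ge\sqrt{b}}\in H^1_{\rho,0}$, using $\langle H_bu,u\rangle_b=\|\partial_zu\|_{L^2_{\rho,b}}^2$; the case $k=0$ gives \eqref{neneone}, and \eqref{postiivitypsib} is ground-state positivity (a nonnegative, nonzero eigenfunction is positive in $\{z>\sqrt{b}\}$ by elliptic regularity and the strong maximum principle, and $\psi_{b,0}=(\ln z-\ln\sqrt{b})+\tilde{\psi}_{b,0}>0$ for $z$ large). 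The identities of (iv) all reduce to their $P_kg$-counterparts---$\Lambda(P_kg)=2k(P_k-P_{k-1})g+P_k$ and \eqref{induction} for \eqref{exactcomputation}; $b\partial_b(P_kg)=-\tfrac12P_k$ for \eqref{idenitnirninebis} and \eqref{vnbevbnenonev}; $\langle P_kg,P_kg\rangle_b=(\ln\sqrt{b})^2+O(|\ln b|)$ for \eqref{scalirpeoduc}; the Laguerre recurrence $z^2P_k=-(2k+2)P_{k+1}+(4k+2)P_k-2kP_{k-1}$ for \eqref{vnbevbnenonevbis}---the $\mu_{b,jk}P_jg$- and $\tilde{\psi}_{b,k}$-contributions being $O(1)$ in $H^2_{\rho,b}$ by \eqref{calculeignvector} and $\|P_jg\|_{H^2_{\rho,b}}\lesssim|\ln b|$.

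\medskip
\noindent The main obstacle, I expect, is the bookkeeping of the logarithmically large terms: $T_{b,k}$ has size $\sim|\ln b|$ while the corrections are $O(|\ln b|^{-\ell})$, so the expansion must be organized so that the $O(1)$ residuals cancel \emph{exactly}---this is precisely what forces the subprincipal logarithmic terms $\mu_{b,jk}P_jg$ in $T_{b,k}$ and pins $\mu_{b,jk}\sim 2/((k-j)|\ln b|)$---and one must then verify that the reduced $(k+1)\times(k+1)$ system has an invertible, $|\ln b|$-large Jacobian so that the implicit function theorem applies uniformly in $b$ and the estimates propagate cleanly, including to the $b$-derivatives.
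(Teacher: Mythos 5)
Your proposal is correct and follows essentially the same Lyapunov--Schmidt route as the paper: the same decomposition $\psi_{b,k}=T_{b,k}+\tilde{\psi}_{b,k}$ with $\tilde\psi_{b,k}\perp P_j$, the same near-inversion Lemma \ref{lemmanerinversion} (including the boundary term $\sqrt{b}\,m_k(b,\cdot)\pa_z\tilde\psi(\sqrt b)$), the same computation $\la Q_k,P_k\ra_0=1$, $\la Q_k,P_j\ra_0=2$ pinning $\l_{b,k}$ and $\mu_{b,jk}$, and the same perturbation of the harmonic-oscillator Poincar\'e inequality plus variational/maximum-principle arguments for part (iii) and the reduction of (iv) to the $P_k\left[\ln z-\ln\sqrt b\right]$ leading terms. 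The only (immaterial) difference is organizational: you solve for $\tilde\psi$ at frozen $(\l,\mu)$ and then close the reduced $(k+1)\times(k+1)$ system by the implicit function theorem, whereas the paper defines $(\mu_{jk},\mu_k)$ as functions of $\tilde\psi$ via \eqref{E:DEFINITIONLAMBDA} and runs a single contraction.
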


\begin{remark} From standard Sturm-Liouville oscillation argument, $\psi_{b,k}$ vanishes $k$ times on $z>\sqrt{b}$, and hence only the ground state $\psi_{b,0}$ is nonnegative.
\end{remark}

\begin{remark} Constants in Lemma \ref{lemmadiag} depend on the frequency $K$ but are uniform in $0<b<b^*(K)$.
\end{remark}

\begin{proof}[Proof of Proposition \ref{lemmadiag}] The argument is of Lyapunov-Schmidt type. We remove the $b$ subscript as much as possible to simplify notations.\\

\noindent {\bf step 1} The Lyapunov-Schmidt argument. Let $T_k(z)\in H^1_{\rho,b}$ be given by \eqref{deftk},
and introduce the universal profile
\[
\mathcal{T}_b(z) : = \ln z-\ln(\sqrt{b}).
\]
Then
\be
\label{copmjeouegov}
\left\{\begin{array}{ll}(H_b-2k)T_k(z)=Q_k(z)+\sum_{j=0}^{k-1}\mu_{jk}\left[2(j-k)\mathcal{T}_b(z)P_j(z)+Q_j\right]\\
Q_j=-2\frac{P'_j(z)}{z}+P_j(z).
\end{array}\right.
\ee
Observe from \eqref{cneneoe} that $Q_j=\tilde{Q}_j(r^2)$ with ${\rm deg} \, \tilde{Q}_j\leq j-1$ and hence 
\be
\label{spanrealtion}
\forall 0\leq j\leq k, \ \ Q_j\in \mbox{\rm Span}(P_j)_{0\leq j\leq k}.
\ee
We solve the eigenvalue problem 
\be\label{E:EIGENVALUEPROBLEM}
(H_b-2k)\psi=\mu_k\psi,
\ee
by representing $\psi$ in the form
\be\label{E:PSIDECOMPOSITION}
\psi=T_k+\tilde{\psi}.
\ee
and hence \eqref{E:EIGENVALUEPROBLEM}, \eqref{E:PSIDECOMPOSITION} give an equation for $\psit:$ 
\bea
\label{E:PSI1EQUATION}
\nonumber (H_b-2k)\tilde{\psi}  &=&-(H_b-2k)T_k+\mu_k(T_k+\psit)\\
\nonumber& =& -Q_k(z)-\sum_{j=0}^{k-1}\mu_{jk}\left[2(j-k)\mathcal{T}_b(z)P_j(z)+Q_j\right]+\mu_k(T_k+\tilde{\psi})\\
&= :&   f(\psit).
\eea
We define $(\mu_j(\psit))_{0\leq j\leq k}$ by imposing the relations:
\be\label{E:DEFINITIONLAMBDA}
\frac{\la f(\psit),P_i\ra_b}{\la P_i,P_i\ra_b}=\sqrt{b}\pa_z\psit(\sqrt{b})\left(M_{b,k}^{-1}\mathcal P_k(\sqrt{b})\right)_i,  \ \ i=0,\dots,k,
\ee 
which is proved below to correspond to an invertible linear system on $((\mu_{j,k})_{0\leq j\leq k-1},\mu_k)$.
Observe that \eqref{E:DEFINITIONLAMBDA} allows us to rewrite \eqref{E:PSI1EQUATION} as:
\be
\label{defhtildie}
\tilde{H}_{b,k}\psit=f-\sum_{j=0}^k\frac{\la f,P_j\ra_b}{\|P_j\|_{L^2_{\rho,b}}^2}P_j=F(\psit)
\ee
with $\tilde{H}_{b,k}$ given by \eqref{cneoneoenveo}.
Thus to find $\psit,$ by Lemma~\ref{lemmanerinversion}
we are left with solving the fixed point equation 
\[
\psit=\tilde{H}_{b,k}^{-1}F(\psit).
\] 
and we indeed claim that the operator $\tilde{H}_{b,k}^{-1}\circ F$ is a strict contraction from the closed ball 
\begin{align}
B_\alpha : =  \Big\{\psit\in H^1_{\rho,b}\Big| & \ \|\Delta \psit\|_{L^2_{\rho,b}}+\sqrt b|\pa_z\psit(\sqrt b)|+\|\psit\|_{H^1_{\rho,b}}\leq \frac{\alpha}{|\ln b|}, \notag\\
& \ \ \psit(\sqrt b)=0,\ \ \la \psit,P_j\ra_b=0, \ \ 0\leq j\leq k\Big\}, \label{deballbalpha}
\end{align} 
to itself for $\alpha$ universal large enough.\\
\noindent\underline{\it Computation of $\mu_{jk},\mu_k$}: We invert \eqref{E:DEFINITIONLAMBDA}. Indeed, we rewrite 
\bee
f(\psit)&=&-Q_k+\frac{|\ln b|}{2}\mu_kP_k\left[1+\frac{2\ln z}{|\ln b|}\right]+\mu_k\tilde{\psi}\\
& - & \sum_{j=0}^{k-1}\frac{|\ln b|}{2}\mu_{jk}\left\{\left[2(j-k)-\mu_k\right]P_j\left[1+\frac{2\ln z}{|\ln b|}\right]+\frac{2Q_j}{|\ln b|}\right\}.
\eee
Let 
$$
\vec{\mu} = (\mu_{0k},\dots,\mu_{(k-1)k}, \mu_k)  \ \text{ and } \ |\vec \mu|_\infty : =\max \{|\mu_{jk}|_{0\leq j\leq k-1},|\mu_k|\}.
$$
Using the almost orthogonality~\eqref{almostorgho} and the $\la\cdot,\cdot\ra_b$-orthogonality of $\psit$ and $P_j,$ $j=0,\dots k,$ we obtain:
\begin{align}
\frac{\la f(\psit),P_j \ra_b}{\la P_j,P_j\ra_b}  & = (k-j)|\ln b|\mu_{jk}-\la Q_k,P_j\ra_0+ (C\vec{\mu})_j 
+ \mu_k\sum_{\ell=0}^{k-1}\mu_{\ell k}\la P_j,P_\ell\mathcal T_b\ra_b + O(|b|), \label{E:MUJK}\\
\frac{\la f(\psit),P_k\ra_b}{\la P_k,P_k\ra_b}&=\mu_k\frac{|\ln b|}{2}-\la Q_k,P_k\ra_0
+ \mu_k\sum_{\ell=0}^{k-1}\mu_{\ell k}\la P_k,P_\ell\mathcal T_b\ra_b+ (C\vec{\mu})_k+  O(|b|), \label{E:MU}
\end{align}
where 
$
C = (C_{ij})_{i,j=0,\dots k} = O(1),
$
is a $(k+1)\times(k+1)$-matrix with bounded entries in the regime where $b$ is small and
$
(C\vec{\mu})_i = \sum_{\ell=0}^{k-1} C_{i\ell}\mu_{\ell k} + C_{ik} \mu_k
$
is the $i$-th entry of the vector $C\vec{\mu},$ $i=0,\dots,k$.
The above system contains quadratic terms
and it can be solved for $\vec{\mu}$ by the following simple iteration argument.
Given $\vec{\tilde \mu}= (\tilde \mu _{0k},\dots.\tilde{\mu}_{(k-1)k},\tilde \mu_k)$, consider the system
\begin{align*}
\frac{\la f(\psit),P_j \ra_b}{\la P_j,P_j\ra_b}  & = (k-j)|\ln b|\mu_{j,k}-\la Q_k,P_j\ra_0+ (\tilde C\vec{\mu})_j 
+ O(|b|), \\
\frac{\la f(\psit),P_k\ra_b}{\la P_k,P_k\ra_b}&=\mu_k\frac{|\ln b|}{2}-\la Q_k,P_k\ra_0
+ (\tilde C\vec{\mu})_k+  O(|b|),
\end{align*}
where $\tilde{C}_{ij} = C_{ij} +\delta_{ik}\sum_{\ell=0}^{k-1}\tilde{\mu}_{ik}\la P_j,P_\ell\mathcal T_b\ra_b,$ $j=0,\dots k-1.$
Assuming that $|\vec{\tilde \mu}|_\infty \le 1,$ we have $\tilde C=O(1)$ and we can invert the above system 
to obtain
\bea
\label{compmujk}
&&\mu_{jk}=\frac{\la Q_k,P_j\ra_0}{(k-j)|\ln b|}+O\left(\frac{1}{|\ln b|^2}+\frac{\sqrt{b}|\pa_z\psit(\sqrt{b})|}{|\ln b|}\right), 0\leq j\leq k-1,\\
\label{vcompmuk}
&& \mu_k=\frac{2\la Q_k,P_k\ra_0}{|\ln b|}+O\left(\frac{1}{|\ln b|^2}+\frac{\sqrt{b}|\pa_z\psit(\sqrt{b})|}{|\ln b|}\right)
\eea
and therefore $|\vec \mu|_\infty \lesssim \frac 1{|\ln b|} \le 1$ for $b<b^*$ sufficienty small, where we used~\eqref{deballbalpha}. 
Contractive property follows easily in a similar manner and for a given $\psit\in B_\alpha$ we obtain the unique solution
$\vec \mu.$ 
We may moreover integrate by parts and use \eqref{normalization} to compute:
\bee
\la Q_k,P_k\ra_0=\la -2\frac{P_k'}{z}+P_k,P_k\ra_0=P_k^2(0)=1.
\eee
similarly for $j\leq k-1$:
$$
\la Q_k,P_j\ra_0=-2\la \frac{P'_k}{z},P_j\ra_0=2+\la\frac{P'_j}{z},P_k\ra_0=2
$$
since $\frac{P'_j}{z}$ is a polynomial of $r^2$ of degree $\leq j-1<k$. Note that by~\eqref{compmujk} and~\eqref{vcompmuk} we additionally have the bound
\be\label{E:MUINFINITYBOUND}
|\vec \mu|_\infty \lesssim \frac 1 {|\ln b|}+\frac{\sqrt{b}|\pa_z\psit(\sqrt{b})|}{|\ln b|}.
\ee

\noindent\underline{\it Estimating $F(\psit)$}: 
 Let us introduce the approximate projection operator:
$$
\Bbb P_kf : =\sum_{j=0}^k\frac{\la f,P_j\ra_b}{\|P_j\|_{L^2_{\rho,b}}^2}P_j.
$$
Observe that $(\text{I} - \Bbb P_k) P_j = O(b)$ and by~\eqref{spanrealtion}
$(\text{I} - \Bbb P_k) Q_j = O(b)$  for any $j\in\{0,1,\dots, k\}$ and the almost orthogonality relation~\eqref{almostorgho}. 
From~\eqref{deballbalpha} we have that $(\text{I} - \Bbb P_k) \tilde \psi = \tilde\psi.$
Thus, from~\eqref{E:PSI1EQUATION} and~\eqref{defhtildie} we obtain that
\begin{align*}
F(\psit)  & =  \mu_k (\text{I} -\Bbb P_k)\left[P_k\ln z + P_k\frac{|\ln b|}{2}\right]+\mu_k\psit \\
& \ \ \ +\sum_{j=0}^{k-1}\mu_{j,k}\left[2(k-j)+\mu_k\right](\text{I}-\Bbb P_k)\left[P_j\ln z + P_j\frac{|\ln b|}2\right] + O(b(1+|\vec \mu|_\infty))\\
& =\mu_k (\text{I} -\Bbb P_k)\left[P_k\ln z\right]+\mu_k\psit 
 +\sum_{j=0}^{k-1}\mu_{j,k}\left[2(k-j)+\mu_k\right](\text{I}-\Bbb P_k)\left[P_j\ln z\right] \\
 & \ \ \ + O\left(b(1+|\vec \mu|_\infty) + b|\ln b||\vec \mu|_\infty \right).
\end{align*}
Therefore
\begin{align}
\|F(\psit)\|_{L^2_{\rho,b}} & \lesssim |\vec \mu|_\infty \left(1 +\|\psit\|_{L^2_{\rho,b}} \right)  + O\left(b(1+|\vec \mu|_\infty) + b|\ln b||\vec \mu|_\infty \right) \notag \\
& \le C \frac {1+ \sqrt{b}|\pa_z\psit(\sqrt{b})|}{|\ln b|} \left(1 + \frac{\alpha}{|\ln b|}\right) \label{E:FBOUND},
\end{align}
for $0<b<b^*$ with $b^*$ sufficiently small and a universal constant $C>0.$
Applying Lemma~\ref{lemmanerinversion} and using~\eqref{E:FBOUND} we conclude from~\eqref{defhtildie}
\begin{align*}
 & \|\Delta \psit\|_{L^2_{\rho,b}}+\|\psit\|_{H^1_{\rho,b}}+|\ln b||\sqrt{b}\pa_z\psit(\sqrt{b})|\lesssim \|F(\psit)\|_{L^2_{\rho,b}} \\
 & \le C \frac{1+\sqrt{b}|\pa_z\psit(\sqrt{b})|}{|\ln b|}\left(1+\frac\alpha {|\ln b|}\right)
\end{align*}
and hence 
$$
\|\Delta \psit\|_{L^2_{\rho,b}}+\|\psit\|_{H^1_{\rho,b}}+ |\ln b||\sqrt{b}\pa_z\psit(\sqrt{b})|+\|F(\psit)\|_{L^2_{\rho,b}}\le \frac\alpha{|\ln b|}
$$ 
and $\psit\in B_\alpha$ for $\alpha>0$ universal large enough and $0<b<b^*(k)$ small enough.
Therefore $\tilde{\psi}\in B_\alpha.$
 
\noindent\underline{\it Contractive property}:
To show the contractive property, 
note that for any $\phi_1,\phi_2\in B_\alpha$ by~\eqref{E:PSI1EQUATION} we have that 
\begin{align*}
&f(\phi_1)-f(\phi_2) \\
& =\frac{|\ln b|}{2}\left(\mu_k(\phi_1)-\mu_k(\phi_2)\right) P_k\left[1+\frac{2\ln z}{|\ln b|}\right]+\left(\mu_k(\phi_1)-\mu_k(\phi_2)\right)\phi_1
   +\mu_k(\phi_2)\left(\phi_1-\phi_2\right)\\
& \ \ \ - \sum_{j=0}^{k-1}\frac{|\ln b|}{2}\left(\mu_{jk}(\phi_1)-\mu_{jk}(\phi_2)\right)\left\{\left[2(j-k)-\mu_k\right]P_j\left[1+\frac{2\ln z}{|\ln b|}\right]+\frac{2Q_j}{|\ln b|}\right\}.
\end{align*}
By a calculation analogous to~\eqref{E:MUJK} and~\eqref{E:MU}, we can evaluate the $\la\cdot,\cdot\ra_b$-inner product of $f(\phi_1)-f(\phi_2)$
with $P_j,$ $j=0,1,\dots,k$ and thereby estimate $|\mu_{jk}(\phi_1) - \mu_{jk}(\phi_2)|,$ $|\mu_{k}(\phi_1) - \mu_{k}(\phi_2)|.$ Using~\eqref{deballbalpha}, we arrive at 
\[
\|f(\phi_1)-f(\phi_2)\|_{L^2_{\rho,b}} \lesssim \frac{1}{|\ln b|} \|\phi_1-\phi_2\|_{L^2_{\rho,b}},
\]
which together with $F = (\text{Id} - \Bbb P_k) f $ gives 
$
\|F(\phi_1)-F(\phi_2)\|_{L^2_{\rho,b}} \lesssim \frac{1}{|\ln b|} \|\phi_1-\phi_2\|_{L^2_{\rho,b}}.
$
The operator $\tilde{H}_b^{-1}$ is continuous by Lemma~\ref{lemmanerinversion} and therefore for a sufficiently small $0<b<b_*$ the operator $\tilde{H}_b^{-1}\circ F$ is a strict contraction. 
By the Banach fixed point theorem, there exists a unique
$\psi_{b,k}\in H^1_{\rho,b}$ satisfying~\eqref{E:EIGENVALUEEQUATION}. The Fr\'echet differentiability of $\psi_b$ with respect to $b$ at any fixed $b>0$ follows similarly, the classical details are left to the reader.\\

\noindent{\bf step 2} Proof of~\eqref{estimatemomentum}. 
We estimate from \eqref{estmuj}, \eqref{deftk}: 
$$
\|\Lambda T_{b,k}\|_{L^2_{\rho,b}}\lesssim |\ln b|
$$  
and \eqref{calculeignvector} now implies 
\be\label{E:LAMBDAPSIBK}
\|\Lambda \psi_{b,k}\|_{L^2_{\rho,b}}\lesssim |\ln b|.
\ee
We may now apply \eqref{weightedesimate} to $z\psi_b$ and \eqref{estimatemomentum} follows from~\eqref{E:LAMBDAPSIBK}.\\

\noindent{\bf step 3} Spectral gap estimate. Let $u\in H^1_{\rho,b}$ satisfy
$$
\la u,\psi_{b,j}\ra_b=0, \ \ 0\leq j\leq k.
$$ 
Let 
$$
v=u{\bf 1}_{z\geq \sqrt{b}}-\sum_{j=0}^k\frac{\la u,P_j\ra_b}{\|P_j\|_{L^2_{\rho,0}}}P_j\in H^1_{\rho,0}
$$ 
then by the Poincar\'e inequality \eqref{estimationcoreor}:
\be
\label{cbebebebie}
\|\pa_zv\|_{L^2_{\rho,b}}^2\geq (2k+2)\|v\|_{L^2_{\rho,b}}^2.
\ee
We now compute from \eqref{deftk} for $0\leq j\leq k$:
\bee
0=\la u,\psi_j\ra_b=\left\la u,P_j(z)\mathcal T_b(z)+\sum_{i=0}^{j-1}\mu_i\left[P_i(z)-P_i(\sqrt{b})\right]+\psit_{b,k}\right\ra_b
\eee
and hence using \eqref{estmuj}, \eqref{calculeignvector}:
$$|\la u,P_j\ra_b|\lesssim \frac{1}{|\ln b|}\|u\|_{L^2_{\rho,b}}, \ \ 0\leq j\leq k$$ from which $$\|u-v\|_{H^1_{\rho,b}}\lesssim \frac{\|u\|_{L^2_{\rho,b}}}{|\ln b|}.$$ Injecting this into \eqref{cbebebebie} implies 
\be
\label{cneonneopernio}
\|\pa_zu\|_{L^2_{\rho,b}}^2\geq \left[2k+2+O\left(\frac{1}{|\ln b|}\right)\right]\|u\|_{L^2_{\rho,b}}^2.
\ee 
To prove \eqref{neneone}, \eqref{postiivitypsib}, let  
$$
\mu_b=\inf_{u\in\mathcal C_b}\frac{\|\pa_zu\|^2_{L^2_{\rho,b}}}{\|u\|_{L^2_{\rho,b}}^2}.
$$
Then any minimizing sequence normalised by $\|u_n\|_{L^2_{\rho,b}}=1$ is bounded in $H^1_{\rm loc}(r\geq \sqrt{b}).$ By the compactness of radial Sobolev and trace embeddings and \eqref{weightedesimate} it strongly converges in $L^2_{\rho,b}$. Hence the infimum is attained and from Lagrange multiplier theory: $$H_b\phi_b=\mu_b\phi_b.$$ Moreover, since $|\phi_b|$ is also an infimum, we may assume $\phi_b\geq 0$. If $\mu_b\neq \l_b$, then $\la\phi_b,\psi_{b,0}\ra_b=0$ and hence from \eqref{spcetralgap}: 
$$
\mu_b=\|\pa_z\phi_b\|_{L^2_{\rho,b}}^2\gtrsim 1. 
$$ 
Note that 
$
\l_{b,0} = \frac{\|\pa_z\psi_b\|_{L^2_{\rho,b}}^2}{\|\psi_b\|_{L^2_{\rho,b}}^2} \ge \mu_b
$
by the definition of $\mu_b.$ 
Together with \eqref{estimatelowe} this contradicts the definition of $\mu_b$ for $0<b<b^*$ small enough. 
Hence $\l_{b,0}=\mu_b$ is the bound state. The simplicity of the first eigenvalue follows from a classical argument~\cite{Evans}- 
 and hence $\psi_b\equiv \phi_b\geq 0.$ 
 Note that $\psi_b>0$ for $z>\sqrt{b}$ by the strong maximum principle.\\

\noindent {\bf step 4} Estimate for $\pa_b\l_{b,k}.$ Applying $\pa_b$ to~\eqref{E:EIGENVALUEEQUATION}, we obtain
\be\label{E:PABPSIBEQUATION}
H_b\pa_b\psi_{b,k} = \pa_b\l_{b,k} \psi_{b,k} + \l_{b,k}\pa_b\psi_{b,k}.
\ee
Evaluating the $\la\cdot,\cdot\ra_b$ inner product of~\eqref{E:PABPSIBEQUATION} with $\psi_{b,k}$ and integrating by parts we obtain
\begin{align*}
& \pa_b\l_{b,k} \|\psi_{b,k}\|_{L^2_{\rho,b}}^2 + \l_{b,k}\la\pa_b\psi_{b,k},\psi_{b,k}\ra_b  = \la H_b\pa_b\psi_{b,k},\psi_{b,k}\ra_b \\
& \ \ \  = \la \pa_b\psi_{b,k}, H_b \psi_{b,k}\ra_b - \pa_b\psi_{b,k}(\sqrt b) \pa_z\psi_{b,k}(\sqrt b) \rho(\sqrt b)\sqrt b \\
& \ \ \ = \l_{b,k}\la\pa_b\psi_{b,k},\psi_{b,k}\ra_b - \pa_b\psi_{b,k}(\sqrt b) \pa_z\psi_{b,k}(\sqrt b) \rho(\sqrt b)\sqrt b.
\end{align*}
Therefore
\be\label{E:PABLAMBDAB}
\pa_b\l_{b,k} = - \frac {\pa_b\psi_{b,k}(\sqrt b) \pa_z\psi_{b,k}(\sqrt b) \rho(\sqrt b)\sqrt b}{\|\psi_{b,k}\|_{L^2_{\rho,b}}^2}.
\ee
From $\psi_{b,k}(\sqrt b)=0$ it follows that 
$
\pa_b\psi_{b,k}(\sqrt b) = - \frac 1{2\sqrt b} \pa_z\psi_{b,k}(\sqrt b) 
$
and therefore from~\eqref{E:PABLAMBDAB}
\be\label{E:PABLAMBDABKBOUND}
\pa_b\l_{b,k} = \frac{ |\pa_z\psi_{b,k}(\sqrt b)|^2\rho(\sqrt b)}{2\|\psi_{b,k}\|_{L^2_{\rho,b}}^2}.
\ee
In particular, since $|\pa_z\psi_{b,k}(\sqrt b)| = O(\frac 1{\sqrt b})$ by~\eqref{deftk} and~\eqref{calculeignvector}, and $\|\psi_{b,k}\|_{L^2_{\rho,b}}^2 \gtrsim |\ln b|^2$  by~\eqref{estmuj} -~\eqref{calculeignvector} 
it follows that 
\be\label{E:PABLAMBDABESTIMATE}
|\pa_b\l_{b,k}| \lesssim \frac 1{b |\ln b|^2}.
\ee
which is the last claim of~\eqref{estimatelowe}. \\

\noindent {\bf step 5} Estimate for $|\pa_b\mu_{b,ik}|,$ $i=0,\dots,k-1.$
From~\eqref{deftk} we obtain
\begin{align}\label{E:PABTBK}
\pa_bT_{b,k} = -\frac 1{2b} P_k + \sum_{j=0}^{k-1}\pa_b\mu_{b,jk} P_j\mathcal{T}_b - \frac 1{2b}\sum_{j=0}^{k-1}\mu_{b,jk} P_j.
\end{align}
From~\eqref{deftk} and~\eqref{E:PABPSIBEQUATION} it follows that 
\be\label{E:PABPSIB1EQUATION}
H_b \pa_b \psit_{b,k} = F_k + \l_{b,k}\pa_b\psit_{b,k},
\ee
where 
\be\label{E:FKDEFINITION}
F_k =  - H_b \pa_bT_{b,k}+\pa_b(\l_{b,k} T_{b,k}) + \pa_b\l_{b,k}\tilde{\psi}_{b,k}.
\ee
Rewriting~\eqref{E:PABPSIB1EQUATION} in the form $H_b\pa_b\psi_{b,k} = (\pa_b\l_{b,k}\psit_{b,k}+\l_{b,k}\pa_b\psit_{b,k}) + \l_{b,k}\pa_bT_{b,k}$
and evaluating the $\la\cdot,\cdot\ra_b$-inner product with $P_j,$ $j=0,\dots,k-1,$ we obtain
\begin{align}\label{E:INT0}
\la H_b\pa_b\psi_{b,k}, P_j\ra_b = \l_{b,k}\la\pa_bT_{b,k},P_j\ra_b
\end{align}
since $\la\psit_{b,k},P_j\ra_b=\la\pa_b\psit_{b,k},P_j\ra_b=0.$
On the other hand, from~\eqref{E:PABTBK} we have that 
\begin{align}
\la\pa_bT_{b,k},P_j\ra_b & = -\frac 1{2b} M_{jk} 
+\sum_{i=0}^{k-1} \pa_b\mu_{b,ik}\left( M_{ji} \frac{|\ln b|}2 + \la P_i\ln z,P_j\ra_b\right) - \frac 1{2b}\sum_{i=0}^{k-1} \mu_{b,ik} M_{ji} \notag \\
&  = \frac{|\ln b|}2 \pa_b\mu_{b,ik} \delta_{ji} + \sum_{i=0}^{k-1}c_{ji}\pa_b\mu_{b,ik} + O(\frac 1{b|\ln b|}), \label{E:INT1}
\end{align}
where 
$
(c_{ij})_{i,j=0,\dots,k-1} = O(1)
$
and we used $M_{ji}=\delta_{ji}+O(b)$ for $j=0,\dots,k-1$ and the first two claims of~\eqref{estmuj} which have already been proven above.
On the other hand, observe that by integration-by-parts and the orthogonality $\la\pa_b\psit_{b,k},P_j\ra_b=0$ we have
\begin{align}\label{E:INT}
\la H_b\pa_bT_{b,k}, P_j\ra_b
=\rho(\sqrt b)\sqrt b\left(\pa_z\pa_bT_{b,k}(\sqrt b)P_j(\sqrt b) - \pa_b T_{b,k}(\sqrt b)\pa_zP_j(\sqrt b)\right).
\end{align}
By~\eqref{E:PABTBK} $|\pa_bT_{b,k}(\sqrt b)| \lesssim 1/b$ and
\[
\pa_z\pa_bT_{b,k} = -\frac 1{2b}\left(\pa_zP_k(\sqrt b) + \sum_{j=0}^{k-1}\mu_{b,jk}\pa_zP_j(\sqrt b)\right) 
+ \frac 1{\sqrt b}\sum_{j=0}^{k-1}\pa_b\mu_{b,jk} P_j(\sqrt b),
\]
which together with~\eqref{E:INT} leads to
\be\label{E:INT2}
\la H_b\pa_bT_{b,k}, P_j\ra_b
= \rho(\sqrt b)\sum_{j=0}^{k-1}\pa_b\mu_{b,jk} P_j(\sqrt b) + O(1),
\ee
where we note that $\pa_zP_k(\sqrt b) =\sqrt b L_k'(\frac b2) = O(\sqrt b)$ implying 
$\frac 1{\sqrt b} \max_{j=0,\dots,k}|\pa_zP_k(\sqrt b)| = O(1).$ To see that $L_k'(\frac b2)=O(1)$ observe that by the definition~\eqref{estaimti} of the $k$-th Laguerre polynomial,
it follows that $L_k'$ is a polynomial of degree $k-1.$
From~\eqref{E:INT0},~\eqref{E:INT1}, and~\eqref{E:INT2}  we conclude that 
\begin{align}\label{E:INVERT}
\frac{|\ln b|}2  \pa_b\mu_{b, ik} \delta_{ji} + c^*_{ji}\pa_b\mu_{b,i k} = O(\frac 1{b|\ln b|}), \ \ c^*_{ji} = O(1), \ i,j=0,\dots,k-1.
\end{align}
The system~\eqref{E:INVERT} is invertible for $0\le b\le b^*$ sufficiently small, and as a consequence 
\be\label{E:PABMUBIKBOUND}
\sup_{i=0,\dots,k-1}|\pa_b\mu_{b,ik}| \lesssim \frac 1{b|\ln b|^2},
\ee
which completes the proof of~\eqref{estmuj}.\\

\noindent {\bf step 5} Estimate for $\|\pa_b\psit_{b,k}\|_b.$
Recalling that $\la\psit_{b,k},P_j\ra_b = 0,$ $j=0,1,\dots k,$ by the construction of $\psit_{b,k},$ we conclude that $\la\pa_b\psit_{b,k},P_j\ra_b=0$ since $\psit_{b,k}(\sqrt b)=0.$
Moreover, the spectral gap estimate~\eqref{estimationcoreor} with $u=\pa_b\psit_{b,k}{\bf 1}_{z\ge\sqrt b}+\pa_b\psit_{b,k}(\sqrt{b}){\bf 1}_{0\le z<\sqrt b}$ together with the bound
$\l_{b,k}=2k+\tilde{\l}_{b,k}$ imply
\be\label{E:POINCARE2}
\l_{b,k}\|\pa_b\psi_{b,k}\|_{L^2_{\rho,b}}^2  \lesssim\frac{2k+O(\frac 1{|\ln b|})}{2k+2+O(\frac 1{|\ln b|})}  \|\pa_z\pa_b\psi_{b,k}\|_{L^2_{\rho,b}}^2 + C |\pa_b\psit_{b,k}(\sqrt b)|^2.
\ee
Evaluating the $\la\cdot,\cdot\ra_b$ inner product of~\eqref{E:PABPSIB1EQUATION} with $\pa_b\psit_{b,k},$ integrating-by-parts, using Cauchy-Schwarz, and the Poincar\'e-type  inequality~\eqref{E:POINCARE2} 
we obtain
\begin{align}\label{E:BOUNDB0}
\|\pa_z\pa_b\psit_{b,k}\|_{L^2_{\rho,b}}^2 \lesssim \|F_k\|_{L^2_{\rho,b}}^2 + \sqrt b |\pa_z\pa_b\psit_{b,k}(\sqrt b)\pa_b\psit_{b,k}(\sqrt b)| +  |\pa_b\psit_{b,k}(\sqrt b)|^2.
\end{align}
From $\psit_{b,k}(\sqrt b)=0$ it follows after differentiating with respect to $b$ that 
\be\label{E:BOUNDB1}
|\pa_b\psit_{b,k}(\sqrt b)| \lesssim \frac 1{\sqrt b}|\pa_z\psit_{b,k}(\sqrt b)| \lesssim  \frac 1{b|\ln b|^2}.
\ee
To estimate $\sqrt b |\pa_z\pa_b\psit_{b,k}(\sqrt b)|$ we 
note that 
\begin{align*}
0 & = \la \pa_b\psit_{b,k},1\ra_b  = \la \pa_b\psit_{b,k}, H_b \ln z\ra_b \\
& = \la H_b\pa_b\psit_{b,k},  \ln z \ra_b + e^{-b/2}\pa_b\psit_{b,k}(\sqrt{b}) + \frac 12 |\ln b| \sqrt{b}\pa_z\pa_b\psit_{b,k}(\sqrt{b})e^{-b/2}.
\end{align*}
Therefore
\begin{align*}
& \big|\sqrt{b}\pa_z\pa_b\psit_{b,k}(\sqrt{b})\big|  \lesssim \frac 1{|\ln b|}\left(\|F_k\|_{L^2_{\rho,b}} + |\l_{b,k}|\|\pa_b\psit_{b,k}\|_{L^2_{\rho,b}} + \frac 1{b|\ln b|^2} \right) \\
& \ \ \ \  
\lesssim \frac 1{|\ln b|}\|F_k\|_{L^2_{\rho,b}}+ \frac 1{|\ln b|}\|\pa_z\pa_b\psit_{b,k}\|_{L^2_{\rho,b}}+ \frac 1{b|\ln b|^3} ,
\end{align*}
where we used~\eqref{E:PABPSIB1EQUATION} in the first estimate and the Poincar\'e inequality~\eqref{E:POINCARE2} in the last. 
Using the Cauchy-Schwarz inequality and plugging this bound and~\eqref{E:BOUNDB1} into~\eqref{E:BOUNDB0}
we obtain
\begin{align}\label{E:PAZPABBOUND}
\|\pa_z\pa_b\psit_{b,k}\|_{L^2_{\rho,b}} \lesssim  \|F_k\|_{L^2_{\rho,b}}+\frac 1{b|\ln b|^2}.
\end{align}
From~\eqref{E:PABTBK} and~\eqref{E:PABMUBIKBOUND} we conclude that $\|T_{b,k}\|_{L^2_{\rho,b}} \lesssim \frac 1b.$
To estimate $\|F_k\|_{L^2_{\rho,b}}$ note that from~\eqref{copmjeouegov} we have the identity:
\begin{align*}
 H_b\pa_bT_{b,k}= 2k\pa_b T_{b,k} + \sum_{j=0}^{k-1}\pa_b\mu_{b,jk}\left[2(j-k)\mathcal{T}_bP_j+Q_j\right] +\frac 1b\sum_{j=0}^{k-1}\mu_{b,jk}(k-j)P_j
\end{align*}
and therefore using~\eqref{E:PABTBK} and~\eqref{estimatelowe} we have
\begin{align*}
- H_b\pa_bT_{b,k} + \l_{b,k}\pa_bT_{b,k} & = O(\frac 1{|\ln b|})\pa_bT_{b,k}  \\
& \ \ \ + \sum_{j=0}^{k-1}\pa_b\mu_{b,jk}\left[2(j-k)\mathcal{T}_bP_j+Q_j\right] +\frac 1b\sum_{j=0}^{k-1}\mu_{b,jk}(k-j)P_j.
\end{align*}
Hence, using~\eqref{E:PABMUBIKBOUND} and~\eqref{estmuj}:
$$
\|- H_b\pa_bT_{b,k} + \l_{b,k}\pa_bT_{b,k} \|_{L^2_{\rho,b}}\lesssim  \frac 1{b|\ln b|}.
$$
Thus, using the definition~\eqref{E:FKDEFINITION} of $F_k,$ bounds~\eqref{E:PABLAMBDABESTIMATE},~\eqref{calculeignvector}, and the previous bound
we obtain 
\begin{align*}
\|F_k\|_{L^2_{\rho,b}} & \lesssim \frac 1{b|\ln b|} +|\pa_b\l_{b,k}|\left( \|T_{b,k}\|_{L^2_{\rho,b}} + \|\psit_{b,k}\|_{L^2_{\rho,b}}\right) \\
& \lesssim \frac 1b + \frac 1{b|\ln b|^2}\left(|\ln b| + \frac 1{|\ln b|}\right) \lesssim  \frac 1{b|\ln b|}.
\end{align*}
Plugging this back into~\eqref{E:PAZPABBOUND} we get $$\|\pa_z\pa_b\psit_{b,k}\|_{L^2_{\rho,b}}\lesssim  \frac 1{b|\ln b|}$$ and therefore, by the 
spectral gap estimate~\eqref{estimationcoreor}, just like in~\eqref{E:POINCARE2}, we obtain
$$
\|\pa_b\psit_{k,b}\|_{L^2_{\rho,b}} \lesssim \frac 1{b|\ln b|}
$$
and the proof of~\eqref{calculeignvector} is completed.\\
 
\noindent{\bf step 6}  Proof of~\eqref{scalirpeoduc} -~\eqref{vnbevbnenonevbis}. \\
\noindent{\it Proof of~\eqref{scalirpeoduc}.} 
We estimate from \eqref{deftk}, \eqref{calculeignvector}: 
\be
\label{esfyveuvuev}
\la \psi_{b,k},\psi_{b,k}\ra_b=\frac{|\ln b|^2}{4}\left[\la P_k, P_k\ra_0+O\left(\frac1{|\ln b|}\right)\right]
\ee
and \eqref{scalirpeoduc} follows from the normalisation \eqref{normalization}.\\
\noindent{\it Proof of \eqref{idenitnirninebis}.} 
We compute from \eqref{E:PABTBK}, \eqref{estmuj}, \eqref{calculeignvector}: 
\be\label{enivbneneonve}
\left\|b\pa_b\psi_{b,k}+\frac12P_k\right\|_{H^2_{\rho,b}}\lesssim \frac{1}{|\ln b|}
\ee
and hence using \eqref{scalirpeoduc}:
\bee
\frac{\la b\pa_b \psi_{b,j},\psi_{b,k}\ra_b}{\la \psi_{b,k},\psi_{b,k}\ra_b}&=&\frac{4}{(\ln b)^2}\left[1+O\left(\frac{1}{\ln b}\right)\right]\left[\la -\frac{P_k}{2},\psi_{b,j}\ra_b+O(1)\right]\\
& = & \frac{4}{(\ln b)^2}\left[-\frac{1}4|\ln b|\la P_j,P_k\ra_b+O(1)\right]=-\frac{1}{|\ln b|}+O\left(\frac{1}{|\ln b|^2}\right),
\eee 
this is \eqref{idenitnirninebis}.\\
\noindent{\it Proof of \eqref{exactcomputation}}: We compute from \eqref{deftk}, \eqref{induction}:
\bee
\Lambda \psi_{b,k}&=&\Lambda P_k\mathcal{T}_b+P_k+\sum_{j=0}^{k-1}\mu_{b,jk}\Lambda ( P_j
\mathcal T_b)+\Lambda \psit_{b,k}\\
& = & 2k[P_k-P_{k-1}]\mathcal T_b+P_k+\sum_{j=0}^{k-1}\mu_{b,jk}\Lambda (P_j
\mathcal T_b)+\Lambda \psit_{b,k}\\
& = & 2k(\psi_{b,k}-\psi_{b,k-1})+\mathcal E_{b,k}
\eee
where the remainder estimate
$$
\|\mathcal E_{b,k}\|_{H^2_{\rho,b}}\lesssim 1
$$ 
holds due to~\eqref{estmuj},~\eqref{calculeignvector} and \eqref{exactcomputation} is proved. \\
\noindent{\it Proof of~ \eqref{vnbevbnenonev}.} 
Note that 
$$
P_k=\frac{2}{|\ln b|}\left\{\psi_{b,k}-\tilde{\psi}_{b,k}-P_k\ln z-\sum_{i=0}^{k-1}\mu_{b,jk}P_j\mathcal T_b\right\}
$$ 
and therefore 
\bee
&&\|b\pa_b\psi_{b,k} + \frac 1{|\ln b|}\psi_{b,k}\|_{H^2_{\rho,b}}\\
 & \lesssim &
\|b\pa_b\psi_{b,k} + \frac 12 P_k \|_{H^2_{\rho,b}} + 
\frac 1{|\ln b|}\|\tilde{\psi}_{b,k}+P_k\ln z+\sum_{i=0}^{k-1}\mu_{b,jk}P_j\mathcal T_b\|_{H^2_{\rho,b}} \\
& \lesssim &\frac 1{|\ln b|},
\eee
where we used~\eqref{enivbneneonve} and~\eqref{estmuj},~\eqref{calculeignvector}. This  concludes the proof of \eqref{vnbevbnenonev}.\\
\noindent{\it Proof of~ \eqref{vnbevbnenonevbis}.} We have from \eqref{deftk}, \eqref{calculeignvector}:
$$
\|z^2\psi_{b,k}-\frac{|\log b|}{2}z^2P_k\|_{H^1_b}\lesssim 1.
$$
Let $\Phi_k=z^2P_k$, then 
$$
(-\Delta + \Lambda)\Phi_k=(2k+2)\Phi_k-4P_k-4\Lambda P_k=(2k+2)\Phi_k-4P_k-8k(P_k-P_{k-1})
$$ 
and hence the relation 
\bee
&&2k\la \Phi_k,P_k\ra_0=(2k+2)\la \Phi_k,P_k\ra_0-4-8k\ \ \mbox{ie}\ \ \la \Phi_k,P_k\ra_0=2+4k\\
&& (2k-2)\la \Phi_j,P_{k-1}\ra_0=(2k+2)\la \Phi_K,P_{k-1}\ra_0+8k\ \ \mbox{ie}\ \ \la \Phi_k,P_k\ra_0=-2k\\
&& \la \Phi_k,P_{j}\ra_0=0, \ \ 0\le j \le k-2.
\eee
Since $P_k$ is a polynomial we conclude that there exists a $c_k\in\RR$ such that 
$$
z^2P_k=c_kP_{k+1}+(4k+2)P_k-2kP_{k-1}.
$$ 
Since $P_k(0)=1$, we obtain by plugging in $z=0$ into the above relationship: 
$$
z^2P_k=-2(k+1)P_{k+1}+(4k+2)P_k-2kP_{k-1},
$$ which yields \eqref{vnbevbnenonevbis}.
\end{proof}


\subsection{Diagonalisation of $\H_b$}
\label{sectionfour}

We are now position to derive the bound state and the spectral gap estimate for the operator $\H_b$.

\begin{lemma}[Renormalised eigenfunction]
\label{lemmarenormalized}
Let $K\in \Bbb N$. Then for all $0<b<b^*(K)$ small enough, the renormalised operator 
$$
\H_b=-\Delta+ b\Lambda \ \ \mbox{with boundary value}\ \ u(1)=0
$$ 
has a family of eigenstates $\eta_{b,k}$ satisfying:\\
\be
\label{eignevalure}
\mathcal H_b\eta_{b,k}=b\l_{b,k}\eta_{b,k}, \  \ 0\leq k\leq K,
\ee
with the following properties:\\
\noindent{\em (i) Structure of the eigenmodes}: there holds the expansion
\be
\label{deftketa}
\left\{\begin{array}{ll}\eta_{b,k}=S_{b,k}(y)+\etat_{b,k}(y)\\
S_{b,k}(y)=P_k(\sqrt{b}y)\ln y+\sum_{j=0}^{k-1}\mu_{b,jk}P_j(\sqrt{b} y)\ln y
\end{array}\right.
\ee
with
\be
\label{calculeignvectoreta}
\frac{\|\Delta \etat_{b,k}\|_{L^2_{b}}}{\sqrt{b}}+\|\pa_y\etat_{b,k}\|_b+\sqrt{b}\|\tilde{\eta}_{b,k}\|_{b}+\sqrt{b}\|\Lambda \etat_{b,k}\|_{b}+|\ln b||\pa_y\tilde{\eta}_{b,k}(1)|\lesssim \frac{1}{|\ln b|},
\ee
\noindent{\em (ii) Further estimates on the eigenvector}: there holds
\bea
\label{cebivbfenoe}
&& \|y\eta_{b,k}\|_{b}\lesssim \frac{|\ln b|}{b}, \ \ \|y^2\eta_{b,k}\|_{b}\lesssim \frac{|\ln b|}{b\sqrt{b}}\\
&& \label{estderivative} \|b\pa_b\eta_{b,k}\|_b\lesssim \frac{|\ln b|}{\sqrt{b}}.
\eea
Moreover:
\bea
\label{exactcomputationeta}
\nonumber&& \|\Lambda \eta_{b,k}-2k(\eta_{b,k}-\eta_{b,k-1})\|_{b}+\frac{1}{\sqrt{b}}\|\pa_y[\Lambda \eta_{b,k}-2k(\eta_{b,k}-\eta_{b,k-1})]\|_{b}\\
&& + \frac{1}{b}\|\mathcal H_b\left[\Lambda \eta_{b,k}-2k(\eta_{b,k}-\eta_{b,k-1})\right]\|_b\lesssim  \frac{1}{\sqrt{b}}
\eea
and 
\bea
\label{foihonononfe}
\nonumber &&\|2b\pa_b\eta_{b,j}-\Lambda \eta_{b,j}+\frac{2}{|\ln b|}\eta_{b,j}\|_b+\frac{1}{\sqrt{b}}\|\pa_y[2b\pa_b\eta_{b,j}-\Lambda \eta_{b,j}+\frac{2}{|\ln b|}\eta_{b,j}\|_b\\
&& + \frac{1}{b}\|\mathcal H_b\left[2b\pa_b\eta_{b,j}-\Lambda \eta_{b,j}+\frac{2}{|\ln b|}\eta_{b,j}\right]\|_b\lesssim \frac{1}{\sqrt{b}|\ln b|}.
\eea

\noindent{\em (iii) Normalisation:}
\be
\label{scalirpeoduceta}
( \eta_{b,k},\eta_{b,k})_b=\frac{|\ln b|^2}{4b}\left[1+O\left(\frac1{|\ln b|}\right)\right].
\ee
\end{lemma}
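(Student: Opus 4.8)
The plan is to obtain every assertion of Lemma~\ref{lemmarenormalized} by \emph{rescaling} the spectral data of $H_b$ produced in Proposition~\ref{lemmadiag}. The substitution $z=\sqrt b\,y$ maps $\{y>1\}$ onto $\{z>\sqrt b\}$, turns $\rho_{b,-}(y)=e^{-\frac{b|y|^2}{2}}$ into $\rho(z)=e^{-\frac{|z|^2}{2}}$, and in the radial sector satisfies $\Delta_y[w(\sqrt b\,y)]=b(\Delta_zw)(\sqrt b\,y)$ and $\Lambda_y[w(\sqrt b\,y)]=(\Lambda_zw)(\sqrt b\,y)$, so that $\H_b=-\Delta_y+b\Lambda_y$ acting on $u(y)$ corresponds to $b\,H_b=b(-\Delta_z+\Lambda_z)$ acting on $u(z/\sqrt b)$. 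I would therefore simply \emph{define} $\eta_{b,k}(y):=\psi_{b,k}(\sqrt b\,y)$, check $\eta_{b,k}(1)=\psi_{b,k}(\sqrt b)=0$ and $\H_b\eta_{b,k}=b\l_{b,k}\eta_{b,k}$, which is \eqref{eignevalure}. Since $\ln(\sqrt b\,y)-\ln\sqrt b=\ln y$, the decomposition~\eqref{deftk} of $\psi_{b,k}$ transfers verbatim to~\eqref{deftketa}, with $S_{b,k}(y)=P_k(\sqrt b\,y)\ln y+\sum_{j<k}\mu_{b,jk}P_j(\sqrt b\,y)\ln y$ obtained from $T_{b,k}(\sqrt b\,y)$, $\etat_{b,k}(y)=\psit_{b,k}(\sqrt b\,y)$, and the same coefficients $\mu_{b,jk}$.

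Next I would propagate the norm bounds through the dictionary furnished by the change of variables: $\|w(\sqrt b\,\cdot)\|_{b}=b^{-1/2}\|w\|_{L^2_{\rho,b}}$, while $\pa_y[w(\sqrt b\,y)]=\sqrt b\,(\pa_zw)(\sqrt b\,y)$ and $\H_b[w(\sqrt b\,y)]=b\,(H_bw)(\sqrt b\,y)$; in short, each $L^2_{\rho,b}$ norm on the $z$ side costs a factor $b^{-1/2}$, each $\pa_y$ costs $\sqrt b$ relative to $\pa_z$, and each $\Delta_y$ or $\H_b$ costs $b$ relative to $\Delta_z$ or $H_b$. Feeding~\eqref{calculeignvector} through this dictionary yields~\eqref{calculeignvectoreta}, and~\eqref{scalirpeoduc} yields~\eqref{scalirpeoduceta}. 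The moment bounds~\eqref{cebivbfenoe} follow from~\eqref{estimatemomentum} together with the interpolation $\|z\psi_{b,k}\|_{L^2_{\rho,b}}\lesssim\|z^2\psi_{b,k}\|_{L^2_{\rho,b}}^{1/2}\|\psi_{b,k}\|_{L^2_{\rho,b}}^{1/2}\lesssim|\ln b|$.

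The one genuinely new point is the $b$-differentiation, since $\eta_{b,k}(y)=\psi_{b,k}(\sqrt b\,y)$ depends on $b$ both through $\psi_{b,k}$ and through its argument. The chain rule gives $\pa_b\eta_{b,k}(y)=(\pa_b\psi_{b,k})(\sqrt b\,y)+\tfrac1{2b}(\Lambda\psi_{b,k})(\sqrt b\,y)$, hence $b\pa_b\eta_{b,k}(y)=b(\pa_b\psi_{b,k})(\sqrt b\,y)+\tfrac12(\Lambda\psi_{b,k})(\sqrt b\,y)$ and, crucially, $2b\pa_b\eta_{b,k}-\Lambda\eta_{b,k}=2b(\pa_b\psi_{b,k})(\sqrt b\,y)$. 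The crude bound~\eqref{estderivative} then follows from $\|b\pa_b\psi_{b,k}\|_{L^2_{\rho,b}}\lesssim1$ (a consequence of~\eqref{enivbneneonve} and $\|P_k\|_{L^2_{\rho,b}}\le1$) and $\|\Lambda\psi_{b,k}\|_{L^2_{\rho,b}}\lesssim|\ln b|$ from~\eqref{estimatemomentum}. For the sharp combinations, I would observe that $\Lambda\eta_{b,k}=(\Lambda\psi_{b,k})(\sqrt b\,y)$ so that $\Lambda\eta_{b,k}-2k(\eta_{b,k}-\eta_{b,k-1})$ is the rescaling of $\Lambda\psi_{b,k}-2k(\psi_{b,k}-\psi_{b,k-1})$, and likewise $2b\pa_b\eta_{b,k}-\Lambda\eta_{b,k}+\tfrac2{|\ln b|}\eta_{b,k}=\big(2b\pa_b\psi_{b,k}+\tfrac2{|\ln b|}\psi_{b,k}\big)(\sqrt b\,y)$; applying the dictionary to the $H^2_{\rho,b}$ bounds~\eqref{exactcomputation} and~\eqref{vnbevbnenonev} then produces~\eqref{exactcomputationeta} and~\eqref{foihonononfe} with the stated powers of $b$ and $|\ln b|$.

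I expect the main obstacle to be bookkeeping rather than conceptual: one must keep track of the $b$-powers produced by the Jacobian and the chain rule and, above all, exploit the algebraic cancellation $2b\pa_b\eta_{b,k}-\Lambda\eta_{b,k}=2b\pa_b\psi_{b,k}$, which is precisely what allows the use of the sharp $|\ln b|^{-1}$ gain in~\eqref{vnbevbnenonev} instead of the lossy $O(|\ln b|)$ bound on $\Lambda\psi_{b,k}$. A secondary technical point is that the $z$-side $H^2_{\rho,b}$ norm controls $\|\Delta\,\cdot\|_{L^2_{\rho,b}}$ but not a priori $\|\Lambda\,\cdot\|_{L^2_{\rho,b}}$ or the $\pa_y$- and $\H_b$-terms appearing in~\eqref{exactcomputationeta}--\eqref{foihonononfe}; this gap is closed by the weighted elliptic/compactness estimates of Section~\ref{sectionone}, which bound $\|\Lambda\,\cdot\|_{L^2_{\rho,b}}$ and $\|(1+z)\pa_z\,\cdot\|_{L^2_{\rho,b}}$ by $\|\Delta\,\cdot\|_{L^2_{\rho,b}}+\|\cdot\|_{L^2_{\rho,b}}$ for functions vanishing at $\sqrt b$.
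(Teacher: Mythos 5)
Your proposal is correct and follows essentially the same route as the paper: the paper also defines $\eta_{b,k}(y)=\psi_{b,k}(\sqrt b\,y)$, transfers \eqref{deftk}--\eqref{scalirpeoduc} through the scaling dictionary $\|\pa_y^\ell v\|_b=b^{\frac{\ell-1}{2}}\|\pa_z^\ell u\|_{L^2_{\rho,b}}$, and exploits exactly the chain-rule identity $b\pa_b\eta_{b,k}=\bigl[\tfrac12\Lambda\psi_{b,k}+b\pa_b\psi_{b,k}\bigr](\sqrt b\,y)$ together with the cancellation $2b\pa_b\eta_{b,k}-\Lambda\eta_{b,k}=2\bigl[b\pa_b\psi_{b,k}\bigr](\sqrt b\,y)$ to get \eqref{foihonononfe} from \eqref{vnbevbnenonev}. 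The only cosmetic difference is which of the equivalent bounds of Proposition \ref{lemmadiag} you cite for \eqref{estderivative}, which does not affect the argument.
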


\begin{remark} Observe that \eqref{exactcomputationeta}, \eqref{foihonononfe}, \eqref{scalirpeoduceta} imply the bound:
\bea
\label{exactcomputationetabis}
\nonumber&& \|b\pa_b\eta_{b,k} -k(\eta_{b,k}-\eta_{b,k-1})\|_{b}+\frac{1}{\sqrt{b}}\|\pa_y[b\pa_b\eta_{b,k} -k(\eta_{b,k}-\eta_{b,k-1})]\|_{b}\\
&& + \frac{1}{b}\|\mathcal H_b\left[b\pa_b\eta_{b,k}-k(\eta_{b,k}-\eta_{b,k-1})\right]\|_b\lesssim  \frac{1}{\sqrt{b}}.
\eea
\end{remark}

\begin{proof}[Proof of Lemma \ref{lemmarenormalized}]
Given $u:\Omega\to\mathbb{R}$, let $v(y)=u(\sqrt{b}y)$, it is straightforward to check that
\be
\label{formuelerenorer}
\H_b v=b(H_bu)(\sqrt{b}y), \ \  \|\pa_y^\ell v\|_{b}=b^{\frac{\ell-1}{2}}\|\pa_z^\ell u\|_{L^2_{\rho,b}}, \ \ \ell \in \mathbb{N}.
\ee
We therefore let
\be
\label{defphibnk}
\eta_{b,k}(y)=\psi_{b,k}(z), \ \ z=y\sqrt{b}.
\ee
and ~\eqref{eignevalure} follows. The estimate \eqref{exactcomputationeta} follows by rescaling \eqref{exactcomputation}, and \eqref{scalirpeoduceta} by rescaling \eqref{scalirpeoduc}. The decomposition \eqref{deftketa} follows from \eqref{deftk}, and \eqref{calculeignvector} implies \eqref{calculeignvectoreta}. The bounds \eqref{cebivbfenoe} follow by rescaling the bounds: $$\ \ \|z\psi_{b,k}\|_{L^2_{\rho,b}}+\|z^2\psi_{b,k}\|_{L^2_{\rho,b}}\lesssim |\ln b|.$$
Directly from the definition \eqref{defphibnk}, we compute:
\be
\label{deftrrscaac}
b\pa_b\eta_{b,k}=\left[\frac{\Lambda \psi_{b,k}}{2}+b\pa_b\psi_{b,k}\right](\sqrt{b}y),
\ee 
which together with \eqref{estimatedervaitveinb}, \eqref{estimatemomentum} yields: 
$$
\|b\pa_b\eta_{b,k}\|_b\lesssim \frac{|\ln b|}{\sqrt{b}},
$$ 
thus proving~\eqref{estderivative}. From \eqref{deftrrscaac}:
$$
2b\pa_b\eta_{b,k}-\Lambda \eta_{b,k}=2\left[b\pa_b\psi_{b,k}\right](\sqrt{b}y) 
$$ 
and hence from \eqref{vnbevbnenonev}:
$$
\left\|2b\pa_b\eta_{b,k}-\Lambda \eta_{b,k}+\frac{2}{|\ln b|}\eta_{b,k}\right\|_b\lesssim \frac{1}{\sqrt{b}}\|b\pa_b\psi_{b,k}+\frac{1}{|\ln b|}\psi_{b,k}\|_{L^2_{\rho_b}}\lesssim \frac{1}{|\ln b|\sqrt{b}}
$$ 
and similarly for higher derivatives.
\end{proof}


\subsection{Diagonalisation of $\mathcal H_B$}
\label{sectionfive}

We now change sign and consider the operator for $B>0$
\be
\label{defHB}
\H_B=-\Delta- B\Lambda \ \ \mbox{with boundary value}\ \ u(1)=0,
\ee
which is a self adjoint operator on $H^1_{B,+}$ given by \eqref{E:HBDEF}. 

\begin{lemma}[Renormalised eigenfunction]
\label{lemmarenormalizedbis}
Let $K\in \Bbb N$. Then for all $0<B<B^*(K)$ small enough, the renormalised operator 
$$
\H_B=-\Delta- B\Lambda \ \ \mbox{with boundary value}\ \ u(1)=0
$$ 
has a family of eigenstates 
\be
\label{beiebibeiebis}
\etah_{B,k}=e^{-\frac{B|y|^2}{2}}\eta_{B,k}, \  \mathcal H_B\etah_{B,k}=B\lh_{B,K}\etah_{B,k}, \ \ \lh_{B,k}=\l_{B,k}+2, \ \ 0\le k \le K,
\ee 
with $\eta_{B,k},\l_{B,k}$ given by Lemma \ref{lemmarenormalized}. Furthermore, there hold the following properties:\\
\noindent{\em (i) Structure of the eigenmodes}: there holds the expansion
\be
\label{deftketabis}
\left\{\begin{array}{ll}\etah_{B,k}=S_{B,k}(y)e^{-\frac{B|y|^2}{2}}+\etaht\\
S_{B,k}(y)=P_k(\sqrt{B}y)\ln y+\sum_{j=0}^{k-1}\mu_{B,jk}P_j(\sqrt{B} y)\ln y
\end{array}\right.
\ee
with
\bea
\label{calculeignvectoretabis}
\nonumber &&\frac{\|\Delta \etaht_{B,k}\|_{L^2_{B}}}{\sqrt{B}}+\|\pa_y\etaht_{B,k}\|_B+\sqrt{B}\|\tilde{\etah}_{B,k}\|_{B}+\sqrt{B}\|\Lambda \etaht_{B,k}\|_{B}+|\log B||\pa_y\tilde{\etah}_{B,k}(1)|\\
&&\lesssim  \frac{1}{|\ln B|},
\eea
\noindent{\em (ii) Further estimates on the eigenvector}: there holds
\bea
\label{cebivbfenoebis}
&& \|y\etah_{B,k}\|_{b}\lesssim \frac{|\ln B|}{B}, \ \ \|y^2\etah_{B,k}\|_{B}\lesssim \frac{|\ln B|}{B\sqrt{B}}\\
&& \label{estderivativebis} \|B\pa_B\etah_{B,k}\|_b\lesssim \frac{|\ln B|}{\sqrt{B}}.
\eea
Moreover:
\bea
\label{exactcomputationetabisbis}
\nonumber&& \|B\pa_B \etah_{B,k}-(k+1)[\etah_{B,k+1}-\etah_{B,k}]\|_{B}\\
\nonumber 
&+& \frac{1}{\sqrt{B}}\|\pa_y[B\pa_B\etah_{B,k}-(k+1)[\etah_{B,k+1}-\etah_{B,k}]\|_{B}\\
& +& \frac{1}{B}\|\mathcal H_B\left[B\pa_B\etah_{B,k}-(k+1)[\etah_{B,k+1}-\etah_{B,k}]\right]\|_B\lesssim  \frac{1}{\sqrt{B}}
\eea
and 
\bea
\label{foihonononfebisbis}
\nonumber &&\|2B\pa_B\etah_{B,j}-\Lambda \etah_{B,j}+\frac{2}{|\ln B|}\etah_{B,j}\|_b+\frac{1}{\sqrt{B}}\|\pa_y[2B\pa_B\etah_{B,j}-\Lambda \etah_{B,j}+\frac{2}{|\ln B|}\etah_{B,j}\|_B\\
&& + \frac{1}{B}\|\mathcal H_B\left[2B\pa_B\etah_{B,j}-\Lambda \eta_{B,j}+\frac{2}{|\ln B|}\etah_{B,j}\right]\|_B\lesssim \frac{1}{\sqrt{B}|\ln B|}.
\eea

\noindent{\em (iii) Normalisation:}
\be
\label{scalirpeoducetabis}
( \etah_{B,k},\etah_{B,k})_B=\frac{|\log B|^2}{4B}\left[1+O\left(\frac1{|\log B|}\right)\right].
\ee
\end{lemma}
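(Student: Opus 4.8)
The plan is to deduce Lemma~\ref{lemmarenormalizedbis} from Lemma~\ref{lemmarenormalized} by a gauge transformation, exactly as the harmonic oscillator estimates \eqref{comptuationintergation}--\eqref{commutationrelation} were deduced from their melting analogues. The key algebraic fact is the renormalized commutation identity: for a radial $w$ on $\Omega$ and $u=e^{-\frac{B|y|^2}{2}}w$, a direct computation using $\Delta e^{-\frac{B|y|^2}{2}}=(B^2|y|^2-2B)e^{-\frac{B|y|^2}{2}}$ in dimension two gives
\[
(-\Delta-B\Lambda)\big(e^{-\frac{B|y|^2}{2}}w\big)=e^{-\frac{B|y|^2}{2}}\big(-\Delta w+B\Lambda w+2Bw\big),
\]
so that $\H_B$ is conjugated by $w\mapsto e^{-\frac{B|y|^2}{2}}w$ to the \emph{melting} operator $-\Delta+B\Lambda$ of Lemma~\ref{lemmarenormalized}, shifted by the additive constant $2B=dB$ (this is the $d=2$ shift already visible in \eqref{estimationcoreorbis}). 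Taking $w=\eta_{B,k}$, where $(-\Delta+B\Lambda)\eta_{B,k}=B\l_{B,k}\eta_{B,k}$ by \eqref{eignevalure} evaluated at $b=B$, one obtains at once the eigenvalue relation \eqref{beiebibeiebis} together with the shift $\lh_{B,k}=\l_{B,k}+2$; the Dirichlet condition $\etah_{B,k}(1)=e^{-B/2}\eta_{B,k}(1)=0$ is inherited. Moreover, since $\rho_{B,+}\,e^{-B|y|^2}=\rho_{B,-}$, the gauge is an isometry $\|e^{-\frac{B|y|^2}{2}}f\|_{B,+}=\|f\|_{B,-}$ (and likewise for the associated scalar products), so the whole Hilbertian framework of Lemma~\ref{lemmarenormalized} transfers to the $+$-weighted spaces used here; in particular \eqref{scalirpeoducetabis} is just \eqref{scalirpeoduceta} read through this isometry.

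Next I would transfer the structural information. Multiplying \eqref{deftketa} at $b=B$ by $e^{-\frac{B|y|^2}{2}}$ yields \eqref{deftketabis} with $\etaht_{B,k}:=e^{-\frac{B|y|^2}{2}}\etat_{B,k}$, and the bound \eqref{calculeignvectoretabis} follows from \eqref{calculeignvectoreta} via the isometry together with the Leibniz rules
\[
\pa_y\big(e^{-\frac{B|y|^2}{2}}f\big)=e^{-\frac{B|y|^2}{2}}(\pa_yf-Byf),\qquad
\Delta\big(e^{-\frac{B|y|^2}{2}}f\big)=e^{-\frac{B|y|^2}{2}}\big(\Delta f-2B\Lambda f-2Bf+B^2|y|^2f\big),
\]
the commutator terms being absorbed by the rescaled polynomially weighted bounds of Proposition~\ref{lemmadiag} — notably $\|y^j\etat_{B,k}\|_{B,-}$, $j=1,2$, obtained from \eqref{calculeignvector} through the scaling relations \eqref{formuelerenorer}--\eqref{defphibnk} — while the boundary term is controlled from $\etaht_{B,k}(1)=0$. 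The moment bounds \eqref{cebivbfenoebis} are rescalings of \eqref{cebivbfenoe}, and \eqref{estderivativebis} follows from \eqref{estderivative} together with $B\pa_B\big(e^{-\frac{B|y|^2}{2}}f\big)=e^{-\frac{B|y|^2}{2}}\big(B\pa_Bf-\tfrac{B|y|^2}{2}f\big)$ and the moment estimate $\|y^2\eta_{B,k}\|_{B,-}\lesssim|\ln B|\,B^{-3/2}$ to handle the extra $\tfrac{B|y|^2}{2}$ factor.

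Finally, the two structural identities \eqref{exactcomputationetabisbis}--\eqref{foihonononfebisbis} are arranged so that the gauge factor commutes through cleanly. Combining the displays above gives $2B\pa_B\big(e^{-\frac{B|y|^2}{2}}f\big)-\Lambda\big(e^{-\frac{B|y|^2}{2}}f\big)=e^{-\frac{B|y|^2}{2}}\big(2B\pa_Bf-\Lambda f\big)$, the $B|y|^2$ contributions cancelling; hence with $f=\eta_{B,j}$ the bound \eqref{foihonononfebisbis} follows from \eqref{foihonononfe} at $b=B$ — for the top term directly via the isometry, and for the $\pa_y$- and $\H_B$-weighted pieces via the Leibniz rules and the commutation identity $\H_B(e^{-\frac{B|y|^2}{2}}g)=e^{-\frac{B|y|^2}{2}}(-\Delta g+B\Lambda g+2Bg)$, using that $g$ is the \emph{small} combination so that $2Bg$ and $Byg$ are lower order. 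For \eqref{exactcomputationetabisbis} I would combine the melting identity $b\pa_b\eta_{b,k}=k(\eta_{b,k}-\eta_{b,k-1})$ modulo the controlled errors of \eqref{exactcomputationetabis} with the rescaled three-term recursion $\tfrac{B|y|^2}{2}\eta_{B,k}=-(k+1)\eta_{B,k+1}+(2k+1)\eta_{B,k}-k\eta_{B,k-1}$ obtained from \eqref{vnbevbnenonevbis} via \eqref{defphibnk}, to get $B\pa_B\eta_{B,k}-\tfrac{B|y|^2}{2}\eta_{B,k}=(k+1)(\eta_{B,k+1}-\eta_{B,k})$ up to the corresponding errors; multiplying by $e^{-\frac{B|y|^2}{2}}$ and using $B\pa_B\etah_{B,k}=e^{-\frac{B|y|^2}{2}}\big(B\pa_B\eta_{B,k}-\tfrac{B|y|^2}{2}\eta_{B,k}\big)$ yields \eqref{exactcomputationetabisbis}. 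The only genuine labor — and hence the main obstacle — is this last bookkeeping: propagating the $O(\cdot)$ remainders through the $\pa_y$- and $\H_B$-weighted norms requires the derivative versions of \eqref{vnbevbnenonevbis} and the matching rescalings of \eqref{calculeignvector}. In practice it may be cleanest, as the deliberately parallel structure of the paper suggests, to re-run the short Lyapunov--Schmidt and bootstrap argument of Sections~\ref{sectiontwo}--\ref{sectionfour} verbatim with $-\Delta-B\Lambda$ in place of $-\Delta+B\Lambda$; conceptually nothing new occurs beyond the $d=2$ spectral shift.
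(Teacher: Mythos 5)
Your proposal is correct and follows essentially the same route as the paper: the conjugation $\H_B(e^{-\frac{B|y|^2}{2}}w)=e^{-\frac{B|y|^2}{2}}(-\Delta w+B\Lambda w+2Bw)$ giving the $+2$ spectral shift, the isometry $L^2_{B,+}\to L^2_{B,-}$ transferring the norms, the cancellation $2B\pa_B\etah_{B,j}-\Lambda\etah_{B,j}=e^{-\frac{B|y|^2}{2}}(2B\pa_B\eta_{B,j}-\Lambda\eta_{B,j})$ for \eqref{foihonononfebisbis}, and the combination of \eqref{exactcomputationetabis} with the rescaled three-term recursion from \eqref{vnbevbnenonevbis} for \eqref{exactcomputationetabisbis} are all exactly the paper's argument. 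The only difference is cosmetic: the paper records the norm comparisons \eqref{normzerp}--\eqref{normtwo} explicitly via the rescaled weighted estimate \eqref{estwiheihegohe} rather than invoking Leibniz rules inline.
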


\begin{proof}[Proof of Lemma \ref{isometryhb}] This is a direct consequence of Lemma \ref{lemmarenormalized}. Indeed, the map 
\be
\label{isometryhb}
\begin{array}{ll} L^2_{B,+}\to L^2_{B,-}\\ v\mapsto w=e^{\frac{B|y|^2}{2}}v\end{array}\ \ \mbox{is an isometry}
\ee  and integrating by parts: 
\be\label{E:ISOMETRY}
\int |\nabla v|^2e^{\frac{B|y|^2}{2}}\rho_{B,+}dy=\int |\nabla w|^2\rho_{B,-}dy+2B\int |w|^2\rho_{B,-}dy
\ee
or equivalently:
$$
\H_Bv=\left(-\Delta w+B\Lambda w+2B w\right) e^{-\frac{B|y|^2}{2}}.
$$
Together with Lemma~\ref{lemmarenormalized}, this yields \eqref{beiebibeiebis}. We now renormalise \eqref{weightedesimate} which yields:
\be
\label{estwiheihegohe}
B^{\frac{k+1}{2}}\|y^kw\|_{B,-}\lesssim \|\pa_yw\|_{B,-}+\sqrt{B}\|w\|_{B,-}.
\ee 
The isometric relation~\eqref{E:ISOMETRY} and~\eqref{estwiheihegohe} imply the following comparison of norms: 
\bea
\label{normzerp}
&&\|\pa_yv\|_{B,+}\lesssim \|\pa_yw\|_{B,-}+\sqrt{B}\|w\|_{B,+}\\
\label{normone}
&&\|\Lambda v\|_{B,+}=\|\Lambda w-By^2w\|_{B,-}\lesssim \|\Lambda w\|_{B,-}+\frac{1}{\sqrt{B}}\|\pa_yw\|_{B,-}+\|w\|_{B,-}\\
&&\|y^kv\|_{B,+}\lesssim \|y^kw\|_{B,-}.
\eea
Using~\eqref{estwiheihegohe}:
\bea
\label{normtwo}
\nonumber \|\Delta v\|_{B,+}&\lesssim &\|\Delta w\|_{B,-}+B\|w\|_{B,-}+B\|\Lambda w\|_{B,-}+B^2\|y^2w\|_{B,-}\\
& \lesssim & \|\Delta w\|_{B,-}+B\|w\|_{B,-}+B\|\Lambda w\|_{B,-}+\sqrt{B}\|\pa_y w\|_{B,-}.
 \eea
 We also observe that 
 $$
 |\pa_yw(1)|\lesssim |\pa_yv(1)|\ \ \mbox{if}\ \ w(1)=0.
 $$ 
Using the above bounds together with~\eqref{beiebibeiebis},~\eqref{deftketabis},~\eqref{calculeignvectoreta},~\eqref{cebivbfenoe}, and~\eqref{scalirpeoduceta} 
 yields \eqref{calculeignvectoretabis}, \eqref{cebivbfenoebis}, and \eqref{scalirpeoducetabis}.
 Moreover, 
 $$
 \pa_B\etah_{B,k}=\left(\pa_B\eta_{B,k}-\frac{|y|^2}{2}\eta_{B,k}\right)e^{-\frac{By^2}{2}}.
 $$ 
 Hence \eqref{estderivativebis} follows from \eqref{estderivative}, \eqref{cebivbfenoe}. 
 We now observe the fundamental conjugation 
 $$
 \Lambda \etah_{B,j}-2B\pa_B\etah_{B,j}=\left(\Lambda \eta_{B,j}-2B\pa_B\eta_{B,j}\right)e^{-\frac{B|y|^2}{2}},
 $$ and hence \eqref{foihonononfebisbis} follows from \eqref{foihonononfe}. Moreover from \eqref{vnbevbnenonevbis}:
 $$B|y|^2\eta_{B,k}=z^2\psi_{B,k}=-(2k+2)\eta_{B,k+1}+(4k+2)\eta_{B,k}-2k\eta_{B,k-1}+F_k, \ \ \|F_k\|_B\lesssim\frac{1}{\sqrt{B}}$$ and hence using \eqref{exactcomputationetabis}:
 \bee
 B\pa_B\eta_{B,k}-B\frac{|y|^2}{2}\eta_{B,k}&=&k(\eta_{B,k}-\eta_{B,k-1})-\frac 12\left[-(2k+2)\eta_{B,k+1}+(4k+2)\eta_{B,k}-2k\eta_{B,k-1}\right]+\tilde{F}_k\\
 & = & (k+1)\left[\eta_{B,k+1}-\eta_{B,k}\right]+\tilde{F}_k\ \ \mbox{with}\ \ \|\tilde{F}_k\|_B\lesssim \frac{1}{\sqrt{B}}\\
 \eee
 and similarly for higher derivatives, and \eqref{exactcomputationetabisbis} is proved.
  \end{proof}


\section{Finite time melting regimes}
\label{sectionnonlin}

This section is devoted to the existence and stability of the melting process. In all the section, we let $$\pm=-, \ \ \rho=\rho_-, \ \ b>0.$$


\subsection{Renormalised equations and initialisation}
\label{sectinogoingeo}

We start with the classical modulated nonlinear decomposition of the flow. We let 
\be
\label{normalizationinitial}
u(t,x)=v(s,y), \ \ y=\frac{r}{\l(t)}, \ \ \l(0)=1,
\ee 
where $\l(0)=1$ is assumed without loss of generality thanks to the scaling symmetry \eqref{E:SELFSIMILAR0}. We define the renormalised time 
\be
\label{renormalizedtime}
s(t)=s_0+\int_0^{t}\frac{d\tau}{\l^2(\tau)}, \ \ s_0\gg 1,
\ee
and obtain the renormalised flow: 
\be\label{E:KDEFINITION}
\left\{\begin{array}{ll}\pa_sv+\H_a v=0, \ \ a=-\lsl,\\ v(s,1)=0, \ \ \pa_yv(s,1)=a.
\end{array}\right.
\ee
We now prepare our initial data in the following way:\\

\noindent\underline{case $k=0$}: We first claim that given $0<b^*\ll 1$ and $\e^*$ with $$\|\e^*\|^2_{b^*}\lesssim \frac{b^*}{|\log b ^*|^2},$$ there exists a locally unique decomposition 
\be
\label{decopmsbstar}
b^*\eta_{b^*,0}+\e^*=b\eta_{b,0}+\e\ \ \mbox{with}\ \ (\e,\eta_{b,0})_b=0, \ \ |b-b^*|\lesssim \frac{b^*}{|\log b^*|^2}.
\ee 
Indeed, we define the map $$F(b,\e^*)=( b^*\eta_{b^*,0}-b\eta_{b,0}+\e^*,\eta_{b,0})_b$$ which satisfies $F(b^*,0)=0$ and 
$$
\pa_bF(b^*,0)=
- ( \eta_{b^*,0}+b^*(\pa_b\eta_{b,0})|_{b=b^*},\eta_{b^*,0})_{b^*}=- \frac{|\log b^*|^2}{4b^*}\left[1+O\left(\frac1{|\log b^*|}\right)\right]<0
$$ 
from \eqref{scalirpeoduceta} and the degeneracy \eqref{exactcomputationetabis} for $k=0$. The claim then follows from a standard application of the implicit function theorem. A Taylor expansion of $F$ about $(b,\e^*)\equiv(b^*,0)$ yields the bound
$$
|b-b^*|\frac{|\log b^*|^2}{4b^*}\left[1+O\left(\frac1{|\log b^*|}\right)\right]\lesssim \|\e^*\|_{b^*}\|\eta_{b^*,0}\|_{b^*}\lesssim \|\e^*\|_{b^*}\frac{|\log b^*|}{2\sqrt{b^*}}
$$ 
and hence $$|b-b^*|\lesssim \frac{\sqrt{b^*}}{|\log b|^*}\frac{\sqrt{b^*}}{|\log b|^*}$$ which concludes the proof of \eqref{decopmsbstar}. 

We therefore pick an initial datum $$v_0^*=b_0^*\eta_{b_0^*}+\e_0^*, \ \ \|\e_0^*\|^2_{b^*}\lesssim \frac{b^*}{|\log b ^*|^2}$$ and decompose the solution 
\be
\label{vejbebejbie}
v(s,y)=b_0(s)\eta_{b_0(s),0}+\e(s,y) \ \ \mbox{with} \ \ b(s)=b_0(s)\ \ \mbox{and}\ \  (\e,\eta_{b,0})_b=0
\ee
which makes sense as long as $\e(s,y)$ is small enough in the $L^2$ weighted sense. We let 
\be
\label{defetwo}
\e_2=\H_b\e
\ee 
and define the energy
$$
\calE : = \|\H_b \e\|_b^2,
$$
which is a coercive norm thanks to the orthogonality condition \eqref{vejbebejbie}, see Appendix \ref{appendcoerc}. We assume the initial smallness
\be\label{E:ESMALLzero}
 \calE(0)\leq\ \frac{b^3(0)}{|\ln b(0)|^2}.
 \ee

\noindent\underline{case $k\geq 1$}: Let
\be
\label{linearcj}
c_{k,1}=-\frac{k+1}{2k^2}, \ \ c_{k,2}=c_{k,1}-\frac{(k+1)\alpha_k}{k}.
\ee 
Then we freeze the explicit value 
\be
\label{definitionb}
b(s) : =\frac{1}{2ks}+\frac{c_{k,1}}{s\ln s}
\ee
and the sequence 
\be
\label{defbkj}
\left\{\begin{array}{ll} b_k^e=\frac{k+1}{2k s}+\frac{c_{k,2}}{s\ln s},\\
b_{j}^e=0, \ \ 0\leq j\leq k-1,
\end{array}\right.
\ee
where the index ``e" stands for exact. 

\begin{remark}
\label{remarkexacte}
 Letting 
 \be
 \label{valueaexact}
 a^e=\frac{k+1}{2ks}+\frac{c_{k,1}}{s\ln s},
 \ee then $(a^e,b,b_k^e)$ satisfy 
\be
\label{approxode}
\left\{\begin{array}{lll}(b_k^e)_s+bb_k^e\left[2k+\frac{2}{|\log b|}\right]+\frac{2(a^e-b)b_k^e}{|\log b|}=O\left(\frac{1}{s^2(\ln s)^2}\right)\\
 b_s+2b(a^e-b)=O\left(\frac{1}{s^2(\ln s)^2}\right)
 \\ a^e-b^e_k\left(1+\frac{2\alpha_k}{|\log b|}\right)=O\left(\frac{1}{s(\log s)^2}\right).
\end{array}\right.
\ee
\end{remark}

We define
\be
\label{defqbeta}
Q_\beta (y)=\sum_{j=0}^k b_j\eta_{b,j}(y)
\ee
and introduce the dynamical decomposition 
\be
\label{orthoe}
v(s,y)=Q_{\beta(s)}+\e(s,y), \ \ (\eta_{b,j}(s),\e)_{b}=0, \ \ 0\leq j\leq k.
\ee 
We let again $$
\e_2=\H_b\e, \ \ \calE : = \|\H_b \e\|_b^2,
$$
which due to the orthogonality conditions \eqref{orthoe} is a coercive norm, see Appendix \ref{appendcoerc}. We assume the initial smallness
\be\label{E:ESMALL}
 \calE(0)\leq
  \frac{b^3(0)}{|\ln b(0)|}.
  \ee
For $k\geq 1$, the set of initial data will be built as a codimension $k$ manifold. To this end and in order to prepare the data, we 
 consider the decomposition 
\be
\label{bveuibvibi}
b_j(s)=b_j^e(s)+\bt_j(s), \ \ \bt_j(s)=\frac{V_j(s)}{s(\ln s)^{\frac 32}}, \ \  j=0,\dots, k.
\ee 
Let the $(2\times2)$-matrix $A_k$ be given by 
\be\label{E:AKMATRIX}
A_k: =  \left( \begin{array}{cc}
-1 & -1  \\
1 &  1 +d_k   
\end{array} \right), \ \ d_k: = \frac 1{k(k+1)}.
\ee
The matrix $A_k$ is diagonalisable with one strictly positive $\mu^k_1>0$ and one strictly negative eigenvalue $\mu^k_2<0.$ Let $P_k$ be an orthogonal matrix diagonalizing $A_k,$ i.e.
\[
A_k = P_k^{-1} \Lambda_k P_k, \ \ 
\Lambda_k : =  \left( \begin{array}{cc} 
\mu^k_1 & 0  \\
0 &  \mu^k_2.   
\end{array} \right)
\]
We define the new unknowns $W_k,W_{k-1}$ by setting 
\be\label{E:WKDEFINITION}
\left( \begin{array}{c}
 W_k  \\
W_{k-1}    
\end{array} \right) : = 
P_k \left( \begin{array}{c}
 V_k  \\
V_{k-1}    
\end{array} \right)
\ee
and now assume the initial bound 
\bea
\label{estinitialbound}
&&|W_k(0)|\leq  1 \\
\label{contorljv}
&&|W_{k-1}(0)|^2+\sum_{j=0}^{k-2}\left|\frac{V_j(0)}{\delta}\right|^2\leq  K^2 
\eea
for some universal constants $K>0$, $0<\delta(k)\ll1$  to be chosen later. We then consider the bootstrap bounds 
\be
\label{E:BOOTBOUND}
\mathcal E \le \left\{\begin{array}{ll}  \frac{Db^3}{|\ln b|^2}\ \ \mbox{for}\ \ k=0,\\ \frac{Db^3}{|\ln b|}\ \ \mbox{for} \ \ k\ge 1
\end{array}\right.
\ee
for some large enough universal $D=D(k)$ to be chosen later, and 
\begin{itemize}
\item for $k=0$
\be
\label{positivitybzero}
0<b_0(s)<b^*
\ee
\item for $k\ge 1$, 
\be
\label{boundstablemode}
|W_k(s)|\leq K
\ee
and 
\be
\label{aprioriboundbjbis} 
|W_{k-1}(s)|^2+\sum_{j=0}^{k-2}\left|\frac{V_j(s)}{\delta}\right|^2\leq K^2
\ee
\end{itemize}
and define 
$$
s^*=\left\{\begin{array}{ll}\sup_{s\geq s_0}\{\eqref{E:BOOTBOUND}, \eqref{positivitybzero}\ \ \mbox{hold on}\ \ [s_0,s]\}\ \ \mbox{for}\ \ k=0,\\\sup_{s\geq s_0}\{\eqref{E:BOOTBOUND}, \eqref{boundstablemode}, \eqref{aprioriboundbj}  \ \ \mbox{hold on}\ \ [s_0,s]\}\ \ \mbox{for}\ \ k\ge 1.\end{array}\right.
$$
The main ingredient of the proof of theorem~\ref{T:MAIN} is the following:

\begin{proposition}[Bootstrap estimates on $b$ and $\varepsilon$]
\label{pr:bootstrap}
The following statements hold:\\
{\em 1. Stable regime}: for $k=0$, $s^*=+\infty$.\\
{\em 2. Unstable regime}: for $k\geq 1$, there exist constants $K,$ $\delta = \delta(K)\ll1$ and $(V_0(0),\dots,V_{k-2}(0),W_{k-1}(0))$ depending on $\e(0)$ 
satisfying~\eqref{E:ESMALL},~\eqref{estinitialbound} and~\eqref{contorljv}, such that  $s^*=+\infty$.
\end{proposition}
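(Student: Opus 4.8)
The argument is a bootstrap (continuity) argument on $[s_0,s^*)$, where $s^*$ is the supremum of times on which \eqref{E:BOOTBOUND} and \eqref{positivitybzero} (case $k=0$), resp. \eqref{E:BOOTBOUND}, \eqref{boundstablemode} and \eqref{aprioriboundbjbis} (case $k\ge1$), hold. We must show that on $[s_0,s^*)$ every such bound is improved by a fixed factor, \emph{save} for the $k$ genuinely expanding modulation parameters when $k\ge1$. When $k=0$ this forces $s^*=+\infty$ at once. When $k\ge1$, strict improvement of the energy and of the stable direction reduces matters to a topological argument: if $s^*<+\infty$ for \emph{every} admissible choice of $(V_0(0),\dots,V_{k-2}(0),W_{k-1}(0))$ subject to \eqref{contorljv}, then, the exit through the sphere in \eqref{aprioriboundbjbis} being transverse, the exit-time map would be a continuous retraction of the closed $k$-ball onto its boundary, which is impossible; this produces the asserted choice of data with $s^*=+\infty$.

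\textbf{Step 1 (modulation equations and the choice of $b$).} Plugging \eqref{orthoe} into \eqref{E:KDEFINITION}, differentiating the orthogonality relations $(\eta_{b,j},\e)_b=0$ in $s$, and using the free boundary identity $\pa_yv(s,1)=a$ together with the eigenfunction expansions and the estimates \eqref{calculeignvectoreta}--\eqref{scalirpeoduceta} of Lemma~\ref{lemmarenormalized}, one obtains the modulation system for $(b_j)_{0\le j\le k}$ of the schematic form \eqref{bsyshte}, together with the algebraic constraint $a=\sum_{j=0}^kb_j\big(1+\tfrac{2\alpha_j}{|\log b|}\big)+\mathrm{lot}$, the errors being quadratic in $(b,\sqrt{\mathcal E})$ up to logarithmic weights. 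One then \emph{defines} $b(s)$ by imposing \eqref{choosephi}, i.e. $\Phi:=b_s+2b(a-b)=0$; besides fixing the remaining gauge, this is exactly the choice that kills the dangerous term in the energy identity of Step~2. For $k\ge1$ one substitutes $b_j=b_j^e+\tilde b_j$ with $\tilde b_j=V_j/(s(\ln s)^{3/2})$ and passes to $(W_k,W_{k-1})$ via \eqref{E:WKDEFINITION}, so that the linearized modulation dynamics decouples: $W_k$ is the decaying mode, while $W_{k-1}$ and $V_0,\dots,V_{k-2}$ are the $k$ expanding directions.

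\textbf{Step 2 (the energy estimate --- the crux).} Let $\e_2=\H_b\e$ as in \eqref{defetwo}. Since $\e(1)=0=\eta_{b,j}(1)$ and $\H_b$ is self-adjoint for $(\cdot,\cdot)_b$, the orthogonality \eqref{orthoe} propagates to $(\e_2,\eta_{b,j})_b=0$, $0\le j\le k$, so the rescaled spectral gap of Proposition~\ref{lemmadiag}, read through \eqref{formuelerenorer}, yields $(\H_b\e_2,\e_2)_b\ge (2k+2)\,b\,\|\e_2\|_b^2$ with the \emph{sharp} constant. Differentiating $\mathcal E=\|\e_2\|_b^2$ in $s$ and using the equation for $\e$ (hence for $\e_2$) we collect: the coercive dissipation $-2(\H_b\e_2,\e_2)_b\le -2(2k+2)b\,\mathcal E$; a term $\propto\Phi\int|y|^2\e_2^2e^{-b|y|^2/2}$ coming from the $s$-dependence of the Gaussian weight, which \emph{vanishes} by $\Phi=0$; modulation source terms bounded via Step~1 and Lemma~\ref{lemmarenormalized}; the nonlinear contributions of the free boundary; and boundary terms at $y=1$ generated by the $H^2$-level integrations by parts. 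These last terms are the main difficulty --- they are not pointwise small --- but they carry a favorable algebraic structure (a perfect $s$-derivative plus a term of the good sign), so that \emph{after integrating in $s$} they are absorbed into the left-hand side and the source; this is precisely where the dissipative structure and the free-boundary algebra are used optimally. The outcome is a differential inequality, schematically $\frac{d}{ds}\mathcal E\le -2(2k+2)b\,\mathcal E+C\big(\tfrac{b^4}{|\log b|^{\sharp}}+\tfrac{b}{|\log b|}\mathcal E\big)$ modulo time-integrated boundary contributions, with $\sharp=2$ for $k=0$ and $\sharp=1$ for $k\ge1$.

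\textbf{Step 3 (closing the bootstrap).} Multiplying by the integrating factor $\exp\!\big(2(2k+2)\int_{s_0}^s b\big)$ --- which behaves like $(s/s_0)^{2(k+1)/k}$ once $b\sim\frac1{2ks}$ (case $k\ge1$, cf. \eqref{definitionb}) and is slowly varying when $k=0$ --- and integrating, the source is seen to be exactly integrable against this weight; together with the initial smallness \eqref{E:ESMALLzero}/\eqref{E:ESMALL} and $D=D(k)$ chosen large, this gives $\mathcal E(s)\le\frac D2\,b^3/|\log b|^{\sharp}$ on $[s_0,s^*)$, strictly improving \eqref{E:BOOTBOUND}; by the coercivity of $\mathcal E$ (Appendix~\ref{appendcoerc}) this also controls $\e$ and keeps the decomposition \eqref{orthoe} valid. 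For $k=0$, the ODE $\Phi=0$ with $a=b_0+\mathrm{lot}$ forces $b_0$ strictly decreasing and confined to $(0,b^*)$, improving \eqref{positivitybzero}; hence $s^*=+\infty$. For $k\ge1$, $W_k$ being the decaying mode and $\mathcal E$ already controlled, \eqref{boundstablemode} is improved by Gr\"onwall; it remains to treat the $k$ expanding directions $(W_{k-1},V_0,\dots,V_{k-2})$. Each obeys a scalar ODE with a strictly expanding linear part, of the form $y'=\tfrac cs y+(\text{smaller})$ with $c>0$, plus errors controlled by the already-closed bounds, so any exit through the sphere in \eqref{aprioriboundbjbis} is transverse. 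By the now-standard Brouwer scheme (as in \cite{cote,collot,RSc,MRRsur}) this transversality makes the exit-time map continuous and equal to the identity on the boundary, contradicting no-retraction; hence there is an admissible choice of $(V_0(0),\dots,V_{k-2}(0),W_{k-1}(0))$ with $s^*=+\infty$. The principal obstacle throughout is Step~2: extracting from the dissipation exactly the decay needed to close $\mathcal E$ at the sharp order $b^3/|\log b|^{\sharp}$ while taming the $H^2$ boundary terms of the free boundary --- in dimension $d=2$ the logarithmic gains are so thin that both the sharp spectral constant $2k+2$ and the time-integration of the boundary terms are indispensable.
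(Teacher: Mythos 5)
Your overall architecture (weighted energy/dissipation for $\e_2=\H_b\e$, modulation equations, integrating factor, Brouwer no‑retraction for the $k$ unstable directions) is the same as the paper's, but there is a quantitative gap in Step 2 that breaks the argument for $k\ge 2$. You take the dissipation to be $-2(2k+2)b\,\mathcal E$, i.e. only the spectral‑gap constant $2k+2$ from \eqref{coercdeuxbis}. With $b\sim\frac1{2ks}$ this gives an integrating‑factor exponent $2+\frac2k$, and the source $\frac{b^4}{|\ln b|}\sim\frac{1}{s^4\ln s}$ then integrates to something of size $s^{-(2+2/k)}$ (up to $\ln\ln s$ for $k=2$), which does \emph{not} recover the bootstrap target $\mathcal E\lesssim \frac{b^3}{|\ln b|}\sim\frac{1}{s^3\ln s}$ once $2+\frac2k\le 3$, i.e. for every $k\ge2$. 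The paper's energy identity \eqref{algebraicneergyidentity} contains the additional damping term $-(a-b)\|\e_2\|_b^2$ produced by the commutator $[\H_a,\H_b]$, and since $a-b=kb+O(b/|\ln b|)$ by \eqref{E:AMINUSB}, the total coercivity constant is $3k+c$ (not $2k+2$), giving the exponent $3+\frac ck>3$ which is exactly what makes the time integration in \eqref{estimateenergy} close. The remark after Proposition \ref{propenergy} flags this constant as essential; your closing sentence attributes the sharpness to $2k+2$, which is the wrong constant.

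Two further discrepancies. First, the paper does not define $b$ by imposing $\Phi=0$: for $k=0$ it sets $b=b_0$, and for $k\ge1$ it freezes $b$ explicitly by \eqref{definitionb}; the term $\frac{\Phi}{4}\|y\e_2\|_b^2$ therefore does not vanish and must be estimated (via $|\Phi|\lesssim b^2/|\ln b|$ from \eqref{estimatephi} together with the weighted bound \eqref{E:YHBEPSILONBOUND}). If you genuinely take $\Phi=0$ as the definition of $b$, then $b$ becomes an extra dynamical unknown and the whole parametrization of the codimension‑$k$ manifold (deviations $\tilde b_j$ from the explicit profiles $b_j^e$, the matrix $A_k$, etc.) has to be reworked; you do not carry this out. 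Second, to conclude $s^*=+\infty$ you must also rule out exit through the $H^2$ blow‑up criterion of the Cauchy theory: the Gaussian‑weighted norm $\mathcal E$ does not control $\|\Delta u\|_{L^2}$ in the far field, and the paper needs a separate localized energy estimate (the cut‑off computation \eqref{E:CHIESTIMATE}) to close the continuation argument. This step is missing from your proposal.
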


\begin{remark} Let us observe that our set of initial data is non empty and contains compactly supported arbitrarily small data in $\dot{H}^1$, see Appendix \ref{data}.
\end{remark}

\begin{remark}
The proof of the Proposition \ref{pr:bootstrap} is presented in section~\ref{S:PROOFPROPOSITION}.
\end{remark}

From now on and for the rest of this section, we study the flow in the bootstrap regime $s\in[s_0,s^*)$. Note in particular the rough bounds 
\be
\label{roughboundbj}
|b_k|\lesssim b, \ \ |b_j|\lesssim \frac{b}{|\ln b|}, \ \ 0\leq j\leq k-1\ \ \mbox{and}\ \ \mathcal E\leq \frac{b^3}{\sqrt{|\ln b|}}
\ee
for $s_0\geq s_0(K)$ large enough.


\subsection{Extraction of the leading order ODE's driving the melting}
\label{beivbevbeo}

We derive in this section the main dynamical constraint on the parameters $(a,b,(b_j)_{0\leq j\leq k})$ which lead to the leading order modulation equations, and are a combination of the linear diagonalisation of the $\H_b$ operator and the nonlinear boundary conditions.\\

We start with the constraint induced by the boundary conditions.

\begin{lemma}[Boundary conditions]
\label{lemmaindeitnitues}
There holds:
\bea
\label{E:PARTIALYEPSILONBOUNDARY}
&&a=\sum_{j=0}^{k}b_j\left[1+\frac{2\alpha_j}{|\ln b|}\right]+O\left(\frac{b}{|\ln b|^2}+\frac{\sqrt{\mathcal E}}{|\ln b|\sqrt{b}}\right),\\
&&\e_2(1)=-a(a-b), \label{etwoone}\\
\label{etwoonebis}
& & \pa_y\e_2(1)=-a_s-\sum_{j=0}^k\l_{b,j}bb_j\left[1+\frac{2\alpha_j}{|\ln b|}\right]+O\left(\frac{b^2}{|\ln b|^2}\right).
\eea
\end{lemma}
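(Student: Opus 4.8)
The plan is to extract the three identities from the nonlinear free boundary conditions of the renormalized flow \eqref{E:KDEFINITION} together with the spectral structure of $\mathcal H_b$ established in Section~\ref{sectiongap}. The only genuine input besides bookkeeping is a sharp estimate of $\pa_y\e(1)$.

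\emph{Preliminary: boundary derivatives of the eigenmodes.} First I would record that for $0\le j\le k$, $\pa_y\eta_{b,j}(1)=1+\frac{2\alpha_j}{|\ln b|}+O(|\ln b|^{-2})$. Indeed, from \eqref{deftketa} and $\ln 1=0$ only the $1/y$ term in $\pa_y S_{b,j}$ survives at $y=1$, so $\pa_y S_{b,j}(1)=P_j(\sqrt b)+\sum_{i=0}^{j-1}\mu_{b,ij}P_i(\sqrt b)$; since $P_i(\sqrt b)=L_i(b/2)=1+O(b)$ and $\mu_{b,ij}=\frac{2}{(j-i)|\ln b|}+O(|\ln b|^{-2})$ by \eqref{estmuj}, the definition \eqref{edfalphaj} of $\alpha_j$ gives $\pa_y S_{b,j}(1)=1+\frac{2\alpha_j}{|\ln b|}+O(|\ln b|^{-2})$, while $|\pa_y\etat_{b,j}(1)|\lesssim|\ln b|^{-2}$ is part of \eqref{calculeignvectoreta}.

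\emph{Identities \eqref{etwoone} and \eqref{etwoonebis}.} From $v(s,1)=0$ and $\pa_yv(s,1)=a$ one gets $\pa_sv(s,1)=0$, $\Lambda v(s,1)=a$, and, differentiating $\pa_yv(s,1)=a$ in $s$, $\pa_y\pa_sv(s,1)=a_s$. Evaluating $\pa_sv+\mathcal H_av=0$ at $y=1$ yields $\Delta v(s,1)=a^2$, hence $v_{yy}(1)=a^2-a$ and $\pa_y(\Lambda v)(1)=v_y(1)+v_{yy}(1)=a^2$. Using $\mathcal H_av=-\pa_sv$, I rewrite $\mathcal H_bv=-\Delta v+b\Lambda v=-\pa_sv-(a-b)\Lambda v$. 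Now $\mathcal H_bQ_\beta=\sum_{j=0}^k b\l_{b,j}b_j\eta_{b,j}$ by \eqref{eignevalure}, so by \eqref{defetwo} and the decomposition \eqref{orthoe}, $\e_2=\mathcal H_bv-\mathcal H_bQ_\beta$; since $\eta_{b,j}(1)=0$, evaluating at $y=1$ gives $\e_2(1)=\mathcal H_bv(1)=-(a-b)\Lambda v(1)=-a(a-b)$, which is \eqref{etwoone}. Applying $\pa_y$ and then $y=1$, and using the preliminary step for $\pa_y\eta_{b,j}(1)$, gives
\[
\pa_y\e_2(1)=-a_s-(a-b)a^2-\sum_{j=0}^k b\l_{b,j}b_j\Big(1+\frac{2\alpha_j}{|\ln b|}\Big)+O\Big(\frac{b^2}{|\ln b|^2}\Big),
\]
where the error absorbs $\sum_j b\l_{b,j}b_j\,O(|\ln b|^{-2})$; finally $|(a-b)a^2|\lesssim b^3\lesssim b^2|\ln b|^{-2}$ in the bootstrap regime \eqref{roughboundbj}, which yields \eqref{etwoonebis}.

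\emph{Identity \eqref{E:PARTIALYEPSILONBOUNDARY}, and the main obstacle.} Writing $v=Q_\beta+\e$ in $\pa_yv(1)=a$, the preliminary step together with the rough bounds \eqref{roughboundbj} ($|b_k|\lesssim b$, $|b_j|\lesssim b|\ln b|^{-1}$ for $j<k$) produces $\sum_j b_j(1+\frac{2\alpha_j}{|\ln b|})+O(b|\ln b|^{-2})$ from $Q_\beta$, so that everything reduces to the estimate $|\pa_y\e(1)|\lesssim \sqrt{\calE}\,(\sqrt b\,|\ln b|)^{-1}$. An integration by parts in the radial variable gives the exact trace identity $\pa_y\e(1)=e^{b/2}(\e_2,1)_b$; by self-adjointness of $\mathcal H_b$, the boundary conditions $\eta_{b,0}(1)=\e(1)=0$, and the orthogonality $(\e,\eta_{b,0})_b=0$, one has $(\e_2,\eta_{b,0})_b=b\l_{b,0}(\e,\eta_{b,0})_b=0$, hence $(\e_2,1)_b=(\e_2,\,1-\tfrac2{|\ln b|}\eta_{b,0})_b$ and $|\pa_y\e(1)|\le e^{b/2}\sqrt{\calE}\,\|1-\tfrac2{|\ln b|}\eta_{b,0}\|_b$. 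Since $\eta_{b,0}=\ln y+\etat_{b,0}$ with $\|\etat_{b,0}\|_b\lesssim(\sqrt b\,|\ln b|)^{-1}$ by \eqref{calculeignvectoreta}, it remains to bound $\|1-\tfrac2{|\ln b|}\ln y\|_b$; evaluating $\|1\|_b^2$, $(1,\ln y)_b$ and $\|\ln y\|_b^2$ by the substitution $u=\tfrac b2y^2$, the leading \emph{and} subleading terms (in powers of $|\ln b|^{-1}$) in $\|1-\tfrac2{|\ln b|}\ln y\|_b^2$ cancel, leaving $O(b^{-1}|\ln b|^{-2})$, whence $\|1-\tfrac2{|\ln b|}\ln y\|_b\lesssim(\sqrt b\,|\ln b|)^{-1}$. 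This is the delicate point: the crude trace bound $|\pa_y\e(1)|\lesssim \sqrt{\calE}/\sqrt b$ is too lossy for the modulation analysis, and the extra factor $|\ln b|^{-1}$ comes precisely from the fact that, in $L^2_{b}$, the ground state $\eta_{b,0}\approx\ln y$ is asymptotically parallel to the constant function — equivalently, from the near-saturation of Cauchy--Schwarz for $(1,\ln y)_b$.
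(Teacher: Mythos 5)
Your proposal is correct and follows essentially the same route as the paper: the identities \eqref{etwoone} and \eqref{etwoonebis} are obtained exactly as in the text by evaluating the equation and its $\pa_y$-derivative at $y=1$ together with $\pa_y\eta_{b,j}(1)=1+\frac{2\alpha_j}{|\ln b|}+O(|\ln b|^{-2})$, and \eqref{E:PARTIALYEPSILONBOUNDARY} reduces to the refined trace bound $|\pa_y\e(1)|\lesssim \sqrt{\calE}/(\sqrt b|\ln b|)$. The only divergence is that you re-derive this trace bound directly — testing $\e_2$ against $1$, removing the $\eta_{b,0}$-component via the orthogonality \eqref{orthoe}, and exploiting the cancellation in $\|1-\tfrac{2}{|\ln b|}\ln y\|_b$ — whereas the paper simply quotes it from the boundary term $b|\ln b|^2(\pa_y\e)^2(1)$ in the coercivity estimate \eqref{coercunubis}; since $1-\tfrac{2}{|\ln b|}\ln y=-\tfrac{2}{|\ln b|}\ln(\sqrt b y)$, your computation is in fact the same mechanism as the appendix proof of that coercivity (which pairs $H_bu$ with $\ln z$), just unfolded inline.
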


\begin{proof}[Proof of Lemma \ref{lemmaindeitnitues}]  We compute from \eqref{deftketa}, \eqref{calculeignvectoreta}, \eqref{estmuj} and recalling the definition \eqref{edfalphaj}:
\be
\label{bejvieivbebie}
\pa_{y}\eta_{b,j}(1)=1+\sum_{i=0}^{j-1}\frac{2}{(j-i)|\ln b|}+O\left(\frac{1}{|\ln b|^2}\right)=1+\frac{2\alpha_j}{|\ln b|}+O\left(\frac{1}{|\ln b|^2}\right).
\ee
This implies that
$$
\pa_yQ_\beta(1)=\sum_{j=0}^{k}b_j\pa_y\eta_{b,j}(1)=\sum_{j=0}^{k}b_j\left[1+\frac{2\alpha_j}{|\ln b|}\right]+O\left(\frac{b}{|\ln b|^2}\right).
$$
Since $v = Q_\beta + \e,$ it follows that 
\bee
\e_y\big|_{y=1} & = &v_y\big|_{y=1} - \pa_yQ_\beta\big|_{y=1}=  -\lsl- \pa_yQ_\beta(1)\\
& = & a- \sum_{j=0}^{k}b_j\left[1+\frac{2\alpha_j}{|\ln b|}\right]+O\left(\frac{b}{|\ln b|^2}\right),
\eee
which together with \eqref{coercunubis} yields \eqref{E:PARTIALYEPSILONBOUNDARY}. From \eqref{E:KDEFINITION}, $v(s,1)=0$ and $\pa_yv(s,1)=-\lsl=a$: $$0=\H_av(1)=(\H_bv+(a-b)\Lambda v)(1)=\e_2(1)+a(a-b),$$ this is \eqref{etwoone}. Now from $\pa_yv(s,1)=a$, we have $$\pa_s\pa_yv(s,1)=a_s.$$ On the other hand, taking $\pa_y$ of \eqref{E:KDEFINITION}, we have:
$$
0=\pa_s\pa_yv+\pa_y(\H_bv+(a-b)\Lambda v)=\pa_s\pa_yv+\pa_y\e_2+\pa_y\H_bQ_\beta+(a-b)y\Delta v.
$$
We evaluate the above identity at $y=1$. From \eqref{E:KDEFINITION} and $\pa_sv(s,1)=0$, $\pa_yv(s,1)=a$: $$\Delta v(1)=a\Lambda v(1)=a^2.$$ By construction, $$\pa_y \H_bQ_\beta=\sum_{j=0}^k\l_{b,j}bb_j\pa_y\eta_{b,j},$$ and hence:
$$a_s+\pa_y\e_2(1)+\sum_{j=0}^k\l_{b,j}bb_j\left[1+\frac{2\alpha_j}{|\ln b|}\right]+a^2(a-b)=O\left(\frac{b^2}{|\ln b|^2}\right).$$ We inject into the estimate the rough bound 
\be
\label{controla}
|a|\lesssim b
\ee
which follows from \eqref{roughboundbj}, \eqref{E:PARTIALYEPSILONBOUNDARY} and  \eqref{etwoonebis} is proved.
\end{proof}

We now show how $Q_\beta$ is prepared to generate an approximate solution to \eqref{E:KDEFINITION} with the suitable leading order dynamical system for $(\beta,
\l)$ induced by the spectral diagonalisation of $\H_b$.

\begin{proposition}[Leading order modulation equations]
\label{approxsolution}
Under the a priori bounds of Proposition \ref{pr:bootstrap}, there holds \be
\label{equationqbeta}
\pa_sQ_{\beta}+\H_ aQ_\beta={\rm Mod} +\Psi
\ee
where we defined the modulation vector
\bea
\label{equationmodulation}
\Mod& : =& \left[(b_k)_s+bb_k\l_{b,k}+\frac{2(a-b)b_k}{|\ln b|}+\frac{kb_k}{b}\Phi\right]\eta_{b,k}\\
\nonumber & + & \sum_{j=0}^{k-1} \left[(b_j)_s+bb_j\l_{b,j}+\frac{2(a-b)b_j}{|\ln b|}+\frac{jb_j-(j+1)b_{j+1}}{b}\Phi\right]\eta_{b,j},
\eea
the deviation 
\be
\label{defphi}
\Phi : =b_s+2b(a-b),
\ee
and the remaining error satisfies the bound:
\be
\label{esterreur}
\|\Psi\|_b+\frac{1}{\sqrt{b}}\|\pa_y\Psi\|_b+\frac{1}{b}\|\mathcal H_b\Psi\|_b\lesssim \frac{b^{\frac 32}}{|\ln b|}+\frac{|\Phi|}{\sqrt{b}}.
\ee
\end{proposition}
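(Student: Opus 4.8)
\noindent\textbf{Proof strategy for Proposition~\ref{approxsolution}.} The plan is a direct computation reducing the claim to the two structural identities of Lemma~\ref{lemmarenormalized}. First I would write $\H_a=\H_b+(a-b)\Lambda$, use the eigenrelation~\eqref{eignevalure}, $\H_b\eta_{b,j}=b\l_{b,j}\eta_{b,j}$, and the fact that each $\eta_{b,j}$ depends on $s$ only through $b=b(s)$, so that $\pa_s\eta_{b,j}=b_s\,\pa_b\eta_{b,j}$. This gives
\begin{align*}
\pa_sQ_\beta+\H_aQ_\beta
=\sum_{j=0}^k(b_j)_s\eta_{b,j}
+b\sum_{j=0}^k\l_{b,j}b_j\eta_{b,j}
+b_s\sum_{j=0}^kb_j\pa_b\eta_{b,j}
+(a-b)\sum_{j=0}^kb_j\Lambda\eta_{b,j}.
\end{align*}
The first two sums already furnish the $(b_j)_s\eta_{b,j}$ and $bb_j\l_{b,j}\eta_{b,j}$ terms of $\Mod$ with no error, so the whole point is to reorganise the last two.

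\medskip

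\noindent Next I would trade $b_s$ for the deviation $\Phi$ of~\eqref{defphi} via $b_s=\Phi-2b(a-b)$, turning the last two sums into
\begin{align*}
\Phi\sum_{j=0}^kb_j\pa_b\eta_{b,j}
+(a-b)\sum_{j=0}^kb_j\bigl(\Lambda\eta_{b,j}-2b\pa_b\eta_{b,j}\bigr).
\end{align*}
Into the second sum I insert the conjugation estimate~\eqref{foihonononfe}, writing $\Lambda\eta_{b,j}-2b\pa_b\eta_{b,j}=\tfrac{2}{|\ln b|}\eta_{b,j}-E_j$ with $\|E_j\|_b+\tfrac1{\sqrt b}\|\pa_yE_j\|_b+\tfrac1b\|\H_bE_j\|_b\lesssim\tfrac1{\sqrt b|\ln b|}$; the leading part produces precisely the $\tfrac{2(a-b)b_j}{|\ln b|}\eta_{b,j}$ terms of $\Mod$. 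Into the first sum I insert the degeneracy estimate~\eqref{exactcomputationetabis}, writing $b\pa_b\eta_{b,j}=j(\eta_{b,j}-\eta_{b,j-1})+\widetilde F_j$ with $\|\widetilde F_j\|_b+\tfrac1{\sqrt b}\|\pa_y\widetilde F_j\|_b+\tfrac1b\|\H_b\widetilde F_j\|_b\lesssim\tfrac1{\sqrt b}$, so that
\begin{align*}
\Phi\sum_{j=0}^kb_j\pa_b\eta_{b,j}
=\frac{\Phi}{b}\sum_{j=0}^kjb_j(\eta_{b,j}-\eta_{b,j-1})
+\frac{\Phi}{b}\sum_{j=0}^kb_j\widetilde F_j.
\end{align*}
The $j=0$ summand of the telescoping sum vanishes, and the reindexing $\sum_{j=1}^kjb_j\eta_{b,j-1}=\sum_{j=0}^{k-1}(j+1)b_{j+1}\eta_{b,j}$ leaves the coefficient $kb_k$ on $\eta_{b,k}$ and $jb_j-(j+1)b_{j+1}$ on $\eta_{b,j}$ for $0\le j\le k-1$, which are exactly the $\Phi$-coefficients appearing in~\eqref{equationmodulation}.

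\medskip

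\noindent Collecting what is left over then defines
\begin{align*}
\Psi=-(a-b)\sum_{j=0}^kb_jE_j+\frac{\Phi}{b}\sum_{j=0}^kb_j\widetilde F_j.
\end{align*}
Since $a-b$, $\Phi/b$ and the $b_j$ are functions of $s$ alone, $\pa_y$ and $\H_b$ fall only on $E_j$ and $\widetilde F_j$, so the weighted bounds above control exactly the same trio of norms $\|\cdot\|_b+\tfrac1{\sqrt b}\|\pa_y\cdot\|_b+\tfrac1b\|\H_b\cdot\|_b$ as in~\eqref{esterreur}; using the rough bounds $|a-b|\lesssim b$ (from~\eqref{controla}) and $\sum_{j=0}^k|b_j|\lesssim b$ (from~\eqref{roughboundbj}) one obtains
\begin{align*}
\|\Psi\|_b+\tfrac1{\sqrt b}\|\pa_y\Psi\|_b+\tfrac1b\|\H_b\Psi\|_b
\lesssim b\cdot b\cdot\frac{1}{\sqrt b|\ln b|}+\frac{|\Phi|}{b}\cdot b\cdot\frac{1}{\sqrt b}
=\frac{b^{3/2}}{|\ln b|}+\frac{|\Phi|}{\sqrt b},
\end{align*}
which is~\eqref{esterreur}.

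\medskip

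\noindent The argument is essentially mechanical once Lemma~\ref{lemmarenormalized} is in hand; the only points that demand care are the telescoping reindexing (checking that the spurious $\eta_{b,-1}$ is annihilated by the factor $j$ at $j=0$) and verifying that the $\tfrac1{\sqrt b}$ and $\tfrac1b$ weights attached to $\pa_y$ and $\H_b$ in~\eqref{foihonononfe}--\eqref{exactcomputationetabis} pass cleanly through the spatially constant modulation prefactors. I do not anticipate a genuine obstacle: the conceptual difficulty — producing those conjugation/degeneracy identities with loss-free weighted remainders — has already been absorbed into the spectral analysis of Section~\ref{sectiongap}.
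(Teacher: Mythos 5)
Your proposal is correct and follows essentially the same route as the paper: expand $\H_a=\H_b+(a-b)\Lambda$, substitute $b_s=\Phi-2b(a-b)$, and then absorb $(a-b)b_j[\Lambda\eta_{b,j}-2b\pa_b\eta_{b,j}]$ and $\tfrac{b_j}{b}\Phi\, b\pa_b\eta_{b,j}$ into $\Mod$ up to the remainders controlled by \eqref{foihonononfe} and \eqref{exactcomputationetabis}, with the error bounded via \eqref{roughboundbj} and \eqref{controla}. The telescoping/reindexing detail and the passage of the weighted norms through the $y$-independent prefactors are handled exactly as in the paper's (more compressed) computation.
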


\begin{proof}[Proof of Proposition \ref{approxsolution}]
By definition $$\H_a=\H_b+(a-b)\Lambda $$ and we therefore compute from \eqref{defqbeta}:
\bee
&&\pa_sQ_{\beta(s)}(y)+\H_aQ_\beta =\sum_{j=0}^{k}\left[(b_j)_s\eta_{b,j}+b_s\frac{b_j}{b}b\pa_b\eta_{b,j}+bb_j\l_{b,j}\eta_{b,j}+(a-b)b_j\Lambda \eta_{b,j}\right]\\
& = & \sum_{j=0}^{k}\left\{[(b_j)_s+bb_j\l_{b,j}]\eta_{b,j}+(a-b)b_j[\Lambda \eta_{b,j}-2b\pa_b\eta_{b,j}]+\frac{b_j}{b}b\pa_b\eta_{b,j}\Phi\right\}\\
& = & \sum_{j=0}^{k}\left\{\left[(b_j)_s+bb_j\l_{b,j}+\frac{2(a-b)b_j}{|\ln b|}\right]\eta_{b,j}+\frac{jb_j\Phi}{b}[\eta_{b,j}-\eta_{b,j-1}]\right.\\
&+ & \left. (a-b)b_j[\Lambda \eta_{b,j}-2b\pa_b\eta_{b,j}-\frac2{|\ln b|}\eta_{b,j}]+\frac{b_j}{b}\Phi\left[b\pa_b\eta_{b,j}-j(\eta_{b,j}-\eta_{b,j-1})\right]\right\}.
\eee 
The bounds \eqref{foihonononfe}, \eqref{exactcomputationetabis}, \eqref{roughboundbj}, \eqref{controla} now yield \eqref{esterreur}.
\end{proof}

\begin{remark}
The presence of the $|\ln b|$ in the denominator on the right-hand side of~\eqref{esterreur} makes it 
a true error term with respect to our bootstrap regime, and this term is one of the leading order errors when closing the energy estimates in sections~\ref{S:MODULATION} and~\ref{S:ENERGYBOUND}. 
\end{remark}


\subsection{Modulation equations}\label{S:MODULATION}


From~\eqref{E:KDEFINITION},~\eqref{equationqbeta} we obtain the equation satisfied by the perturbation $\e:$
\be
\label{E:EQUATION}
\pa_s\e+\H_a\e=\matchal F
\ee
where 
\be
\label{E:FDEFINITION}
\mathcal F=-\Mod-\Psi.
\ee
The nonlinear decomposition and the orthogonality conditions~\eqref{orthoe} generate a differential equation for the modulation vector
$\beta=(b_j)_{0\leq j\leq k}$  in the setting of the bootstrap lemma \ref{pr:bootstrap} which we now compute exactly.


\begin{lemma}[Modulation equations for $b_j$]
\label{lemmamodulation}
\noindent{\em 1. $k=0$}: the $b$ law is given by:
\be
\label{E:KBOUND}
\left|b_s+\frac{2b^2}{|\ln b|}\right|\lesssim\frac{b^2}{|\ln b|^2}
\ee
\noindent{\em 2. $k\geq1$}:  the modulation dynamical system for the vector $(\bt_j)_{0\leq j\leq k}$ is given by 
\bea
\label{eqbtk}
\nonumber
&& \left| (\bt_k)_s+\frac 1s\left[\bt_k+(k+1)\sum_{j=0}^k\bt_j\right]\right|+ \left| (\bt_{k-1})_s+\frac1s\left[\frac{k-1}{k}\bt_{k-1}-(k+1)\sum_{j=0}^k\bt_j\right]\right|\\
 & + & \sum_{j=0}^{k-2}\left|(\bt_j)_s+\frac{j}{ks}\bt_j\right| \lesssim \frac{b^2}{|\ln b|^2}+\frac{\sqrt{b}\sqrt{\mathcal E}}{|\log b|}.
\eea
\end{lemma}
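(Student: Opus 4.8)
The plan is to extract the dynamics of $(b_j)_{0\le j\le k}$ by differentiating in $s$ the orthogonality conditions $(\eta_{b,j},\e)_b=0$ of~\eqref{orthoe} (resp.~\eqref{vejbebejbie} when $k=0$). Since both the weight $\rho_{b,-}=e^{-b|y|^2/2}$ and the eigenmode $\eta_{b,j}$ depend on $s$ only through $b(s)$, this yields
\[
(\pa_s\e,\eta_{b,j})_b+b_s(\e,\pa_b\eta_{b,j})_b-\tfrac{b_s}{2}(\e,y^2\eta_{b,j})_b=0 .
\]
I then substitute $\pa_s\e=-\H_a\e+\mathcal F=-\H_b\e-(a-b)\Lambda\e-\Mod-\Psi$ from~\eqref{E:EQUATION}--\eqref{E:FDEFINITION}. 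As $\H_b$ is self-adjoint for $(\cdot,\cdot)_b$ when $\e(1)=0$ and $\eta_{b,j}(1)=0$, the main linear term vanishes: $(\H_b\e,\eta_{b,j})_b=(\e,\H_b\eta_{b,j})_b=b\l_{b,j}(\e,\eta_{b,j})_b=0$ by orthogonality. Moreover distinct eigenvalues $b\l_{b,i}$ of $\H_b$ are pairwise distinct for $b$ small by~\eqref{estimatelowe}, so $(\eta_{b,i},\eta_{b,j})_b=0$ for $i\ne j$, and projecting $\Mod$ from~\eqref{equationmodulation} onto $\eta_{b,j}$ picks out exactly the bracketed coefficient in front of $\eta_{b,j}$ times $\|\eta_{b,j}\|_b^2$. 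Dividing by $\|\eta_{b,j}\|_b^2=\frac{|\ln b|^2}{4b}(1+O(1/|\ln b|))$ from~\eqref{scalirpeoduceta}, one gets, for each $0\le j\le k$, that the corresponding bracket in~\eqref{equationmodulation} equals $\frac{4b}{|\ln b|^2}$ times $-(\Psi,\eta_{b,j})_b-(a-b)(\Lambda\e,\eta_{b,j})_b+b_s(\e,\pa_b\eta_{b,j})_b-\tfrac{b_s}{2}(\e,y^2\eta_{b,j})_b$ (up to the harmless normalization correction).

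\textbf{Error estimates.} It remains to bound this right-hand side. For the $\Psi$--term I use~\eqref{esterreur} with $\|\eta_{b,j}\|_b\lesssim|\ln b|/\sqrt b$, noting that $\Phi$ in~\eqref{defphi} is small in the bootstrap regime because $b(s)$ is frozen by~\eqref{definitionb} and satisfies~\eqref{approxode}, so $\Phi$ is governed by $a-a^e$, hence by the boundary relation~\eqref{E:PARTIALYEPSILONBOUNDARY} and the a priori bounds. For the transport term I integrate $\Lambda=y\pa_y$ by parts onto $\eta_{b,j}$, $(\Lambda\e,\eta_{b,j})_b=-(\e,\Lambda\eta_{b,j}+2\eta_{b,j}-by^2\eta_{b,j})_b$, then use $|a-b|\lesssim b/|\ln b|$ (from~\eqref{E:PARTIALYEPSILONBOUNDARY} and the leading modulation), the coercivity of $\mathcal E$ from Appendix~\ref{appendcoerc} (so $\|\e\|_b\lesssim\sqrt{\mathcal E}/b$), and the bounds~\eqref{cebivbfenoe},~\eqref{estderivative},~\eqref{exactcomputationeta}. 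For the moving-weight terms I use $|b_s|\lesssim b^2$, $\|\pa_b\eta_{b,j}\|_b,\ \|y^2\eta_{b,j}\|_b\lesssim|\ln b|/(b\sqrt b)$, and again the coercivity of $\mathcal E$; these two produce precisely the $\frac{\sqrt b\sqrt{\mathcal E}}{|\ln b|}$ contribution in~\eqref{eqbtk}. Altogether the right-hand side is $O\bigl(\tfrac{b^2}{|\ln b|^2}+\tfrac{\sqrt b\sqrt{\mathcal E}}{|\ln b|}\bigr)$.

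\textbf{Reading off the two cases.} For $k=0$ one has $b=b_0$, $\l_{b,0}=\tfrac{2}{|\ln b|}+O(1/|\ln b|^2)$ by~\eqref{estimatelowe}, $a=b_0+O\bigl(b/|\ln b|^2+\sqrt{\mathcal E}/(|\ln b|\sqrt b)\bigr)$ by~\eqref{E:PARTIALYEPSILONBOUNDARY}, and no $\Phi$--term appears (its coefficient is $k=0$); hence $bb_0\l_{b,0}=\tfrac{2b^2}{|\ln b|}+O(b^2/|\ln b|^2)$ and the rest is $O(b^2/|\ln b|^2)$ once the sharper bootstrap $\mathcal E\le Db^3/|\ln b|^2$ is used, which is~\eqref{E:KBOUND}. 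For $k\ge1$, substitute $b_j=b_j^e+\bt_j$ with $b_j^e$ from~\eqref{defbkj} and $b(s)$ from~\eqref{definitionb}, subtract the exact relations~\eqref{approxode}, and linearize: since $b\sim\tfrac1{2ks}$ and $\l_{b,j}\sim2j$, the terms $bb_j\l_{b,j}$ give $\tfrac{j}{ks}\bt_j$ plus errors, while $\Phi=b_s+2b(a-b)$ together with the boundary expansion $a=\sum_i b_i(1+\tfrac{2\alpha_i}{|\ln b|})+\ldots$ produces the coupled $\tfrac{k+1}{s}\sum_{i=0}^k\bt_i$ and the $\tfrac{k-1}{ks}\bt_{k-1}$ contributions; collecting all of this reproduces~\eqref{eqbtk}.

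\textbf{Main obstacle.} The delicate point is the strongly coupled $\Phi$/boundary contribution: by~\eqref{E:PARTIALYEPSILONBOUNDARY} the quantity $a$, hence $\Phi$, depends on all the $b_j$ at once, so the linear part of the $(\bt_j)$--system is genuinely non-diagonal — this is precisely the coupling later diagonalized by the matrix $A_k$ of~\eqref{E:AKMATRIX}. Tracking the $O(1/|\ln b|)$ corrections in $\l_{b,j}$, the $\alpha_j$--weights and the normalization~\eqref{scalirpeoduceta} so that they assemble into exactly the coefficients appearing in~\eqref{eqbtk}, while checking that the leftover errors genuinely fit inside $\tfrac{b^2}{|\ln b|^2}+\tfrac{\sqrt b\sqrt{\mathcal E}}{|\ln b|}$ — which for $k=0$ is what forces the tighter bootstrap constant — is where the real care lies; the individual Cauchy--Schwarz bounds are routine once the coercivity of $\mathcal E$ from Appendix~\ref{appendcoerc} is available.
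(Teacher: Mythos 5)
Your proposal is correct and follows essentially the same route as the paper: differentiating the orthogonality conditions (equivalently, pairing the $\e$-equation with $\eta_{b,j}$), using self-adjointness of $\H_b$ and the normalization \eqref{scalirpeoduceta} to isolate the $j$-th bracket of $\Mod$, bounding the error terms with \eqref{esterreur}, \eqref{foihonononfe}, \eqref{cebivbfenoe}, \eqref{estderivative} and the coercivity of $\mathcal E$, and then linearizing around $(b_j^e,b)$ via \eqref{approxode} for $k\ge1$. The only slight imprecision is attributing the smallness of $\Phi$ to the freezing \eqref{definitionb} in the generic discussion — that applies only to $k\ge1$, while for $k=0$ one must absorb the $|b_s|/|\ln b|$ contribution self-consistently as in the paper's \eqref{cebeneeon}–\eqref{estbspoitnwise} — but you handle the two cases separately so this does not affect correctness.
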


\begin{remark} The constants in Lemma \ref{lemmamodulation} are independent of $D,K$, see also Remark~\ref{R:SQUAREROOT}. We need to keep track of the coupling between the modes
in \eqref{eqbtk} in order to study the linearised system close to $b_e^j$ and close the shooting argument, see \eqref{controlmodes}.
\end{remark}

\begin{proof}[Proof of Lemma \ref{lemmamodulation}] This lemma follows from the orthogonality conditions~\eqref{orthoe} and the boundary conditions of lemma~\ref{lemmaindeitnitues}.\\

\noindent{\bf step 1} Computation of $\Mod$. The $\Mod$ estimate follows from the sharp choice of orthogonality conditions \eqref{orthoe}. Indeed, for any $0\le j\le k$, we take the scalar product of \eqref{E:EQUATION} with $\eta_{b,j}$ and use the orthogonality condition \eqref{orthoe} to compute:
$$
-(\e,\pa_s\eta_{b,j})_b+\frac{b_s}{2}( \e,|y|^2\eta_{b,j})_b =  (\mathcal F,\eta_{b,j})-(a-b) (\Lambda \e,\eta_{b,j}).
$$ 
We now integrate by parts and use \eqref{orthoe} again to compute:
\bee
&&-(\e,\pa_s\eta_{b,j})_b+\frac{b_s}{2}( \e,|y|^2\eta_{b,j})_b+(a-b)(\Lamdba \e,\eta_{b,j})\\
& = & (\e,-\frac{b_s}{b}b\pa_b\eta_{b,j}+ \frac{b_s}{2}|y|^2\eta_{b,j}+(a-b)[-\Lamdba \eta_{b,j}+by^2\eta_{b,j}])\\
& = & (\e, \frac{\Phi}{b}\left[-b\pa_b\eta_{b,j}+\frac{by^2}{2}\eta_{b,j}\right]+(a-b)[2b\pa_b\eta_{b,j}-\Lamdba \eta_{b,j}])
\eee
We evaluate all terms in the above expression. From \eqref{cebivbfenoe}, \eqref{estderivative}, \eqref{coercunubis}:
$$
|(\e, \frac{\Phi}{b}\left[-b\pa_b\eta_{b,j}+\frac{by^2}{2}\eta_{b,j}\right])|\lesssim \frac{|\Phi|}{b}\|\e\|_b\frac{|\log b|}{\sqrt{b}}\lesssim \frac{|\Phi||\log b|}{b^2}\frac{\sqrt{\mathcal E}}{\sqrt{b}}.
$$ 
Similarly from \eqref{foihonononfe}, \eqref{scalirpeoduceta}, and \eqref{controla}:
$$
|(\e,(a-b)[2b\pa_b\eta_{b,j}-\Lamdba \eta_{b,j}])|\lesssim \frac{|b-a|}{\sqrt{b}}\|\e\|_b\lesssim \frac{\sqrt{\mathcal E}}{\sqrt{b}}.
$$
We now estimate the $\matchal F$ terms given by \eqref{E:FDEFINITION}. From \eqref{esterreur}, \eqref{scalirpeoduceta}:
$$|(\Psi,\eta_{b,j})_b|\lesssim \|\Psi\|_b\|\eta_{b,j}\|_b\lesssim \left[\frac{b^{\frac 32}}{|\ln b|}+\frac{|\Phi|}{\sqrt{b}}\right]\frac{|\ln b|}{\sqrt{b}}\lesssim b+\frac{|\Phi||\ln b|}{b}.$$  The collection of above bounds together 
with \eqref{scalirpeoduceta} and~\eqref{E:FDEFINITION} yields 
\be
\label{estiamtiemod}
\frac{|(\Mod, \eta_{b,j})_b|}{(\eta_{b,j},\eta_{b,j})_b}\lesssim\frac{b}{|\ln b|^2} \left[ b+\frac{|\Phi||\ln b|}{b}+\left[1+\frac{|\Phi||\log b|}{b^2}\right]\frac{\sqrt{\mathcal E}}{\sqrt{b}}\right].
\ee
We now argue differently depending on $k$.\\

\noindent{\bf step 2} Case $k=0$. In this case, $b=b_0$ and thus from \eqref{E:PARTIALYEPSILONBOUNDARY}:
$$\Phi=(b_0)_s+2b_0(a-b_0)=b_s+O\left(\frac{b^2}{|\log b|^2}+\frac{\sqrt{b}\sqrt{\mathcal E}}{|\log b|}\right).$$ 
This together with \eqref{roughboundbj} implies the bound:
\be
\label{reoughboundphi}
|\Phi|\lesssim \frac{b^2}{|\log b|}+|b_s|
\ee
and thus \eqref{estiamtiemod}, \eqref{equationmodulation}, and~\eqref{E:PARTIALYEPSILONBOUNDARY} imply:
\be
\label{cebeneeon}
|b_s+b^2\l_{b,0}|\lesssim \frac{b^2}{|\ln b|^2}+\frac{|b_s|}{|\ln b|}+\left[1+\frac{|b_s||\log b|}{b^2}\right]\frac{\sqrt{b}\sqrt{\mathcal E}}{|\ln b|^2}.
\ee 
Using the rough bound \eqref{roughboundbj}, this gives: 
\be
\label{estbspoitnwise}
\left|b_s\left[1+O\left(\frac{1}{|\ln b|}\right)\right]+\frac{2b^2}{|\ln b|}\left[1+O\left(\frac{1}{|\ln b|}\right)\right]\right|\lesssim \frac{b^2}{|\ln b|^2}
\ee 
and \eqref{E:KBOUND} follows.\\

\noindent{\bf step 3} Case $k\geq 1$. In this case, we have from \eqref{definitionb}: $$|b_s|\lesssim b^2$$ and we therefore need to estimate $\Phi$:
\bea
\label{estimatephi}
\nonumber \Phi&=&b_s+2b(a-b)=b_s+2b\left[a-\sum_{j=0}^kb_j\left(1+\frac{2\alpha_j}{|\ln b|}\right)\right]\\
\nonumber & +& 2b\sum_{j=0}^k(b_j-b_j^e)\left(1+\frac{2\alpha_j}{|\ln b|}\right)+2b\left[b_k^e\left(1+\frac{2\alpha_j}{|\ln b|}\right)-b\right]\\
& = & 2b\sum_{j=0}^k\bt_j+O\left(\frac{b^2}{|\ln b|^2}+\frac{\sqrt{b}\sqrt{\mathcal E}}{|\log b|}\right)
\eea
where we used \eqref{E:PARTIALYEPSILONBOUNDARY}, \eqref{approxode} and the bootstrap bounds \eqref{boundstablemode}, \eqref{aprioriboundbj} in the last step. This implies in particular the rough bound 
\be
\label{roughocntorlphi}
|\Phi|\leq \frac{b^2}{|\log b|}
\ee
From \eqref{estiamtiemod} it follows that: 
\be
\label{estmodet}
\frac{|(\Mod, \eta_{b,j})_b|}{(\eta_{b,j},\eta_{b,j})_b}\lesssim \frac{b^2}{|\ln b|^2}+\frac{\sqrt{b}\sqrt{\mathcal E}}{|\ln b|^2}\lesssim \frac{b^2}{|\log b|^2}.
\ee 
We now recall \eqref{valueaexact} and compute from \eqref{E:PARTIALYEPSILONBOUNDARY}, \eqref{approxode}:
\bea
\label{mainterma}
\nonumber a&=&\sum_{j=0}^k(b_j^e+\bt_j)\left[1+\frac{2\alpha_j}{|\log b|}\right]+O\left(\frac{b}{|\log b|^2}+\frac{\sqrt{\mathcal E}}{|\log b|\sqrt{b}}\right)\\
& = & a^e+\sum_{j=0}^k\bt_j+O\left(\frac{b}{|\log b|^2}+\frac{\sqrt{\mathcal E}}{|\log b|\sqrt{b}}\right).
\eea
We now use \eqref{estimatephi}, \eqref{approxode}, \eqref{mainterma} to compute explicitly:
\bee
&&(b_k)_s+bb_k\l_{b,k}+\frac{2(a-b)b_k}{|\log b|}+\frac{kb_k}{b}\Phi \notag \\
& = & (b_k^e+\bt_k)_s+b(b_k^e+\bt_k)\left[2k+\frac{2}{|\log b|}+O\left(\frac{1}{|\log b|^2}\right)\right]+\frac{2(a^e-b)(b^e_k+\bt_k)}{|\log b|}\\
&& + \frac{2(a-a^e)b_k}{|\log b|}+k(b^e_k+\bt_k)\left[2\sum_{j=0}^k\bt_j+O\left(\frac{b}{|\ln b|^2}+\frac{\sqrt{\mathcal E}}{\sqrt{b}|\log b|}\right)\right]\\
&=&  (\bt_k)_s+\frac 1s\left[\bt_k+(k+1)\sum_{j=0}^k\bt_j\right]+O\left(\frac{b^2}{|\ln b|^2}+\frac{\sqrt{b}\sqrt{\mathcal E}}{|\ln b|}\right). \label{E:MODULATION1}
\eee
similarly for $j=k-1$:
\bee
&&(b_{k-1})_s+bb_{k-1}\l_{b,{k-1}}+\frac{2(a-b)b_{k-1}}{|\ln b|}+\frac{(k-1)b_{k-1}-kb_{k}}{b}\Phi\\
& = &(\bt_{k-1})_s+b\bt_{k-1}\left[2(k-1)+\frac{2}{|\log b|}+O\left(\frac{1}{|\log b|^2}\right)\right]+\frac{2(a^e-b)\bt_{k-1}}{|\ln b|}\\
& +& \frac{2(a-a^e)\bt_{k-1}}{|\ln b|}+\left[(k-1)\bt_{k-1}-k(b_k^e+\bt_k)\right]\left[2\sum_{j=0}^k\bt_j+O\left(\frac{b}{|\ln b|^2}+\frac{\sqrt{\mathcal E}}{\sqrt{b} |\log b|}\right)\right]\\
& =&(\bt_{k-1})_s+\frac1s\left[\frac{k-1}{k}\bt_{k-1}-(k+1)\sum_{j=0}^k\bt_j\right]+ O\left(\frac{b^2}{|\ln b|^2}+\frac{\sqrt{b}\sqrt{\mathcal E}}{|\ln b|}\right).
\eee
Finally for $0\le j\leq k-2$:
\bee
&&(b_j)_s+bb_j\l_{b,j}+\frac{2(a-b)b_j}{|\ln b|}+\frac{jb_j-(j+1)b_{j+1}}{b}\Phi\\
& = & (\bt_j)_s+\frac{j}{ks}\bt_j+O\left(\frac{b^2}{|\ln b|^2}+\frac{\sqrt{b}\sqrt{\mathcal E}}{|\ln b|}\right).
\eee
Injecting the above bounds into \eqref{estmodet} yields \eqref{eqbtk}.
\end{proof}


\subsection{Energy bound}\label{S:ENERGYBOUND}

We now arrive at the second main feature of the analysis which is the derivation of suitable energy bounds for $\e$. 
The key here is the dissipation embedded in the problem and its geometry which feeds back into the energy estimates
through the boundary conditions. A careful analysis of this interaction will allow us to close the energy estimates.

\begin{proposition}[Energy bound]
\label{propenergy}
There holds the pointwise control:\\
\noindent{\em1. for $k=0$}:
\be
\label{estemeergyzero}
\frac 12\frac{d}{ds}\left\{\mathcal E+O\left(\frac{b^3}{|\ln b|^{2}}\right)\right\}+c b\matchal E\lesssim\frac{b^4}{|\ln b|^2};
\ee
\noindent{\em 2. for $k\ge 1$}: 
\be
\label{estemeergy}
\frac 12\frac{d}{ds}\left\{\mathcal E+O\left(\frac{b^3}{|\ln b|^{5/4}}\right)\right\}+\left[3k+c\right]b\matchal E\lesssim\frac{Kb^4}{|\ln b|}
\ee
for some universal constant $c>0$. 
\end{proposition}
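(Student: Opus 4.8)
\emph{Strategy.} The plan is to run the weighted energy estimate directly on $\e_2=\H_b\e$, using the spectral gap of $\H_b$ in the weighted norm $\|\cdot\|_b$ and, crucially, the cancellation forced by the choice of $b(s)$, which makes $\Phi=b_s+2b(a-b)$ an error term.

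\emph{Step 1 (equation for $\e_2$).} I would first differentiate $\e_2=\H_b\e$ in $s$, using $\pa_s\H_b=b_s\Lambda$, the $\e$--equation~\eqref{E:EQUATION} with $\H_a=\H_b+(a-b)\Lambda$, and the commutator identity $\H_b\Lambda=\Lambda\H_b+2\H_b-2b\Lambda$, to get
\[
\pa_s\e_2+\H_b\e_2+(a-b)\Lambda\e_2+2(a-b)\e_2=\Phi\Lambda\e+\H_b\mathcal F,\qquad \mathcal F=-\Mod-\Psi.
\]
Two preliminary facts are used repeatedly: first, since $\e(s,1)=0$ and $\eta_{b,j}(1)=0$, integration by parts gives $(\e_2,\eta_{b,j})_b=(\e,\H_b\eta_{b,j})_b=b\l_{b,j}(\e,\eta_{b,j})_b=0$ for $0\le j\le k$; because $\Mod$ is a linear combination of the $\eta_{b,j}$, $0\le j\le k$, and $\H_b\eta_{b,j}=b\l_{b,j}\eta_{b,j}$ by~\eqref{eignevalure}, this yields $(\H_b\Mod,\e_2)_b=0$, so $\Mod$ drops out of the energy identity altogether (the point of the sharp orthogonality~\eqref{orthoe}). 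Second, $\e_2(1)=-a(a-b)$ by~\eqref{etwoone} and $\pa_y\e_2(1)$ is given by~\eqref{etwoonebis}.

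\emph{Step 2 (energy identity and the cancellation).} Next I would compute $\tfrac12\tfrac{d}{ds}\mathcal E=\tfrac12\tfrac{d}{ds}(\e_2,\e_2)_b$, carefully tracking (i) the contribution $-\tfrac{b_s}{4}\|y\e_2\|_b^2$ of the $s$--derivative of the Gaussian $e^{-b|y|^2/2}$, and (ii) every boundary term at $y=1$ produced by integrating $\H_b$ and $\Lambda$ by parts, using $(\Lambda u,u)_b=-\|u\|_b^2+\tfrac b2\|yu\|_b^2-\tfrac12 e^{-b/2}u(1)^2$ and $-\tfrac{b_s}{4}=\tfrac{b(a-b)}{2}-\tfrac\Phi4$. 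The dangerous term $\tfrac{b(a-b)}{2}\|y\e_2\|_b^2$ coming from the weight then cancels \emph{exactly} against the $-\tfrac{b(a-b)}{2}\|y\e_2\|_b^2$ coming from $-(a-b)(\Lambda\e_2,\e_2)_b$, leaving
\[
\tfrac12\tfrac{d}{ds}\mathcal E+\|\pa_y\e_2\|_b^2+(a-b)\mathcal E=-e^{-b/2}\pa_y\e_2(1)\,\e_2(1)+\tfrac{a-b}{2}e^{-b/2}\e_2(1)^2-\tfrac\Phi4\|y\e_2\|_b^2+\Phi(\Lambda\e,\e_2)_b-(\H_b\Psi,\e_2)_b.
\]
This is precisely the mechanism flagged in Step 3 of the introduction: without this choice of $b$ the surviving $\|y\e_2\|_b^2$--term would involve a norm carrying no explicit spectral-gap constant.

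\emph{Step 3 (dissipation and lower-order terms).} Since $\e_2\perp\eta_{b,j}$, $0\le j\le k$, the spectral gap~\eqref{spcetralgap}, renormalized via~\eqref{formuelerenorer}, gives $\|\pa_y\e_2\|_b^2\ge\big(2k+2+O(\tfrac1{|\ln b|})\big)b\,\mathcal E$ up to corrections governed by $\e_2(1)=-a(a-b)$ (here one first subtracts a small boundary profile to land in $H^1_{b,-}$). Combined with $-(a-b)\mathcal E$ and the relation $a-b=kb(1+o(1))$ — which follows from~\eqref{valueaexact},~\eqref{definitionb} (the $\tfrac1{s\log s}$-terms cancel) and the bootstrap control of $a-a^e$ — this produces the net dissipative coefficient $\big(3k+2+O(\tfrac1{|\ln b|})\big)b\ge(3k+c)b$ when $k\ge1$, and $\big(2+O(\tfrac1{|\ln b|})\big)b\ge cb$ when $k=0$. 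It remains to check the right-hand side terms are errors: using $|\Phi|\lesssim b^2/|\ln b|$ (from~\eqref{roughocntorlphi} for $k\ge1$, and~\eqref{reoughboundphi},~\eqref{E:KBOUND} for $k=0$), the terms $-\tfrac\Phi4\|y\e_2\|_b^2$ and $\Phi(\Lambda\e,\e_2)_b$ are absorbed by the coercivity of $\mathcal E$; the $\e_2(1)^2$--term is $O(b^5)$; $(\H_b\Psi,\e_2)_b$ is controlled by~\eqref{esterreur} and Young's inequality (the $|\ln b|$ denominator in~\eqref{esterreur} being what makes $\Psi$ a true error); and, exposing via~\eqref{etwoonebis} and the modulation equations the $|\ln b|^{-1}$--gain in $\pa_y\e_2(1)$, the boundary term $-e^{-b/2}\pa_y\e_2(1)\e_2(1)$ is $O(b^4/|\ln b|)$ for $k\ge1$ and $O(b^5/|\ln b|^2)$ for $k=0$. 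The borderline contributions of pure size $b^4$, coming from the $\e_2(1)\ne0$ correction to the spectral gap, are rewritten as $\tfrac{d}{ds}$ of an explicit small quantity plus a genuine $O(b^4/|\ln b|^2)$ remainder; absorbing this total derivative into the left-hand side produces the modified energy $\mathcal E+O(b^3/|\ln b|^{5/4})$ of the statement.

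\emph{Main obstacle.} The crux is the treatment of the free-boundary terms at $y=1$: since $\e_2(1)=-a(a-b)\ne0$, the profile $\e_2$ is not in $H^1_{b,-}$, so the spectral gap cannot be applied verbatim, and the resulting boundary corrections sit only marginally below the target accuracy when $k\ge1$; defeating them requires recognizing their total-$s$-derivative structure and introducing the $O(b^3/|\ln b|^{5/4})$ corrector — this is the ``integration in time'' that replaces the polynomially weighted estimates of~\cite{RS1}.
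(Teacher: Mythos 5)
Your proposal follows the paper's proof essentially verbatim: the same $\e_2$-equation and commutator algebra, the same exact cancellation producing the $-\frac{\Phi}{4}\|y\e_2\|_b^2$ term, the orthogonality killing $(\H_b\Mod,\e_2)_b$, the spectral gap combined with $a-b=kb(1+o(1))$ to produce the $(3k+2)b$ coefficient, and the total-$s$-derivative treatment of the boundary contribution generating the $O(b^3/|\ln b|^{5/4})$ corrector. One imprecision worth fixing: the borderline term that forces the total-derivative trick is not the $\e_2(1)\neq 0$ correction to the spectral gap (which is $O(b^2|\e_2(1)|^2)=O(b^6/|\ln b|^2)$ and harmless), but rather the $a_s$ --- more precisely the $(a-\tilde a)_s$, resp. $(a-b)_s$ for $k=0$ --- part of $\pa_y\e_2(1)$ in the product $\e_2(1)\pa_y\e_2(1)$, which is \emph{not} pointwise controlled by the modulation equations and hence cannot be declared $O(b^4/|\ln b|)$ outright as in your Step 3; the paper instead writes $-a(a-b)(a-\tilde a)_s$ as an exact $s$-derivative of a cubic polynomial in $(a-\tilde a,\tilde a,b)$ plus remainders in which the derivative falls only on the controlled quantities $\tilde a$ and $b$.
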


\begin{remark} 
The sharp coercivity constant $3k$ in \eqref{estemeergy} which follows from the sharp Poincar\'e estimate \eqref{coercdeux} is essential to close the energy bound, see~\eqref{estimateenergy}. 
\end{remark}

\begin{proof}[Proof of Proposition \ref{propenergy}] We compute the energy identity for $\mathcal E$ and estimate all terms.\\

\noindent{\bf step 1} Algebraic energy identity. Recalling from~\eqref{defetwo} that $\e_2=\mathcal H_{b}\e,$ it follows from \eqref{E:EQUATION}:
$$
\pa_s\e_2+\H_a\e_2=[\pa_s,\H_{b}]\e+[\H_a,\H_{b}]\e+\H_{b}\mathcal F.
$$ 
To compute the commutators $[\pa_s,\H_{b}]$, $[\H_a,\H_{b}]$ we use 
\be
\label{commutatorlaplace}
[\Delta,\Lambda]=2\Delta
\ee
 which yields:
\bee
&&[\pa_s,\H_{b}]\e+[\H_a,\H_{b}]\e=b_s\Lambda \e+[\H_b+(a-b)\Lambda,\H_b]\e=b_s\Lambda \e+(a-b)[\Lambda,-\Delta]\\
& = & b_s\Lambda \e+2(a-b)\Delta \e=(b_s+2b(a-b))\Lambda \e-2(a-b)[-\Delta\e+b\Lambda\e]\\
& = & \Phi\Lambda \e+2(b-a)\e_2.
\eee
Hence the $\e_2$ equation: 
\be
\label{eqationaetwo}
\pa_s\e_2+\H_a\e_2=\Phi\Lambda \e+2(b-a)\e_2+\H_b\mathcal F.
\ee
We now compute the modified energy identity:
\bee
\nonumber \frac 12\frac{d}{ds}\mathcal E& = & \frac 12\frac{d}{ds}\int_{y\geq 1} \e_2^2e^{-\frac{by^2}{2}}ydy=  -\frac{b_s}{4}\int_{y\geq 1} y^2|\e_2|^2e^{-\frac{b|y|^2}{2}}ydy+(\pa_s\e_2,\e_2)_b\\
& = &  -\frac{b_s}{4}\|y\e_2\|_b^2+(\Phi\Lambda \e+2(b-a)\e_2+\H_b\mathcal F-\H_a\e_2,\e_2)_b.
\eee
We carefully integrate by parts to compute:
\bee
&&-\int_{y\geq 1}\e_2\H_a\e_2e^{-\frac{by^2}{2}}ydy=-\int_{y\geq 1}\e_2[\H_b\e_2+(a-b)\Lambda \e_2]e^{-\frac{by^2}{2}}ydy\\
& =& \int_{y\geq 1}\pa_y(\rho_b y\pa_y\e_2)\e_2dy+(b-a)\int \e_2y\pa_y\e_2e^{-\frac{by^2}{2}}ydy\\
& = & -\rho_b(1)\e_2(1)\pa_y\e_2(1)-\int_{y\geq 1}|\pa_y\e_2|^2e^{-\frac{by^2}{2}}ydy\\
&&  - \frac{b-a}{2}\rho_b(1)\e_2^2(1)-\frac{b-a}{2}\int_{y\geq 1}\e_2^2\left[2-by^2\right]e^{-\frac{by^2}{2}}ydy\\
& = &  -\|\pa_y\e_2\|_b^2+(a-b)\|\e_2\|_2^2-\frac{b(a-b)}{2}\|y\e_2\|_b^2-\rho_b(1)\e_2(1)\left[\pa_y\e_2+\frac{b-a}{2}\e_2\right](1).
\eee
This yields the algebraic energy identity:
\bea
\label{algebraicneergyidentity}
\nonumber \frac 12\frac{d}{ds}\mathcal E&=& -\|\pa_y\e_2\|_b^2-(a-b)\|\e_2\|_2^2-\frac{\Phi}{4}\|y\e_2\|_b^2-\rho_b\e_2\left[\pa_y\e_2+\frac{b-a}{2}\e_2\right](1)\\
& + & \Phi(\Lambda \e,\e_2)_b+(\H_b\mathcal F,\e_2)_b.
\eea

We now estimate all  terms in the right-hand side of \eqref{algebraicneergyidentity}.\\

\noindent{\bf step 2} Nonlinear estimates. From \eqref{coercunubis}, \eqref{roughocntorlphi}, \eqref{roughboundbj}:
\bee
|\Phi||(\Lambda\e,\e_2)_b|\lesssim \frac{b^2}{|\ln b|}\frac{\sqrt{\mathcal E}}{b}\sqrt{\mathcal E}\lesssim b\frac{\mathcal E}{|\ln b|}.
\eee
Moreover from \eqref{orthoe} $(\H_b\Mod,\e_2)_b=0,$ and from \eqref{esterreur}, \fref{roughocntorlphi}:
$$|(\H_b\Psi,\e_2)_b\lesssim b\sqrt{\mathcal E}\frac{b^{\frac 32}}{|\ln b|}.$$
We now estimate from \eqref{E:YHBEPSILONBOUND}, \eqref{roughocntorlphi}, \eqref{etwoone}, \eqref{controla}:
$$|\Phi\|y\e_2\|_b^2\lesssim \frac{b^2}{|\ln b|}\left[\frac{\|\pa_y\e_2\|^2_b}{b^2}+b^3\right]\lesssim \frac{\|\pa_y\e_2\|^2_b}{|\ln b|}+\frac{b^4}{|\ln b|^2}.
$$
It now remains to treat the boundary term in \eqref{algebraicneergyidentity} and we argue differently depending on $k$.\\

\noindent{\bf step 3} Conclusion for $k=0$. We compute from \eqref{etwoonebis}, \eqref{E:KBOUND}:
\bee
\pa_y\e_2(1)&=&-a_s-\frac{2b^2}{|\ln b|}+O\left(\frac{b^2}{|\ln b|^2}\right)=-(a-b)_s -(b_s+\frac{2b^2}{|\ln b|})+O\left(\frac{b^2}{|\ln b|^2}\right)\\
& = & -(a-b)_s+O\left( \frac{\sqrt{b}\sqrt{\mathcal E}}{|\ln b|}+\frac{b^2}{|\ln b|^2}\right)
\eee
and hence using \eqref{etwoone}:
\bee
&&\rho_b(1)\e_2(1)\left[\pa_y\e_2+\frac{b-a}{2}\e_2\right](1)\\
&=& e^{-\frac{b}{2}}a(a-b)\left[-(a-b)_s+\frac{a(a-b)^2}{2}+O\left( \frac{\sqrt{b}\sqrt{\mathcal E}}{|\ln b|}+\frac{b^2}{|\ln b|^2}\right)\right]\\
& = & e^{-\frac b2}\left[-(a-b)^2(a-b)_s-b(a-b)(a-b)_s\right]+O\left(b\frac{b^{\frac 32}\sqrt{\mathcal E}}{|\ln b|}+\frac{b^4}{|\ln b|^2}\right)\\
& = & -\frac{d}{ds}\left\{e^{-\frac b2}\frac{(a-b)^3}{6}+e^{-\frac b2}\frac{b(a-b)^2}{2}\right\} - \frac{b_se^{-\frac b2}}{2}\frac{(a-b)^3}{6}\\
&& +e^{-\frac b2}\frac{b(a-b)^2}{2}\left[-\frac{b_s}{2}+\frac{b_s} b\right]+O\left(b\frac{b^{\frac 32}\sqrt{\mathcal E}}{|\ln b|}+\frac{b^4}{|\ln b|^2}\right)\\
& = &  -\frac{d}{ds}\left\{e^{-\frac b2}\frac{(a-b)^3}{6}+e^{-\frac b2}\frac{b(a-b)^2}{2}\right\}+O\left(b\frac{b^{\frac 32}\sqrt{\mathcal E}}{|\ln b|}+\frac{b^4}{|\ln b|^2}+\frac{b^2(a-b)^2}{|\ln b|}\right).
\eee
We now observe from \eqref{E:PARTIALYEPSILONBOUNDARY}, \eqref{roughboundbj} that 
\be
\label{cejbevbevi}
|a-b|\lesssim \frac{b}{|\ln b|},
\ee and hence the collection of above bounds yields the control:
\bee
\frac 12\frac{d}{ds}\left\{\mathcal E+O\left(\frac{b^3}{|\ln b|^2}\right)\right\}=-\|\pa_y\e_2\|_b^2+O\left(\frac{\|\pa_y\e_2\|_b^2+\|\e_2\|_b^2}{|\ln b|}+b\frac{b^{\frac 32}\sqrt{\mathcal E}}{|\ln b|}+\frac{b^4}{|\ln b|^2}\right).
\eee
We now inject \eqref{etwoone}, \eqref{coercdeuxbis} with $k=0$ and \eqref{estemeergyzero} follows.\\

\noindent{\bf step 4} Conclusion for $k\ge 1$. 
Let  
$$
\at=a^e+\sum_{j=0}^k\bt_j
$$ 
be the leading order part of $a,$ see~\eqref{mainterma}.
Then from \eqref{eqbtk}, \eqref{aprioriboundbj}, \eqref{approxode}:
\bee
&&\tilde{a}_s+\sum_{j=0}^k\l_{b,j}bb_j\left[1+\frac{2\alpha_j}{|\ln b|}\right]\\
&&= (a^e)_s+\sum_{j=0}^k(\bt_j)_s+\sum_{j=0}^k\l_{b,j}b(b^e_j+\bt_j)\left[1+\frac{2\alpha_j}{|\ln b|}\right]\\
& = & \left(a^e-b^e_k\left[1+\frac{2\alpha_k}{|\log b|}\right]\right)_s+\left(1+\frac{2\alpha_k}{|\log b|}\right)\left[(b^e_k)_s+\l_{b,j}bb^e_k\right]+O\left(\frac{b^2}{|\log b|^2}\right)\\
&+& O\left(\frac{K b^2}{|\ln b|^{3/2}}+\frac{\sqrt{b}\sqrt{\mathcal E}}{|\ln b|}\right)= -2\frac{(a^e-b)b_e^k}{|\log b|}+O\left(\frac{K b^2}{|\ln b|^{3/2}}+\frac{\sqrt{b}\sqrt{\mathcal E}}{|\ln b|}\right) \\
& = & -\frac{k+1}{2k s^2|\log s|}+O\left(\frac{K b^2}{|\ln b|^{3/2}}+\frac{\sqrt{b}\sqrt{\mathcal E}}{|\ln b|}\right).
\eee
We therefore estimate using Lemma \ref{lemmaindeitnitues}:
\bee
&&-\rho_b\e_2\left[\pa_y\e_2+\frac{b-a}{2}\e_2\right](1)\\
&=& -e^{-\frac{b}{2}}a(a-b)\left[a_s+\sum_{j=0}^k\l_{b,j}bb_j\left[1+\frac{2\alpha_j}{|\ln b|}\right]+O\left(\frac{b^2}{|\ln b|^2}\right)\right]\\
& = & -e^{-\frac{b}{2}}a(a-b)\left[(a-\tilde{a})_s-\frac{k+1}{2k s^2\ln s}+O\left(\frac{K b^2}{|\ln b|^{3/2}}+\frac{\sqrt{b}\sqrt{\mathcal E}}{|\ln b|}\right)\right]\\
& = & -e^{-\frac{b}{2}}a(a-b)(a-\tilde{a})_s+ e^{-\frac{b}{2}}a(a-b)\frac{k+1}{2k s^2\ln s} +O\left(\frac{K b^4}{|\ln b|^{3/2}}+\frac{bb^{\frac 32}\sqrt{\mathcal E}}{|\ln b|}\right)\\
& = & -e^{-\frac b2}(a-\at)_s\left[(a-\at)^2+(a-\at)(2\at-b)+\at(\at-b)\right]\\
&& + O(\frac{b^4}{|\ln b|}) +O\left(\frac{K b^4}{|\ln b|^{3/2}}+\frac{bb^{\frac 32}\sqrt{\mathcal E}}{|\ln b|}\right)\\
& = & -\frac{d}{ds}\left\{-e^{-\frac b2}\left[\frac{(a-\at)^3}{3}+\frac{(a-\at)^2}{2}(2\at-b)+(a-\at)\at(\at-b)\right]\right\}\\
&& + O\left(\frac{b^4}{|\ln b|} +\frac{K b^4}{|\ln b|^{3/2}}+\frac{bb^{\frac 32}\sqrt{\mathcal E}}{|\ln b|}+b^3|a-\at|\right).
\eee
We have the bound from \eqref{mainterma}:
\bee
a-\at
& = & O\left(\frac{b}{|\ln b|}+\frac{\sqrt{\mathcal E}}{|\ln b|\sqrt{b}}\right).
\eee
Injecting the above bounds into \eqref{algebraicneergyidentity} and using the rough bound~\eqref{roughboundbj} we obtain the bound:
\begin{align*}
\frac 12\frac{d}{ds}\left\{\mathcal E+O\left(\frac{b^3}{|\ln b|} \right)   \right\}
&= -\|\pa_y\e_2\|_b^2-(a-b)\|\e_2\|_2^2\\
& \ \ \ + O\left(\frac{\|\pa_y\e_2\|_b^2+\|\e_2\|_b^2}{|\ln b|}+b\frac{b^{\frac 32}\sqrt{\mathcal E}}{|\ln b|}+\frac{b^4}{|\ln b|}\right).
\end{align*}
We now estimate from \eqref{mainterma}:
\be\label{E:AMINUSB}
a-b=\frac{1}{2s}+O\left(\frac{b}{|\ln b|}\right)=kb+O\left(\frac{b}{|\ln b|}\right)
\ee 
and \eqref{estemeergy} now follows from \eqref{coercdeuxbis} with \eqref{etwoone}.
\end{proof}


\subsection{Proof of Proposition~\ref{pr:bootstrap}}\label{S:PROOFPROPOSITION}


We are now in position to give a sharp description of the singularity formation for our set of initial data. The key is to close the bootstrap bounds of Proposition \ref{pr:bootstrap}. We distinguish the cases $k=0$ and $k\ge 1$.\\

{\bf step 1} Closing the bootstrap bounds. 
Our goal is to show that the bounds~\eqref{E:BOOTBOUND}, \eqref{positivitybzero} improve in the case $k=0$
and similarly for the bounds \eqref{E:BOOTBOUND}, \eqref{boundstablemode}, \eqref{aprioriboundbj} in the case $k\ge1.$ The 
improvement of the energy bound~\eqref{E:BOOTBOUND} will follow from proposition~\ref{propenergy}, while the bounds \eqref{boundstablemode}, \eqref{aprioriboundbj}
will be improved for a suitable set of initial data constructed via a topological argument.

\vspace{0.1in}
\noindent\underline{\bf $k=0$}. 
 First observe that \eqref{E:KBOUND} ensures 
$$b_s<0\ \ \mbox{and hence}\ \ b(s)<b_0\leq b^*.$$ From \eqref{cejbevbevi}, 
\be
\label{estab}
\lsl+b=b-a=O\left(\frac{b}{|\ln b|}\right)
\ee and hence: 
\be
\label{vninoiioneneo}
\ln \l(s)=-\int_{0}^{s}b\left[1+O\left(\frac1{|\ln b|}\right)\right]d\sigma<+\infty \ \ \mbox{implies}\ \ \l(s)>0.
\ee
We now rewrite  \eqref{estemeergy} (with $k=0$) as 
$$
\frac{d}{ds}\left\{\frac 12\mathcal E+O\left(\frac{b^3}{|\ln b|^{2}}\right)\right\}+bc\left[\mathcal E+O\left(\frac{b^3}{|\ln b|^{2}}\right)\right]\lesssim \frac{b^4}{|\ln b|^2}
$$
with $c>0.$ Using~\eqref{estab} we obtain:
\be
\label{cejebneove}\frac{d}{ds}\left\{\frac{1}{\l^c}\left[\frac 12\mathcal E+O\left(\frac{b^3}{|\ln b|^{2}}\right)\right]\right\}\lesssim 
\frac{b^4}{\l^c|\ln b|^2}.
\ee
We now integrate in time. To evaluate the right hand side, we integrate by parts using \eqref{E:KBOUND}:
\bee
&&\int_0^s\frac{b^4}{\l^c|\ln b|^2}d\sigma =  \int_0^s\left[-\lsl\frac{b^3}{\l^c|\ln b|^2}+O\left(\frac{b^4}{|\ln b|^3}\right)\right]d\sigma\\
&= & \left[\frac 1c\frac{b^3}{\l^c|\ln b|^2}\right]_0^s-\frac 1c\int_0^s\frac{b_s}{\l^c}\left[\frac{3b^2}{|\ln b|^2}+\frac{2b^2}{|\ln b|^3}\right]d\sigma+\int_0^sO\left(\frac{b^4}{\l^c|\ln b|^3}\right)d\sigma.
\eee
Using the smallness of $b$ we get: 
$$
\int_0^s\frac{b^4}{\l^c|\ln b|^2}d\sigma \lesssim \frac{b^3}{\l^c|\ln b|^2}(s).
$$ 
Hence~\eqref{normalizationinitial}, \eqref{E:ESMALL} and the time integration of \eqref{cejebneove} ensure:
\be
\label{neonvenenvoee}
\matchal E(s)\lesssim \frac{b^3}{|\ln b|^2}(s)+\l^c(s)\left[\matchal E(0)+ \frac{b_0^3}{|\ln b_0|^{2}}\right]\lesssim \frac{b^3}{|\ln b|^2}(s)+\l^c(s)\frac{b_0^3}{|\ln b_0|^2}.
\ee
We moreover estimate from \eqref{E:KBOUND}:
\bee
\frac{d}{ds}\left\{\frac{b^3}{\l^c|\ln b|^2}\right\}=\frac{b^3}{\l^c|\ln b|^2}\left[\frac{3b_s}{b}+\frac{2b_s}{b|\ln b|}-c\lsl\right]
=\frac{b^3}{\l^c|\ln b|^2}\left[cb+O\left(\frac{b}{|\ln b|^2}\right)\right]>0
\eee
and hence using  \eqref{normalizationinitial} again:
$$
\l^c(s)\frac{b_0^3}{|\ln b_0|^2}\leq \l^c(s_0)\frac{b^3}{|\ln b|^2}(s)
$$ 
which together with \eqref{vninoiioneneo} implies $b(s)>0$ and closes the bound \eqref{positivitybzero}.
Injecting this into \eqref{neonvenenvoee}  improves  the energy bound \eqref{E:BOOTBOUND} for $D$ universal large enough, which concludes the proof of Proposition \ref{pr:bootstrap} for $k=0$.\\

\noindent{\underline{$k\geq 1$}}. This case requires a shooting argument to build the nonlinear manifold of perturbations $(V_j)_{0\leq j\leq k-1}$. We first rewrite \eqref{estemeergy} using \eqref{definitionb}:
$$
\frac{d}{ds}\left\{\mathcal E+O\left(\frac{b^3}{|\ln b|}\right)  \right\}+\left[3+\frac{c}{k}\right]\frac{1}s\matchal E\lesssim  \frac{K}{s^4|\ln s|}.
$$ 
Using \eqref{E:ESMALL}, an integration-in-time yields:
\bea
\label{estimateenergy}
\nonumber  \mathcal E(s)&\leq& \frac{s_0^{3+\frac ck}}{s^{3+\frac ck}}\left[\mathcal E_0+\frac{b_0^3}{|\log b_0|}\right]+\frac{b^3}{|\log b|}+\frac{1}{s^{3+\frac ck}}\int_{s_0}^s  \frac{K\sigma^{3+\frac ck}}{\sigma^4|\ln \sigma|}d\sigma\\
 & \lesssim&  \frac{K}{s^{3}(\ln s)}\lesssim K\frac{b^3}{|\ln b|}.
\eea
This means that there exists a $\tilde C>0$ universal large enough such that if
$D = \tilde C K,$
the bound \eqref{E:BOOTBOUND} gets improved, and we assume it now.
We inject this relation into \eqref{eqbtk} and conclude:
\bee
&& \left| (\bt_k)_s+\frac 1s\left[\bt_k+(k+1)\sum_{j=0}^k\bt_j\right]\right|+ \left| (\bt_{k-1})_s+\frac1s\left[\frac{k-1}{k}\bt_{k-1}-(k+1)\sum_{j=0}^k\bt_j\right]\right|\\
 & + & \sum_{j=0}^{k-2}\left|(\bt_j)_s+\frac{j}{ks}\bt_j\right| \lesssim 
\frac{\sqrt{K}b^2}{|\ln b|^{3/2}}\lesssim \frac{\sqrt{K}}{s^2(\ln s)^{3/2}}.
\eee
Equivalently using the change of variables \eqref{bveuibvibi}:
\bea
\label{controlmodes}
 \nonumber &&\left| (V_k)_s+\frac{(k+1)\sum_{j=0}^kV_j}{s}\right|+ \left| (V_{k-1})_s+\frac1s\left[-\frac{1}{k}V_{k-1}-(k+1)\sum_{j=0}^kV_j\right]\right|\\
 & + & \sum_{j=0}^{k-2}\left|(V_j)_s+\frac{j-k}{ks}V_j\right|\lesssim \frac{\sqrt{K}}{s}.
\eea
The bootstrap bound~\eqref{aprioriboundbj} implies that $|V_j|\le \delta K,$ $j=0,\dots k-2.$ Therefore, from the first two bounds
in~\eqref{controlmodes} we conclude that for a sufficiently large $K$ the following bound holds
\[
\left| (V_k)_s+\frac{(k+1)}s\left(V_k+V_{k-1}\right)\right|+ \left| (V_{k-1})_s-\frac{k+1}s\left(V_k+ (1+d_k)V_{k-1}\right)\right| \lesssim \frac{\delta K}{s},
\]
where we remind the reader that $d_k= \frac 1{k(k+1)}.$ 
Recalling the definition~\eqref{E:AKMATRIX} of the matrix $A_k,$ the above inequalities can be succinctly rewritten in the form 
\begin{align*}
\pa_s\left( \begin{array}{c}
 V_k  \\
V_{k-1}    
\end{array} \right) 
= 
\frac{k+1}sA_k \left( \begin{array}{c}
 V_k  \\
V_{k-1}    
\end{array} \right) + O\left(\frac{\delta K}s\right),
\end{align*}
which in turn leads to 
\be\label{E:WKBOUND}
\left| (W_k)_s+\frac{(k+1)\mu^k_1}sW_k\right|+ \left| (W_{k-1})_s+\frac{(k+1)\mu^k_2}sW_{k-1}\right| \lesssim \frac{\delta K}{s},
\ee
where $W_k,W_{k-1}$ are defined in~\eqref{E:WKDEFINITION} and $\mu^k_2<0<\mu^k_1$ are the eigenvalues of $A_k.$
This first yields the control of the stable direction $W_k,$ since after integrating-in-time the first bound in~\eqref{E:WKBOUND}, we arrive at  
$$
|W_k(s)|\leq |W_k(0)|\frac{s_0^{(k+1)\mu^k_1}}{s^{(k+1)\mu^k_2}}+\frac{1}{s^{(k+1)\mu_1^k}}\int_{s_0}^s\frac{\delta K\sigma^{(k+1)\mu_1^k}}{\sigma}d\sigma\le 1+ C\delta K,
$$ 
where we used~\eqref{estinitialbound} and the positivity of $\mu^k_1.$
This improves \eqref{boundstablemode} for $K$ sufficiently large and $\delta <\frac 1{2C}.$ 
We now argue by contradiction and assume that for all $(\frac{V_0}{\delta},\dots,\frac{V_{k-2}}{\delta},W_{k-1})\in B_K(\Bbb R^{d-1})$, the bootstrap time $s^*$ is finite, so that from \eqref{aprioriboundbj}:
\be
\label{estbitabvuie}
|W_{k-1}(s^*)|^2+\sum_{j=0}^{k-2}\left|\frac{V_j(s^*)}{\delta}\right|^2=K^2.
\ee
We claim that this contradicts the Brouwer fixed point theorem. Indeed, using~\eqref{controlmodes},~\eqref{E:WKBOUND}, and the strict negativity of $\mu^k_2$:
\bee
&&\frac12\frac{d}{ds}\left\{|W_{k-1}(s)|^2+\sum_{j=0}^{k-2}\left|\frac{V_j(s)}{\delta}\right|^2\right\}(s^*)\\
& = & \frac{1}{s^*}\left[|\mu_2^k|(k+1)W_{k-1}^2(s^*)+\sum_{j=0}^{k-2}\frac{k-j}{\delta^2k}V_j^2(s^*)+O\left(\delta K^2+K^{3/2}\right)\right]\\
& \geq & \frac{c}{s^*}\left[K^2- C\delta K^2\right],
\eee
for some universal constants $c,C>0.$ 
Hence
\be\label{E:SIGN}
\frac{d}{ds}\left\{|W_{k-1}(s^*)|^2+\sum_{j=0}^{k-2}\left|\frac{V_j(s^*)}{\delta}\right|^2\right\}(s^*)>0
\ee 
for $0<\delta\ll1 $ universal small enough in \eqref{aprioriboundbj}. 
Let 
$$
\tilde{V}=(\frac{V_0}{\delta}, \dots,\frac{V_{k-2}}{\delta},W_{k-1}),
$$ 
then this implies from standard from standard argument that the map 
$$
 B_{K}(\RR^{d-1}) \owns \tilde{V}(0) \mapsto s^*\left(\tilde{V}(0)\right)
$$ 
is continuous, and hence the map 
$$ 
\begin{array}{ll}  B_{K}(\RR^{d-1})\to B_{K}(\RR^{d-1})\\
\tilde{V}(0) \mapsto \tilde{V}\left[\tilde{s}^*( \tilde{V}(0))\right]\end{array}
$$
is continuous and the identity on the boundary sphere $\Bbb S_{d-1}(K)$, a contradiction to Brouwer's fixed point theorem. 
This concludes the proof of Proposition \ref{pr:bootstrap} for $k\geq 1$.

\begin{remark} Note that $(V_0(\e_0),\dots.V_{k-2}(\e_0), W_{k-1}(\e_0))$ are by construction lying on a nonlinear codimension $k$ manifold of initial data.  
The fact that the set of such initial data forms a Lipschitz manifold in the $H^2$ topology reduces to a local uniqueness problem in the class of solutions satisfying the a priori bounds of Proposition \ref{pr:bootstrap}. 
Such a uniqueness problem has been recently solved in a related framework in the more complicated case of the wave equation~\cite{collot} and the KdV equation \cite{MMN}, see also~\cite{szeftel, CRS}, and a completely analogous approach can be applied here. 
We therefore omit the details.
\end{remark}

\begin{remark}\label{R:SQUAREROOT}
Note that the presence of $\sqrt K$ on the right-hand side of~\eqref{controlmodes} is essential to the closure of the estimates. 
It originates from the bound~\eqref{eqbtk}, where we carefully tracked the constants  and proved that 
only $\sqrt{\mathcal E}$ appears on the right-hand side of~\eqref{eqbtk}.
\end{remark}

\noindent{\bf step 2} Global $H^2$ control. From 
Proposition~\ref{pr:bootstrap}, the solution remains in the bootstrap regime of Proposition~\ref{pr:bootstrap} as long as it exists in  $H^2$ which requires: $\forall s\geq 0$,
\be
\label{globalbound}
\|u(s)\|_{L^2(|x|\ge \l(s))}+ \|\nabla u(s)\|_{L^2(|x|\geq \l(s))}+\|\Delta u(s)\|_{L^2(|x|\geq \l(s))}<+\infty \ \ \ee
and 
\be
\label{positibity}
\l(s)>0.
\ee
In the case $k=0,$ the positivity of $\l$ follows from the time integration of \eqref{E:KBOUND} which implies 
$$
|\ln \lambda(s)|\lesssim \int_0^s b \left(1+O(\frac{1}{|\ln b|})\right)d\sigma<+\infty,
$$ 

while in the case $k\ge 1$ we use $a=-\l_s/\l$ and the estimate~\eqref{E:AMINUSB}, which implies the above bound again.
The global $L^2$-bound follows from the basic dissipation law satisfied by the solutions of~\eqref{E:STEFAN}:
\[
\frac12\frac{d}{dt} \|u\|_{L^2(\Omega(t))}^2 + \|\nabla u \|_{L^2(\Omega(t))}^2 = 0,
\]
which immediately implies that $\|u(s)\|_{L^2(|x|\ge \l(s))}<\infty.$
The global $\dot{H}^1$-bound follows directly from the dissipation of the Dirichlet energy \eqref{decaydirichlet}. 
For the global $\dot{H}^2$-bound, we take a cut off function $\chi=0$ for $r\leq 1$ and $r=1$ for $r\geq 2$, then the 
weighted control \eqref{E:BOOTBOUND} ensures 
$$
\|\Delta u(s)\|_{L^2(\l(s)\leq r\leq 2)}<C(s)<+\infty\ \ \mbox{for}\ \ s\geq 0
$$ 
since the exponential weight is uniformly bounded from below and above in $\l(s)\leq r\leq 2.$ 
To obtain the bound in the region $r\ge 2$ we compute:
\begin{align}
\frac12 \frac{d}{dt}\int \chi|\Delta u|^2&=\int \chi \Delta \pa_t u\Delta u=\int \chi \Delta^2 u\Delta u \notag \\
& =-\int\chi|\nabla \Delta u|^2+\frac 12\int \Delta \chi |\Delta u|^2 \label{E:CHIESTIMATE}
\end{align}
 and hence 
 \begin{align*}
 \int \chi|\Delta u|^2(s) 
  \lesssim \|\chi\Delta u(0)\|_{L^2}^2+\int_0^s\frac{1}{\l^2(\sigma)}\|\Delta u(s)\|_{L^2(\l(\sigma)\leq r\leq 2)}^2d\sigma<+\infty.
 \end{align*}
Hence $s^*=+\infty$ which concludes the proof of Proposition \ref{pr:bootstrap}.


\subsection{Proof of Theorem~\ref{T:MAIN}}\label{S:PROOFMAINTHEOREM}


We are now in position to conclude the proof of Theorem \ref{T:MAIN}.\\

\noindent{\bf step 1} Finite time melting. We claim that the solution melts in finite time with the law \eqref{E:MELTINGRATE}, \eqref{meltingk} as a consequence of the time integration of the modulation equations.\\
\noindent\underline{case $k=0$:} From~\eqref{E:KBOUND}, \eqref{E:BOOTBOUND}, we obtain the following pointwise differential inequality for $b:$
\begin{align}\label{E:DIFFERENTIALINEQUALITY}
b_s + \frac{2b^2}{|\ln b|} =O\left( \frac{ b^2}{|\ln  b|^{2}}\right).
\end{align}
We now follow \cite{RaphRod,RSc} to derive the melting speed of $\lambda$ and sketch the proof for the sake of clarity. Multiplying~\eqref{E:DIFFERENTIALINEQUALITY} by $\frac{\ln b}{ b^2}$ we obtain
\begin{align} \notag 
\frac{ b_s\ln b}{ b^2} = 2 + O\left(\frac{1}{|\ln  b|}\right).
\end{align}
The primitive of $\frac{\ln u}{u^2}$ is  $-\frac{\ln u}{u}-\frac{1}{u}$ and therefore
\begin{align}\notag 
\frac{\ln b}{ b}+\frac{1}{ b} = -2s + O(\int_0^s\frac{1}{|\ln  b|}\,d\tau)
\end{align}
which implies $$\frac{\ln s}{s}\lesssim b\lesssim \frac{\ln s}{s}.$$
Hence: 
\be
\label{vbeivbvbei}
b=-\frac{1+\ln b}{2s}\left[1+O\left(\frac1s\int_0^s\frac{1}{|\ln  b|}\,d\tau\right)\right]=-\frac{1+\ln b}{2s}\left[1+O\left(\frac{1}{|\ln b|}\right)\right]
\ee 
Taking the $\ln$ yields
$$
\ln b=-\ln 2-\ln s+\ln(-\ln b)+O\left(\frac{1}{|\ln b|}\right)
$$ 
which reinserted into \eqref{vbeivbvbei} ensures:  
\be\label{E:AUX1}
b = \frac{\ln s}{2s}\left[1+O\left(\frac{\ln\ln s}{\ln s}\right)\right],  \ \ \ln b=\ln\ln s-\ln2 -\ln s+O\left(\frac{\ln\ln s}{\ln s}\right).
\ee
Injecting this into \eqref{vbeivbvbei} again yields:
\bea
\label{E:BSOLVED}
\nonumber 
b&=&-\frac{-\ln s+\ln\ln s-\ln 2+1+O\left(\frac{\ln\ln s}{\ln s}\right)}{2s}\left[1+O\left(\frac{1}{\ln s}\right)\right]\\
& = & \frac{\ln s}{2s}-\frac{\ln\ln s}{2s}+O\left(\frac1s\right).
\eea
Recalling from \eqref{E:PARTIALYEPSILONBOUNDARY} that 
\be
\label{cbwbeibeibe}
b = -\frac{\lambda_s}{\lambda} + O\left(\frac{b}{|\ln b|^{2}}\right),
\ee
we conclude that 
\begin{align} \notag 
-(\ln\lambda)_s = \frac{\ln s}{2s} -\frac{\ln\ln s}{2s} +O(\frac{1}{s}),
\end{align}
which gives
\begin{align} \notag 
-\ln\lambda = \frac{1}{4}(\ln s)^2 -\frac{1}{2} \ln s\ln\ln s +O(\ln s),
\end{align}
which in turn gives
\begin{align} \notag 
-2\ln(\lambda^2) = (\ln s)^2\left[1-2\frac{\ln\ln s}{\ln s}+ O\left(\frac{1}{\ln s}\right)\right].
\end{align}
This leads to 
\begin{align}
\sqrt{-2\ln(\lambda^2)} &= \ln s\left[1-\frac{\ln\ln s}{\ln s}+O\left(\frac{1}{\ln s}\right)\right] \notag \\
&= \ln s - \ln\ln s+O(1) \label{E:AUX2}
\end{align}
from which
$$
e^{\sqrt{-2\ln(\lambda^2)}} = \frac{s}{\ln s}e^{O(1)}$$
and hence from \eqref{cbwbeibeibe}, \eqref{E:BSOLVED}:
$$-\l \l_t=-\lsl=b+O\left(\frac{1}{s\ln s}\right)=\frac{\ln s}{2s}e^{O(1)}=e^{-\sqrt{2|\ln \l^2|}+O(1)}.$$ This yields the pointwise ode:
$$-e^{\sqrt{2|\ln \l^2|}+O(1)}(\lambda^2)_t =1
$$
which integration in time yields:
\be\label{E:LSQUARE}
\lambda^2(t) = (T-t)e^{-\sqrt{2|\ln(T-t)|}+O(1)}
\ee
and \eqref{E:MELTINGRATE} is proved.\\

\noindent\underline{case $k\ge 1$}: We estimate from \eqref{E:BOOTBOUND}, \eqref{linearcj}:
\be\label{E:AUX3}
-\lsl=a=\frac{k+1}{2ks}-\frac{k+1}{2k^2s\ln s}+O\left(\frac{1}{s(\ln s)^{3/2}}\right)
\ee
and hence there exists 
$c^*=c^*(u_0)$ such that 
$$
-\ln\lambda (s)=\frac{k+1}{2k}\ln s-\frac{k+1}{2k^2}\ln \ln s+c^*+o_{s\to +\infty}(1)
$$ 
or equivalently: 
\begin{align}
\label{tlaw}
\l(s)& =c(u_0)(1+o(1))\frac{(\ln s)^{\frac{k+1}{2k^2}}}{s^{\frac{k+1}{2k}}}, \ \ c(u_0)>0.
\end{align} 
We conclude that
$$
T =\int_0^{+\infty}\l^2(s)ds<+\infty
$$ 
and 
\begin{align*}
T-t&=\int_s^{+\infty}\l^2(\sigma)\,d\sigma 
=\int_s^{+\infty}(c^2+o(1))\frac{(\ln \sigma)^{\frac{k+1}{k^2}}}{\sigma^{\frac{k+1}{k}}}d\sigma
=(kc^2+o(1))\frac{(\ln s)^{\frac{k+1}{k^2}}}{s^{\frac 1k}}.
\end{align*}
This implies 
\be\label{E:AUX4}
\frac{1}s=\frac{(T-t)^k}{|\ln (T-t)|^{\frac{k+1}{k}}}(c+o(1))
\ee
which together with \eqref{tlaw} yields the melting law:
$$
\l(t)=(c^*(u_0)+o_{t\to T}(1))\frac{(T-t)^{\frac{k+1}{2}}}{|\ln (T-t)|^{\frac{k+1}{2k}}},
$$ 
this is \eqref{meltingk}.\\

\noindent{\bf step 2} Non-concentration of the energy.
Pick $R>0$ and a cut-off function 
$$
\chi_R(x)=\chi\left(\frac{x}{R}\right)=\left\{\begin{array}{ll} 0\ \ \mbox{for}\ \ x\leq R\\ 1\ \ \mbox{for}\ \ x\geq 2R.\end{array}\right.
$$ 
Then for $t$ sufficiently close to the melting time $T$:
\bea
\label{locliazationconsr}
\nonumber &&\frac 12\frac{d}{dt}\int \chi_R|\nabla u|^2\, dx=\int\chi_R\nabla u\cdot\nabla\pa_tu\, dx=\int\chi_R\nabla u\cdot\nabla\Delta u \, dx\\
\nonumber &=& -\int \Delta u\left[\chi_R\Delta u+\nabla \chi_R\cdot\nabla u\right] \, dx\\
&=&  -\int \chi_R|\Delta u|^2+\frac12\int |\nabla u|^2r\frac{\pa}{\pa r}\left(\frac{\chi'_R}{r}\right)\, dx
\eea
and hence the uniform bound on the Dirichet energy ensures: 
$$
\forall R>0, \ \ \int_0^T\chi_R|\Delta u|^2dx<+\infty.
$$ 
Hence for all $0<\tau < T-t$, 
\bee
&&\int\chi_R|\nabla u(t+\tau)-\nabla u(t)|^2dx=\int\chi_R\left|\int_t^{t+\tau}(\pa_t\nabla u)(\sigma,x)d\sigma\right|^2dx\\
&\lesssim & \tau \int_0^T\chi_R|\pa_t \nabla u|^2dx\lesssim \tau \int_0^T\chi_R|\nabla \Delta u|^2dx\leq  C(R)\tau,
\eee
where the last estimate follows by integrating-in-$t$ equation~\eqref{E:CHIESTIMATE} with $\chi = \chi_R$ and using~\eqref{locliazationconsr}.
Hence for all $R>0$, $\nabla u(t,x)$ is a Cauchy sequence in $L^2(|x|\geq 2 R)$ as $t\to T$. 
We conclude from a simple diagonal extraction argument that there exists $u^*\in \dot{H}^1(\RR^2)$ such that 
\be
\label{estnione}
\forall R>0, \ \ \nabla u(t)\to \nabla u^*\ \ \mbox{in}\ \ L^2(|x|\geq 2R) \text{ as } \ t\to T.
\ee 
Moreover, the uniform bound on the Dirichlet energy \eqref{decaydirichlet} ensures 
\be
\label{weakvoncc}
\nabla u^*\in L^2, \ \ \nabla u(t)\rightharpoonup \nabla u^*\ \ \mbox{ weakly in}\ \ L^2\ \ \mbox{as}\ \ t\to T.
\ee
Pick now 
\be
\label{deffnenoe}
R(t)=\left\{\begin{array}{ll}  \l(t)B(t), \ \ B^2(t)b(t)=\frac12 |\ln b(t)| \ \ \mbox{for}\ \ k=0\\ 
\l(t)B(t), \ \ B^2(t)a(t) =  \l(t) |\ln a(t)|\ \ \mbox{for}\ \ k\ge1.
\end{array}\right.
\ee 
Note that in both cases we have $B(t)\gg1.$
Then from \eqref{locliazationconsr}: 
$$
\left|\frac{d}{d\tau}\int \chi_{R(t)}|\nabla u(\tau)|^2dx\right|\lesssim \frac{1}{R^2(t)}\int |\nabla u(\tau)|^2 \, dx+\int\chi_{R(t)}|\Delta u(\tau)|^2 \, dx
$$ 
and hence integrating over $[t,T)$ and using  \eqref{estnione}, \eqref{deffnenoe}, \eqref{decaydirichlet}: 
$$
\left|\int  \chi_{R(t)}|\nabla u(\tau)|^2dx- \int \chi_{R(t)}|\nabla u^*|^2dx\right|\lesssim \frac{T-t}{R^2(t)}+\int_t^T\int_{r\ge \l(t)}|\Delta u(\tau)|^2dxd\tau.
$$ 
If $k=0,$ we use~\eqref{E:MELTINGRATE} to estimate
$$
\frac{T-t}{R^2(t)}=\frac{2b(t)(T-t)}{\l^2(t)}\frac{1}{|\ln b(t)|}\longrightarrow 0\ \ \mbox{as} \ \ t\to T.
$$

The above limit holds since by~\eqref{E:LSQUARE}
\[
\frac{2b(t)(T-t)}{\l^2(t)}\frac{1}{|\ln b(t)|}
\lesssim \frac{b(t)e^{\sqrt{2|\ln (T-t)|}}}{|\ln b(t)|} \lesssim \frac{b(t)e^{2\sqrt{|\l(t)|}}}{|\ln b(t)|} \lesssim \frac 1{|\ln b(t)|} \to 0, \ \ \text{ as} \ t\to T,
\]
where the last bound follows from~\eqref{E:AUX1} and~\eqref{E:AUX2}.
If $k\ge1,$ then
$$
\frac{T-t}{R^2(t)}=\frac{a(t)(T-t)}{\l^2(t)}\frac{1}{c|\ln a(t)|}\longrightarrow 0\ \ \mbox{as} \ \ t\to T.
$$
The above limit holds since by~\eqref{E:LSQUARE}
\[
\frac{a(t)(T-t)}{\l^2(t)}\frac{1}{|\ln a(t)|}
\lesssim \frac{\frac{(T-t)^k}{|\ln (T-t)|^{\frac{k+1}{k}}}(T-t)}{\frac{(T-t)^{k+1}}{|\ln (T-t)|^{\frac{k+1}{k}}}|\ln a(t)|} 
\lesssim \frac 1{|\ln a(t)|} \to 0, \ \ \text{ as} \ t\to T,
\]
where we used~\eqref{E:AUX3} and~\eqref{E:AUX4}.
Letting $t\to T$, we conclude: 
 \be
 \label{cenkonekonveonveo}
 \int |\nabla u^*|^2=\lim_{t\to T} \int  \chi_{R(t)}|\nabla u(\tau)|^2dx.
 \ee 
 We now claim that 
 \be
 \label{toberpovfedd}
 \lim_{t\to T} \int_{\{|x|\ge \l(t)\}}  (1-\chi_{R(t)})|\nabla u(\tau)|^2dx=0
 \ee
 which together with \eqref{cenkonekonveonveo}, \eqref{weakvoncc} concludes the proof of \eqref{stronltwoconvergece}. 
 Indeed, in the case $k=0,$ from \eqref{calculeignvectoreta},~\eqref{coercunubis},~\eqref{E:BOOTBOUND},
 and~\eqref{deffnenoe} we obtain:
 \begin{align*}
 & \int_{\{|x|\ge \l(t)\}}  (1-\chi_{R(t)})|\nabla u(t)|^2dx\leq \int_{\l(t)\le |x|\leq 2R(t)}|\nabla u(t)|^2dx \\
 & =\int_{1\le |y|\leq 2B(t)}|\nabla v(t,y)|^2  dy\\
  & \lesssim  e^{2bB^2(t)}\left[\int_{1\le y\leq 2B(t)}|b\pa_y\eta_{b_0}|^2\rho_bydy+\|\pa_y\e\|^2_b\right] \\
   & =  e^{2bB^2(t)}\left[b^2|\ln b|^2 + \frac{b^2}{|\ln b|}\right]\lesssim b|\ln b|^2\to \ \ 0\ \ \mbox{as} \ \ t\to T,
  \end{align*}
and \eqref{toberpovfedd} is proved. A similar algebra holds for $k\ge1$. This concludes the proof of Theorem~\ref{T:MAIN}.


\section{Infinite time freezing regimes}
\label{sectionnonlinbis}

This section is devoted to the existence and stability of the freezing process emerging from strongly localised initial data. 
Throughout the section, we let $$\pm=+, \ \ \rho=\rho_+, \ \ B>0.$$


\subsection{Renormalised equations and initialisation}

We let
\be
\label{normalizationinitialbis}
u(t,x)=v(s,y), \ \ y=\frac{r}{\l(t)}, \ \ \l(0)=1,
\ee 
with the renormalised time 
\be
\label{renormalizedtimebis}
s(t)=s_0+\int_0^{t}\frac{d\tau}{\l^2(\tau)}, \ \ s_0\gg 1,
\ee
and obtain the renormalised equation: 
\be\label{E:KDEFINITIONbis}
\left\{\begin{array}{ll}\pa_sv+\H_A v=0, \ \ A=\lsl,\\ v(s,1)=0, \ \ \pa_yv(s,1)=-A.
\end{array}\right.
\ee
We now prepare our initial data in the following way. We let 
\be
\label{defbbs}
B(s)=\frac 1{2s}, \ \ B_k^e=\frac{1}{s^{k+1}(\log s)^2}
\ee so that with $A^e=B^e$:
\bea
\label{systapproximabis}
&&(B_k^e)_s+B_k^eB\left(2k+2+\frac{2}{\log s}\right)+\frac{2(B-A^e)B_k^e}{\log s}
=O\left(\frac{1}{s^{k+2}(\log s)^3}\right).
\eea
We define
\be\label{E:QBETACOOLING}
Q_\beta (y) : =-\sum_{j=0}^k B_j\etah_{B,j}(y)
\ee
and introduce the dynamical decomposition 
\be
\label{orthoebis}
v(s,y)=Q_{\beta(s)}+\e(s,y), \ \ (\etah_{B,j}(s),\e)_{B}=0, \ \ 0\leq j\leq k.
\ee 
We let again $$
\e_2=\H_B\e, \ \ \calE : = \|\H_B \e\|_B^2,
$$
which due to the orthogonality conditions \eqref{orthoe} is a coercive norm, see Appendix \ref{appendcoerc}. We assume the initial smallness
\be\label{E:ESMALLbis}
 \calE(0)\leq
  \frac{B(B_k^e)^2(0)}{|\ln B(0)|}
  \ee
  and consider the bootstrap bound 
\be
\label{E:BOOTBOUNDbis}
\mathcal E \le\frac{DB(B_k^e)^2}{|\log B|}
\ee
for some large enough universal $D=D(k)$ to be chosen later. Moreover, we assume initially 
  \be
  \label{initializationbzero}
 B_k(s_0)=B_k^e(s_0), \ \ s_0\gg 1
  \ee
  and bootstrap the bound 
  \be
  \label{bootBseroekr}
  |B_k(s)|\leq 10 B_k^e(s).
  \ee
  For $k\ge 1$, we also let
\be
\label{bveuibvibibis}
B_j(s)=\frac{V_j(s)}{s^{k+1}(\log s)^{\frac 52}}, \ \  j=0,\dots, k-1.
\ee 
and assume 
\be
\label{aprioriboundbj} 
\sum_{j=0}^{k-1}\left|V_j(s)\right|^2\leq K^2.
\ee
We define 
$$
s^*=\left\{\begin{array}{ll}\sup_{s\geq s_0}\{\eqref{E:BOOTBOUNDbis}, \eqref{bootBseroekr} \mbox{hold on}\ \ [s_0,s]\}\ \ \mbox{for}\ \ k=0,\\ \sup_{s\geq s_0}\{\eqref{E:BOOTBOUNDbis}, \eqref{bootBseroekr}, \eqref{aprioriboundbj}\ \ \mbox{hold on}\ \ [s_0,s]\}\ \ \mbox{for}\ \ k\ge 1.
\end{array}\right.
$$
The main ingredient of the proof of Theorem~\ref{T:MAINbis} is the following:

\begin{proposition}[Bootstrap estimates on $B$ and $\varepsilon$]
\label{pr:bootstrapbis}
The following statements hold:\\
{\em 1. Stable regime}: for $k=0$, $s^*=+\infty$.\\
{\em 2. Unstable regime}: for $k\geq 1$, there exist constants $K,$ $\delta = \delta(K)\ll1$ and $(V_0(0),\dots,V_{k-2}(0),V_{k-1}(0))$ depending on $\e(0)$ such that  $s^*=+\infty$.
\end{proposition}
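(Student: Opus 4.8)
The plan is to transcribe, essentially line by line, the proof of Proposition \ref{pr:bootstrap}, replacing the degenerate Gaussian weight $e^{-\frac{b|y|^2}{2}}\,dy$ by the confining weight $\rho_{B,+}=e^{\frac{B|y|^2}{2}}\,dy$ and the spectral data of Lemma \ref{lemmarenormalized} by those of Lemma \ref{lemmarenormalizedbis}; the argument runs in the same four steps, the only genuinely new point being to check that the favourable sign structure of the confining problem survives the boundary terms. \emph{Step 1 (modulation equations).} Starting from the renormalized flow \eqref{E:KDEFINITIONbis} and the decomposition \eqref{orthoebis} with $Q_\beta$ as in \eqref{E:QBETACOOLING}, I would run the computation of Proposition \ref{approxsolution} using the algebraic identities \eqref{exactcomputationetabisbis}--\eqref{foihonononfebisbis} and the normalization \eqref{scalirpeoducetabis}, obtaining $\pa_sQ_\beta+\H_AQ_\beta={\rm Mod}+\Psi$ with $\Psi$ a true remainder; pairing the $\e$-equation with $\etah_{B,j}$ and invoking \eqref{orthoebis} then produces the modulation system for $(B_j)_{0\le j\le k}$, together with the nonlinear boundary identities $\e_2(1)=-A(A-B)$, $\pa_y\e_2(1)=-A_s-\sum_j\lh_{B,j}BB_j\big(1+\frac{2\alpha_j}{|\log B|}\big)+{\rm lot}$ and $A=\sum_j B_j\big(1+\frac{2\alpha_j}{|\log B|}\big)+{\rm lot}$, exactly as in Lemma \ref{lemmaindeitnitues}. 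The decisive new feature is the eigenvalue shift $\lh_{B,k}=\l_{B,k}+2$ of \eqref{beiebibeiebis}: it forces $B_k^e\sim s^{-k-1}(\log s)^{-2}\ll B=\frac1{2s}$, so that $A\sim B_k^e$, $B-A\sim B>0$, and the deviation $\Phi=B_s+2B(B-A)\sim -2BB_k^e$ is tiny; consequently all error terms in the analogues of \eqref{esterreur}--\eqref{estemeergy} scale with the small amplitude $B_k^e$ rather than with $B$, which yields a much stronger decoupling of $B_k$ from the lower modes than in the melting case (compare \eqref{systapproximabis}).

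\emph{Step 2 (energy estimate).} I would establish the freezing analogue of Proposition \ref{propenergy}: with $\e_2=\H_B\e$,
\[
\frac12\frac{d}{ds}\Big\{\mathcal E+O\Big(\frac{B(B_k^e)^2}{|\log B|}\Big)\Big\}+\gamma_k\,B\,\mathcal E\ \lesssim\ \frac{K B^2(B_k^e)^2}{|\log B|}
\]
for a universal constant $\gamma_k>0$, improved relative to the melting case by the $+2$ eigenvalue shift of \eqref{beiebibeiebis}. The $\e_2$ equation comes from the commutator identity $[\pa_s,\H_B]+[\H_A,\H_B]=\Phi\Lambda+2(A-B)\e_2$; differentiating $\mathcal E=\|\e_2\|_B^2$ against $\rho_{B,+}$ produces the dissipative term $-\|\pa_y\e_2\|_B^2$, which is now genuinely coercive by the sharp Poincar\'e inequality \eqref{estimationcoreorbis} with its crucial $d=2$ shift, while the drift $-(B-A)\|\e_2\|_B^2$ has a favourable sign since $B-A\sim B>0$. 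The dangerous boundary contribution $\rho_{B,+}(1)\,\e_2(1)\big[\pa_y\e_2+\frac{A-B}{2}\e_2\big](1)$ is reorganized, as in the proof of Proposition \ref{propenergy}, into a total $s$-derivative of cubic quantities in $A-B$ plus admissible errors, the faster decay of $B_k^e$ leaving a comfortable error budget.

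\emph{Step 3 (closing the bootstrap and global existence).} Integrating the energy inequality against $\l^{-\gamma_k}$ (using $(\log\l)_s=A\ll1$) and invoking \eqref{E:ESMALLbis} improves \eqref{E:BOOTBOUNDbis} for $D$ universal large; integrating the $B_k$ modulation ODE of Step 1 against the integrating factor $1/B_k^e=s^{k+1}(\log s)^2$, with forcing controlled by \eqref{systapproximabis}, keeps $B_k$ pinned to $B_k^e$ and improves \eqref{bootBseroekr} --- the counterpart of controlling the stable mode $W_k$ in the melting case, and easier since $B_k^e\ll B$. For $k=0$ this closes the proof, $B=\frac1{2s}$ being frozen. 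For $k\ge1$ the rescaled unknowns $V_0,\dots,V_{k-1}$ of \eqref{bveuibvibibis} satisfy, to leading order, a linear-plus-$O(\sqrt K/s)$ system $\pa_s\vec V=\frac1sM_k\vec V+O(\sqrt K/s)$ whose relevant eigenvalues all force the flux through $\partial B_K(\RR^{k})$ outward, and \eqref{aprioriboundbj} is closed by the outgoing-flux/Brouwer argument of the $k\ge1$ part of the proof of Proposition \ref{pr:bootstrap}: were $s^*<\infty$ for every admissible $\e(0)$, the continuous map $\vec V(0)\mapsto\vec V(s^*)/K$ would retract $B_K(\RR^{k})$ onto its boundary sphere, a contradiction. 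Finally, $s^*=+\infty$ with $(\log\l)_s=A$ and $\int_{s_0}^\infty A\,d\sigma<\infty$ gives $\l(s)\nearrow\l_\infty\in(0,\infty)$; since $\frac{dt}{ds}=\l^2\to\l_\infty^2$, $t\to+\infty$ as $s\to+\infty$, the global $L^2$ and $\dot H^1$ bounds follow from the $L^2$-dissipation law and \eqref{decaydirichlet} as in section \ref{S:PROOFPROPOSITION}, and the local $\dot H^2$ bound from \eqref{E:BOOTBOUNDbis} and the cut-off computation \eqref{E:CHIESTIMATE}, so the solution is global in $H^2$.

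\emph{Main obstacle.} The only step demanding genuine care is Step 2: one must verify that passing to the confining weight $\rho_{B,+}$, whose boundary value $\rho_{B,+}(1)=e^{B/2}$ now depends on $B$, does not spoil the precise algebraic cancellation of the $y=1$ boundary term --- i.e. that it still assembles into an exact $s$-derivative up to errors bounded by the right-hand side of the displayed energy inequality; everything else is a sign-favourable transcription of the melting analysis.
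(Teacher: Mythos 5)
Your proposal reproduces the paper's own architecture essentially verbatim: the modulation system of Lemma \ref{lemmamodulationbis}, the energy identity of Proposition \ref{propenergybis} with the $+2$ spectral shift and the sign-favourable drift $-(B-A)\|\e_2\|_B^2$, the integrating-factor control pinning $B_k$ to $B_k^e$, and the outgoing-flux/Brouwer argument on the now fully decoupled unstable modes $(V_0,\dots,V_{k-1})$. However, one step of your ``sign-favourable transcription'' would fail as written. In Step 3 you integrate the energy inequality against $\lambda^{-\gamma_k}$. In the melting case this works because $b\approx a=-\lambda_s/\lambda$, so $e^{\gamma\int b\,d\sigma}\approx\lambda^{-\gamma}$ is precisely the integrating factor matched to the dissipation $\gamma b\,\mathcal E$. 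In the freezing case this identification breaks down: here $B=\tfrac1{2s}$ while $A=\lambda_s/\lambda\sim B_k^e\ll B$, so $\lambda^{-\gamma_k}=e^{-\gamma_k\int A\,d\sigma}$ converges to a constant and absorbs none of the dissipation $B(2k+4+c)\,\mathcal E$ in \eqref{cnekovnevbnenvoeneo}. Integrating with your factor would only yield $\mathcal E(s)\lesssim \mathcal E(0)+\int_{s_0}^{s}B^2(B_k^e)^2|\log B|^{-1}\,d\sigma$, a quantity of fixed size $\sim s_0^{-2k-3}$, whereas improving \eqref{E:BOOTBOUNDbis} requires the decaying bound $\mathcal E(s)\lesssim B(B_k^e)^2|\log B|^{-1}\sim s^{-2k-3}(\log s)^{-5}$. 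The correct integrating factor is $e^{(2k+3+c)\int 2B\,d\sigma}=s^{2k+3+c}$, an explicit power of $s$ read off from \eqref{defbbs}; the absorption must be done in the $s$ variable, not through $\lambda$.

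A second, minor, point: because $Q_\beta=-\sum_j B_j\etah_{B,j}$ in \eqref{E:QBETACOOLING} and $\pa_yv(s,1)=-A$, the boundary identity comes out as $\pa_y\e_2(1)=+A_s+\sum_j\lh_{B,j}BB_j(\cdots)-A^2(B-A)$, i.e. \eqref{etwoonebisbisthird}, not with the melting-case signs you quote. This is only a bookkeeping slip, but it matters at the one place you yourself flag as delicate, namely when the boundary term $\rho_{B}(1)\e_2(1)[\pa_y\e_2-\tfrac{B-A}{2}\e_2](1)$ is reorganized into an exact $s$-derivative of cubic quantities in $A-\tilde A$.
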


From now on and for the rest of this section, we study the flow in the bootstrap regime $s\in[s_0,s^*)$. Note in particular the rough bounds 
\be
\label{roughboundbjbis}
|B_j|\lesssim |B_k^e|, \ \ 0\leq j\leq k\ \ \mbox{and}\ \ \mathcal E\leq \frac{B(B_k^e)^2}{\sqrt{|\ln B|}}
\ee
for $s_0\geq s_0(K)$ large enough.


\subsection{Extraction of the leading order ODE's driving the freezing}


We start with the constraint induced by the boundary conditions:

\begin{lemma}[Boundary conditions]
\label{lemmaindeitnituesbis}
There holds:
\bea
\label{E:PARTIALYEPSILONBOUNDARYbis}
&&A=\sum_{j=0}^{k}B_j\left[1+\frac{2\alpha_j}{|\ln B|}+O\left(\frac{1}{|\ln B|^2}\right)\right]+O\left(\frac{\sqrt{\mathcal E}}{|\ln B|\sqrt{B}}\right),\\
&&\e_2(1)=A(B-A), \label{etwoonebisbis}\\
\label{etwoonebisbisthird}
& & \pa_y\e_2(1)=A_s-A^2(B-A)+\sum_{j=0}^k\lh_{B,j}BB_j\left[1+\frac{2\alpha_j}{|\ln B|}+O\left(\frac{1}{|\ln B|^2}\right)\right]\eea
\end{lemma}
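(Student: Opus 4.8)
The plan is to follow \emph{verbatim} the proof of Lemma~\ref{lemmaindeitnitues} from the melting case, replacing the eigenfunction data of Lemma~\ref{lemmarenormalized} by its freezing counterpart (Lemma~\ref{lemmarenormalizedbis}) and carefully tracking the sign changes induced by $A=\l_s/\l>0$, the boundary condition $\pa_yv(s,1)=-A$ in~\eqref{E:KDEFINITIONbis}, and the operator identity $\H_A=\H_B+(B-A)\Lambda$, which plays here the role played by $\H_a=\H_b+(a-b)\Lambda$ in the melting analysis.

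The first step is to compute the boundary value of the derivative of the eigenmodes. From the decomposition~\eqref{deftketabis}, the facts that $S_{B,j}(1)=0$ and $P_j(\sqrt B)=L_j(B/2)=1+O(B)$, the expansion~\eqref{estmuj} for $\mu_{B,ij}$, the identity $\sum_{i=0}^{j-1}\tfrac1{j-i}=\alpha_j$ from~\eqref{edfalphaj}, and the trace bound $|\pa_y\tilde{\etah}_{B,j}(1)|\lesssim|\ln B|^{-2}$ contained in~\eqref{calculeignvectoretabis}, one gets
\[
\pa_y\etah_{B,j}(1)=1+\frac{2\alpha_j}{|\ln B|}+O\!\left(\frac1{|\ln B|^2}\right),\qquad 0\le j\le k.
\]
Then $\pa_yQ_\beta(1)=-\sum_{j=0}^kB_j\,\pa_y\etah_{B,j}(1)$, and since $v=Q_\beta+\e$ with $\pa_yv(s,1)=-A$ this gives $\pa_y\e(1)=-A+\sum_{j=0}^kB_j[1+2\alpha_j/|\ln B|+O(|\ln B|^{-2})]$; combining with the coercivity/trace estimate for $\e$ attached to the orthogonality conditions~\eqref{orthoebis} (the $\rho_+$-analogue of~\eqref{coercunubis}, see Appendix~\ref{appendcoerc}), which bounds $|\pa_y\e(1)|\lesssim\sqrt{\mathcal E}/(|\ln B|\sqrt B)$, yields~\eqref{E:PARTIALYEPSILONBOUNDARYbis}.

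The pointwise identities at $y=1$ are then read off from~\eqref{E:KDEFINITIONbis}. Since $v(s,1)=0$ for all $s$, $\pa_sv(s,1)=0$, so $\H_Av(1)=0$. Writing $\H_Av=\H_BQ_\beta+\e_2+(B-A)\Lambda v$ and using $\etah_{B,j}(1)=0$ (hence $\H_BQ_\beta(1)=-\sum_jB_jB\lh_{B,j}\etah_{B,j}(1)=0$) together with $\Lambda v(1)=\pa_yv(1)=-A$, one obtains $\e_2(1)-A(B-A)=0$, which is~\eqref{etwoonebisbis}. For~\eqref{etwoonebisbisthird} I would differentiate~\eqref{E:KDEFINITIONbis} in $y$, getting $\pa_s\pa_yv+\pa_y\e_2+\pa_y\H_BQ_\beta+(B-A)\pa_y(\Lambda v)=0$, and evaluate at $y=1$: $\pa_s\pa_yv(1)=\pa_s(-A)=-A_s$; from $\H_Av(1)=0$ and $\Lambda v(1)=-A$ one has $\Delta v(1)=A^2$, whence $\pa_y(\Lambda v)(1)=\pa_yv(1)+\pa_y^2v(1)=\Delta v(1)=A^2$; and $\pa_y\H_BQ_\beta(1)=-\sum_jB_jB\lh_{B,j}\pa_y\etah_{B,j}(1)$ with the expansion above. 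Rearranging gives precisely~\eqref{etwoonebisbisthird}.

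I do not expect a genuine obstacle: the lemma is an algebraic consequence of the expansions of Lemma~\ref{lemmarenormalizedbis} and the boundary data, exactly as in the melting case. The two points requiring attention are (i) the sign bookkeeping ($a\mapsto A$, $\pa_yv(1)=a\mapsto -A$, $(a-b)\mapsto(B-A)$), and (ii) ensuring the $\pa_y\e(1)$ trace bound in the confining-weight space $H^2_{B,+}$ is the one established in Appendix~\ref{appendcoerc}; checking that the $O(|\ln B|^{-2})$ relative errors coming from $P_j(\sqrt B)=1+O(B)$ and from $\mu_{B,ij}$ are absorbed into the bracketed coefficients $1+2\alpha_j/|\ln B|+O(|\ln B|^{-2})$ is routine.
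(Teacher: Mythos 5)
Your proposal is correct and follows essentially the same route as the paper: the expansion of $\pa_y\etah_{B,j}(1)$ from the eigenmode structure, the trace bound $|\pa_y\e(1)|\lesssim \sqrt{\mathcal E}/(\sqrt{B}|\ln B|)$ from the coercivity estimate attached to the orthogonality conditions (the relevant estimate is \eqref{coercunubisbisbis} in Appendix~\ref{appendcoercbis}, not Appendix~\ref{appendcoerc}), and the two pointwise identities obtained by evaluating $\H_Av$ and its $y$-derivative at the fixed boundary $y=1$ using $v(s,1)=0$, $\pa_yv(s,1)=-A$ and $y\Delta v=\pa_y(\Lambda v)$. The only cosmetic difference is that you compute $\pa_y\etah_{B,j}(1)$ directly from \eqref{deftketabis}, whereas the paper relays through the melting-case formula \eqref{bejvieivbebie} via the conjugation \eqref{beiebibeiebis}; both yield the same expansion.
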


\begin{remark} Note that \eqref{E:PARTIALYEPSILONBOUNDARYbis}, \eqref{roughboundbjbis} imply
\be
\label{roughboundA}
|A|\lesssim B_k^e+\frac{\sqrt{\mathcal E}}{|\ln B|\sqrt{B}}\lesssim B_k^e.
\ee
\end{remark}

\begin{proof}[Proof of Lemma \ref{lemmaindeitnitues}]  We compute from \eqref{beiebibeiebis} \eqref{bejvieivbebie}: 
\be
\label{bejvieivbebiebis}
\pa_{y}\etah_{B,j}(1)=\pa_y\eta_{B,j}(1)e^{-\frac B2}=1+\frac{2\alpha_j}{|\ln B|}+O\left(\frac{1}{|\ln B|^2}\right).
\ee
This implies that
$$
\pa_yQ_\beta(1)=-\sum_{j=0}^{k}B_j\pa_y\eta_{B,j}(1)=-\sum_{j=0}^{k}B_j\left[1+\frac{2\alpha_j}{|\ln B|}+O\left(\frac{1}{|\ln B|^2}\right)\right].
$$
Since $v = Q_\beta + \e,$ it follows that 
\bee
\e_y\big|_{y=1} & = &v_y\big|_{y=1} - \pa_yQ_\beta\big|_{y=1}=  -\lsl- \pa_yQ_\beta(1)\\
& = & -A+ \sum_{j=0}^{k}B_j\left[1+\frac{2\alpha_j}{|\ln B|}+O\left(\frac{1}{|\ln B|^2}\right)\right],
\eee
which together with \eqref{coercunubisbis} yields \eqref{E:PARTIALYEPSILONBOUNDARYbis}. From \eqref{E:KDEFINITIONbis}, $v(s,1)=0$ and $\pa_yv(s,1)=-\lsl=-A$: 
$$
0=\H_Av(1)=(\H_Bv+(B-A)\Lambda v)(1)=\e_2(1)+A(A-B),
$$ this is \eqref{etwoonebisbis}. Now from $\pa_yv(s,1)=-A$, we have 
$$
\pa_s\pa_yv(s,1)=-A_s.
$$ 
On the other hand, taking $\pa_y$ of \eqref{E:KDEFINITIONbis}, we have:
$$
0=\pa_s\pa_yv+\pa_y(\H_Bv+(B-A)\Lambda v)=\pa_s\pa_yv+\pa_y\e_2+\pa_y\H_BQ_\beta+(B-A)y\Delta v.
$$
We evaluate the above identity at $y=1$. From \eqref{E:KDEFINITIONbis} and $\pa_sv(s,1)=0$, $\pa_yv(s,1)=-A$: $$\Delta v(1)=-A\Lambda v(1)=A^2.$$ By construction, 
$$
\pa_y \H_bQ_\beta=-\sum_{j=0}^k\hat{\l}_{B,j}BB_j\pa_y\etah_{B,j},
$$ 
and hence:
$$-A_s+\pa_y\e_2(1)-\sum_{j=0}^k\hat{\l}_{B,j}BB_j\left[1+\frac{2\alpha_j}{|\ln B|}+O\left(\frac{1}{|\log B|^2}\right)\right]+A^2(B-A)=0.
$$
\end{proof}

We now compute the leading order modulation equations.

\begin{proposition}[Leading order modulation equations]
\label{approxsolutionbis}
Under the a priori bounds of Proposition \ref{pr:bootstrapbis}, there holds \be
\label{equationqbetabis}
\pa_sQ_{\beta}+\H_ AQ_\beta={\rm Mod} +\Psi
\ee
where we defined the modulation vector
\bea
\label{equationmodulationbis}
\Mod& : =& -\sum_{j=0}^{k} \left[(B_j)_s+BB_j\lh_{B,j}+\frac{2(B-A)B_j}{|\ln B|}\right]\etah_{B,j},
\eea
and the remaining error satisfies the bound:
\be
\label{esterreurbis}
\|\Psi\|_b+\frac{1}{\sqrt{B}}\|\pa_y\Psi\|_B+\frac{1}{B}\|\mathcal H_B\Psi\|_B\lesssim \frac{\sqrt{B}B^e_k}{|\log B|}.
\ee
\end{proposition}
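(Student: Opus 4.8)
The plan is to run the computation of Proposition~\ref{approxsolution} in the freezing variables, the main simplification being that here $B(s)=\frac{1}{2s}$ is frozen \emph{exactly} (see \eqref{defbbs}), so $B_s=-2B^2$ and the deviation $\Phi:=B_s+2B(B-A)=-2AB$ is genuinely small, of size $O(BB_k^e)$, rather than being killed by a choice. First I would use $\H_A=\H_B+(B-A)\Lambda$, the chain rule in $s$, and the eigenvalue relation $\H_B\etah_{B,j}=B\lh_{B,j}\etah_{B,j}$ from \eqref{beiebibeiebis} to expand
\[
\pa_sQ_\beta+\H_AQ_\beta=-\sum_{j=0}^k\Big[(B_j)_s\etah_{B,j}+B_sB_j\pa_B\etah_{B,j}+BB_j\lh_{B,j}\etah_{B,j}+(B-A)B_j\Lambda\etah_{B,j}\Big],
\]
and then regroup the two ``middle'' terms as
\[
B_sB_j\pa_B\etah_{B,j}+(B-A)B_j\Lambda\etah_{B,j}=(B-A)B_j\big[\Lambda\etah_{B,j}-2B\pa_B\etah_{B,j}\big]+B_j\Phi\,\pa_B\etah_{B,j}.
\]
Inserting the near-identity $\Lambda\etah_{B,j}-2B\pa_B\etah_{B,j}=\frac{2}{|\ln B|}\etah_{B,j}+R_j$ coming from \eqref{foihonononfebisbis}, where $R_j$ is controlled by $\frac{1}{\sqrt B|\ln B|}$ in the triple norm $\|\cdot\|_B+\frac{1}{\sqrt B}\|\pa_y\cdot\|_B+\frac{1}{B}\|\H_B\cdot\|_B$, produces exactly $\Mod$ as in \eqref{equationmodulationbis} plus the error $\Psi=-\sum_j B_j\Phi\,\pa_B\etah_{B,j}+\sum_j(B-A)B_jR_j$.

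It then remains to estimate $\Psi$ in the norm of \eqref{esterreurbis}. For the second sum I would use $|B-A|\lesssim B$, the rough bound $|B_j|\lesssim B_k^e$ from \eqref{roughboundbjbis}, and the bound on $R_j$, getting a contribution $\lesssim\frac{\sqrt B\,B_k^e}{|\ln B|}$. For the first sum, since $\Phi=-2AB$ one has $-B_j\Phi\,\pa_B\etah_{B,j}=2AB_j\,(B\pa_B\etah_{B,j})$; I would bound the triple norm of $B\pa_B\etah_{B,j}$ by $\frac{|\ln B|}{\sqrt B}$ using \eqref{exactcomputationetabisbis} (which writes $B\pa_B\etah_{B,j}=(j+1)(\etah_{B,j+1}-\etah_{B,j})$ plus an $O(\frac{1}{\sqrt B})$ remainder in each of the three norms), \eqref{estderivativebis}, and the normalization \eqref{scalirpeoducetabis} together with \eqref{calculeignvectoretabis}, \eqref{cebivbfenoebis} for $\etah_{B,j}$ and its $\pa_y,\H_B$ images; with $|A|\lesssim B_k^e$ from \eqref{roughboundA} and $|B_j|\lesssim B_k^e$ this yields a contribution $\lesssim (B_k^e)^2\frac{|\ln B|}{\sqrt B}$, which is $\lesssim\frac{\sqrt B\,B_k^e}{|\ln B|}$ exactly because the explicit profile satisfies the fast-decay inequality $B_k^e|\ln B|^2\lesssim B$. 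Adding these estimates gives \eqref{esterreurbis}.

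The only genuinely delicate point is this $2AB_j\,B\pa_B\etah_{B,j}$ term: its naive $L^2_B$ size $|A|B_k^e\frac{|\ln B|}{\sqrt B}$ overshoots the desired bound by a factor $|\ln B|^2$, and it is rescued only by combining the quantitative gain $|A|\lesssim B_k^e$ with the strong decay $B_k^e|\ln B|^2\lesssim B$ built into the choice \eqref{defbbs} of $B_k^e$; this is also why --- in contrast to the melting modulation vector \eqref{equationmodulation}, which retains $\Phi/b$ couplings --- the freezing modulation vector \eqref{equationmodulationbis} is purely diagonal and $B_j\Phi\,\pa_B\etah_{B,j}$ may be dumped entirely into $\Psi$. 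Everything else, in particular chasing the $\pa_y$- and $\H_B$-versions of the eigenfunction estimates of Lemma~\ref{lemmarenormalizedbis} through the same algebra, is routine and strictly parallel to the proof of Proposition~\ref{approxsolution}, so I would present it in compressed form.
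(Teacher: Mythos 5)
Your proposal follows essentially the same route as the paper: the identical regrouping of $B_sB_j\pa_B\etah_{B,j}+(B-A)B_j\Lambda\etah_{B,j}$ into the $(B-A)B_j[\Lambda\etah_{B,j}-2B\pa_B\etah_{B,j}-\frac{2}{|\ln B|}\etah_{B,j}]$ correction (controlled by \eqref{foihonononfebisbis}) plus the $\frac{B_j}{B}\Phi\,B\pa_B\etah_{B,j}$ term, with $|\Phi|\lesssim|AB|\lesssim BB_k^e$ and the fast decay $B_k^e|\ln B|^2\lesssim B$ rescuing the latter. The argument and the estimates are correct and match the paper's proof, including the observation that no $\Phi/B$ coupling survives in $\Mod$.
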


\begin{proof}[Proof of Proposition \ref{approxsolution}]
Let the deviation 
$$
\Phi : =B_s+2B(B-A).
$$
By definition $$\H_A=\H_B+(B-A)\Lambda $$ and we therefore compute from~\eqref{E:QBETACOOLING}:
\bee
&&-\pa_sQ_{\beta(s)}(y)-\H_AQ_\beta\\
&=&\sum_{j=0}^{k}\left[(B_j)_s\etah_{B,j}+B_s\frac{B_j}{B}B\pa_B\etah_{B,j}+BB_j\hat{\l}_{B,j}\etah_{B,j}+(B-A)B_j\Lambda \etah_{B,j}\right]\\
& = & \sum_{j=0}^{k}\left\{[(B_j)_s+BB_j\lh_{b,j}]\etah_{B,j}+(B-A)B_j[\Lambda \etah_{B,j}-2B\pa_B\etah_{B,j}]+\frac{B_j}{B}B\pa_B\etah_{B,j}\Phi\right\}\\
& = & \sum_{j=0}^{k}\left\{\left[(B_j)_s+BB_j\lh_{B,j}+\frac{2(B-A)B_j}{|\log B|}\right]\etah_{B,j}\right.\\
&& + \left. (B-A)B_j\left(\Lambda \etah_{B,j}-2B\pa_B\etah_{B,j}-\frac2{|\ln B|}\etah_{B,j}\right)+\frac{B_j}{B}\Phi B\pa_B\etah_{B,j}\right\}.\\
\eee 
We now estimate from \eqref{roughboundA}: 
\be
\label{reoughboundphibis}
|\Phi|\lesssim |AB|\lesssim BB_k^e
\ee
and hence using \eqref{roughboundbjbis}, \eqref{estderivativebis}:
$$\|\frac{B_j}{B}\Phi B\pa_B\etah_{B,j}\|_B\lesssim \frac{|\log B|}{\sqrt{B}}(B_k^e)^2\lesssim \frac{\sqrt{B}B^e_k}{|\log B|}$$ and similarly for higher derivatives. Moreover from \eqref{foihonononfebisbis}, \eqref{roughboundbjbis}:
\bee
\left\|(B-A)B_j\left[\Lambda \etah_{B,j}-2B\pa_B\etah_{B,j}-\frac2{|\ln B|}\etah_{B,j}\right]\right\|_B\lesssim \frac{1}{\sqrt{B}|\ln B|}BB^e_k\lesssim \frac{\sqrt{B}B^e_k}{|\log B|}
\eee
and similarly for higher derivatives, and \eqref{esterreurbis} is proved.
\end{proof}


\subsection{Modulation equations}


The relations \eqref{E:KDEFINITIONbis},~\eqref{equationqbetabis} yield
\be
\label{E:EQUATIONbis}
\pa_s\e+\H_A\e=\matchal F,  \ \ 
\mathcal F=-\Mod-\Psi
\ee
and we now compute the exact modulation equations.

\begin{lemma}[Modulation equations for $B_j$]
\label{lemmamodulationbis}
There holds for $0\leq j\leq k$:
\be
\label{pouetbisbis}
\left|(B_j)_s+B_j B\left(2j+2+\frac{4}{|\log B|}\right)\right|\lesssim \frac{BB_k^e}{|\log B|^2}.
\ee  
\end{lemma}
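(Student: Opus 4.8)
The argument mirrors \emph{step 1} of the proof of Lemma~\ref{lemmamodulation} in the melting case: we exploit the orthogonality conditions \eqref{orthoebis}, i.e. $(\etah_{B,j},\e)_B=0$ for $0\le j\le k$, to kill the bulk of the $\mathrm{Mod}$ term, and then read off the coefficients from \eqref{equationmodulationbis}. Concretely, for each fixed $0\le j\le k$ I would take the $(\cdot,\cdot)_B$-inner product of \eqref{E:EQUATIONbis} with $\etah_{B,j}$. Differentiating the orthogonality relation $(\etah_{B,j},\e)_B=0$ in $s$ produces, besides $(\pa_s\etah_{B,j},\e)_B$, a boundary/weight term coming from $\pa_s$ acting on the Gaussian measure $\rho_{B,+}=e^{+B|y|^2/2}$, namely $\tfrac{B_s}{2}(|y|^2\etah_{B,j},\e)_B$. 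After integrating by parts in the $\H_A\e$ term (using $\e(1)=0$ and $\etah_{B,j}(1)=0$, so no new boundary contribution appears) and regrouping with the $\pa_B$-derivative via $B_s\pa_B\etah_{B,j}$, the left-hand side organizes into the combination $\Phi\,\big(\tfrac{|y|^2}{2}\etah_{B,j}-B\pa_B\etah_{B,j}\big)$ plus $(B-A)\big(2B\pa_B\etah_{B,j}-\Lambda\etah_{B,j}\big)$ paired against $\e$, exactly as in the melting computation.

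**Estimating the error terms.** Each of these pairings is then bounded using the eigenfunction estimates of Lemma~\ref{lemmarenormalizedbis} together with the coercivity $\|\e\|_B\lesssim\sqrt{\mathcal E}/B$ from Appendix~\ref{appendcoerc} (the freezing analogue of \eqref{coercunubis}). Specifically: for the $\Phi$-term I use $|\Phi|\lesssim B B_k^e$ from \eqref{reoughboundphibis} and the bounds $\|y^2\etah_{B,k}\|_B\lesssim |\ln B|/(B\sqrt B)$, $\|B\pa_B\etah_{B,k}\|_B\lesssim |\ln B|/\sqrt B$ from \eqref{cebivbfenoebis}--\eqref{estderivativebis}; for the $(B-A)$-term I use $|B-A|\lesssim B$ and the \emph{degeneracy} estimate \eqref{foihonononfebisbis}, which gives $\|2B\pa_B\etah_{B,j}-\Lambda\etah_{B,j}\|_B\lesssim \tfrac{1}{\sqrt B|\ln B|}$ up to the $\tfrac{2}{|\ln B|}\etah_{B,j}$ correction that combines cleanly with the $\tfrac{2(B-A)B_j}{|\ln B|}$ coefficient already present in $\mathrm{Mod}$. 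On the $\mathcal F=-\mathrm{Mod}-\Psi$ side, the $\mathrm{Mod}$ piece is orthogonal to $\etah_{B,j}$ by \eqref{orthoebis}, while $(\Psi,\etah_{B,j})_B$ is controlled by \eqref{esterreurbis} and the normalization \eqref{scalirpeoducetabis}, yielding a contribution $\lesssim \sqrt B B_k^e/|\ln B|\cdot |\ln B|/\sqrt B\sim B_k^e$—which after dividing by $(\etah_{B,j},\etah_{B,j})_B\sim |\ln B|^2/B$ becomes $O(BB_k^e/|\ln B|^2)$, consistent with the claimed right-hand side. Collecting everything and dividing by $(\etah_{B,j},\etah_{B,j})_B$ gives
\[
(B_j)_s+BB_j\lh_{B,j}+\frac{2(B-A)B_j}{|\ln B|}=O\!\left(\frac{BB_k^e}{|\ln B|^2}+\frac{\sqrt B\sqrt{\mathcal E}}{|\ln B|^2}\right),
\]
and then the bootstrap bound \eqref{E:BOOTBOUNDbis}, $\mathcal E\le D B(B_k^e)^2/|\ln B|$, makes the second error term subleading.

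**Reducing to the stated form.** Finally I substitute the eigenvalue expansion $\lh_{B,j}=\l_{B,j}+2=2j+2+\tfrac{2}{|\ln B|}+O(|\ln B|^{-2})$ from \eqref{beiebibeiebis} and \eqref{estimatelowe}, and the boundary relation \eqref{E:PARTIALYEPSILONBOUNDARYbis} giving $A=\sum_{i=0}^k B_i(1+O(1/|\ln B|))+O(\sqrt{\mathcal E}/(|\ln B|\sqrt B))$, hence $B-A=O(|\ln B|^{-1}\sum|B_i|)+\dots$, so that $\tfrac{2(B-A)B_j}{|\ln B|}=O(BB_k^e/|\ln B|^2)$ by \eqref{roughboundbjbis}. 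Absorbing this into the error and writing $BB_j\lh_{B,j}=B_j B(2j+2+\tfrac{2}{|\ln B|})+B_jB\cdot\tfrac{2}{|\ln B|}+O(B_jB/|\ln B|^2)$—where the extra $\tfrac{2}{|\ln B|}$ comes precisely from reorganizing the $(B-A)$ contribution together with the $O(|\ln B|^{-1})$ correction to $\lh_{B,j}$—produces the coefficient $2j+2+\tfrac{4}{|\ln B|}$ in \eqref{pouetbisbis}. The main obstacle, as in the melting case, is bookkeeping the $1/|\ln B|$ corrections so that they land exactly on the coefficient $2j+2+\tfrac4{|\ln B|}$ rather than being lost in the error; the confining Gaussian weight $e^{+B|y|^2/2}$ actually makes the $y^2$-moment estimates cleaner here than in the melting regime, and there is no delicate coupling between modes to track since, unlike \eqref{eqbtk}, the system \eqref{pouetbisbis} decouples at leading order thanks to the distinct eigenvalues $2j+2$.
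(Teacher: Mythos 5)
Your overall strategy is exactly the paper's: pair \eqref{E:EQUATIONbis} with $\etah_{B,j}$, use the orthogonality \eqref{orthoebis} to isolate $(\Mod,\etah_{B,j})_B$, control the $\Phi$- and $(B-A)$-commutator terms with \eqref{cebivbfenoebis}, \eqref{estderivativebis}, \eqref{foihonononfebisbis} and the coercivity of Appendix~\ref{appendcoercbis}, bound $(\Psi,\etah_{B,j})_B$ via \eqref{esterreurbis}, divide by \eqref{scalirpeoducetabis}, and then convert the resulting estimate $\big|(B_j)_s+BB_j\lh_{B,j}+\tfrac{2(B-A)B_j}{|\ln B|}\big|\lesssim BB_k^e/|\ln B|^2$ into \eqref{pouetbisbis}. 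Up to and including that intermediate estimate (the paper's \eqref{estparameteresbis}) your argument is correct and matches the paper's.

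The final reduction, however, contains a genuine error. You assert $B-A=O\big(|\ln B|^{-1}\sum_i|B_i|\big)$ and hence that $\tfrac{2(B-A)B_j}{|\ln B|}=O(BB_k^e/|\ln B|^2)$ is an error term. This transplants the melting-case logic, where $b$ is dynamically slaved to $a$ so that $|a-b|\lesssim b/|\ln b|$. In the freezing regime $B=\tfrac1{2s}$ is a \emph{fixed} function while $A=O(B_k^e)$ by \eqref{roughboundA}, and $B_k^e\ll B$; hence $B-A=B\big(1+O(B_k^e/B)\big)$ is \emph{not} small compared to $B$, and $\tfrac{2(B-A)B_j}{|\ln B|}=\tfrac{2BB_j}{|\ln B|}+O\big(\tfrac{B_k^eB_j}{|\ln B|}\big)$ contributes at leading order to the coefficient. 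It is precisely this $\tfrac{2BB_j}{|\ln B|}$, added to the $\tfrac{2}{|\ln B|}$ inside $\lh_{B,j}=2j+2+\tfrac{2}{|\ln B|}+O(|\ln B|^{-2})$, that produces the $\tfrac{4}{|\ln B|}$ in \eqref{pouetbisbis}; if one followed your absorption literally the coefficient would come out as $2j+2+\tfrac{2}{|\ln B|}$. Your closing sentence does attribute the extra $\tfrac{2}{|\ln B|}$ to ``the $(B-A)$ contribution,'' which is the right answer, but it contradicts the preceding claim that this contribution was absorbed into the error. Since the paper's subsequent integrations of \eqref{pouetbisbis} rely on the precise logarithmic correction, this bookkeeping needs to be fixed, not waved through.
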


\begin{proof}[Proof of Lemma \ref{lemmamodulationbis}] Let $0\leq j\le k$ and take the scalar product of \eqref{E:EQUATIONbis} with $\etah_{B,j}$ and use the orthogonality condition \eqref{orthoebis} to compute:
$$
-(\e,\pa_s\etah_{B,j})_B-B_s( \e,\frac{|y|^2}2\etah_{B,j})_B =  (\mathcal F,\etah_{B,j})+(A-B) (\Lambda \e,\etah_{B,j}).
$$ 
We integrate by parts using \eqref{orthoebis}:
\bee
&&-(\e,\pa_s\etah_{B,j})_B-B_s( \e,\frac{|y|^2}2\etah_{B,j})_B+(B-A) (\Lambda \e,\etah_{B,j})_B\\
& = & -(\e,\pa_s\etah_{B,j})_B-B_s( \e,\frac{|y|^2}2\etah_{B,j})_B-(B-A)(\e,B|y|^2\etah_{B,j}+\Lambda \etah_{B,j})_B\\
& = & -(\e,\frac12(B_s+2B(B-A))|y|^2\etah_{B,j})_B-(\e,\frac{B_s+2B(B-A)}{B}B\pa_B\etah_{B,j})_B\\
& & + (B-A)(\e,2B\pa_B\etah_{B,j}-\Lambda\etah_{B,j})_B\\
& = & -(\e,\frac{\Phi}{B}[\frac 12B|y|^2\etah_{B,j}+B\pa_B\etah_{B,j}])_B+(B-A)(\e,2B\pa_B\etah_{B,j}-\Lambda\etah_{B,j})_B.
\eee
We now estimate from \eqref{coercunubisbis}, \eqref{cebivbfenoebis}, \eqref{estderivativebis}, \eqref{reoughboundphibis}:
\bee
\left|-(\e,\frac{\Phi}{B}[\frac 12B|y|^2\etah_{B,j}+B\pa_B\etah_{B,j}])_B\right|\lesssim \|\e\|_B\frac{|\Phi|}{B}\frac{|\log B|}{\sqrt{B}}\lesssim\frac{\sqrt{\mathcal E}}{|\log B|\sqrt{B}}
\eee
and using \eqref{foihonononfebisbis} and \eqref{orthoebis}:
$$|(B-A)(\e,2B\pa_B\etah_{B,j}-\Lambda\etah_{B,j})_B|\lesssim \frac{B}{\sqrt{B}}\|\e\|_B\lesssim \frac{\sqrt{\mathcal E}}{|\log B|\sqrt{B}}.$$
We now estimate the $\matchal F$ terms given by \eqref{E:EQUATIONbis}. From \eqref{esterreurbis}:
$$
|(\Psi,\etah_{B,j})_B|\lesssim \|\Psi\|_B\|\etah_{B,j}\|_B\lesssim\frac{\sqrt{B}B^e_k}{|\log B|}\frac{|\log B|}{\sqrt{B}}\lesssim B_k^e.
$$  The collection of above bounds together 
with \eqref{scalirpeoducetabis} and~\eqref{E:EQUATIONbis} yields 
$$
\frac{|(\Mod, \etah_{B,j})_B|}{(\etah_{B,j},\etah_{B,j})_B}\lesssim\frac{B}{|\ln B|^2} \left[ \frac{\sqrt{\mathcal E}}{|\log B|\sqrt{B}}+B_k^e\right]=\frac{BB_k^e}{|\log B|^2}
$$
or equivalently:
\be
\label{estparameteresbis}
\sum_{j=0}^k\left|(B_j)_s+BB_j\hat{\l}_{B,j}+\frac{2(B-A)B_j}{|\log B|}\right|\lesssim \frac{BB_k^e}{|\log B|^2}.
\ee
We conclude from \eqref{estparameteresbis}, \eqref{roughboundA}, \eqref{beiebibeiebis}:
$$\left|(B_j)_s+B_j B\left[2j+2+\frac{4}{|\log B|}+O\left(\frac{1}{|\log B|^2}\right)\right]\right|\lesssim \frac{BB_k^e}{|\log B|^2},$$ this is \eqref{pouetbisbis}. 
\end{proof}


\subsection{Energy bound}\label{S:ENERGYBOUNDbis}


We now derive the energy estimate in the freezing regime

\begin{proposition}[Energy bound for freezing]
\label{propenergybis}
There holds the pointwise control
\be
\label{cnekovnevbnenvoeneo}
\frac 12\frac{d}{ds}\left\{\mathcal E+O\left(\frac{B(B_k^e)^2}{|\log B|}\right)\right\}+B(2k+4+c)\|\e_2\|_B^2\lesssim B\frac{B(B_k^e)^2}{|\log B|}
\ee
for some universal constant $c>0$.
\end{proposition}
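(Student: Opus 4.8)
The plan is to transcribe the proof of Proposition~\ref{propenergy} into the freezing (confining-weight) setting, where the analysis is in fact lighter: the deviation obeys $|\Phi|\lesssim BB_k^e$ by \eqref{reoughboundphibis} and only the single mode $B_k^e$ survives at leading order (the remaining $B_j$ being $O(s^{-k-1}(\log s)^{-5/2})$ by \eqref{bveuibvibibis}), so essentially every error term is automatically of admissible size $B\cdot\frac{B(B_k^e)^2}{|\log B|}$. First, writing $\e_2=\H_B\e$ and $\Phi:=B_s+2B(B-A)$, I differentiate \eqref{E:EQUATIONbis}, compute the commutators $[\pa_s,\H_B]$ and $[\H_A,\H_B]$ from $[\Delta,\Lambda]=2\Delta$ (\eqref{commutatorlaplace}), and use $\e_2=-\Delta\e-B\Lambda\e$ to absorb them, arriving at
\[
\pa_s\e_2+\H_A\e_2=-\Phi\Lambda\e+2(A-B)\e_2+\H_B\mathcal F .
\]
From $\tfrac12\tfrac{d}{ds}\mathcal E=\tfrac{B_s}{4}\|y\e_2\|_B^2+(\pa_s\e_2,\e_2)_B$ and integration by parts against $\rho_{B,+}\,y\,dy$ on $\Omega$ (using $\H_B=-\tfrac1{\rho_{B,+}y}\pa_y(\rho_{B,+}y\pa_y\cdot)$ and $\e(s,1)=0$) one gets an algebraic identity of the form
\begin{align*}
\tfrac12\tfrac{d}{ds}\mathcal E &=-\|\pa_y\e_2\|_B^2+(A-B)\|\e_2\|_B^2+\tfrac{\Phi}{4}\|y\e_2\|_B^2 \\
&\quad-\rho_{B,+}(1)\,\e_2(1)\Big[\pa_y\e_2+\tfrac{B-A}2\e_2\Big](1)-\Phi(\Lambda\e,\e_2)_B+(\H_B\mathcal F,\e_2)_B .
\end{align*}

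The bulk error terms are then disposed of as in the proof of Proposition~\ref{propenergy}: the modulation part of $\mathcal F=-\Mod-\Psi$ drops by \eqref{orthoebis} (both $\e$ and the $\etah_{B,j}$ vanish at $y=1$, so $(\H_B\Mod,\e_2)_B=(\H_B\Mod,\H_B\e)_B=0$); $(\H_B\Psi,\e_2)_B$ is controlled via \eqref{esterreurbis} and \eqref{scalirpeoducetabis}; $\Phi(\Lambda\e,\e_2)_B$ via the coercivity of $\mathcal E$ (Appendix~\ref{appendcoerc}, \eqref{coercunubisbis}) together with $|\Phi|\lesssim BB_k^e$; and $\Phi\|y\e_2\|_B^2$ via the weighted estimate \eqref{estwiheihegohe} (adapted to $\rho_{B,+}$), so that it is absorbed into $-\|\pa_y\e_2\|_B^2$ up to an admissible remainder. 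The only genuinely delicate term is the boundary contribution. There I substitute $\e_2(1)=A(B-A)$ from \eqref{etwoonebisbis} and the expression \eqref{etwoonebisbisthird} for $\pa_y\e_2(1)$; using $\lh_{B,j}=2j+2+\frac2{|\ln B|}+O(\frac1{|\ln B|^2})$, the modulation equations \eqref{pouetbisbis}, and $A=\sum_jB_j[1+\frac{2\alpha_j}{|\ln B|}]+O(\cdots)$ from \eqref{E:PARTIALYEPSILONBOUNDARYbis} together with $A^e=B_k^e$, I check that $\sum_j\lh_{B,j}BB_j[1+\frac{2\alpha_j}{|\ln B|}]$ cancels $A_s$ up to $O(\frac{BB_k^e}{|\log B|})$, so that $\pa_y\e_2(1)=-(A-\tilde A)_s+(\text{lower order})$ for the leading part $\tilde A$ of $A$, exactly as in Step~4 of the proof of Proposition~\ref{propenergy}. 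Since $\e_2(1)=A(B-A)$ is an explicit polynomial in $A$ and $B$, the product $\rho_{B,+}(1)\e_2(1)[\pa_y\e_2+\frac{B-A}2\e_2](1)$ then rearranges into $\frac{d}{ds}$ of a quantity of size $O(B(B_k^e)^2)$ — which goes into the braces in \eqref{cnekovnevbnenvoeneo} — plus errors of order $B\cdot\frac{B(B_k^e)^2}{|\log B|}$.

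Collecting everything, using $A-B=-B+O(BB_k^e)$ from \eqref{roughboundA} and $\mathcal E=\|\e_2\|_B^2$, I finally apply the sharp spectral-gap/Poincaré estimate — the $\rho_{B,+}$-version of \eqref{coercdeuxbis}, which carries the dimensional shift $d=2$ of \eqref{estimationcoreorbis} and hence yields the constant $2k+4$ — to bound $-\|\pa_y\e_2\|_B^2+(A-B)\|\e_2\|_B^2\le-(2k+4+c)B\mathcal E$ for some universal $c>0$; this gives \eqref{cnekovnevbnenvoeneo}. The main obstacle is the boundary term: its individual pieces are not small, and the whole argument rests on the rigid algebra forced by \eqref{etwoonebisbis}--\eqref{etwoonebisbisthird} and by the modulation system, which alone makes it collapse to a total $s$-derivative of a small correction plus admissible errors. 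Everything else is a direct rewriting of Sections~\ref{S:MODULATION}--\ref{S:ENERGYBOUND} in the $+$-setting.
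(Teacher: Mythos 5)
Your proposal is correct and follows essentially the same route as the paper's proof: the same modified energy identity obtained from the commutator $[\Delta,\Lambda]=2\Delta$, the same orthogonality cancellation $(\H_B\Mod,\e_2)_B=0$, the same rearrangement of the boundary term into a total $s$-derivative using the algebra forced by \eqref{etwoonebisbis}--\eqref{etwoonebisbisthird} and the modulation system, and the same final use of the shifted spectral gap \eqref{coercdeuxbisbis} combined with $-(B-A)\|\e_2\|_B^2\approx -B\|\e_2\|_B^2$ to reach the constant $2k+4+c$. The only discrepancies are immaterial sign and reference slips (the paper's boundary bracket is $[\pa_y\e_2-\frac{B-A}{2}\e_2](1)$ and \eqref{etwoonebisbisthird} yields $\pa_y\e_2(1)=+(A-\tilde A)_s+O(BB_k^e/|\log B|)$; the freezing-case coercivity bounds you need are \eqref{coercunubisbisbis} and \eqref{E:YHBEPSILONBOUNDbis}), none of which affects the validity of the argument.
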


\begin{proof}[Proof of Proposition \ref{propenergybis}] We compute the energy identity for $\mathcal E$ and estimate all terms.\\

\noindent{\bf step 1} Algebraic energy identity. Recall $\e_2=\mathcal H_{B}\e$. We compute from \eqref{E:EQUATIONbis}:
$$
\pa_s\e_2+\H_A\e_2=[\pa_s,\H_{B}]\e+[\H_A,\H_{B}]\e+\H_{B}\mathcal F.
$$ 
and hence using \eqref{commutatorlaplace}:
\begin{align*}
& [\pa_s,\H_{B}]\e+[\H_A,\H_{B}]\e =-B_s\Lambda \e+[\H_B+(B-A)\Lambda,\H_B]\e \\
& =-B_s\Lambda \e+(B-A)[\Lambda,-\Delta]
 =  -B_s\Lambda \e+2(B-A)\Delta \e \\
 &=-(B_s+2B(B-A))\Lambda \e-2(B-A)[-\Delta\e-B\Lambda\e]
 =  -\Phi\Lambda \e-2(B-A)\e_2.
\end{align*}
Hence the $\e_2$ equation: 
\be
\label{eqationaetwobis}
\pa_s\e_2+\H_A\e_2=-\Phi\Lambda \e-2(B-A)\e_2+\H_B\mathcal F.
\ee
We now compute the modified energy identity:
\bee
\nonumber \frac 12\frac{d}{ds}\mathcal E& = & \frac 12\frac{d}{ds}\int_{y\geq 1} \e_2^2e^{\frac{By^2}{2}}ydy= \frac{B_s}{4}\int_{y\geq 1} y^2|\e_2|^2e^{\frac{B|y|^2}{2}}ydy+(\pa_s\e_2,\e_2)_B\\
& = &  \frac{B_s}{4}\|y\e_2\|_B^2+(-\Phi\Lambda \e-2(B-A)\e_2+\H_B\mathcal F-\H_A\e_2,\e_2)_B.
\eee
We carefully integrate by parts to compute:
\bee
&&-\int_{y\geq 1}\e_2\H_A\e_2e^{\frac{By^2}{2}}ydy=-\int_{y\geq 1}\e_2[\H_B\e_2+(B-A)\Lambda \e_2]e^{\frac{By^2}{2}}ydy\\
& =& \int_{y\geq 1}\pa_y(\rho_B y\pa_y\e_2)\e_2dy-(B-A)\int_{y\ge1} \e_2y\pa_y\e_2e^{\frac{By^2}{2}}ydy\\
& = & -\rho_B(1)\e_2(1)\pa_y\e_2(1)-\int_{y\geq 1}|\pa_y\e_2|^2e^{\frac{By^2}{2}}ydy\\
&&  +\frac{B-A}{2}\rho_B(1)\e_2^2(1)+\frac{B-A}{2}\int_{y\geq 1}\e_2^2\left[2+By^2\right]e^{\frac{By^2}{2}}ydy\\
& = &  -\|\pa_y\e_2\|_B^2+(B-A)\|\e_2\|_2^2+\frac{B(B-A)}{2}\|y\e_2\|_B^2-\rho_B(1)\e_2(1)\left[\pa_y\e_2-\frac{B-A}{2}\e_2\right](1).
\eee
This yields the algebraic energy identity:
\bea
\label{algebraicneergyidentitybis}
\nonumber \frac 12\frac{d}{ds}\mathcal E&=& -\|\pa_y\e_2\|_B^2-(B-A)\|\e_2\|_2^2+\frac{\Phi}{4}\|y\e_2\|_B^2-\rho_B\e_2\left[\pa_y\e_2-\frac{B-A}{2}\e_2\right](1)\\
& - & \Phi(\Lambda \e,\e_2)_B+(\H_B\mathcal F,\e_2)_B.
\eea
We now estimate all  terms in the right-hand side of \eqref{algebraicneergyidentitybis}.\\

\noindent{\bf step 2} Nonlinear estimates. From \eqref{coercunubisbisbis}, \eqref{reoughboundphibis}:
\bee
|\Phi||(\Lambda\e,\e_2)_B|\lesssim BB_k^e\frac{\mathcal E}{B}\lesssim B\frac{\mathcal E}{|\ln B|}.
\eee
Moreover from \eqref{orthoebis}, \eqref{equationmodulationbis}, $(\H_B\Mod,\e_2)_b=0,$ and from \eqref{esterreurbis}, \eqref{roughboundbjbis}:
$$|(\H_B\Psi,\e_2)_B\lesssim B\sqrt{\mathcal E}\frac{\sqrt{B}B^e_k}{|\log B|}\lesssim B\frac{B(B^e_k)^2}{|\log B|}.$$
We now estimate from \eqref{E:YHBEPSILONBOUNDbis}, \eqref{reoughboundphibis}, \eqref{etwoonebisbis}, \eqref{roughboundA}:
\bee
|\Phi\|y\e_2\|_B^2&\lesssim& BB_k^e\left[\frac{\|\pa_y\e_2\|^2_B}{B^2}+\frac{\e^2_2(1)}{B}\right]\lesssim \frac{\|\pa_y\e_2\|^2_B}{|\log B|}+\frac{B^3A^2}{|\log B|}\\
& \lesssim & \frac{\|\pa_y\e_2\|^2_B}{|\log B|}+\frac{B^3}{|\log B|}\left[(B_k^e)^2+\frac{\mathcal E}{B|\ln B|^2}\right]\\
& \lesssim & \frac{\|\pa_y\e_2\|^2_B}{|\log B|}+B\frac{B(B_k^e)^2}{|\log B|^2}.
\eee

\noindent{\bf step 3} Boundary term and conclusion. It now remains to treat the boundary term in \eqref{algebraicneergyidentitybis}. First from \eqref{etwoonebisbis}, \eqref{etwoonebisbisthird}, \eqref{roughboundA}:
$$|B-A||\e_2(1)|^2\lesssim |B|(AB)^2\lesssim B^3(B_k^e)^2.$$ Let $$\tilde{A}=\sum_{j=0}^kB_j\left(1+\frac{2\alpha_j}{|\log B|}\right),$$ and observe the bounds from \eqref{E:PARTIALYEPSILONBOUNDARYbis}, \eqref{roughboundbjbis}, \eqref{pouetbisbis}:
\be
\label{estatilde}
|\tilde{A}-A|\lesssim \frac{B^e_k}{|\log B|}, \ \ |\tilde{A}_s|\lesssim BB^e_k
\ee
We now rewrite \eqref{etwoonebisbisthird} using \eqref{pouetbisbis}, \eqref{roughboundA}, \eqref{estparameteresbis}:
\bee
\pa_y\e_2(1)&=&\left(A-\tilde{A}\right)_s+\sum_{j=0}^k\left(1+\frac{2\alpha_j}{|\log B|}\right)\left[(B_j)_s+\hat{\l}_{B,j}BB_j\right]+O\left(\frac{BB_k^e}{|\log B|^2}\right)\\
& = & (A-\tilde{A})_s+O\left(\frac{BB_k^e}{|\log B|}\right)
\eee
from which:
\bee
&&\rho_B(1)\e_2(1)\left[\pa_y\e_2-\frac{B-A}{2}\e_2\right](1)\\
& = & \rho_B(1)A(B-A)\left[(A-\tilde{A})_s+O\left(\frac{BB_k^e}{|\log B|}+ B^2B_k^e\right)\right]\\
& = & O\left(B\left(\frac{B(B^e_k)^2}{|\log B|}\right)\right)+ \rho_B(1)A(B-A)(A-\tilde{A})_s.
\eee 
We now compute using \eqref{estatilde}:
\bee
&&-\rho_B(1)A(B-A)(A-\tilde{A})_s=  \rho_B(1)(A-\tilde{A})_s\left[(A-\tilde{A})^2+(A-\tilde{A})(\tilde{A}-B)-\tilde A(B-A)\right]\\
&&=  -\frac{d}{ds}\left\{\rho_B(1)\left[\frac{(A-\tilde{A})^3}{3}+\frac{(A-\tilde{A})^2(\tilde{A}-B)}{2}-(A-\tilde{A}){red}\tilde A(B-A)\right]\right\}\\
&& - \frac{B_s}{2}\rho_B(1)\left[\frac{(A-\tilde{A})^3}{3}+\frac{(A-\tilde{A})^2(\tilde{A}-B)}{2}-(A-\tilde{A})\tilde A(B-A)\right]\\
&& + \rho_B(1)\left[\frac{(A-\tilde{A})^2}{2}(\tilde{A}_s-B_s)-(A-\tilde{A})(\tilde{A}_s(B-A)+(B_s-A_s)\tilde{A})\right]\\
& = & \frac{d}{ds}\left\{O\left(\frac{B(B_e^k)^2}{|\log B|^2}\right)\right\}+O\left(\frac{B^2(B_k^e)^2}{|\log B|}\right)
\eee
Injecting the collection of above bounds into \eqref{algebraicneergyidentitybis} yields:
\bee
\label{algebraicneergyidentitybisbis}
&& \frac 12\frac{d}{ds}\left\{\mathcal E+O\left(\frac{B(B_k^2)^2}{|\log B|}\right)\right\}\\
& = & -\|\pa_y\e_2\|_B^2-(B-A)\|\e_2\|_B^2+O\left(\frac{\|\pa_y\e_2\|^2_B}{|\log B|}+B\frac{B(B_k^e)^2}{|\log B|}\right)
\eee
and hence using the coercivity \eqref{coercdeuxbisbis},~\eqref{etwoonebisbis}, \eqref{defbbs} and~\eqref{roughboundA}:
$$\frac 12\frac{d}{ds}\left\{\mathcal E+O\left(\frac{B(B_k^2)^2}{|\log B|}\right)\right\}+B\left[2k+5+O\left(\frac{1}{|\log B|}\right)\right]\|\e_2\|_B^2\lesssim \frac{B(B_k^e)^2}{|\log B|},$$ and \eqref{cnekovnevbnenvoeneo} is proved.
\end{proof}


\subsection{Proof of Proposition \ref{pr:bootstrapbis} and Theorem \ref{T:MAINbis}}


We may now close the bootstrap estimates of Proposition \ref{pr:bootstrapbis}.\\

\noindent{\bf step 1} Closing the energy bound.  First observe
$\left|\lsl\right|=|A|\lesssim B$ and thus $|\ln \l(s)|\lesssim C(s)$ implies $\l(s)>0$ on $[0,s^*]$. The control of the $\matchal E$ norm easily implies the $H^2$ control $\|u(s)\|_{H^2}\lesssim C(s)$ 
and hence the solution is well defined from the point of view of the $H^2$ Cauchy theory on $[0,s^*]$. 
We now integrate in time the bound \eqref{cnekovnevbnenvoeneo} and obtain for some $c>0$ 
\bee
\matchal E(s)&\le& \left(\frac{s_0}{s}\right)^{2k+3+c}\mathcal E(0)+ C\frac{B(s)(B_k^e(s))^2}{|\log s|}+C \frac{1}{s^{2k+3+c}}\int_{s_0}^s\frac{B^2(B_k^e)^2}{|\log B|}\sigma^{2k+3+c}\,d\sigma\\
& \le & \left(\frac{s_0}{s}\right)^{2k+3+c}\mathcal E(0)+C \frac{B(s)(B_k^e(s))^2}{|\log s|}\lesssim \frac{B(s)(B_k^e(s))^2}{|\log s|}.
\eee
Hereby we used the  explicit formulas \eqref{defbbs} to infer that $ \frac{1}{s^{2k+3+c}}\int_0^s\frac{B^2(B_k^e)^2}{|\log B|}\sigma^{2k+3+c}\,d\sigma \lesssim \frac{B(s)(B_k^e(s))^2}{|\log s|} $ 
and in the last inequality the initial data assumption \eqref{E:ESMALLbis}. This closes the energy bound \eqref{E:BOOTBOUNDbis}.\\

\noindent{\bf step 2} Control of $B_j$. We  estimate from \eqref{pouetbisbis}:
$$
\left|(B_k)_s+BB_k\left(2k+2+\frac{4}{|\log B|}\right)\right|\lesssim \frac{BB^e_k}{|\log B|^2}
$$ 
from which 
$$
\left|\frac{d}{ds}(s^{k+1}(\log s)^2B_k)\right|\lesssim \frac{1}{s(\log s)^2}.
$$ 
An integration-in-time yields:
\be
\label{controbceovbov}
B_k(s)= \frac{1}{s^{k+1}(\log s)^2}\left[\frac{B_k(s_0)}{B_k^e(s_0)}+O\left(\frac{1}{\log s_0}\right)\right]
\ee
and the initial data assumption \eqref{initializationbzero} now improves \eqref{bootBseroekr}.\\
For $k\ge 1$, we now argue by contradiction and assume that for all $(V_j)_{0\leq j\leq k-1}$ with $\sum_{j=0}^{k-1}|V_j(0)|^2\leq K^2$, there holds 
$s^*<+\infty$ i.e. 
$$
\sum_{j=0}^{k-1}|V_j(s^*)|^2=K^2.
$$ 
We estimate using the variables \eqref{bveuibvibibis} and \eqref{pouetbisbis}:
$$
\left|(V_j)_s-\frac{k-j}{s}V_j\right|\lesssim \frac{1}{s\sqrt{\log s}}, \ \ j = 1,\dots, k-1.
$$ 
Hence at the exit time:
$$
\frac12\frac{d}{ds}\left[\sum_{j=0}^{k-1}|V_j|^2\right](s^*)\gtrsim \frac1{s^*}\sum_{j=0}^{k-1}|V_j(s^*)|^2>0
$$
and a contradiction follows as in the melting case using Brouwer fixed point theorem. This concludes the proof of Proposition \ref{pr:bootstrapbis}.


\subsection{Proof of Theorem~\ref{T:MAINbis}}\label{S:PROOFMAINTHEOREMbis}


The proof of Theorem~\ref{T:MAINbis} now follows from a simple time integration of the modulation equations.\\
We estimate from \eqref{pouetbisbis} 
$$
\left|(B_k)_s+\frac{B_k}s\left(k+1+\frac{2}{\log s}\right)\right|\lesssim \frac{B_k^e}{s(\log s)^2}
$$ 
and hence 
$$
\left|\frac{d}{ds}(s^{k+1}(\log s)^2B_k)\right|\lesssim \frac{1}{s(\log s)^2}
$$ 
which implies for $s$ large enough: 
$$
B_k(s)=\frac{c(u_0)(1+o(1))}{s^{k+1}(\log s)^2}
$$ 
for some universal constant $c=c(u_0)$. 
We conclude from \eqref{E:PARTIALYEPSILONBOUNDARYbis}: 
$$
\lsl=A= \frac{c(u_0)(1+o(1))}{s^{k+1}(\log s)^2}
$$ 
from which there exists $\l_\infty\geq \l_\infty(u_0)>0$ with 
$$
\l_\infty - \l(s)=\left\{\begin{array}{ll} \frac{c(u_0)(1+o_{s\to +\infty}(1))}{\log s} \ \ \text{ if } \ k=0,\\
\frac{c(u_0)(1+o_{s\to +\infty}(1))}{s^{k}(\log s)^2} \ \ \text{ if } \ k\ge1.
\end{array}\right.
$$ 
Since for large $s\gg1$ we have $\frac{ds}{dt} \sim \frac1{\l_{\infty}^2},$~\eqref{E:MELTINGRATEbis} and~\eqref{E:MELTINGRATEbisk} follow. Finally, recalling that $\hat \eta_{B,j} = e^{-\frac{By^2}2}\eta_{B,j}$ we estimate:
\bee
\int_{|y|\geq 1} |\nabla (\etah_{B,j})|^2&=& \int_{y\geq 1}\left|\pa_y\eta_{B,j}-By\eta_{B,j}\right|^2 e^{-B y^2}y\,dy =\int_{z\geq \sqrt{B}}\left|\pa_z\psi_{B,j}-z\psi_{B,j}\right|^2e^{-z^2}z\,dz\\
& = & |\ln B|^2[\frac 14+o(1)] \int_{z\ge 0} |P_j' - zP_j|^2e^{-z^2} z\,dz=  c_k\left[1+o(1)\right]|\log B|^2
\eee
for some universal constant $c_k>0$. Note that we used~\eqref{deftk}. Hence using \eqref{coercunubisbis}:
$$
\int\left|\nabla(v-B_k\etah_{B,k})\right|^2\lesssim |\log B|^{2}\sum_{j=0}^{k-1}B_j^2+\|\nabla \e\|_{L^2}^2\lesssim (B_k^e)^2.
$$
Therefore, since the self-similar rescaling preserves the Dirichlet energy, it follows that 
\[
\int_{\Omega(t)}|\nabla u|^2  = \int_{|y|\ge 1} |\nabla v|^2 = B_k^2\int_{|y|\ge 1}|\nabla\etah_{B,k}|^2 + O((B_k^e)^2)
\]
which yields~\eqref{E:MELTINGRATEbis2}. To prove~\eqref{E:FORMULA}, note that by 
integrating~\eqref{E:STEFAN} and using the Stokes theorem, we arrive at the following conservation law:
\be\label{E:CONSERVATIONLAW}
\frac{d}{dt} \left(\int_{\Omega(t)} u (t,x)\,dx - \pi \l^2(t)\right) = 0,
\ee
which holds as long as $u\in L^1(\Omega(t)).$ To see this and evaluate $\lim_{t\to\infty}\|u\|_{L^1(\Omega(t))}$ 
first observe that  $v$ satisfies
\begin{align*}
\|v\|_{L^1(\Omega)}& = \int_{y\ge 1}|v|\, ydy 
 \le \|v\|_B\left(\int_{y\ge1}e^{-\frac{By^2}{2}}\,ydy\right)^{1/2}\lesssim\frac 1{\sqrt B}\|v\|_B.
 \end{align*}
On the other hand by~\eqref{orthoebis} it follows that
\begin{align*}
\|v\|_B & \le \sum_{j=0}^k|B_j|\|\eta_{B,j}\|_B + \|\e\|_B \lesssim \frac{B}{|\log B|^2} \frac{|\log B|}{\sqrt B} + \frac{1}{B} \|\H_B\e\|_B,
 \lesssim \frac{\sqrt B}{|\log B|}
\end{align*}
where we used~\eqref{defbbs},~\eqref{scalirpeoducetabis},~\eqref{bveuibvibibis},~\eqref{coercunubisbisbis}, and~\eqref{E:BOOTBOUNDbis}.
The two previous inequalities  lead to 
$$
\|v\|_{L^1(\Omega)} \lesssim \frac 1{|\log B|} \rightarrow 0  \ \ \text{ as $s\to\infty.$}
$$
Therefore 
$$
\|u\|_{L^1(\Omega(t))} = \l^2(t) \|v\|_{L^1(\Omega)} \to 0 \ \ \text{ as $t\to\infty$}
$$
since $|\l(t)|$ remains bounded.
It follows in particular that the conservation law~\eqref{E:CONSERVATIONLAW} holds and formula~\eqref{E:FORMULA} follows.\\
This concludes the proof of Theorem~\ref{T:MAINbis}.


\begin{appendix}



\section{Coercivity estimates in the melting case}
\label{appendcoerc}

This appendix is devoted to the derivation of various coercivity estimates in the melting regime $$\pm=-, \ \ \rho=\rho_-, \ \ b>0$$ which are used along the proof. We start with the standard compactness of the harmonic oscillator.

\begin{lemma}[Weighted $L^2$ estimate]\label{L:HARMONICOSCILLATOR}
Let $u,\pa_z u\in L^2_{\rho}(\RR^2).$ 
Then $\forall k\geq 0$, 
 \be
\label{weightedesimate}
\int z^{2k}u^2z\rho dz\lesssim_k\int (\pa_zu)^2\rho zdz+\int u^2z\rho dz.\ee
\end{lemma}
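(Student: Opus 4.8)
The plan is to prove \eqref{weightedesimate} by induction on $k$, the instances $k=0,1$ being the ones actually invoked in the sequel (for example \eqref{weightedesimate} is applied to $z\psi_b$ to produce the $z^2$-moment, and it enters the weighted Poincar\'e inequality \eqref{weghtedpoinacre}). By density it is enough to argue for radial $u\in C_c^\infty(\RR^2)$, for which the boundary contributions below vanish --- at $z=0$ because of the vanishing weight $z^{2k}$, and at $z=+\infty$ by compact support --- and then to pass to the limit by Fatou's lemma, the constant obtained depending only on $k$. Conceptually the estimate is the conjugate through $w:=u\,e^{-|z|^2/4}$ of the weight-free bound $\||z|^kw\|_{L^2(\RR^2)}^2\lesssim_k\|\nabla w\|_{L^2}^2+\||z|w\|_{L^2}^2$; indeed one checks $\|u\|_{L^2_\rho}^2=\|w\|_{L^2}^2$, $\||z|^ku\|_{L^2_\rho}^2=\||z|^kw\|_{L^2}^2$, and, integrating by parts in the cross term, $\|\partial_zu\|_{L^2_\rho}^2+\|u\|_{L^2_\rho}^2=\|\nabla w\|_{L^2}^2+\tfrac14\||z|w\|_{L^2}^2$ (the $\|w\|_{L^2}^2$ from the cross term cancelling). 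For $k=1$ this is immediate, but it is just as convenient to stay with the Gaussian weight throughout.

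The base case $k=0$ is trivial. For the inductive step one trades a factor $z^2$ of the weighted moment against a derivative of $u$, using the identity $z\rho=-\partial_z\rho$: integrating by parts in $z\in(0,\infty)$,
\[
\int_0^\infty z^{2k}u^2\,z\rho\,dz=-\int_0^\infty z^{2k}u^2\,\partial_z\rho\,dz=2k\int_0^\infty z^{2k-2}u^2\,z\rho\,dz+2\int_0^\infty z^{2k-1}u\,\partial_zu\;z\rho\,dz .
\]
The first summand is controlled by the induction hypothesis applied to $u$ at order $k-1$, and the cross term is handled by Young's inequality,
\[
2\int_0^\infty z^{2k-1}u\,\partial_zu\;z\rho\,dz\le\tfrac12\int_0^\infty z^{2k}u^2\,z\rho\,dz+2\int_0^\infty z^{2k-2}(\partial_zu)^2\,z\rho\,dz ,
\]
so that absorbing the first term on the right into the left-hand side gives
\[
\int_0^\infty z^{2k}u^2\,z\rho\,dz\lesssim_k\int_0^\infty z^{2k-2}u^2\,z\rho\,dz+\int_0^\infty z^{2k-2}(\partial_zu)^2\,z\rho\,dz .
\]

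For $k=1$ the right-hand side is exactly $\|u\|_{L^2_\rho}^2+\|\partial_zu\|_{L^2_\rho}^2$, so the proof is complete with an explicit constant. The step I expect to be the real obstacle is the passage to $k\ge2$: the moment $\int z^{2k-2}(\partial_zu)^2\,z\rho\,dz$ is not on our list, and one must re-run the same ``power-for-a-derivative'' integration by parts with $\partial_zu$ in place of $u$; since each application lowers the weight exponent by two, after finitely many iterations every weighted moment is reduced to unweighted $L^2_\rho$-norms of $u$ together with finitely many of its derivatives, all finite in the dense class, after which one collects the purely combinatorial $k$-dependent constants. The only non-bookkeeping point along the way is to keep each boundary term at $z=0$ and $z=+\infty$ vanishing --- which the choice of approximating class guarantees --- and to check that the final constant is independent of the approximating sequence, so that the bound extends to all $u$ with $u,\partial_zu\in L^2_\rho$.
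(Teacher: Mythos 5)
Your argument for $k=0,1$ is correct and is essentially the paper's own computation (the paper packages it as the expansion of $\int(\pa_zu-\delta z^ku)^2\rho z\,dz\ge0$ followed by one integration by parts on the cross term; you integrate by parts first and then apply Young, which is the same thing). The genuine gap is exactly the step you flag as ``the real obstacle'': for $k\ge2$ your reduction leaves the moment $\int z^{2k-2}(\pa_zu)^2\rho z\,dz$, and re-running the argument with $\pa_zu$ in place of $u$ requires $\pa_z^2u,\dots,\pa_z^ku$, which are not in the hypotheses. More importantly, even in the dense class where these derivatives exist, the iterated inequality bounds $\int z^{2k}u^2\rho z\,dz$ by $\sum_{j=0}^{k}\|\pa_z^ju\|_{L^2_\rho}^2$, which is \emph{not} the right-hand side of \eqref{weightedesimate}; an approximating sequence $u_n\to u$ only controls $\|u_n\|_{L^2_\rho}+\|\pa_zu_n\|_{L^2_\rho}$, so the higher-derivative terms need not stay bounded and no limiting argument recovers the stated bound. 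This is not bookkeeping.

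In fact the gap cannot be closed, because for $k\ge2$ the estimate is false under the stated hypotheses. Take $u(z)=e^{z^2/4}\chi(z-R)$ with $\chi$ a fixed smooth bump supported in $[0,1]$ and $R\to+\infty$; then $u^2\rho=\chi(z-R)^2$ and $(\pa_zu)^2\rho=\bigl(\tfrac z2\chi(z-R)+\chi'(z-R)\bigr)^2$, so
\[
\int u^2\rho z\,dz\sim R,\qquad \int(\pa_zu)^2\rho z\,dz\sim R^{3},\qquad \int z^{2k}u^2\rho z\,dz\sim R^{2k+1},
\]
and $R^{2k+1}\lesssim R^3+R$ fails for $k\ge2$. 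Your own conjugation remark makes this transparent: the claim is equivalent to $\||z|^kw\|_{L^2}^2\lesssim\|\nabla w\|_{L^2}^2+\||z|w\|_{L^2}^2$ for $w=ue^{-|z|^2/4}$, which is clearly false for $k\ge 2$ (the same bump $w=\chi(z-R)$). The same example defeats the intermediate inequality in the paper's own induction, so the defect lies in the statement for $k\ge2$, not only in your argument. What your one-step computation does prove, and what should replace \eqref{weightedesimate} for $k\ge2$, is $\int z^{2k}u^2\rho z\,dz\lesssim_k\int z^{2k-2}\bigl(u^2+(\pa_zu)^2\bigr)\rho z\,dz$; note that the lemma is only ever invoked in the paper for $k\le1$, or applied to $zu$ for functions whose $\Lambda$-derivative is controlled separately, which is consistent with this corrected form.
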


\begin{proof}[Proof of Lemma \ref{L:HARMONICOSCILLATOR}] Indeed, we use $\pa_z\rho=-z\rho$ and integrate by parts to compute:
\bee
&&\int(\pa_zu-\delta z^ku)^2\rho zdz=  \int (\pa_zu)^2\rho zdz+\delta^2\int z^{2k}u^2\rho zdz-2\delta \int z^{k+1}u\pa_z u\rho z dz\\
& = & \int (\pa_zu)^2\rho zdz+\delta^2\int z^{2k}u^2\rho zdz-\delta[z^{k+1}\rho u^2]_0^{+\infty}+\delta\int u^2[(k+1)^{k-1}z\rho -z^{2k}\rho]dz
\eee
and hence for $0<\delta=\delta(k)\ll1$ small enough:
$$\int z^{2k}u^2\rho zdz\lesssim_k  \int (\pa_zu)^2\rho zdz+\int u^2\rho zdz+\int u^2z^{2k-2}\rho zdz$$ and \eqref{weightedesimate} follows by induction on $k\geq 1$.
\end{proof}

 We now claim the main coercivity property at the heart of the energy estimate.
 \begin{lemma}[Coercivity of $H_b$]
 \label{lemmacoercun}
  Let $k\in \Bbb N$ and $0<b<b^*(k)$ small enough. Let $u\in H^3_{\rho}(r\geq \sqrt{b})$ satisfy 
 
  $$
  u(\sqrt{b})=0, \ \ \la u,\psi_{b,j}\ra_b=0, \ \ 0\le j\le k.
  $$ 
  Then the following inequality holds: 
\bea
\label{coercunu}
\|H_b u\|_{L^2_{\rho,b}}^2&\gtrsim &\|\Delta u\|_{L^2_{\rho,b}}^2+\|(1+z)\pa_{z}u\|_{L^2_{\rho,b}}^2\\
\nonumber &+& \|(1+z)u\|_{L^2_{\rho,b}}^2+b|\ln b|^2(\pa_zu)^2(\sqrt{b}).
\eea
Moreover, 
there exists a constant $c_k>0$ such that
\be
\label{coercdeux}
\|\pa_z H_bu\|_{L^2_{\rho,b}}^2\geq\left[2k+2+O\left(\frac{1}{|\ln b|}\right)\right] \|H_b u\|_{L^2_{\rho,b}}^2-c_kb^2|H_bu(\sqrt{b})|^2
\ee
and 
\be
\label{coiehenoenevo}
\|zH_bu\|_{L^2_{\rho,b}}^2\lesssim  \|\pa_zH_bu\|_{L^2_{\rho,b}}^2+b|H_bu(\sqrt{b})|^2.
\ee
\end{lemma}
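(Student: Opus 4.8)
Set $f:=H_bu$. The starting observation is that $f$ inherits the orthogonality of $u$: since $u(\sqrt b)=0$ and $\psi_{b,j}(\sqrt b)=0$, the boundary terms in the integration by parts vanish and
\[
\la f,\psi_{b,j}\ra_b=\la H_bu,\psi_{b,j}\ra_b=\la u,H_b\psi_{b,j}\ra_b=\l_{b,j}\la u,\psi_{b,j}\ra_b=0,\qquad 0\le j\le k.
\]
Estimate \eqref{coercunu} will be obtained from a chain of weighted energy identities for $u$; estimates \eqref{coercdeux} and \eqref{coiehenoenevo} will be obtained by transplanting the sharp harmonic-oscillator Poincar\'e inequality \eqref{estimationcoreor} and the weighted bound \eqref{weightedesimate} from $H^1_{\rho,0}$ to the half-line $\{z\ge\sqrt b\}$, after extending $f$ by the constant $f(\sqrt b)$ across the boundary.

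\textbf{Proof of \eqref{coercunu}.} Pairing $H_bu=f$ with $u$ and integrating by parts (no boundary term, since $u(\sqrt b)=0$) gives $\|\pa_zu\|_{L^2_{\rho,b}}^2=\la f,u\ra_b$; since $u\in H^1_{\rho,b}$ with $\la u,\psi_{b,j}\ra_b=0$, the spectral gap \eqref{spcetralgap} applies and yields $\|u\|_{L^2_{\rho,b}}+\|\pa_zu\|_{L^2_{\rho,b}}\lesssim\|f\|_{L^2_{\rho,b}}$. Pairing $f$ with $\ln z$, using $H_b(\ln z)=1$ and $u(\sqrt b)=0$, reproduces the computation of Lemma \ref{lemmanerinversion} and gives $|\ln b|\,|\sqrt b\,\pa_zu(\sqrt b)|\lesssim\|f\|_{L^2_{\rho,b}}$, i.e. the boundary term $b|\ln b|^2(\pa_zu)^2(\sqrt b)\lesssim\|f\|_{L^2_{\rho,b}}^2$. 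A Pohozaev-type pairing with $\Lambda u$ (again exactly as in Lemma \ref{lemmanerinversion}) gives
\[
\tfrac12\|\Lambda u\|_{L^2_{\rho,b}}^2+\tfrac12be^{-b/2}|\pa_zu(\sqrt b)|^2=\la\Lambda u,f\ra_b,
\]
hence $\|\Lambda u\|_{L^2_{\rho,b}}\lesssim\|f\|_{L^2_{\rho,b}}$; then $\Delta u=\Lambda u-f$ controls $\|\Delta u\|_{L^2_{\rho,b}}$, while $\Lambda u=z\pa_zu$ controls the $z\pa_zu$ part, and \eqref{weightedesimate} with $k=1$ applied to the trivial extension $u{\bf 1}_{z\ge\sqrt b}\in H^1_{\rho,0}$ controls $\|zu\|_{L^2_{\rho,b}}$. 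Collecting these bounds yields \eqref{coercunu}.

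\textbf{Proof of \eqref{coercdeux} and \eqref{coiehenoenevo}.} Let $v$ be the radial function on $\RR^2$ equal to $f$ on $\{z\ge\sqrt b\}$ and to the constant $f(\sqrt b)$ on $\{z<\sqrt b\}$; then $v\in H^1_{\rho,0}$, $\|\pa_zv\|_{L^2_{\rho,0}}=\|\pa_zf\|_{L^2_{\rho,b}}$ and $\|v\|_{L^2_{\rho,0}}^2=\|f\|_{L^2_{\rho,b}}^2+O(b)|f(\sqrt b)|^2\ge\|f\|_{L^2_{\rho,b}}^2$. Set $c_j:=\la v,P_j\ra_0$ and $w:=v-\sum_{j=0}^kc_jP_j$, so $\la w,P_j\ra_0=0$ and \eqref{estimationcoreor} gives $\|\pa_zw\|_{L^2_{\rho,0}}^2\ge(2k+2)\|w\|_{L^2_{\rho,0}}^2$. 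Expanding both sides using $(-\Delta+\Lambda)P_j=2jP_j$ and $\la P_i,P_j\ra_0=\delta_{ij}$ — the only extra contributions being the kink boundary terms at $z=\sqrt b$, each $O(b)|f(\sqrt b)|$ since $\sqrt b\,\pa_zP_j(\sqrt b)=\sqrt b\cdot\sqrt b\,L_j'(b/2)=O(b)$ — one arrives at
\[
\|\pa_zf\|_{L^2_{\rho,b}}^2\ge(2k+2)\|f\|_{L^2_{\rho,b}}^2-\sum_{j=0}^k(2k+2-2j)c_j^2-C\Big(b^2|f(\sqrt b)|^2+\tfrac1{|\ln b|^2}\|f\|_{L^2_{\rho,b}}^2\Big).
\]
It remains to show $|c_j|\lesssim\tfrac1{|\ln b|}\|f\|_{L^2_{\rho,b}}+b|f(\sqrt b)|$. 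Since $\la v,P_j\ra_0=\la f,P_j\ra_b+O(b)|f(\sqrt b)|$, it suffices to bound $\la f,P_j\ra_b$; substituting $\psi_{b,j}=\mathcal T_b\big(P_j+\sum_{i<j}\mu_{b,ij}P_i\big)+\psit_{b,j}$ from \eqref{deftk}, with $\mathcal T_b=\ln z+\tfrac12|\ln b|$, into $\la f,\psi_{b,j}\ra_b=0$, and using $|\mu_{b,ij}|=O(\tfrac1{|\ln b|})$, $\|\psit_{b,j}\|_{L^2_{\rho,b}}=O(\tfrac1{|\ln b|})$, $\|P_j\ln z\|_{L^2_{\rho,b}}\lesssim1$ from \eqref{estmuj}--\eqref{calculeignvector}, produces a lower-triangular system for $(\la f,P_j\ra_b)_{0\le j\le k}$ whose inversion gives $|\la f,P_j\ra_b|\lesssim\tfrac1{|\ln b|}\|f\|_{L^2_{\rho,b}}$. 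Feeding this back, $\sum_j(2k+2-2j)c_j^2\lesssim\tfrac1{|\ln b|^2}\|f\|_{L^2_{\rho,b}}^2+b^2|f(\sqrt b)|^2$, and \eqref{coercdeux} follows, with remainder $O(\tfrac1{|\ln b|^2})\|f\|_{L^2_{\rho,b}}^2$ (a fortiori $O(\tfrac1{|\ln b|})$) and $-c_kb^2|H_bu(\sqrt b)|^2$. Finally, \eqref{weightedesimate} with $k=1$ applied to $v$ gives $\|zf\|_{L^2_{\rho,b}}^2\le\|zv\|_{L^2_{\rho,0}}^2\lesssim\|\pa_zv\|_{L^2_{\rho,0}}^2+\|v\|_{L^2_{\rho,0}}^2=\|\pa_zf\|_{L^2_{\rho,b}}^2+\|v\|_{L^2_{\rho,0}}^2$, and $\|v\|_{L^2_{\rho,0}}^2=\|f\|_{L^2_{\rho,b}}^2+O(b)|f(\sqrt b)|^2\lesssim\|\pa_zf\|_{L^2_{\rho,b}}^2+b|f(\sqrt b)|^2$ by \eqref{coercdeux}, which proves \eqref{coiehenoenevo}.

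\textbf{Main obstacle.} The delicate point is the sharpness of the constant in \eqref{coercdeux}: the three sources of loss in passing from $H^1_{\rho,0}$ to $\{z\ge\sqrt b\}$ — the extension of $f$, the non-orthogonality coefficients $c_j$, and the kink boundary terms — must all be $O(\tfrac1{|\ln b|})$ (or smaller) relative to $\|f\|_{L^2_{\rho,b}}^2$, and the bookkeeping must separate out exactly a $b^2|f(\sqrt b)|^2$ boundary error. This is what forces one to use the quantitative logarithmic structure \eqref{deftk}--\eqref{calculeignvector} of the $\psi_{b,j}$ rather than merely their orthogonality; the lower-triangular system for $\la f,P_j\ra_b$ is where that structure is cashed in.
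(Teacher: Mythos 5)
Your proof is correct and follows essentially the same route as the paper's: the spectral gap \eqref{spcetralgap} plus the $\ln z$ pairing for \eqref{coercunu}, and for \eqref{coercdeux}--\eqref{coiehenoenevo} the constant extension of $H_bu$ across $z=\sqrt{b}$ combined with the sharp Poincar\'e inequality \eqref{estimationcoreor} and the smallness of the projections $\la H_bu,P_j\ra_b$, which is cashed in from $\la H_bu,\psi_{b,j}\ra_b=0$ and the logarithmic structure \eqref{deftk}. The only cosmetic differences are that the paper controls $\|\Delta u\|_{L^2_{\rho,b}}$ via the exact identity $\|H_bu\|_{L^2_{\rho,b}}^2=\|\Delta u\|_{L^2_{\rho,b}}^2+b\rho(\sqrt{b})(\pa_zu)^2(\sqrt{b})$ rather than your Pohozaev pairing with $\Lambda u$, and reads off the bound on $c_j$ directly from $P_j=\tfrac{2}{|\ln b|}\psi_{b,j}+O(\tfrac{1}{|\ln b|})$ rather than inverting a triangular system.
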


\begin{proof}[Proof of Lemma \ref{lemmacoercun}] This lemma is a simple perturbative consequence of the harmonic oscillator estimate \eqref{estimationcoreor}, \eqref{spcetralgap}, and a careful integration by parts to track the boundary term in \eqref{coercunu}.\\

\noindent{\bf step 1} Proof of \eqref{coercunu}. Pick a small constant $\delta>0$, then from $u(\sqrt{b})=0$, we may integrate by parts and compute:
$$\|(H_b-\delta) u\|_{L^2_{\rho,b}}^2=\la (H_b-\delta)u,(H_b-\delta)u\ra_b=\|H_b u\|^2_{L^2_{\rho,b}}-2\delta\la H_bu,u\ra_b+\delta^2 \|u\|_{L^2_{\rho,b}}^2.$$ We may now use the spectral gap bound~\eqref{spcetralgap} with \eqref{weightedesimate}
and conclude that for $\delta$ small enough:
\be\label{E:INTER}
\|H_b u\|^2_{L^2_{\rho,b}}\gtrsim  \|\pa_zu\|_{L^2{\rho,b}}^2+\|(1+z)u\|_{L^2_{\rho,b}}^2+\|(H_b-\delta) u\|_{L^2_{\rho,b}}^2.
\ee
We integrate by parts using the general formula
\be
\label{generalformula}
\la \pa_zu,v\ra_b=-\sqrt{b}u(\sqrt{b})v(\sqrt{b})\rho(\sqrt{b})-\la u,\pa_z v\ra_b-\la u,\frac{v}{z}\ra_b+\la u,zv\ra_b
\ee
to compute:
\begin{align}
\|H_b u\|_{L^2_{\rho,b}}^2 &  = \|\Delta u\|_{L^2_{\rho,b}}^2 - 2\la u_{zz}+\frac{\pa_z u}{z},z\pa_z u\ra_b + \|\Lambda u\|_{L^2_{\rho,b}}^2 \notag \\
&= \|\Delta u\|_{L^2_{\rho,b}}^2  -  \la\pa_z(\pa_zu)^2,z\ra_b - 2\|\pa_zu\|_{L^2_{\rho,b}}^2 +   \|\Lambda u\|_{L^2_{\rho,b}}^2\notag \\
& = \|\Delta u\|_{L^2_{\rho,b}}^2+ b(\pa_zu)^2(\sqrt{b})\rho(\sqrt{b}) +\la (\pa_zu)^2,1\ra_b + \la(\pa_zu)^2,1\ra_b \notag \\
& \ \ \ - \|\Lambda u\|_{L^2_{\rho,b}}^2-2\|\pa_zu\|_{L^2_{\rho,b}}^2 +   \|\Lambda u\|_{L^2_{\rho,b}}^2 \notag \\
& = \|\Delta u\|_{L^2_{\rho,b}}^2 +b(\pa_zu)^2(\sqrt{b})\rho(\sqrt{b}) \label{E:INTER2}.
\end{align}
On the other hand, integrating by parts:
$$
\la H_bu,\ln z\ra_b=\la u,1\ra_b+\sqrt{b}\pa_zu(\sqrt{b})\ln(\sqrt{b})
$$
and thus: 
$$
b|\pa_zu(\sqrt{b})|^2\lesssim \frac{1}{|\ln b|^2}\left[\|H_bu\|_{L^2_{\rho,b}}^2+\|u\|_{L^2_{\rho,b}}^2\right] \lesssim \frac{1}{|\ln b|^2} \|H_b u\|_{L^2_{\rho,b}}^2,
$$
where we used~\eqref{E:INTER}. 
Together with~\eqref{E:INTER2},~\eqref{E:INTER}, claim~\eqref{coercunu} follows.\\

\noindent{\bf step 2} Proof of \eqref{coercdeux}, \eqref{coiehenoenevo}. Define the radially symmetric function 
\be
\label{defguibgeogeog}
v(z)=\left\{\begin{array}{ll} H_bu(\sqrt{b})\ \ \mbox{for}\ \ 0\leq z\leq \sqrt{b}\\ H_bu(z)\ \ \mbox{for}\ \ z\geq \sqrt{b}\end{array}\right.
\ee 
and note that $v\in H^1_{\rho,0}.$
Consider 
$$
w: =v-\sum_{j=0}^{k}\frac{\la v,P_j\ra_0}{\la P_j,P_j\ra_0}P_j.
$$ 
Then from~\eqref{estimationcoreor}:
\be
\label{estzero}
\int_{z\geq 0}|\pa_z w|^2e^{-\frac{z^2}{2}}zdz\geq (2k+2)\int_{z\geq 0}|w|^2e^{-\frac{z^2}{2}}zdz
\geq (2k+2)\int_{z\geq \sqrt{b}}|w|^2e^{-\frac{z^2}{2}}zdz.
\ee
On the other hand, from \eqref{deftk}: $$\left\|P_k-\frac{2}{|\ln b|}\psi_{b,k}\right\|_{H^2_{\rho,b}}\lesssim \frac{1}{|\ln b|}$$ from which for $0\leq j\leq k$:
\bee
&&\la v,P_j\ra_0=H_bu(\sqrt{b})\int_{0\leq z\leq \sqrt{b}}P_je^{-\frac{z^2}{2}}zdz+\la H_bu,\frac{2}{|\ln b|}\psi_{b,j}\ra_b+O\left(\frac{\|H_bu\|_{L^2_{\rho,b}}}{|\ln b|}\right)\\
& = &
\frac 2{|\ln b|}
\la u,\l_{b,j}\psi_{b,j}\ra_b +O\left(b|H_bu(\sqrt{b})|+\frac{\|H_bu\|_{L^2_{\rho,b}}}{|\ln b|}\right)=  O\left(b|H_bu(\sqrt{b})|+\frac{\|H_bu\|_{L^2_{\rho,b}}}{|\ln b|}\right),
\eee
where we used the orthogonality $\la u,\psi_{b,j}\ra_b=0,$ $0\le j \le k.$
Therefore 
$$
\|v-w\|_{H^1_{\rho,b}}\lesssim b|H_bu(\sqrt{b})|+\frac{\|v\|_{L^2_{\rho,b}}}{|\ln b|}.
$$ 
Injecting this into \eqref{estzero} yields \eqref{coercdeux}. We now apply \eqref{weightedesimate} to $v$ and conclude from \eqref{coercdeux}, \eqref{defguibgeogeog}:
\bee
\|zH_bu\|_{L^2_{\rho,b}}^2&\lesssim &\|zv\|_{L^2_{\rho,0}}^2 + b^2|H_bu(\sqrt b)|^2
\lesssim \|\pa_zv\|_{L^2_{\rho,0}}^2+\|v\|_{L^2_{\rho,0}}^2+ b^2|H_bu(\sqrt b)|^2\\
&\lesssim &\|\pa_zH_bu\|_{L^2_{\rho,b}}^2+\|H_bu\|_{L^2_{\rho,b}}^2+b|H_bu(\sqrt{b})|^2
 \lesssim  \|\pa_zHu\|_{L^2_{\rho,b}}^2+b|H_bu(\sqrt{b})|^2
\eee
and \eqref{coiehenoenevo} is proved.
\end{proof}

We now renormalise Lemma \ref{lemmacoercun} by letting $\e(y)=u(\sqrt{b}y)$ which yields exactly:

 \begin{lemma}[Coercivity of $\H_b$]
 \label{lemmacoercunbis}
  Let $k\in \Bbb N$ and $0<b<b^*(k)$ small enough. If $\e\in H^3_{\rho_b}(y\geq 1)$ satisfies 
  $$
  \e(1)=0, \ \ (\e,\eta_{b,j})_b=0\ \ \mbox{for}\ \ 0\leq j\leq k,
  $$ 
  then 
\bea
\label{coercunubis}
\|\H_b \e\|_b^2&\gtrsim &\|\Delta \e\|_{b}^2+b\|\pa_y\e\|_{b}^2+b^2\|\Lambda \e\|_{b}^2\\
\nonumber &+& b^2\|(1+\sqrt{b}y)\e\|_{b}^2+b|\ln b|^2(\pa_y\e)^2(1).
\eea
Moreover,
\be
\label{coercdeuxbis}
\|\pa_y \H_b\e\|_{L^2_{\rho,b}}^2\geq \left[2k+2+O\left(\frac{1}{|\ln b|}\right)\right] b\|\H_b \e\|_{b}^2-c_kb^2|\H_b\e(1)|^2
\ee
and 
\be
\label{E:YHBEPSILONBOUND}
b\|y\H_b\e\|_{b}^2\lesssim  \frac{1}{b}\|\pa_y\H_b\e\|_{b}^2+|\H_b\e(1)|^2.
\ee
\end{lemma}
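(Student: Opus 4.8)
The statement is the renormalized form of Lemma~\ref{lemmacoercun}, so the plan is to deduce it purely by the parabolic rescaling $z=\sqrt{b}\,y$, i.e. $\e(y)=u(\sqrt{b}\,y)$, with no new analysis. First I would verify that the hypotheses transfer. From \eqref{defphibnk} we have $\eta_{b,j}(y)=\psi_{b,j}(\sqrt{b}\,y)$, and the substitution $z=\sqrt{b}\,y$ in the defining integrals gives the measure identity $\rho(z)\,z\,dz=b\,\rho_b(y)\,y\,dy$ (since $z^2/2=b y^2/2$ and $z\,dz=b\,y\,dy$), hence $(\e,\eta_{b,j})_b=b^{-1}\la u,\psi_{b,j}\ra_b$; so $(\e,\eta_{b,j})_b=0$ for $0\le j\le k$ is equivalent to $\la u,\psi_{b,j}\ra_b=0$. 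Likewise $\e(1)=u(\sqrt{b})=0$, and $\e\in H^3_{\rho_b}(y\ge1)$ is equivalent to $u\in H^3_{\rho}(z\ge\sqrt{b})$, so Lemma~\ref{lemmacoercun} is applicable to $u$.

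Next I would record the scaling dictionary. From \eqref{formuelerenorer}, $\H_b\e=b\,(H_b u)(\sqrt{b}\,y)$ and $\|\pa_y^\ell\e\|_b=b^{(\ell-1)/2}\|\pa_z^\ell u\|_{L^2_{\rho,b}}$; combined with $z=\sqrt{b}\,y$ and the measure identity above, one gets term by term
\[
\|\H_b\e\|_b^2=b\,\|H_bu\|_{L^2_{\rho,b}}^2,\quad \|\Delta_y\e\|_b^2=b\,\|\Delta_z u\|_{L^2_{\rho,b}}^2,\quad \|(1+\sqrt{b}\,y)\pa_y\e\|_b^2=\|(1+z)\pa_z u\|_{L^2_{\rho,b}}^2,
\]
\[
\|(1+\sqrt{b}\,y)\e\|_b^2=b^{-1}\|(1+z)u\|_{L^2_{\rho,b}}^2,\quad (\pa_y\e)^2(1)=b\,(\pa_z u)^2(\sqrt{b}),
\]
\[
\|\pa_y\H_b\e\|_b^2=b^2\,\|\pa_z H_bu\|_{L^2_{\rho,b}}^2,\quad \|y\,\H_b\e\|_b^2=\|z\,H_bu\|_{L^2_{\rho,b}}^2,\quad |\H_b\e(1)|^2=b^2\,|H_bu(\sqrt{b})|^2.
\]
Feeding these into \eqref{coercunu} and multiplying through by $b$ yields \eqref{coercunubis}, where one additionally uses $(1+\sqrt{b}\,y)^2\gtrsim 1+b y^2$ to split $b\,\|(1+\sqrt{b}\,y)\pa_y\e\|_b^2\gtrsim b\|\pa_y\e\|_b^2+b^2\|\Lambda\e\|_b^2$ and the analogous split for the zeroth-order term. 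Feeding the dictionary into \eqref{coercdeux} and multiplying by $b^2$ gives \eqref{coercdeuxbis}, the constant $2k+2+O(1/|\ln b|)$ being unaffected by the rescaling; feeding it into \eqref{coiehenoenevo} and multiplying by $b$ gives \eqref{E:YHBEPSILONBOUND}.

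The main (and only) point requiring care is the bookkeeping of the powers of $b$ produced by the measure $\rho\,z\,dz=b\,\rho_b\,y\,dy$ and by the derivatives under $z=\sqrt{b}\,y$: an error there would corrupt the sharp constant in \eqref{coercdeuxbis} and the precise balance of weights $b\|\pa_y\e\|_b^2$, $b^2\|\Lambda\e\|_b^2$, $b^2\|(1+\sqrt{b}\,y)\e\|_b^2$, $b|\ln b|^2(\pa_y\e)^2(1)$ in \eqref{coercunubis}, all of which are used at criticality in the energy estimate (cf.\ \eqref{estimateenergy}). I would therefore state the measure identity once and quote it uniformly for each term rather than recomputing.
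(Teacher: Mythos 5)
Your proposal is correct and is exactly the paper's argument: the paper proves this lemma by the single sentence ``We now renormalize Lemma \ref{lemmacoercun} by letting $\e(y)=u(\sqrt{b}y)$,'' and your scaling dictionary (measure identity $\rho\,z\,dz=b\,\rho_b\,y\,dy$, the resulting powers of $b$ for each term, and the transfer of the orthogonality conditions via $(\e,\eta_{b,j})_b=b^{-1}\la u,\psi_{b,j}\ra_b$) is the correct bookkeeping that the paper leaves implicit. All the exponents check out, including the sharp constant in \eqref{coercdeuxbis}.
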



\section{Coercivity estimates in the freezing case}
\label{appendcoercbis}

We now consider the freezing regime $$\pm=+, \ \ \rho=\rho_+, \ \ B>0$$ and the operator $$H_B=-\Delta -\Lambda, \ \ u(\sqrt{B})=0.$$ We claim the analogue of Lemma \ref{lemmacoercun}:

 \begin{lemma}[Coercivity of $H_B$]
 \label{lemmacoercunbisbis}
  Let $k\in \Bbb N$ and $0<B<B^*(k)$ small enough. Let 
  \be
  \label{defpsibhat}
  \psih_{B,j}=\psi_{B,j}e^{-\frac{B|y|^2}{2}}
  \ee
  and $u\in H^3_{\rho}(r\geq \sqrt{B})$ satisfy 
 $$
  u(\sqrt{B})=0, \ \ \la u,\psih_{B,j}\ra_B=0, \ \ 0\le j\le k,
  $$ 
  then the following inequality holds: 
\bea
\label{coercunubisbis}
\|H_B u\|_{L^2_{\rho,B}}^2&\gtrsim &\|\Delta u\|_{L^2_{\rho,B}}^2+\|(1+z)\pa_{z}u\|_{L^2_{\rho,B}}^2\\
\nonumber &+& \|(1+z)u\|_{L^2_{\rho,B}}^2+B|\ln B|^2(\pa_zu)^2(\sqrt{B}).
\eea
Moreover, 
there exists a constant $c_k>0$ such that
\be
\label{coercdeuxtris}
\|\pa_z H_bu\|_{L^2_{\rho,B}}^2\geq\left[2k+4+O\left(\frac{1}{|\ln b|}\right)\right] \|H_B u\|_{L^2_{\rho,B}}^2-c_kB^2|H_bu(\sqrt{B})|^2
\ee
and 
\be
\label{coiehenoenevobis}
\|zH_Bu\|_{L^2_{\rho,B}}^2\lesssim  \|\pa_zH_Bu\|_{L^2_{\rho,B}}^2+B|H_Bu(\sqrt{B})|^2.
\ee
\end{lemma}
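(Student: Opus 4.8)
The plan is to run the proof of Lemma \ref{lemmacoercun} almost verbatim, the single structural change being that the melting harmonic-oscillator gap \eqref{estimationcoreor} is replaced by its freezing analogue \eqref{estimationcoreorbis}, which carries the extra shift $+2$ and hence turns the constant $2k+2$ of \eqref{coercdeux} into the $2k+4$ of \eqref{coercdeuxtris}. The first preliminary step is to record the perturbed spectral gap for $H_B$: for $u\in H^1_{\rho,B}$ with $u(\sqrt B)=0$ and $\langle u,\psih_{B,j}\rangle_B=0$, $0\le j\le k$, one has $\|\pa_z u\|_{L^2_{\rho,B}}^2\ge\big[2k+4+O(|\ln B|^{-1})\big]\|u\|_{L^2_{\rho,B}}^2$. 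This follows exactly as \eqref{spcetralgap} followed from \eqref{estimationcoreor} in Proposition \ref{lemmadiag}, using the structure $\psih_{B,j}=\psi_{B,j}e^{-|z|^2/2}$ from \eqref{defpsibhat} together with Lemma \ref{lemmarenormalizedbis}; alternatively one conjugates, setting $w:=e^{|z|^2/2}u$ in the melting-type space $L^2_{\rho_-,B}$, for which $w(\sqrt B)=0$, $\langle w,\psi_{B,j}\rangle_{B,-}=0$, and an integration by parts using $\pa_z\rho_+=z\rho_+$ and $u(\sqrt B)=0$ gives $\|\pa_z w\|_{L^2_{\rho_-,B}}^2=\|\pa_z u\|_{L^2_{\rho,B}}^2-2\|u\|_{L^2_{\rho,B}}^2$, so that \eqref{spcetralgap} applied to $w$ immediately yields the $2k+4$ gap for $u$. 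I will also need the $\rho_+$-version of the weighted estimate \eqref{weightedesimate}, which is proved by the same completion-of-square argument or transported through the same isometry.

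For \eqref{coercunubisbis} I would, as in step 1 of Lemma \ref{lemmacoercun}, first expand $\|(H_B-\delta)u\|_{L^2_{\rho,B}}^2=\|H_B u\|_{L^2_{\rho,B}}^2-2\delta\|\pa_z u\|_{L^2_{\rho,B}}^2+\delta^2\|u\|_{L^2_{\rho,B}}^2$ (using $\langle H_B u,u\rangle_B=\|\pa_z u\|_{L^2_{\rho,B}}^2$), and combine with the perturbed gap and the weighted estimate to obtain $\|H_B u\|_{L^2_{\rho,B}}^2\gtrsim\|\pa_z u\|_{L^2_{\rho,B}}^2+\|(1+z)u\|_{L^2_{\rho,B}}^2+\|(H_B-\delta)u\|_{L^2_{\rho,B}}^2$ for $\delta$ small. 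Next, a Pohozaev-type integration by parts (multiplying $H_B u$ by $\Lambda u$) produces the identity $\|H_B u\|_{L^2_{\rho,B}}^2=\|\Delta u\|_{L^2_{\rho,B}}^2-Be^{B/2}(\pa_z u)^2(\sqrt B)$, where the boundary term now has the sign opposite to \eqref{E:INTER2} because $\pa_z\rho_+=+z\rho_+$. Finally the boundary term is absorbed exactly as in the melting case: integrating against $\ln z$ and using $H_B\ln z=-1$ gives $\langle H_B u,\ln z\rangle_B=-\langle u,1\rangle_B-\tfrac12|\ln B|\sqrt B\,e^{B/2}\pa_z u(\sqrt B)$, hence $B|\pa_z u(\sqrt B)|^2\lesssim|\ln B|^{-2}\|H_B u\|_{L^2_{\rho,B}}^2$; this controls the Pohozaev boundary term, upgrades the identity to $\|\Delta u\|_{L^2_{\rho,B}}^2\lesssim\|H_B u\|_{L^2_{\rho,B}}^2$, and supplies the term $B|\ln B|^2(\pa_z u)^2(\sqrt B)$ in \eqref{coercunubisbis}, while $\|(1+z)\pa_z u\|_{L^2_{\rho,B}}^2\lesssim\|H_B u\|_{L^2_{\rho,B}}^2$ follows from the weighted estimate applied to $\pa_z u$.

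For \eqref{coercdeuxtris} and \eqref{coiehenoenevobis} I would follow step 2 of Lemma \ref{lemmacoercun} with no change of substance: extend $v:=H_B u$ below $\sqrt B$ by the constant $H_B u(\sqrt B)$ so that $v\in H^1_{\rho_+,0}$, subtract its $\hat P_j$-projections, and apply \eqref{estimationcoreorbis} — this is precisely where the constant $2k+4$ enters \eqref{coercdeuxtris}. The projections $\langle v,\hat P_j\rangle_0$ are bounded by $O\big(B|H_B u(\sqrt B)|+|\ln B|^{-1}\|H_B u\|_{L^2_{\rho,B}}\big)$ using the relation $P_k=\tfrac2{|\ln B|}\psi_{B,k}+O(|\ln B|^{-1})$ from \eqref{deftk} together with the orthogonality $\langle u,\psih_{B,j}\rangle_B=0$; this yields \eqref{coercdeuxtris}, and then \eqref{coiehenoenevobis} is deduced from the $\rho_+$-weighted estimate applied to $v$ combined with \eqref{coercdeuxtris}, just as \eqref{coiehenoenevo} was in the melting case. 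The only real difficulty is bookkeeping: because the Gaussian weight $\rho_+=e^{|z|^2/2}$ is growing rather than decaying, the boundary terms generated at each integration by parts enter with the opposite sign (and slightly different powers of $e^{\pm B/2}$) than in the melting computation, so one must check case by case that they are either favourably signed or dominated by the auxiliary bound $B|\pa_z u(\sqrt B)|^2\lesssim|\ln B|^{-2}\|H_B u\|_{L^2_{\rho,B}}^2$; in particular the Pohozaev identity here carries a boundary term of the ``wrong'' sign, which is why that $\ln z$ estimate is essential rather than merely convenient. Everything else is a routine transcription of Appendix \ref{appendcoerc}.
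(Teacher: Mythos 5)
Your proposal follows the paper's own route essentially verbatim: the same $\|(H_B-\delta)u\|^2$ expansion combined with the shifted spectral gap inherited from \eqref{estimationcoreorbis} via the conjugation $w=e^{|z|^2/2}u$, the same Pohozaev identity $\|H_Bu\|^2_{L^2_{\rho,B}}=\|\Delta u\|^2_{L^2_{\rho,B}}-B\rho_+(\sqrt B)(\pa_zu)^2(\sqrt B)$ with the reversed boundary sign, and for \eqref{coercdeuxtris}--\eqref{coiehenoenevobis} the same constant extension of $v=H_Bu$ below $\sqrt B$, projection onto the $\hat P_j$, and application of the freezing Poincar\'e inequality. Your identification of the sign flip in the Pohozaev boundary term as the reason the $\ln z$ estimate becomes essential (rather than merely convenient) is exactly right.

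There is one step that does not survive as written: the pairing $\la H_Bu,\ln z\ra_B$ over the whole exterior domain. In the freezing weight $\rho_+=e^{+|z|^2/2}$ one has $\ln z\notin L^2_{\rho_+,B}$, and the integrals $\la H_Bu,\ln z\ra_B$ and $\la u,1\ra_B=\int_{\sqrt B}^\infty u\,\rho_+z\,dz$ need not converge for $u$ merely in the stated class, so the identity $\la H_Bu,\ln z\ra_B=-\la u,1\ra_B-\tfrac12|\ln B|\sqrt B e^{B/2}\pa_zu(\sqrt B)$ is not meaningful as stated. The paper's proof avoids this by testing against $\chi(z)\ln z$ with $\chi$ a cutoff equal to $1$ on $[\sqrt B,1]$ and vanishing for $z\ge 2$, which localizes all integrals to a bounded region while still producing the boundary term $\sqrt B\,\pa_zu(\sqrt B)\ln\sqrt B$; the bulk term is then controlled by $\|u\|_{L^2_{\rho,B}}+\|H_Bu\|_{L^2_{\rho,B}}$ via $\la u,H_B(\chi\ln z)\ra_B$. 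With this localization inserted, your argument goes through and coincides with the paper's.
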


\begin{proof} We follow the proof of Lemma \eqref{lemmacoercunbis}.\\

\noindent{\bf step 1} Proof of \eqref{coercunu}. Pick a small constant $\delta>0$, then from $u(\sqrt{B})=0$, we may integrate by parts and compute:
$$\|(H_B-\delta) u\|_{L^2_{\rho,b}}^2=\la (H_B-\delta)u,(H_B-\delta)u\ra_B=\|H_B u\|^2_{L^2_{\rho,B}}-2\delta\la H_Bu,u\ra_B+\delta^2 \|u\|_{L^2_{\rho,B}}^2.$$ We now use the isometry \eqref{isometryhb} and \eqref{comptuationintergation}. We first obtain from  \eqref{spcetralgap} the spectral gap:  
$$
\forall u\ \ \mbox{with}\ \ \la u,\psi_{B,j}\ra_B=0, \ \ 0\leq  j\leq k,$$
then
\be
\label{spectralgapbis}
\la H_Bu,u\ra_B\geq \left[2k+4+O\left(\frac{1}{|\log B|}\right)\right] \|u\|^2_{L^2_{\rho,B}},
\ee
and similarly from \eqref{weightedesimate}:
\be
\label{weightedesimatebis}
\int z^2u^2z\rho dz\lesssim \int (\pa_zu)^2\rho zdz+\int u^2z\rho dz.
\ee
We therefore conclude that for $\delta$ small enough:
\be\label{E:INTERbis}
\|H_B u\|^2_{L^2_{\rho,B}}\gtrsim  \|\pa_zu\|_{L^2{\rho,B}}^2+\|(1+z)u\|_{L^2_{\rho,B}}^2.
\ee
We integrate by parts using the general formula
\be
\label{generalformulabis}
\la \pa_zu,v\ra_B=-\sqrt{B}u(\sqrt{b})v(\sqrt{B})\rho(\sqrt{B})-\la u,\pa_z v\ra_B-\la u,\frac{v}{z}\ra_B-\la u,zv\ra_B
\ee
to compute:
\begin{align}
\|H_B u\|_{L^2_{\rho,B}}^2 &  = \|\Delta u\|_{L^2_{\rho,B}}^2 +2\la u_{zz}+\frac{\pa_z u}{z},z\pa_z u\ra_B + \|\Lambda u\|_{L^2_{\rho,B}}^2 \notag \\
&= \|\Delta u\|_{L^2_{\rho,B}}^2  + \la\pa_z(\pa_zu)^2,z\ra_B +2\|\pa_zu\|_{L^2_{\rho,B}}^2 +   \|\Lambda u\|_{L^2_{\rho,B}}^2\notag \\
& = \|\Delta u\|_{L^2_{\rho,B}}^2- B(\pa_zu)^2(\sqrt{b})\rho(\sqrt{b}) -\la (\pa_zu)^2,1\ra_B -\la(\pa_zu)^2,1\ra_b \notag \\
& \ \ \ - \|\Lambda u\|_{L^2_{\rho,B}}^2+2\|\pa_zu\|_{L^2_{\rho,B}}^2 +   \|\Lambda u\|_{L^2_{\rho,B}}^2 \notag \\
& = \|\Delta u\|_{L^2_{\rho,B}}^2 -B(\pa_zu)^2(\sqrt{B})\rho(\sqrt{B}) \label{E:INTER2bis}.
\end{align}
On the other hand, let $$\chi(z)=\left\{\begin{array}{ll} 1 \ \ \mbox{for}\ \ \sqrt{B}\leq z\leq 1\\ 0\ \ \mbox{for}\ \ z\geq 2\end{array}\right.$$ then integrating by parts:
$$
\la H_Bu,\chi(z)\ln z\ra_B=-\la u,H_B(\chi(z)\log z)\ra_B+\sqrt{B}\pa_zu(\sqrt{B})\ln(\sqrt{B})
$$
and thus: 
$$
B|\pa_zu(\sqrt{B})|^2\lesssim \frac{1}{|\ln B|^2}\left[\|H_Bu\|_{L^2_{\rho,B}}^2+\|u\|_{L^2_{\rho,B}}^2\right] \lesssim \frac{1}{|\log B|^2} \|H_B u\|_{L^2_{\rho,B}}^2,
$$
where we used~\eqref{E:INTERbis}. Together with~\eqref{E:INTER2bis},~\eqref{E:INTERbis}, claim \eqref{coercunubisbis} follows.\\

\noindent{\bf step 2} Proof of \eqref{coercdeuxtris}, \eqref{coiehenoenevobis}. Define the radially symmetric function 
\be
\label{defguibgeogeogbis}
v(z)=\left\{\begin{array}{ll} H_Bu(\sqrt{B})\ \ \mbox{for}\ \ 0\leq z\leq \sqrt{B}\\ H_Bu(z)\ \ \mbox{for}\ \ z\geq \sqrt{B}\end{array}\right.
\ee 
and note that $v\in H^1_{\rho,0}.$
Consider 
$$
w: =v-\sum_{j=0}^{k}\frac{\la v,\hat{P}_j\ra_0}{\la \hat{P}_j,\hat{P}_j\ra_0}\hat P_j.
$$ 
Then from~\eqref{estimationcoreorbis}:
\bea
\label{estzerobis}
\nonumber \int_{z\geq 0}|\pa_z w|^2e^{\frac{z^2}{2}}zdz&\geq& (2k+4)\int_{z\geq 0}|w|^2e^{\frac{z^2}{2}}zdz\\
&\geq& (2k+4)\int_{z\geq \sqrt{b}}|w|^2e^{\frac{z^2}{2}}zdz.
\eea
On the other hand, from \eqref{deftk}, \eqref{defpsibhat}: $$\left\|\hat{P_k}-\frac{2}{|\ln B|}
\hat{\psi}_{B,k}\right\|_{H^2_{\rho,B}}\lesssim \frac{1}{|\ln B|}$$ from which for $0\leq j\leq k$:
\bee
\la v,\hat{P}_j\ra_0&=&H_Bu(\sqrt{B})\int_{0\leq z\leq \sqrt{B}}\hat P_je^{\frac{z^2}{2}}zdz+\la H_Bu,\frac{2}{|\ln B|}\psih_{B,j}\ra_B+O\left(\frac{\|H_Bu\|_{L^2_{\rho,B}}}{|\ln B|}\right)\\
& = & O\left(B|H_Bu(\sqrt{B})|+\frac{\|H_Bu\|_{L^2_{\rho,B}}}{|\ln B|}\right),
\eee
where we used the orthogonality $\la u,\psih_{B,j}\ra_B=0,$ $0\le j \le k.$
Therefore 
$$
\|v-w\|_{H^1_{\rho,B}}\lesssim B|H_Bu(\sqrt{B})|+\frac{\|v\|_{L^2_{\rho,B}}}{|\ln B|}.
$$ 
Injecting this into \eqref{estzerobis} yields \eqref{coercdeuxtris}. We now apply \eqref{weightedesimatebis} to $v$ and conclude from \eqref{coercdeuxbis}, \eqref{defguibgeogeogbis}:
\bee
&&\|zH_Bu\|_{L^2_{\rho,B}}^2\lesssim \|zv\|_{L^2_{\rho,0}}^2 + B^2|H_Bu(\sqrt B)|^2
\lesssim \|\pa_zv\|_{L^2_{\rho,0}}^2+\|v\|_{L^2_{\rho,0}}^2+ B^2|H_Bu(\sqrt B)|^2\\
&\lesssim &\|\pa_zH_Bu\|_{L^2_{\rho,B}}^2+\|H_Bu\|_{L^2_{\rho,B}}^2+B|H_Bu(\sqrt{B})|^2
 \lesssim  \|\pa_zH_Bu\|_{L^2_{\rho,B}}^2+B|H_Bu(\sqrt{B})|^2
\eee
and \eqref{coiehenoenevobis} is proved.
\end{proof}

In analogy to Lemma~\ref{lemmacoercunbis}, we now renormalise Lemma \ref{lemmacoercunbis} by letting $\e(y)=u(\sqrt{B}y)$ and obtain:

 \begin{lemma}[Coercivity of $\H_B$]
 \label{lemmacoercunbisbistris}
  Let $k\in \Bbb N$ and $0<B<B^*(k)$ small enough. If $\e\in H^3_{\rho_B}(y\geq 1)$ satisfies 
  $$
  \e(1)=0, \ \ (\e,\eta_{B,j})_b=0\ \ \mbox{for}\ \ 0\leq j\leq k,
  $$ 
  then 
\bea
\label{coercunubisbisbis}
\|\H_B \e\|_B^2&\gtrsim &\|\Delta \e\|_{B}^2+b\|\pa_y\e\|_{B}^2+B^2\|\Lambda \e\|_{B}^2\\
\nonumber &+& B^2\|(1+\sqrt{B}y)\e\|_{B}^2+B|\log B|^2(\pa_y\e)^2(1).
\eea
Moreover,
\be
\label{coercdeuxbisbis}
\|\pa_y \H_B\e\|_{L^2_{\rho,B}}^2\geq \left[2k+4+O\left(\frac{1}{|\log B|}\right)\right] B\|\H_B \e\|_{B}^2-c_kB^2|\H_B\e(1)|^2
\ee
and 
\be
\label{E:YHBEPSILONBOUNDbis}
B\|y\H_B\e\|_{B}^2\lesssim  \frac{1}{B}\|\pa_y\H_B\e\|_{B}^2+|\H_B\e(1)|^2.
\ee
\end{lemma}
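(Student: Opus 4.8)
The plan is to obtain Lemma~\ref{lemmacoercunbisbistris} as a routine renormalization of Lemma~\ref{lemmacoercunbisbis}, exactly as Lemma~\ref{lemmacoercunbis} is deduced from Lemma~\ref{lemmacoercun} in the melting case. Concretely, I would introduce the scaling $u(z):=\e(z/\sqrt B)$, equivalently $\e(y)=u(\sqrt B y)$ with $z=\sqrt B y$, which sends a function on $\{y\ge1\}$ with $\e(1)=0$ to a function on $\{z\ge\sqrt B\}$ with $u(\sqrt B)=0$, and satisfies $u\in H^3_\rho(r\ge\sqrt B)$ iff $\e\in H^3_{\rho_B}(y\ge1)$. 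Moreover, by \eqref{defpsibhat} and \eqref{beiebibeiebis} one has $\psih_{B,j}(\sqrt B y)=\etah_{B,j}(y)$, so that $\la u,\psih_{B,j}\ra_B = B\,(\e,\etah_{B,j})_B$; hence the orthogonality conditions in the two statements are equivalent.

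Next I would record the conjugation identities, the freezing analogues of \eqref{formuelerenorer}. Since $\H_B=-\Delta-B\Lambda$, $H_B=-\Delta-\Lambda$, $(\Lambda u)(z)=(\Lambda\e)(z/\sqrt B)$ and $(\Delta u)(z)=B^{-1}(\Delta\e)(z/\sqrt B)$, one gets $(H_B u)(z)=B^{-1}(\H_B\e)(z/\sqrt B)$, and, because $\rho_+(z)\,z\,dz=B\,\rho_{B,+}(y)\,y\,dy$ under $z=\sqrt B y$, one gets $\|\pa_z^\ell v\|_{L^2_{\rho,B}}=B^{(\ell-1)/2}\|\pa_y^\ell w\|_B$ for $v(z)=w(z/\sqrt B)$. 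Specializing these: $\|H_B u\|_{L^2_{\rho,B}}^2=B^{-1}\|\H_B\e\|_B^2$, $\|\Delta u\|_{L^2_{\rho,B}}^2=B^{-1}\|\Delta\e\|_B^2$, $\|\pa_z H_B u\|_{L^2_{\rho,B}}^2=B^{-2}\|\pa_y\H_B\e\|_B^2$, $\|z H_B u\|_{L^2_{\rho,B}}^2=\|y\H_B\e\|_B^2$, $(\pa_z u)^2(\sqrt B)=B^{-1}(\pa_y\e)^2(1)$, $|H_B u(\sqrt B)|=B^{-1}|\H_B\e(1)|$, while $\|(1+z)\pa_z u\|_{L^2_{\rho,B}}^2=B\|(1+\sqrt B y)\pa_y\e\|_B^2\ge B\|\pa_y\e\|_B^2+B^2\|\Lambda\e\|_B^2$ and $\|(1+z)u\|_{L^2_{\rho,B}}^2=B\|(1+\sqrt B y)\e\|_B^2$.

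With this dictionary it is purely mechanical to substitute into the three estimates of Lemma~\ref{lemmacoercunbisbis}. Multiplying \eqref{coercunubisbis} by $B$ gives
\[
\|\H_B\e\|_B^2\gtrsim\|\Delta\e\|_B^2+B\|\pa_y\e\|_B^2+B^2\|\Lambda\e\|_B^2+B^2\|(1+\sqrt B y)\e\|_B^2+B|\log B|^2(\pa_y\e)^2(1),
\]
which is \eqref{coercunubisbisbis}; multiplying \eqref{coercdeuxtris} by $B^2$ gives $\|\pa_y\H_B\e\|_B^2\ge[2k+4+O(|\log B|^{-1})]\,B\|\H_B\e\|_B^2-c_kB^2|\H_B\e(1)|^2$, which is \eqref{coercdeuxbisbis}; and multiplying \eqref{coiehenoenevobis} by $B$, using $\|z H_B u\|_{L^2_{\rho,B}}^2=\|y\H_B\e\|_B^2$, gives $B\|y\H_B\e\|_B^2\lesssim B^{-1}\|\pa_y\H_B\e\|_B^2+|\H_B\e(1)|^2$, which is \eqref{E:YHBEPSILONBOUNDbis}.

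There is no genuine difficulty here beyond bookkeeping; the one point requiring care is the exact power of $B$ attached to each term — in particular that $L^2_{\rho,B}$-norms of $\ell$-th order derivatives scale like $B^{(\ell-1)/2}$, so that $\|\pa_z\cdot\|$ is scale invariant while $\|\Delta\cdot\|$ and $\|\pa_z H_B\cdot\|$ gain negative powers of $B$, and that $(\pa_z u)(\sqrt B)$ scales like $B^{-1/2}(\pa_y\e)(1)$. Getting these right is what produces the $B$-weights displayed in \eqref{coercunubisbisbis}--\eqref{E:YHBEPSILONBOUNDbis} and, crucially, preserves the sharp constant $2k+4$ inherited from the freezing Poincar\'e inequality \eqref{estimationcoreorbis} (the $+2=+d$ shift), which is what is needed to close the energy estimate in Proposition~\ref{propenergybis}.
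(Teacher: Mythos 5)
Your proposal is correct and is exactly the paper's argument: the paper obtains Lemma~\ref{lemmacoercunbisbistris} from Lemma~\ref{lemmacoercunbisbis} precisely by the rescaling $\e(y)=u(\sqrt{B}y)$, using the same scaling dictionary and the identification $\psih_{B,j}(\sqrt{B}y)=\etah_{B,j}(y)$ for the orthogonality conditions. Only two transcription slips in your dictionary should be fixed — the general exponent is $B^{(1-\ell)/2}$, not $B^{(\ell-1)/2}$, and $\|(1+z)\pa_zu\|^2_{L^2_{\rho,B}}=\|(1+\sqrt{B}y)\pa_y\e\|_B^2$ with no extra factor of $B$ (the factor $B$ only enters after multiplying \eqref{coercunubisbis} through by $B$) — but since every specific identity you actually use and all three final inequalities are stated correctly, these typos do not affect the argument.
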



\section{Non trivial melting initial data}
\label{data}


In this appendix, we show that our set of initial data for melting is non empty and contains compactly supported data arbitrarily small in $\dot{H}^1$. We show the construction for $k=0$, an analogous construction holds for $k\geq 1$ and is left to the reader.\\
To see this define a cut-off function $\chi(y)=1$ for $y\leq 1$ and $\chi(y)=0$ for $y\geq 2$, and set $\chi_B=\chi(\frac{y}{B}),$ where 
 $$B^2=\frac{|\ln b_0|}{2b_0}.$$ 
 By abuse of notation, we denote by $b_0$ the initial value of $b_0$ in this section.
 Let 
 \be\label{E:ALPHA}
 \alpha : = \frac {\|\eta_{b_0}\|_{b_0^2}}{(\chi_B\eta_{b_0},\eta_{b_0})_{b_0}}.
 \ee
 Note that $\alpha-1 =  \frac {((1-\chi_B)\eta_{b_0},\eta_{b_0})_{b_0}}{(\chi_B\eta_{b_0},\eta_{b_0})_{b_0}}.$
Furthermore
\begin{align}
& \big|((1-\chi_B)\eta_{b_0},\eta_{b_0})_{b_0}\big|  \lesssim \int_{y\ge B} \eta_{b_0}^2 \rho_{b_0} 
 \lesssim \int_{y\ge B} \ln^2(y) \rho_{b_0} + \|\eta_{b_0,1}\|_{b}^2 \notag \\
& = \frac 1{b_0}\int_{\sqrt{b_0}B}^\infty \ln^2(\sqrt{b_0}z) \rho(z)\,dz + \frac 1{b_0|\ln b_0|^2} \notag \\
& \lesssim  \frac {\ln^2b_0}{b_0}\int_{\sqrt{b_0}B}^\infty  \rho(z)\,dz  + \frac 1{b_0}\int_{\sqrt{b_0}B}^\infty  \sqrt{\rho(z)}\,dz + \frac 1{b_0|\ln b_0|^2}\notag \\
& \lesssim \frac {\ln^2b_0}{b_0} e^{-b_0B^2/2} + \frac 1{b_0} e^{-b_0B^2/4} +  \frac 1{b_0|\ln b_0|^2} \notag \\
& \lesssim \frac {\ln^2b_0}{b_0} e^{-|\ln b_0|/4} + \frac 1{b_0} e^{-|\ln b_0|/8} +  \frac 1{b_0|\ln b_0|^2} 
 \lesssim \frac 1{b_0|\ln b_0|^2},\label{E:ALPHA1}
\end{align}
where we used~\eqref{deftketa} and~\eqref{calculeignvectoreta}.
Therefore
\begin{align}
|\alpha-1| & \lesssim  \frac{\frac 1{b_0|\ln b_0|^2}}{\|\eta_{b_0}\|_{b_0}^2 -((1-\chi_B)\eta_{b_0},\eta_{b_0})_{b_0} } 
 \lesssim  \frac{\frac 1{b_0|\ln b_0|^2}}{\|\eta_{b_0}\|_{b_0}^2 }  
 \lesssim \frac{\frac 1{b_0|\ln b_0|^2}}{\frac{|\ln b_0|^2}{b_0} } = \frac 1{|\ln b_0|^4}, \label{E:ALPHA2}
\end{align}
where we used~\eqref{E:ALPHA1} and~\eqref{calculeignvectoreta}.
Consider the initial data 
$$
u(0)=v(0)=b_0\alpha \chi_B \eta_{b_0}. 
$$
Note that by~\eqref{E:ALPHA1} we have the bound $|\alpha|\lesssim1.$
Then using \eqref{deftketa}, \eqref{calculeignvectoreta}: 
\bee
 \int|\pa_yu(0)|^2&\lesssim&  b_0^2\left[\int_{1\le y\leq 2B}|\pa_y\eta_{b_0}|^2+\int_{B_0\leq y\leq 2B_0}\frac{|\eta_{b_0}|^2}{B^2}\right]\\
& \lesssim & b_0^2e^{b_0 B^2/2}\left[\|\pa_y \eta_{b_0}\|_{b_0}^2+\frac{b_0}{K}\|\eta_{b_0}\|_{b_0}^2\right]
\lesssim b_0^2e^{\frac{|\ln b_0|}{4}}\left[|\ln b_0|+\frac{b_0}{K}\frac{|\ln b_0|}{2b_0}\right]\\
&\ll& 1.
\eee
On the other hand, we have by definition $\e(0)=-(1-\alpha \chi_B)b_0\eta_{b_0} = -(1-\alpha)b_0\eta_{b_0}-\alpha (1-\chi_B)b_0\eta_{b_0}$ and hence:
\begin{align*}
\|\mathcal H_{b_0}\e(0)\|_{b}^2
 \lesssim & b_0^2\int_{y\geq B}|b_0\l_{b_0}\eta_{b_0}|^2\rho_{b_0}y\,dy + |1-\alpha|^2 b_0^4\l_{b_0}^2\|\eta_{b_0}\|_{b_0}^2\\
&+b_0^2\int_{B\leq y\leq 2B}\left[\frac{|\pa_y\eta_{b_0}|^2}{y^2}+\frac{\eta_{b_0}^2}{y^4}+b_0^2\eta_{b_0}^2\right]\rho_{b_0} y\,dy.
\end{align*}
Using~~\eqref{E:ALPHA1} we can estimate the first term:
\bee
&&b_0^2\int_{y\geq B}|b_0\l_{b_0}\eta_{b_0}|^2\rho_{b_0}ydy\lesssim b_0^4\l_{b_0}^2 \lesssim b_0^4\l_{b_0}^2 \frac 1{b_0|\ln b_0|^2}  \lesssim \frac{b_0^3}{|\ln b_0|^4},
\eee
For the second term we use~\eqref{E:ALPHA2} and~\eqref{calculeignvectoreta} and readily obtain
\[
|1-\alpha|^2 b_0^4\l_{b_0}^2\|\eta_{b_0}\|_{b_0}^2 \lesssim \frac{b_0^3}{|\ln b_0|^8}.
\]
similarly using the decomposition~\eqref{deftketa} (with $k=0$), we have
\begin{align*}
b_0^2\int_{B\leq y\leq 2B}\frac{|\pa_y\eta_{b_0}|^2}{y^2} \rho_{b_0}y\,dy & \lesssim b_0^2\int_{B\leq y\leq 2B}\frac{1}{y^4} \rho_{b_0} y\,dy + b_0^2\int_{B\leq y\leq 2B}\frac{|\pa_y\eta_{b_0,1}|^2}{y^2} \rho_{b_0} y\,dy \\
& \lesssim b_0^2 B^{-4} e^{-b_0B^2/2} + b_0^2 B^{-2} \|\pa_y\eta_{b_0,1}\|_{b_0}^2 \\
& \lesssim \frac{b_0^4}{|\ln b_0|^4} + \frac{b_0^3}{|\ln b_0|^4} \lesssim   \frac{b_0^3}{|\ln b_0|^4},
\end{align*}
where we used~\eqref{calculeignvectoreta} in the last line.
In a similar fashion
\begin{align*}
b_0^2\int_{B\leq y\leq 2B}\frac{\eta_{b_0}^2}{y^4}\rho_{b_0} y\,dy & \lesssim b_0^2\int_{B\leq y\leq 2B}\frac{|\ln y|^2}{y^4}\rho_{b_0} y\,dy + b_0^2\int_{B\leq y\leq 2B}\frac{\eta_{b_0,1}^2}{y^4}\rho_{b_0} y\,dy \\
& \lesssim b_0^2 B^{-4} |\ln B|^2 \int_{B\leq y\leq 2B}\rho_{b_0} y\,dy + b_0^2 B^{-4}\|\eta_{b_0,1}\|_{b_0}^2 \\
& \lesssim \frac{b_0^3}{|\ln b_0|^2} e^{-b_0 B^2/2} + \frac{b_0^3}{|\ln b_0|^6} \\
& \lesssim  \frac{b_0^3}{|\ln b_0|^2} e^{- |\ln b_0|/4} + \frac{b_0^3}{|\ln b_0|^6}\lesssim  \frac{b_0^3}{|\ln b_0|^6} 
\end{align*}
if $b_0$ is sufficiently small.
Finally,
\begin{align*}
b_0^2\int_{B\leq y\leq 2B}b_0^2\eta_{b_0}^2\rho_{b_0} y\,dy
& \lesssim  b_0^4\int_{B\leq y\leq 2B}(\ln y)^2\rho_{b_0} y\,dy + b_0^4\|\eta_{b_0,1}\|_{b_0}^2 \\
&\lesssim  b_0^4 |\ln B|^2 \frac 1{b_0} e^{-b_0B^2/2} + b_0^4 \frac{1}{b_0 |\ln b_0|^2} \\
& \lesssim  b_0^3|\ln b_0|^2e^{-\frac{|\ln b_0|}{4}} +  b_0^3 \frac{1}{ |\ln b_0|^2} \lesssim \frac{b_0^3}{|\ln b_0|^2}.
\end{align*}
and hence \eqref{E:ESMALL} is satisfied.
Moreover, 
\[
(\e(0),\eta_{b_0})_{b_0} =  (-(1-\alpha \chi_B)b_0\eta_{b_0} ,\eta_{b_0})_{b_0} = -b_0 \|\eta_{b_0}\|_{b_0}^2 + \alpha b_0 (\chi_{B_0}\eta_{b_0},\eta_{b_0}) = 0
\]
by~\eqref{E:ALPHA}, and therefore the orthogonality condition from~\eqref{orthoe} is satisfied.

\begin{remark}
Observe that  by our construction, the initial temperature $u_0$ is non-negative in $\Omega.$ In this case, the solution $u(t,\cdot)$ remains non-negative by the maximum principle.
\end{remark}


\section{Cauchy theory in $\dot{H}^1\times \dot{H}^2$}
\label{appendixcauchy}


\begin{theorem}[Well-posedness in $H^2$]\label{T:WELLPOSEDNESS}
Let $u_0\in H^2(\Omega(0)),$  $\l_0>0,$ $u_0(\l_0)=0.$ 
Then there exists a time $T=T(\| u_0\|_{H^2(\Omega)}, \ \l_0)>0,$ a constant $C>0,$ and a solution $(u,\l)$ to the Stefan problem~\eqref{E:STEFANPOLAR} on the time interval $[0,T]$ such that 
\begin{align}
& u\in C([0,T),H^2(\Omega)) \cap L^2([0,T),H^3(\Omega)), \notag \\
&u_t\in C((0,T),L^2(\Omega)) \cap L^2([0,T],H^1(\Omega)), \notag\\
& \l \in C^1([0,T),\RR),\label{E:REGULARITY}
\end{align}
and the following bounds hold:
\[
 \|u\|_{H^2(\Omega(t))} \le C=C(\|u_0\|_{H^2(\Omega(0)),\l_0}), \ \ \lambda(t)>\frac {\l_0}2
\]
for some universal polynomial function C of the initial data.
Moreover, if $\mathcal{T}$ is the maximal time of existence of a solution $(w,\l)$ satisfying~\eqref{E:REGULARITY}, then
\[
\text{either } \ \lim_{t\to\mathcal{T}^-}\|u(t,\cdot)\|_{H^2(\Omega(t))} = \infty \ \ \text{ or } \ \ \ \lim_{t\to\mathcal{T}^-}\l(t) =0.
\]
\end{theorem}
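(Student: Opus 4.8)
\textbf{Proof proposal for Theorem \ref{T:WELLPOSEDNESS}.}

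The plan is to reduce the free boundary problem to a fixed-domain problem by the change of variables already introduced in the paper, set up a fixed-point/Galerkin scheme for the resulting quasilinear parabolic equation, and then derive the a priori $H^2$ bound together with the blow-up criterion. First I would fix the flow map. On the time interval $[0,T]$, with $T$ to be chosen small, I introduce $y = r/\l(t)$ as in \eqref{E:SELFSIMILAR}, so the moving domain $\{r\ge\l(t)\}$ becomes the fixed domain $\Omega=\{y\ge 1\}$; writing $u(t,r)=v(\tau,y)$ with a convenient rescaled time, the equation \eqref{E:STEFANPOLAR} becomes the fixed-boundary parabolic problem $v_\tau = \Delta v + \frac{\l_\tau}{\l}\Lambda v$ with the Dirichlet condition $v(\tau,1)=0$, and the free boundary law $u_r(t,\l(t))=-\dot\l(t)$ becomes the ODE $\dot\l(t) = -\tfrac1{\l(t)}v_y(\tau,1)$. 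This decouples the PDE for $v$ from the ODE for $\l$ in a way amenable to iteration: given $\l\in C^1([0,T])$ with $\l(t)>\l_0/2$, one solves the linear (in fact linear-in-$v$ once $\l$ is frozen) parabolic problem for $v$, then updates $\l$ by integrating the ODE with right-hand side $-\l^{-1}v_y(\cdot,1)$, and shows this map is a contraction on a small ball in a suitable space. The only subtle point at this step is that the coefficient $\l_\tau/\l$ in the equation for $v$ depends on $\dot\l$, hence on $v_y(\cdot,1)$ from the previous iterate; this is handled because $v_y(\cdot,1)$ is controlled by the $H^2$ norm of $v$ via the trace theorem, and the coupling is through lower-order terms relative to the parabolic smoothing.

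Next I would carry out the linear theory and the energy estimates. For the linear parabolic problem on $\Omega=\{y\ge1\}$ with Dirichlet data at $y=1$, I use a standard Galerkin approximation in $H^1_0$, obtaining $v\in C([0,T],H^2)\cap L^2([0,T],H^3)$ and $v_\tau\in C((0,T],L^2)\cap L^2([0,T],H^1)$, which is the regularity asserted in \eqref{E:REGULARITY}. The key a priori bound is obtained by testing the equation with $v$, with $v_\tau$, and with $\Delta^2$-type quantities respecting the boundary conditions: the $L^2$ estimate is the basic dissipation $\frac12\tfrac{d}{dt}\|u\|_{L^2(\Omega(t))}^2 + \|\nabla u\|_{L^2(\Omega(t))}^2=0$ (used in the paper as \eqref{E:ENERGY1}); the $\dot H^1$ estimate is exactly the Dirichlet energy dissipation; and the $\dot H^2$ estimate is obtained by the computation in \eqref{E:CHIESTIMATE}-type identities, $\tfrac12\tfrac{d}{dt}\|\Delta u\|_{L^2}^2 = -\|\nabla\Delta u\|_{L^2}^2 + \text{(boundary terms)}$, where the boundary terms at $r=\l(t)$ must be handled using $u(t,\l(t))=0$ and $u_r(t,\l(t))=-\dot\l(t)$; differentiating these identities in time and using the free boundary ODE controls the boundary contributions in terms of $\|u\|_{H^2}$ and $\dot\l$, closing a Gronwall inequality on a time $T$ depending only on $\|u_0\|_{H^2(\Omega(0))}$ and $\l_0$. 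Throughout, one keeps $\l(t)>\l_0/2$ by the bound $|\dot\l(t)|=|\l^{-1}v_y(\cdot,1)|\lesssim \l_0^{-1}\|v\|_{H^2}$, so that $|\l(t)-\l_0|\le CT$ stays small.

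Finally, the blow-up criterion follows by a continuation argument. Let $\mathcal T$ be the maximal existence time of a solution with the regularity \eqref{E:REGULARITY}. If both $\limsup_{t\to\mathcal T^-}\|u(t)\|_{H^2(\Omega(t))}<\infty$ and $\liminf_{t\to\mathcal T^-}\l(t)>0$, then the a priori estimates above apply uniformly near $\mathcal T$, so $(u,\l)$ extends continuously to $t=\mathcal T$ with $u(\mathcal T)\in H^2$ and $\l(\mathcal T)>0$; applying the local existence result with initial data $(u(\mathcal T),\l(\mathcal T))$ extends the solution past $\mathcal T$, contradicting maximality. Hence at least one of the two alternatives in the statement must hold. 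I expect the main obstacle to be the $\dot H^2$ estimate: the boundary terms at the moving interface $r=\l(t)$ produced by integrating by parts against $\Delta^2 u$ are genuinely dangerous, and one must use the structure of the two boundary conditions (Dirichlet plus the Stefan condition) together with their time derivatives — effectively trading a boundary term of order $\|\nabla\Delta u\|$ against the good dissipative term via a trace/interpolation inequality of the form $|\partial_n^2 u(t,\l(t))|\lesssim \|\nabla\Delta u\|_{L^2}^{1/2}\|u\|_{H^2}^{1/2} + \|u\|_{H^2}$ — in order to close the estimate; this is the analogue in Sobolev spaces of the Schauder-type arguments of \cite{HeHeVe,Fried1964}, and is the only place where real care is needed.
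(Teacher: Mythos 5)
Your overall architecture coincides with the paper's: pull back to the fixed domain $\Omega=\{y\ge1\}$ via $w(t,y)=u(t,\l(t)y)$, derive $L^2$, $\dot H^1$ and $\dot H^2$ energy identities with exact boundary terms, close on a short time by a continuity argument, construct the solution by freezing $\l$, solving the linear parabolic problem, and updating $\l$ from the Neumann trace, and obtain the blow-up criterion by continuation. The $L^2$ and $\dot H^1$ steps and the iteration/continuation scheme are fine as you describe them.

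There is, however, a genuine gap at exactly the step you flag as delicate, and the mechanism you propose does not resolve it. After integrating by parts in the $\dot H^2$ estimate, the dangerous boundary contribution is $(\pa_y w_t\,\Delta w)\big|_{y=1}$. Since $w_y(t,1)=-\l\dot\l$, one has $\pa_yw_t(t,1)=-\pa_t(\l\dot\l)$, which contains $\ddot\l$; this is \emph{not} controlled by $\|u\|_{H^2}$ and $\dot\l$, nor by the dissipation $\|\nabla\Delta w\|_{L^2}$ (pointwise control of $\pa_y w_t$ on the boundary would require a trace of $\nabla w_t$, i.e.\ essentially $H^4$ regularity of $w$). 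Your proposed interpolation inequality for $\pa_n^2u$ at the interface targets $w_{yy}(1)$, which is already harmless: restricting the equation to $y=1$ gives $\Delta w|_{y=1}=\l^2\dot\l^2$, so $w_{yy}(1)$ is an explicit function of $\l,\dot\l$. The actual resolution, which your sketch is missing, is structural: precisely because $\Delta w|_{y=1}=(\l\dot\l)^2$, the dangerous term is an exact time derivative, $(\pa_yw_t\,\Delta w)|_{y=1}=-\tfrac13\tfrac{d}{dt}(\l\dot\l)^3$, so $\ddot\l$ never needs to be estimated; one moves $\tfrac{2\pi}3(\l\dot\l)^3$ to the left-hand side of the energy identity and then controls the resulting un-time-integrated quantity $|\l\dot\l|^3$ by the refined trace--interpolation bound $|w_y(1)|\lesssim\|\nabla w\|_{L^2}^{1/2}\|\nabla w\|_{H^1}^{1/2}$ together with Young's inequality and the already-closed $\dot H^1$ identity. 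Without this cancellation (or an equivalent device) the Gronwall argument you describe does not close.
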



\begin{remark}
The Stefan problem allows for an instant smoothing effect. It is well-known that the solution $u$ becomes infinitely smooth on $(0,\mathcal{T})$ in both the time- and the space variable, see for instance~\cite{Ko98, PrSaSi, KiNi78}.
\end{remark}


The proof of Theorem~\ref{T:WELLPOSEDNESS} is presented at the very end of this section, as a simple consequence of Theorem~\ref{T:WELLPOSEDNESSW}.

We start by pulling-back the problem~\eqref{E:STEFANPOLAR} onto the fixed domain
$\Omega :=  \{ {\bf y} \in\mathbb{R}^2, \ |{\bf y}|\ge 1\}.$ 
We denote the points in $\Omega$ by bold ${\bf y},$  while the radial coordinate $|{\bf y}|$ is denoted by $y.$
We define the pull-back temperature function $w:\Omega\to\RR$ by
\[
w(t,y)  = u (t, \l(t)y) 
\]
A simple application of the chain rule gives the following system of equations for $w:$
\begin{subequations}
\label{E:STEFANPOLARW}
\begin{alignat}{2}
w_t-\frac{\dot{\l}}{\l}\Lambda w - \frac 1{\l^2}\Delta w&=0&& \text{ in }  \Omega\,;\label{E:HEATW}\\
w_y(t,1)&=-\dot{\lambda}(t)\l(t);&& \label{E:NEUMANNW}\\
w(t,1)&=0,&&\label{E:DIRICHLETW}\\
w(0,\cdot) =w_0\,, & \ \l(0)=\l_0.&& \label{E:INITIALW}
\end{alignat}
\end{subequations}


\begin{lemma}[Energy identities]\label{L:ENERGYIDENTITIES}
Assume that $(w,\l)$ is a smooth solution to the Stefan problem~\eqref{E:STEFANPOLARW} on some interval $[0,T].$  
Assume that $\l(0)>0,$ $w_0\big|_{y=1}=0,$
and that  $w(t,\cdot)\in H^2(\Omega)$ for $t\in[0,T].$
Then on the interval $[0,T]$ the following energy identities hold:
\begin{align}
& \frac 12\frac{d}{dt}\|w\|_{L^2(\Omega)}^2  +\frac 1{\l^2} \|\nabla w\|_{L^2(\Omega)}^2   = - \frac {\dot \l}{\l} \|w\|_{L^2(\Omega)}^2,\label{E:ENERGY0}\\
 & \frac 12 \frac{d}{dt} \|\nabla w\|_{L^2(\Omega)}^2 +\frac 1{\l^2} \|\Delta w\|_{L^2(\Omega)}^2   =  \pi \l \dot \l^3  \label{E:ENERGY1},\\
&  \frac 12 \frac{d}{dt} \|\Delta w\|_{L^2(\Omega)}^2 +\frac {2\pi}3 \frac{d}{dt} \left( \l |\dot \l|\right)^3
 + \frac 1{\l^2} \|\nabla\Delta w\|_{L^2(\Omega)}^2  \notag \\
 & \ \ \  = \frac {2\dot{\lambda}}{\l}  \|\Delta w\|_{L^2(\Omega)}^2 + \pi \dot{\lambda}^5\l^3.  \label{E:ENERGY2}
\end{align}
\end{lemma}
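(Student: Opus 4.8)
\textbf{Proof proposal for Lemma~\ref{L:ENERGYIDENTITIES}.}

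The plan is to derive the three identities in increasing order of differentiation, in each case by multiplying the pulled-back heat equation~\eqref{E:HEATW} by the appropriate test function and integrating over $\Omega=\{y\ge 1\}$, being careful to track the boundary contributions at $y=1$ coming from the Neumann condition~\eqref{E:NEUMANNW}. Recall that $\Omega$ is a two-dimensional exterior domain, so all integrals carry the radial measure $y\,dy$ and the boundary $\{y=1\}$ is a circle of length $2\pi$; this accounts for the factor $\pi$ (half of $2\pi$, after an integration by parts produces a $\tfrac12$) that appears throughout.

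First, for~\eqref{E:ENERGY0} I would multiply~\eqref{E:HEATW} by $w$ and integrate. The dissipation term $-\tfrac1{\l^2}\int w\Delta w$ integrates by parts to $\tfrac1{\l^2}\|\nabla w\|_{L^2}^2$ with no boundary term, since $w(t,1)=0$ by~\eqref{E:DIRICHLETW}. The transport term $-\tfrac{\dot\l}{\l}\int w\Lambda w = -\tfrac{\dot\l}{\l}\int w\,y\pa_y w\, y\,dy$ is integrated by parts using $\pa_y(w^2 y) $-type manipulations; in two dimensions $\int \Lambda w\, w\, y\,dy = -\|w\|_{L^2}^2$ up to a vanishing boundary term (since $w(t,1)=0$), giving the right-hand side of~\eqref{E:ENERGY0}. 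Next, for~\eqref{E:ENERGY1} I would multiply~\eqref{E:HEATW} by $-\Delta w$ and integrate. The term $-\int w_t \Delta w$ integrates by parts to $\tfrac12\tfrac{d}{dt}\|\nabla w\|_{L^2}^2$ plus a boundary term $-\oint w_t \pa_n w$; using $w(t,1)=0$ hence $w_t(t,1)=0$, this boundary term vanishes. The term $\tfrac1{\l^2}\int \Delta w\,\Delta w$ is the dissipation $\tfrac1{\l^2}\|\Delta w\|_{L^2}^2$. The transport term $\tfrac{\dot\l}{\l}\int \Lambda w\,\Delta w$ is handled via the commutator identity $[\Delta,\Lambda]=2\Delta$ (see~\eqref{commutatorlaplace}) together with an integration by parts; the bulk contributions combine to a multiple of $\|\nabla w\|_{L^2}^2$ which, surprisingly, cancels, leaving only the boundary term at $y=1$, which by~\eqref{E:NEUMANNW} equals $\pi\l\dot\l^3$ (here $\pa_y w(t,1)=-\dot\l\l$ enters cubically: once from $\Lambda w = y\pa_y w$ evaluated at $y=1$, and the $\dot\l/\l$ prefactor times the boundary value of a quadratic-in-$\pa_y w$ expression).

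Finally, for~\eqref{E:ENERGY2} I would multiply~\eqref{E:HEATW} by $\Delta^2 w$ (equivalently, differentiate and test with $\nabla\Delta w$) and integrate; this is the most delicate step and I expect it to be the main obstacle. The issue is that testing at the $\dot H^2$ level against $\Delta^2 w$ produces a boundary term involving $\pa_t\pa_y w(t,1)$, which is not directly controlled and must be rewritten using the equation~\eqref{E:HEATW} restricted to the boundary and the Neumann condition~\eqref{E:NEUMANNW}. Specifically, $\pa_y w(t,1) = -\dot\l\l$ gives, upon time differentiation, $\pa_t\pa_y w(t,1) = -(\ddot\l\l+\dot\l^2)$, and one must also evaluate $\Delta w(t,1)$ and $\pa_y\Delta w(t,1)$ by restricting~\eqref{E:HEATW} to $y=1$ and using $w(t,1)=w_t(t,1)=0$. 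Collecting these boundary contributions, the terms of the form $\tfrac{d}{dt}(\l|\dot\l|)^3$ emerge naturally as a total time derivative (with the factor $\tfrac{2\pi}{3}$), the $\tfrac{2\dot\l}{\l}\|\Delta w\|_{L^2}^2$ term comes from the commutator $[\Delta,\Lambda]=2\Delta$ applied twice (once for each Laplacian in $\Delta^2$), and the remaining boundary term collapses to $\pi\dot\l^5\l^3$. Throughout, I would assume enough regularity of $(w,\l)$ — which is granted by the hypothesis that $w(t,\cdot)\in H^2(\Omega)$ and smoothness on $[0,T]$ — to justify all integrations by parts and boundary trace evaluations; the a priori smoothing effect of the parabolic equation (cf.\ the remark after Theorem~\ref{T:WELLPOSEDNESS}) makes these manipulations rigorous for $t>0$, and one passes to $t=0$ by continuity.
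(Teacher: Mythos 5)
Your proposal follows essentially the same route as the paper's proof: test the equation with $w$, $-\Delta w$, and (after applying $\nabla$) $-\nabla\Delta w$; evaluate the boundary traces $w_t(t,1)=0$, $\pa_yw(t,1)=-\l\dot\l$, and $\Delta w(t,1)=\l^2\dot\l^2$ (the last by restricting the equation to $y=1$); and recognize $\pa_yw_t\,\Delta w\big|_{y=1}$ as the exact derivative $-\tfrac13\pa_t(\l\dot\l)^3$, exactly as in the paper. The only cosmetic slip is your remark that the commutator is "applied twice, once for each Laplacian'': the paper uses $\Delta\Lambda w=\Lambda\Delta w+2\Delta w$ a single time, the factor $2$ in $\tfrac{2\dot\l}{\l}\|\Delta w\|_{L^2}^2$ being built into the commutator itself, and this does not affect the validity of your approach.
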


\begin{proof}
Multiply~\eqref{E:HEATW} by $ w$ and integrate over $\Omega.$ We obtain
\begin{align*}
0 &  = \frac 12\frac{d}{dt} \int_{\Omega} w^2\, d{\bf y} - \frac{\dot \l}\l \int_{\Omega}\Lambda w w\, d{\bf y} + \frac 1{\l^2}\|\nabla w\|_{L^2(\Omega)}^2  \\
& =  \frac 12\frac{d}{dt} \|w\|_{L^2(\Omega)}^2 - \frac{\dot \l}{2\l} \int_{1}^\infty y^2\pa_y(w^2)\, dy + \frac 1{\l^2}\|\nabla w\|_{L^2(\Omega)}^2 \\
& = \frac 12\frac{d}{dt}\|w\|_{L^2(\Omega)}^2 + \frac{\dot \l}{\l}\|w\|_{L^2(\Omega)}^2 + \frac 1{\l^2}\|\nabla w\|_{L^2(\Omega)}^2 ,
\end{align*} 
which is precisely~\eqref{E:ENERGY0}.
Multiply~\eqref{E:HEATW} by $-\Delta w$ and integrate by parts. We obtain
\begin{align*}
0& = \int_\Omega \nabla w\cdot \nabla w_t \, d{\bf y}- 2\pi (\pa_nw w_t)\big|_{y=1} + \frac {\dot{\lambda}}{\l} \int_\Omega \Lambda w \Delta w \, d{\bf y} + \frac 1{\l^2} \|\Delta w\|_{L^2(\Omega)}^2 \\
& = \frac 12 \frac{d}{dt} \|\nabla w\|_{L^2(\Omega)}^2 + 2\pi( w_yw_t)\big|_{y=1} + 2\pi  \frac {\dot{\lambda}}{\l} \int_1^\infty \Lambda w \pa_y\Lambda w \,dy +\frac 1{\l^2} \|\Delta w\|_{L^2(\Omega)}^2 \\
& =  \frac 12 \frac{d}{dt} \|\nabla w\|_{L^2(\Omega)}^2 - \pi  \frac {\dot{\lambda}}{\l} (\Lambda w)^2\big|_{y=1} +\frac 1{\l^2} \|\Delta w\|_{L^2(\Omega)}^2 \\
& = \frac 12 \frac{d}{dt} \|\nabla w\|_{L^2(\Omega)}^2 - \pi \l \dot \l^3 +\frac 1{\l^2} \|\Delta w\|_{L^2(\Omega)}^2,
\end{align*}
where we used the fact that $w_t(t,1)=0,$ $\Delta w = \frac 1y\pa_y\Lambda w,$ and $\Lambda w\big|_{y=1} = w_y\big|_{y=1} = -\l \dot{\l}.$ 
Note also that $\pa_n w|_{y=1} = -\pa_y w|_{y=1}.$This proves~\eqref{E:ENERGY1}.
To prove~\eqref{E:ENERGY2} we apply $\nabla$ to~\eqref{E:STEFANPOLARW}, multiply by $-\nabla\Delta w$ and integrate-by-parts.
We obtain
\begin{align}
0 & = - \int_\Omega \nabla w_t \cdot\nabla\Delta w \,d{\bf y} + \frac {\dot{\lambda}}{\l} \int_\Omega \nabla\Lambda w\cdot\nabla \Delta w\, d{\bf y}
+\frac 1{\l^2} \|\nabla\Delta w\|_{L^2(\Omega)}^2 \notag \\
& = \frac 12 \frac{d}{dt}\int_\Omega(\Delta w)^2\, d{\bf y} +2\pi (\pa_yw_t\Delta w)\big|_{y=1}  - \frac {\dot{\lambda}}{\l} \int_\Omega \Delta\Lambda w\cdot \Delta w\, d{\bf y} \notag \\
& \ \ \ 
 - 2\pi \frac {\dot{\lambda}}{\l} ( \pa_y\Lambda w\Delta w)\big|_{y=1} + \frac 1{\l^2} \|\nabla\Delta w\|_{L^2(\Omega)}^2. \label{E:PIECE0}
\end{align}
From~\eqref{E:NEUMANNW} it follows that 
\[
\pa_yw_t\big|_{y=1} = - \pa_t (\l\dot \l)
\]
Restricting~\eqref{E:HEATW} to $y=1$ we conclude that $- \frac {\dot{\lambda}}{\l} w_y\big|_{y=1} - \frac 1 {\l^2} (\Delta w)\big|_{y=1}  = 0.$ Using~\eqref{E:NEUMANNW} this implies that 
\be\label{E:DELTAWBOUNDARY}
(\Delta w)\big|_{y=1} = \l^2\dot{\l}^2.
\ee
The previous two boundary identities imply that 
\be\label{E:PIECE1}
2\pi (\pa_yw_t\Delta w)\big|_{y=1} = -\frac {2\pi}3 \frac{d}{dt} \left(\l \dot \l\right)^3 . 
\ee
Note that $\pa_y\Lambda w = w_y + y w_{yy}$ and therefore, when restricted to $y=1$ we conclude that $(\pa_y\Lambda w)\big|_{y=1} = (\Delta w)\big|_{y=1}.$ 
From~\eqref{E:DELTAWBOUNDARY} we infer that 
\[
(\pa_y\Lambda w)\big|_{y=1} = \l^2\dot{\l}^2.
\]
Therefore
\be\label{E:PIECE2}
- 2\pi \frac {\dot{\lambda}}{\l} ( \pa_y\Lambda w\Delta w)\big|_{y=1} = - 2\pi \l^3 \dot{\l}^5.
\ee
It remains to evaluate the term $ - \frac {\dot{\lambda}}{\l} \int_\Omega \Delta\Lambda w\cdot \Delta w\, d{\bf y} .$
A  direct calculation yields
\[
\Delta \Lambda w =  \Lambda \Delta w + 2 \Delta w.
\]
Therefore
\begin{align}
- \frac {\dot{\lambda}}{\l} \int_\Omega \Delta\Lambda w\cdot \Delta w\, d{\bf y}
& = -\frac {2\dot{\lambda}}{\l}  \|\Delta w\|_{L^2(\Omega)}^2 - 2\pi  \frac {\dot{\lambda}}{\l} \int_1^\infty \pa_y \Delta w \Delta w\, dy \notag \\
& = -\frac {2\dot{\lambda}}{\l}  \|\Delta w\|_{L^2(\Omega)}^2 + \pi \frac {\dot{\lambda}}{\l} (\Delta w)^2\big|_{y=1}. \label{E:PIECE3}
\end{align}
Plugging~\eqref{E:PIECE1},~\eqref{E:PIECE2}, and~\eqref{E:PIECE3} into~\eqref{E:PIECE0}, we obtain~\eqref{E:ENERGY2}.
\end{proof}


Let us define the energy-like quantities
\begin{align}
E(t)  =  \sup_{0\le s\le t} &\Big\{\frac 12 \|w(s,\cdot)\|_{L^2(\Omega)}^2+ \frac 12 \|\nabla w(s,\cdot)\|_{L^2(\Omega)}^2 + \frac 12\|\Delta w(s,\cdot)\|_{L^2(\Omega)}^2\Big\}, \label{E:ENERGYDEFINITION}
\end{align}
\be\label{E:DISSIPATIONDEFINITION}
D(t)= \frac 1{\l(t)^2}\|\nabla w(t,\cdot)\|_{L^2(\Omega)}^2 + \frac 1{\l(t)^2} \|\Delta w(t,\cdot))\|_{L^2(\Omega)}^2 + \frac 1{\l(t)^2} \|\nabla\Delta w(t,\cdot)\|_{L^2(\Omega)}^2.
\ee


\begin{lemma}[A priori estimate]\label{L:APRIORI}
Assume that $(w,\l)$ is a smooth solution to~\eqref{E:STEFANPOLARW} on some interval $[0,T^*].$  
Assume that  $\l_0>0,$ $w_0\big|_{y=1}=0,$
and that  $w(t,\cdot)\in H^2(\Omega)$ for $t\in[0,T^*].$ 
Then there exists a $T=T(E(0),\l_0)>0,$ $T\le T^*,$ such that for any $t\in[0,T]$ the following a priori bounds hold:
\be
E(t) \le 4 E(0), 
\ee
\be
\l(t) > \frac {\l_0} 2.
\ee
\end{lemma}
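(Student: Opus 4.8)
The plan is to close the standard local-in-time a priori estimate for the pulled-back Stefan problem using the three energy identities of Lemma~\ref{L:ENERGYIDENTITIES}. First I would fix the target time $T$ small depending only on $E(0)$ and $\l_0$, and run a continuity/bootstrap argument: assume that on $[0,T]$ we have the a priori bounds $E(t)\le 4E(0)$ and $\l(t)>\l_0/2$, and show they self-improve to $E(t)\le 2E(0)$ and $\l(t)>3\l_0/4$ provided $T$ is small enough. Throughout, the key smallness comes from the fact that all the source terms on the right-hand sides of \eqref{E:ENERGY0}--\eqref{E:ENERGY2} are at least quadratic in $\dot\l$ and hence, after controlling $\dot\l$, integrable in time with a factor that vanishes as $T\to 0$.

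The crucial preliminary is to bound $\dot\l$ in terms of $E$. From the Neumann condition \eqref{E:NEUMANNW} we have $|\l\dot\l|=|w_y(t,1)|$, and by the trace inequality $\|w(t,\cdot)\|_{L^\infty(\partial\Omega)}$-type estimate in the radial setting together with $w(t,1)=0$, one controls $|w_y(t,1)|\lesssim \|w(t,\cdot)\|_{H^2(\Omega)}\lesssim \sqrt{E(t)}$. Combined with the lower bound $\l(t)>\l_0/2$ this gives $|\dot\l(t)|\lesssim \sqrt{E(0)}/\l_0$ on the bootstrap interval, whence $|\l(t)-\l_0|\lesssim T\sqrt{E(0)}/\l_0$, which for $T$ small recovers $\l(t)>3\l_0/4$ and in particular $\l(t)<2\l_0$. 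Next I would add \eqref{E:ENERGY0}, \eqref{E:ENERGY1} and \eqref{E:ENERGY2}: the dissipation terms $\l^{-2}(\|\nabla w\|_{L^2}^2+\|\Delta w\|_{L^2}^2+\|\nabla\Delta w\|_{L^2}^2)$ are nonnegative and may be dropped for the energy bound (they give the $L^2_tH^3_x$ regularity separately), the boundary-correction term $\frac{2\pi}{3}\frac{d}{dt}(\l|\dot\l|)^3$ is moved to the left and absorbed since $(\l|\dot\l|)^3\lesssim (E(0)/\l_0)^{3}\l_0^{3}$ is itself controlled by $E$, and the remaining right-hand side is bounded by $C(E(0),\l_0)(1+E(t))$. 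Gronwall on $[0,T]$ then yields $E(t)\le E(0)e^{C(E(0),\l_0)T}\le 2E(0)$ for $T=T(E(0),\l_0)$ small enough, closing the bootstrap.

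The main obstacle, and the only genuinely delicate point, is the treatment of the boundary terms in the third-order identity \eqref{E:ENERGY2}: the quantity $\frac{d}{dt}(\l|\dot\l|)^3$ is not obviously a lower-order term, since a priori it involves $\ddot\l$. The resolution is exactly the one used in the derivation of \eqref{E:ENERGY2}: one keeps it on the left-hand side as an exact time derivative, so the ``energy'' being propagated is really $\tilde E(t):=E(t)+\frac{2\pi}{3}(\l|\dot\l|)^3$, and one checks that $\tilde E$ and $E$ are comparable on the bootstrap interval because $(\l|\dot\l|)^3\lesssim_{\,\l_0} E(t)^{3/2}$ and $E(t)\le 4E(0)$. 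A secondary technical point is justifying the integrations by parts and the boundary identities $(\Delta w)|_{y=1}=\l^2\dot\l^2$, $(\partial_y\Lambda w)|_{y=1}=\l^2\dot\l^2$ for merely $H^2$ (resp.\ $H^3$) solutions; this is handled by a routine approximation/mollification argument, performing all the above manipulations on smooth solutions and passing to the limit, exactly as indicated in the statement of Lemma~\ref{L:ENERGYIDENTITIES}. With these in place the two claimed inequalities $E(t)\le 4E(0)$ and $\l(t)>\l_0/2$ follow on $[0,T]$.
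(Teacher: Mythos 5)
Your overall scheme --- integrate the three identities of Lemma~\ref{L:ENERGYIDENTITIES}, control $\dot\l$ through the trace of $w_y$ at $y=1$, and close by a continuity argument in which every error term either carries a factor of $T$ or is absorbable --- is exactly the paper's. The one genuinely different step is your treatment of the cubic boundary term in \eqref{E:ENERGY2}: you keep $\frac{2\pi}{3}(\l|\dot\l|)^3$ inside a modified energy $\tilde E=E+\frac{2\pi}{3}(\l|\dot\l|)^3$ and argue that $\tilde E$ and $E$ are comparable, whereas the paper estimates the un-time-integrated term $-\frac{2\pi}{3}\l(t)^3\dot\l(t)^3$ pointwise via the trace--interpolation bound $|w_y(1)|\lesssim\|\nabla w\|_{L^2}^{1/2}\|\nabla w\|_{H^1}^{1/2}$, i.e. $|\l\dot\l|^3\le\delta E+C_\delta\|\nabla w\|_{L^2}^6$, and then re-uses the integrated $\dot H^1$ identity \eqref{E:ENERGY1} to show $\sup_{s\le t}\|\nabla w\|_{L^2}^2\le E(0)+CtE^{3/2}\sup\l^{-2}$, so that the dangerous piece is either absorbable, or comes with an explicit factor of $t$, or is a pure function of the data. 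Your variant is legitimate (the correction is non-negative, so $E\le\tilde E$, and $\tilde E(0)\lesssim E(0)+E(0)^{3/2}$), but note that the comparability you invoke, $(\l|\dot\l|)^3\lesssim E^{3/2}\lesssim E(0)^{1/2}E$, only gives $\tilde E\le(1+CE(0)^{1/2})E$ with a constant depending on the size of the data; consequently neither your argument nor, in fact, the paper's written proof literally yields the constant $4E(0)$ for large data --- what both actually produce, and what Theorem~\ref{T:WELLPOSEDNESSW} records, is $E(t)\le C(E(0),\l_0)$ for a polynomial $C$, with $T$ chosen accordingly. Two smaller remarks: the lemma already assumes a smooth solution, so the mollification step you mention is not needed here (the boundary identities $(\Delta w)|_{y=1}=\l^2\dot\l^2$ belong to the proof of Lemma~\ref{L:ENERGYIDENTITIES}); and your Gronwall step must use the bootstrap bound $E\le 4E(0)$ to linearize the superlinear right-hand sides ($E^{3/2}$, $E^{5/2}$), which you do implicitly when writing the bound $C(E(0),\l_0)(1+E)$.
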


\begin{proof}
From Lemma~\ref{L:ENERGYIDENTITIES} we infer that 
\begin{align}
E(t) + \int_0^t D(s)\,ds \le & E(0) + \frac{2\pi}3 \l_0^3\dot\l(0)^3 - \frac{2\pi}3 \l(t)^3\dot\l(t)^3 
+ \int_0^t \frac{|\dot \l(s)|}{\l(s)}\,ds \, E(t)\notag\\
& + \pi \int_0^t \left( \l^3(s)|\dot{\l}(s)|^5 +\l(s)\dot\l(s)^3\right)\,ds \label{E:ENERGYINEQUALITY1}
\end{align}
Note that 
\[
w_y^2\big|_{y=1} \lesssim \|\Delta w\|_{L^2(\Omega)}^2 + \|\nabla w\|_{L^2(\Omega)}^2.
\]
Therefore
\be\label{E:TRACE}
|\dot \l(t)|^2 \le \frac C {\l^2} E(t), \ \ C>1.
\ee
Using~\eqref{E:TRACE} we obtain
\begin{align}
&E(t) \int_0^t \frac{|\dot \l(s)|}{\l(s)}\,ds + \pi \int_0^t \left( \l^3(s)|\dot{\l}(s)|^5 +\l(s)|\dot\l(s)|^3\right)\,ds \notag \\
& \ \ \ \ \le C' t \left(E^2(t)+E^{3/2}(t)+E^{5/2}(t)\right) \sup_{0\le s\le t}\frac 1{\l^2(s)}. \label{E:ERROR1}
\end{align}
To bound the error term $- \frac{2\pi}3 \l(t)^3\dot\l(t)^3$ we need a more refined estimate than~\eqref{E:TRACE} due to the absence of the integral-in-time.
Note that by the trace inequality and the interpolation between fractional Sobolev spaces, we have
\[
|w_y(1)| \lesssim \|\nabla w\|_{H^{1/2}(\Omega)} \lesssim \|\nabla w\|_{L^2(\Omega)}^{1/2}  \|\nabla w\|_{H^1(\Omega)}^{1/2}.
\]
Therefore, upon using the Young inequality
\begin{align}
|\l \dot \l|^3 & \lesssim \|\nabla w\|_{L^2(\Omega)}^{3/2} \|\nabla w\|_{H^1(\Omega)}^{3/2} \le \delta \|\nabla w\|_{H^1(\Omega)}^{2} + C_\delta \|\nabla w\|_{L^2(\Omega)}^6 \notag \\
& \le \delta E + C_\delta \|\nabla w\|_{L^2(\Omega)}^6. \label{E:REFINED}
\end{align}
Integrating~\eqref{E:ENERGY1} over $[0,t],$ we have
\begin{align*}
 \sup_{0\le s \le t}\|\nabla w\|_{L^2(\Omega)}^2 & \le \|\nabla w_0\|_{L^2(\Omega)}^2 + \pi \int_0^t \l(s)|\dot\l(s)|^3 \,ds \\
& \le E_0 + C^*t E^{3/2}(t)\sup_{0\le s\le t}\frac 1{\l^2(s)}.
\end{align*}
Therefore,  we obtain from~\eqref{E:REFINED} that 
\[
|\l \dot \l|^3 \le C_0 + \delta E + C t p(E)q(\sup_{0\le s\le t}\frac 1{\l^2(s)})
\]
where $p$ and $q$ are increasing polynomial functions of their arguments.
Plugging this bound back into~\eqref{E:ENERGYINEQUALITY1}, using~\eqref{E:ERROR1}, and the definition of $E(t)$ we conclude that 
\be\label{E:ENERGYINEQUALITY}
E(t) + \int_0^t D(s)\,ds \le C_0 + C t p(E(t)) q(\sup_{0\le s\le t}\frac 1{\l^2(s)}),
\ee
where $C_0=C_0(E(0),\l_0).$
Since $\lambda(t) = \l(0) +\int_0^t\dot \l(s)\,ds$, it follows that 
\[
\l(t) \ge \l_0 - t \sup_{0\le s \le t}|\dot \l(s)| \ge \l_0 - \frac{Ct}{\l} E^{1/2}(t).
\]
Let 
\[
T' = \sup\{ t \ge0 \big| E(t) \le 4 E(0), \ \l(t)>\l_0/2\}.
\]
By the continuity of $E(\cdot)$ and $\l(\cdot)$ it follows that $T'>0.$
On $[0,T']$ we therefore have 
\be\label{E:ENERGYINEQUALITY2}
E(t) + \int_0^t D(s)\,ds \le C_0 + Ctp(E(t))  q(\frac 1{\l_0 - 4\frac{\sqrt C}{\l(0)} t  E(0)^{1/2}})
\ee
By a standard continuity argument, there exists a sufficiently small
$T=T(E(0),\l_0),$ $T\le \frac {\l_0}{16 \sqrt {C}E(0)^{1/2}}$ such that 
\[
E(t) \le 2 C_0,  \ \ t\in[0,T].
\]
By the choice of $T,$ we also have the bound $\lambda (t) \ge \frac 34\l_0>\frac 12\l_0$ for $t\in[0,T]$
and this concludes the proof of the lemma.
\end{proof}


\begin{theorem}[Local well-posedness]\label{T:WELLPOSEDNESSW}
Let $w_0\in H^2(\Omega),$ $\l_0>0,$ and $w_0\big|_{y=1}=0.$
Then there exists a time $T=T(\| w_0\|_{H^2(\Omega)}, \ \l(0))>0$ and a solution $(w,\l)$ to the Stefan problem~\eqref{E:STEFANPOLARW} on the time interval $[0,T]$ such that 
\begin{align}
& w\in C([0,T],H^2(\Omega)) \cap L^2([0,T],H^3(\Omega)), \notag \\
& w_t\in C((0,T], L^2(\Omega)) \cap L^2([0,T],H^1(\Omega)), \notag \\
&  \l \in C^1([0,T],\RR), \label{E:REGULARITYW}
\end{align}
satisfying the energy estimate 
\[
E(t) \le C_0=C_0(E(0)), \ \ t\in[0,T]
\]
and the lower bound 
\[
\lambda(t)>\frac {\l(0)}2 \ \ t\in[0,T],
\]
where the energy $E(\cdot)$ is defined by~\eqref{E:ENERGYDEFINITION}.
Moreover, if $\mathcal{T}$ is the maximal time of existence of a solution $(w,\l)$ satisfying~\eqref{E:REGULARITYW}, then
\[
\text{either } \ \lim_{t\to\mathcal{T}^-}\|w(t,\cdot)\|_{H^2(\Omega)} = \infty \ \ \text{ or } \ \ \ \lim_{t\to\mathcal{T}^-}\l(t) =0.
\]
\end{theorem}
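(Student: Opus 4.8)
The plan is to prove Theorem~\ref{T:WELLPOSEDNESSW} by the standard parabolic fixed-point scheme on the fixed domain $\Omega = \{|{\bf y}|\ge 1\}$, combined with the a priori bounds already established in Lemma~\ref{L:APRIORI}. First I would set up an iteration map: given a reference pair $(\bar w,\bar\l)$ in a suitable complete metric space $X_T$ of functions satisfying $\bar w\in C([0,T],H^2(\Omega))\cap L^2([0,T],H^3(\Omega))$, $\bar w_t\in L^2([0,T],H^1(\Omega))$, $\bar\l\in C^1([0,T],\RR)$ with $\bar\l(0)=\l_0$, $\bar w(t,1)=0$, and with the energy controlled by $E(t)\le 2C_0(E(0))$ and $\bar\l(t)>\l_0/2$, one defines the next iterate $(w,\l)$ by the \emph{linear} parabolic problem
\begin{align*}
w_t - \frac{\dot{\bar\l}}{\bar\l}\Lambda w - \frac 1{\bar\l^2}\Delta w &= 0 \quad\text{in }\Omega,\\
w_y(t,1) &= -\dot\l(t)\l(t), \quad w(t,1)=0,
\end{align*}
where the free-boundary ODE for $\l$ is decoupled off by noting that $\dot\l\l = -w_y(t,1)$ must be solved jointly: concretely one reads $\l(t) = \sqrt{\l_0^2 - 2\int_0^t w_y(s,1)\,ds}$ once $w$ is known on $[0,1]$-trace, so the cleanest route is to treat the pair $(w,\l)$ as a coupled system where the boundary datum $-\dot\l\l$ is itself linear in $w_y(t,1)$. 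Standard linear parabolic theory with the time-dependent but bounded (by the reference bounds and $\bar\l>\l_0/2$) coefficients $\dot{\bar\l}/\bar\l$, $1/\bar\l^2$ gives existence, uniqueness and the stated regularity~\eqref{E:REGULARITYW} for the iterate, with maximal-regularity estimates in the anisotropic Sobolev scale $H^2\cap L^2H^3$; here the compatibility condition $w_0(1)=0$ is exactly what is needed at $t=0$.

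Next I would run the contraction argument. The energy identities of Lemma~\ref{L:ENERGYIDENTITIES} (applied to the linear iterate, or directly to differences) combined with the a priori estimate Lemma~\ref{L:APRIORI} show that the iteration map sends the ball $\{E(t)\le 2C_0,\ \l>\l_0/2\}$ in $X_T$ into itself for $T=T(E(0),\l_0)$ small enough; the key inputs are the trace bound~\eqref{E:TRACE}, the refined trace--interpolation bound~\eqref{E:REFINED}, and the Young-inequality absorption leading to~\eqref{E:ENERGYINEQUALITY2}. For contraction one subtracts the equations for two iterates $(w^{(1)},\l^{(1)})$, $(w^{(2)},\l^{(2)})$ coming from references $(\bar w^{(i)},\bar\l^{(i)})$: the difference $\delta w = w^{(1)}-w^{(2)}$ solves a linear parabolic equation with the \emph{same} principal part and source terms of the form $(\tfrac{\dot{\bar\l}^{(1)}}{\bar\l^{(1)}} - \tfrac{\dot{\bar\l}^{(2)}}{\bar\l^{(2)}})\Lambda w^{(2)}$ and $(\tfrac 1{(\bar\l^{(1)})^2} - \tfrac 1{(\bar\l^{(2)})^2})\Delta w^{(2)}$, each of which is bounded by $\|\bar w^{(1)}-\bar w^{(2)}\|_{X_T}$ times a factor involving $\|w^{(2)}\|_{X_T}\le 2C_0$; the boundary-term difference $-(\dot\l^{(1)}\l^{(1)} - \dot\l^{(2)}\l^{(2)})$ is controlled by the trace of $\delta w$. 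An energy estimate at the $H^2$ level (using~\eqref{E:ENERGY0}--\eqref{E:ENERGY2} for the difference, and again Young's inequality to absorb the boundary terms) yields $\|\delta w\|_{X_T} + \|\delta\l\|_{C^1} \le C(C_0)\, T^{\theta}\, (\|\bar w^{(1)}-\bar w^{(2)}\|_{X_T} + \|\bar\l^{(1)}-\bar\l^{(2)}\|_{C^1})$ for some $\theta>0$, hence a strict contraction after shrinking $T$. The Banach fixed-point theorem then produces the desired solution $(w,\l)$, and the fixed point automatically inherits $E(t)\le C_0$ and $\l(t)>\l_0/2$ on $[0,T]$ from the a priori bounds; uniqueness in the class~\eqref{E:REGULARITYW} follows from the same difference estimate applied with $(\bar w^{(i)},\bar\l^{(i)}) = (w^{(i)},\l^{(i)})$ on a possibly smaller interval and a continuation/bootstrap argument.

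Finally I would address the blow-up alternative. By the standard continuation principle, one restarts the local construction from any time $t_0<\mathcal T$ with data $(w(t_0,\cdot),\l(t_0))$, and the existence time depends only on $E(t_0)$ and $\l(t_0)$; therefore if $\mathcal T<\infty$ and both $\limsup_{t\to\mathcal T^-}\|w(t,\cdot)\|_{H^2(\Omega)}<\infty$ and $\liminf_{t\to\mathcal T^-}\l(t)>0$ held, one could extend past $\mathcal T$, a contradiction. This gives the dichotomy: $\lim_{t\to\mathcal T^-}\|w(t,\cdot)\|_{H^2(\Omega)}=\infty$ or $\lim_{t\to\mathcal T^-}\l(t)=0$. (To upgrade $\limsup$ to $\lim$ one uses the energy inequality~\eqref{E:ENERGYINEQUALITY2} backward: as long as $\l$ stays bounded below, $E$ cannot blow up in finite time, so the two alternatives are mutually exclusive in the limit.) The main obstacle in this program is the boundary structure: the Neumann datum $w_y(t,1) = -\dot\l\l$ couples the free-boundary ODE to the trace of the solution's gradient at the \emph{same} time, so one cannot cleanly decouple $\l$ from $w$; the resolution is precisely the refined trace--interpolation estimate $|w_y(1)|\lesssim \|\nabla w\|_{L^2}^{1/2}\|\nabla w\|_{H^1}^{1/2}$ of~\eqref{E:REFINED}, which lets the dangerous cubic boundary term $\l\dot\l^3$ in~\eqref{E:ENERGY1}--\eqref{E:ENERGY2} be absorbed into the dissipation at the cost of a harmless lower-order term — this is the one place where a naive energy estimate fails and where care with the geometry is essential.
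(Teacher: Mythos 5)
Your proposal is correct and follows essentially the same route as the paper: an iteration on the fixed domain $\Omega$ in which each step solves a linear parabolic problem with coefficients frozen from the previous iterate, closed by the a priori bounds of Lemma~\ref{L:APRIORI} (in particular the trace estimate~\eqref{E:TRACE} and the interpolation bound~\eqref{E:REFINED}), with the blow-up alternative obtained from the standard continuation principle. The only (immaterial) difference is that you close the scheme by a contraction/Banach fixed-point estimate on differences, whereas the paper passes to the limit via uniform energy and $\pa_t w_n$ bounds and compactness.
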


\begin{proof}
The proof of existence follows a standard iteration argument for the sequence of approximations $(\l_n(t),w_n(t)),$ $n\in\mathbb{N}.$ For a given $\l_n(\cdot)$ we define
$w_{n+1}$ by solving
\begin{subequations}
\begin{alignat}{2}
\pa_tw_{n+1}-\frac{\pa_t\l_n}{\l_n}\Lambda w_{n+1} - \frac 1{\l_n^2}\Delta w_{n+1}&=0&& \text{ in }  \Omega\,;\label{E:ITERATIONHEAT} \\
w_{n+1}(t,1)&=0.&&\label{E:DIRICHLETW}\notag
\end{alignat}
\end{subequations} 
We then update $\l_{n+1}(\cdot)$ by solving
\[
\pa_yw_{n+1}(t,1)=-\pa_t\lambda_{n+1}(t)\l_n(t).
\]
Estimates analogous to 
the a priori estimates of Lemma~\ref{L:APRIORI} can be used to obtain uniform bounds on $E(w_n,\l_n) + \int_0^tD(w_n,\l_n)\,ds,$ where $E$ and $D$ are defined
by~\eqref{E:ENERGYDEFINITION} and~\eqref{E:DISSIPATIONDEFINITION} respectively. Note that 
 we can also get uniform bounds on $\|\pa_tw_{n}\|_{L^\infty([0,T], L^2(\Omega))}+\|\pa_tw_{n}\|_{L^2([0,T], H^1(\Omega))}$
as the latter norms are controlled by $E(w_n,\l_n) + \int_0^t D(w_n,\l_n)\,ds$ from~\eqref{E:ITERATIONHEAT}.
Upon passing to the limit, we obtain a solution to which the energy estimate of Lemma~\ref{L:APRIORI} applies.
The proof of uniqueness is standard.
The breakdown criterion is a simple consequence of~\eqref{E:ENERGYDEFINITION}, and~\eqref{E:TRACE}.
\end{proof}


\subsection*{Proof of Theorem~\ref{T:WELLPOSEDNESS}}
Let $w$ be the solution to~\eqref{E:STEFANPOLARW} as given by Theorem~\ref{T:WELLPOSEDNESSW}.
It is easy to check that $\|w(t,\cdot)\|_{H^2(\Omega)} \lesssim E(t),$ $t\in [0,\mathcal{T}).$
Theorem~\ref{T:WELLPOSEDNESS} now follows from  the  change of variables $u(t,r) = w(t, \frac r {\l(t)})$ and 
Theorem~\ref{T:WELLPOSEDNESSW}.

\end{appendix}

\end{document}